\documentclass[11pt]{amsart}
\usepackage{amscd}     
\usepackage{amssymb}
\usepackage{amsmath, amsthm, graphics, dsfont}
\usepackage{xypic}     
\LaTeXdiagrams        
\usepackage[all]{xy}
\xyoption{2cell} \UseAllTwocells \xyoption{frame} \CompileMatrices
\allowdisplaybreaks[3]
\usepackage{amsfonts}
\usepackage[normalem]{ulem}
\usepackage{hyperref}
\usepackage{mathtools}
\usepackage{verbatim} 
\usepackage{MnSymbol}
\usepackage{latexsym}
\usepackage{epsfig}
\usepackage{enumerate}
\usepackage{times}
\usepackage{xcolor}
\usepackage{geometry}
\usepackage{tikz-cd}
\newgeometry{vmargin={25mm,25mm}, hmargin={25mm,25mm}}   

\usetikzlibrary{calc}

\DeclareMathAlphabet{\mathpzc}{T1}{pzc}{m}{it}

\DeclarePairedDelimiterX{\infdivx}[2]{(}{)}{%
  #1\;\delimsize\|\;#2%
}

\newtheorem{theorem}{Theorem}[section]
\newtheorem{question}[theorem]{Question}
\newtheorem{proposition}[theorem]{Proposition}
\newtheorem{lemma}[theorem]{Lemma}
\newtheorem{corollary}[theorem]{Corollary}
\theoremstyle{definition}

\newtheorem{alephtheorem}{Theorem}

\theoremstyle{remark}
\newtheorem{remark}[theorem]{Remark}

\newcommand{\CC}{\mathbb{C}}
\newcommand{\ubullet}{\,\underline{\bullet}\,}
\newcommand{\QQ}{\mathbb{Q}}
\DeclareMathOperator{\Aut}{Aut}
\DeclareMathOperator{\ev}{ev}
\DeclareMathOperator{\diag}{diag}
\DeclareMathOperator{\End}{End}
\DeclareMathOperator{\id}{Id}
\DeclareMathOperator{\Spec}{Spec}
\DeclareMathOperator{\Tr}{Tr}
\newcommand{\strata}{\mathcal S}
\newcommand{\cI}{\mathcal I}

\newcommand{\Disc}{\Delta}
\newcommand{\rank}{\mathrm{rank}}
\newcommand{\cM}{\mathcal M}
\newcommand{\ocM}{\overline{\mathcal M}}

\newcommand{\Product}{\mathrm{Prod}}

\newcommand{\spin}{{3-\text{spin}}}

\newcommand{\Ch}{\mathsf{C}}
\newcommand{\Th}{\mathsf{\Theta}}
\newcommand{\Om}{\mathsf{\Omega}}

\newcommand{\vv}{v}
\newcommand{\ww}{w}

\title[From $3$-spin to Negative $r$-spin: Limits of CohFTs and Tautological Relations]{From $3$-spin to Negative $r$-spin: Limits of Cohomological Field Theories and Tautological Relations}
\date{\today}
\author[Genlik]{Deniz Genlik}
\author[Janda]{Felix Janda}

\begin{document}

\begin{abstract}
  We prove that the negative $r$-spin tautological relations of  Chidambaram--Garcia-Failde--Giacchetto are a consequence of Pixton's $3$-spin relations, for every $r\ge2$, and deduce that they hold in the tautological Chow ring of $\ocM_{g,n}$. In particular, our result for $r=2$ implies that Kazarian--Norbury $K$-relations hold in Chow.

  The proof factors through two essential parts. The first part shows that the tautological relations arising from Chiodo's class imply the negative $r$-spin relations. This requires an intricate study of limiting behavior of the Givental--Teleman graph contributions. In the second part, we show that the Chiodo relations are a consequence of Pixton's $3$-spin relations as an application of a generalization of Janda's work on tautological relations from CohFTs.
\end{abstract}

\maketitle

\section{Introduction}

The tautological subring $R^*(\ocM_{g,n})\subset A^*(\ocM_{g,n})$, together with its image $RH^*(\ocM_{g,n})\subset H^*(\ocM_{g,n})$ in cohomology, has been a central object of enumerative algebraic geometry since Mumford's seminal paper \cite{Mu83}. An important cornerstone in the study of tautological rings of $\ocM_{g,n}$ is the work \cite{Pi12P}, where Pixton introduced a system of classes $\mathsf{P}_{g,n,d,\sigma, a_1,\ldots,a_n}$ in $R^*(\ocM_{g,n})$ and conjectured the following tautological relations:
\begin{equation}\label{eqn:Pixtons_relations}
\mathsf{P}_{g,n,d,\sigma, a_1,\ldots,a_n}=0 \in R^*(\ocM_{g,n})\quad \text{for} \quad 3d \geq g+1 + \vert \sigma \vert+\sum_{i=1}^n a_i.
\end{equation}

In \cite{PPZ15}, Pandharipande--Pixton--Zvonkine proved these relations in the tautological cohomology ring $RH^*(\ocM_{g,n})$. Shifting Witten's $3$-spin CohFT to a semisimple point of the associated Frobenius manifold and applying the Givental--Teleman classification \cite{Te12} produces an explicit formula for the CohFT in terms of its topological field theory and $R$-matrix; the degree vanishing of the $3$-spin class above its geometric support then forces these contributions to vanish, yielding tautological relations that are equivalent to the system of relations given by \eqref{eqn:Pixtons_relations}. These relations are now known as \emph{Pixton's relations} or \emph{$3$-spin relations}. In \cite{PPZ19}, Pandharipande--Pixton--Zvonkine obtained tautological relations from  Witten's $r$-spin CohFT extending their strategy to all $r\geq 3$ .

As the proof of Givental--Teleman classification \cite{Te12} uses topological techniques, the proofs in \cite{PPZ15} and \cite{PPZ19} do not imply that the relations obtained hold in the tautological Chow ring $R^*(\ocM_{g,n})$. In \cite{Ja17}, Janda obtained Pixton's relations via the equivariant Gromov-Witten theory of $\mathbb{P}^1$ and concluded that Pixton's relations hold in the tautological Chow ring $R^*(\ocM_{g,n})$ since the proof relies on the virtual localization theorem of \cite{GrPa99}. Moreover, Janda \cite{Ja18} proved that for any convergent, generically semisimple CohFT with a flat unit and nonempty discriminant locus, the ideal of tautological relations associated with its shifted family equals the ideal of $3$-spin relations. In particular, any tautological relations obtained by applying Givental--Teleman reconstruction at a semisimple point and taking a limit toward the discriminant locus are automatically implied by Pixton's relations. In other words, the method of \cite{PPZ15} and \cite{PPZ19} cannot produce genuinely new relations within this class of shifted convergent CohFTs. In \cite{paper1}, we extend Janda's comparison theorem \cite{Ja18} from CohFTs with flat unit to generically semisimple CohFTs with a vacuum, allowing the unit to be non-flat.

Norbury defined the theta CohFT $\Th^{2}$ in \cite{Nor23} and conjectured that its descendant potential is the Br\'ezin--Gross-Witten tau function. The CohFT $\Th^2$ has found several applications: it appears in Jackiw--Teitelboim
supergravity \cite{SW20} and in volume formulas for moduli of super hyperbolic
Riemann surfaces with geodesic boundary \cite{Nor26}. Moreover, it has been coupled to the
Gromov--Witten theory of $\mathbb{P}^1$ \cite{Nor22}, and its relation with the  Br\'ezin--Gross-Witten tau function motivated the $K$- and $J$- $\kappa$-class-polynomials and their vanishing conjectures developed by Kazarian--Norbury \cite{KaNo24}.

Chidambaram--Garcia-Failde--Giacchetto generalized this construction to obtain rank-$(r-1)$ CohFTs 
$\Th^{r}$ for $r\geq 3$ in \cite{CGG25}. They constructed $\Th^r$ geometrically from the top Chern class of a vector bundle associated with $r^{\text{th}}$ roots of the log anticanonical bundle and proved that the resulting theory is a nowhere-semisimple CohFT without a flat unit. They established a topological recursion description and $\mathcal{W}$-constraints for the descendant theory, proved Norbury's Br\'ezin--Gross-Witten conjecture in the case $r=2$, conjectured the corresponding $r$-BGW statement for $r\geq 3$, proved it for $r=3$, and provided the negative $r$-spin tautological relations in the tautological cohomology ring $RH^*(\ocM_{g,n})$. More recently, Guo--Zhang developed Virasoro constraints for homogeneous CohFTs with vacuum and applied their formalism to the deformed negative $r$-spin theory \cite{GZ25}.

The construction of $\Th^{r}$ is based on the top degree part of the Chiodo's Chern class construction \cite{Ch08}. The CohFTs $\Th^{r}$ are now known as the \emph{theta CohFTs} or the \emph{negative $r$-spin CohFTs} as they are negative spin analogues of Witten's $r$-spin CohFTs. The underlying Frobenius manifold of $\Th^{r}$ is nowhere semisimple. As a result, the shifting strategy of \cite{PPZ15} to produce relations is not applicable in this setting. In \cite{CGG25}, the authors overcome this technical barrier by introducing a one-parameter deformation $\Th^{r,\epsilon}$ which is a semisimple CohFT with a non-flat unit when $\epsilon\neq 0$. Applying Givental--Teleman reconstruction to $\Th^{r,\epsilon}$ at a semisimple point expresses the full CohFT in terms of its topological field theory, $R$-matrix and vacuum vector; degree vanishing of $\Th^r$ above its geometric support produces tautological relations called the \emph{negative $r$-spin relations}. The authors pose two natural questions for these relations \cite{CGG25}. As negative $r$-spin relations obtained from $\Th^{r,\epsilon}$ by Givental--Teleman classification, the first question is:
\begin{question}\label{ques:Chow_negative_r_spin}
Do negative $r$-spin relations hold in the tautological Chow ring ?
\end{question}
Janda's theorem on tautological relations \cite{Ja18} does not apply either, for two independent reasons
emphasized in \cite{CGG25}:
\begin{itemize}
    \item The CohFT $\Th^{r,\epsilon}$ is not a shift of the theta CohFT $\Th^r$ to a semisimple point, it is rather a deformation.
    \item The unit of the deformed theta CohFT $\Th^{r,\epsilon}$ is non-flat.
\end{itemize}
Hence, another main question in \cite{CGG25} is:
\begin{question}\label{ques:3_spin_to_negative_r_spin}
Are the negative $r$-spin relations also a consequence of Pixton's $3$-spin relations?
\end{question}

\begin{remark}
  The authors of \cite{CGG25} state that they expect positive answers to these questions. However, they do not have a proof. 
\end{remark}

We should remark that a positive answer to Question \ref{ques:3_spin_to_negative_r_spin} yields a positive answer to Question \ref{ques:Chow_negative_r_spin} as the $3$-spin relations hold in the tautological Chow ring $R^*(\ocM_{g,n})$ \cite{Ja17}. The main aim of our paper is to prove that $3$-spin relations imply negative $r$-spin relations and conclude that the negative $r$-spin relations hold in the tautological Chow ring $R^*(\ocM_{g,n})$. While achieving this goal, we also obtain fundamental structural results regarding the Chiodo and theta CohFTs.

\subsection{Main results}
The main result of this paper is the following.

\begin{alephtheorem}[{see Theorem~\ref{thm:main}}] \label{ThmA}
For every $r \geq 2$, the ideal $\cI_{g,n}^{\Th^{r,\epsilon}}$ of negative $r$-spin relations is contained in the ideal $\cI_{g,n}^{\Om^{\spin}}$ of $3$-spin relations:
  \begin{equation*}
    \cI_{g,n}^{\Th^{r,\epsilon}}  \subseteq  \cI_{g,n}^{\Om^{\spin}}.
  \end{equation*}
\end{alephtheorem}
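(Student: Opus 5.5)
The plan follows the two-step factorization announced in the abstract. I introduce an intermediate ideal $\cI_{g,n}^{\Ch}$ of \emph{Chiodo relations}. These are the tautological relations obtained from the Chiodo class CohFT, Chiodo's total Chern class of $R\pi_*$ of the universal $r$-th root of the (log) anticanonical bundle twisted by a formal parameter. I restrict to its semisimple locus, apply Givental--Teleman reconstruction there, and impose vanishing above the geometric degree bound. I then prove two inclusions, $\cI_{g,n}^{\Th^{r,\epsilon}}\subseteq \cI_{g,n}^{\Ch}$ and $\cI_{g,n}^{\Ch}\subseteq\cI_{g,n}^{\Om^{\spin}}$, and combine them.

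\textbf{Step 1: Chiodo relations imply negative $r$-spin relations.} The negative $r$-spin CohFT $\Th^r$ is the top-degree part of Chiodo's class. The deformation $\Th^{r,\epsilon}$ should appear as a rescaling limit of the Chiodo CohFT. Concretely, I weight the degree-$k$ Chern class part by $t^{-k}$ and shift the Frobenius manifold point in a $t$-dependent way tied to $\epsilon$, then let $t\to 0$ (or $\infty$). On the Givental--Teleman side, both the $R$-matrix of Chiodo's class, whose entries are explicit Bernoulli-type series from Chiodo's formula, and its vacuum vector have to be tracked under this rescaling. I would show that each graph contribution in the Givental--Teleman sum for the Chiodo theory has a well-defined limit, and that these limits reassemble exactly into the graph sum for $\Th^{r,\epsilon}$ with its own $R$-matrix, TFT and vacuum. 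The relations come from the degree components above the support bound. A negative $r$-spin relation should then be obtained either as the leading coefficient of a family of Chiodo relations, or as a limit of linear combinations of them. In either case it lies in the ideal, because the ideal is a linear subspace closed under such limits in each finite-dimensional graded piece of the strata algebra. \emph{This is the main obstacle}: the $R$-matrix entries individually blow up, so one must show cancellations graph by graph. I would first try to conjugate by a $t$-dependent diagonal (grading) operator so that the rescaled $R(z)$ and vacuum converge coefficientwise. If that fails, I would argue at the level of the whole relation ideal using homogeneity (Euler field) of both theories.

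\textbf{Step 2: $3$-spin relations imply Chiodo relations.} The Chiodo CohFT is convergent and generically semisimple, with a vacuum but a possibly non-flat unit. Its relations also arise by shifting to the semisimple point and taking limits toward the discriminant locus. I would verify these hypotheses, including nonemptiness of the discriminant, and then apply the extension of Janda's comparison theorem \cite{Ja18} to CohFTs with vacuum from \cite{paper1}. That theorem identifies the relation ideal of the shifted family with the $3$-spin ideal, giving $\cI_{g,n}^{\Ch}\subseteq\cI_{g,n}^{\Om^{\spin}}$. Chaining this with Step 1 yields Theorem~\ref{ThmA} for every $r\ge 2$.
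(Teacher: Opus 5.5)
Your architecture is the paper's: Chiodo relations as an intermediate ideal, a graph-by-graph limit for $\cI^{\Th^{r,\epsilon}}\subseteq\cI^{\Ch}$, and \cite{paper1} for $\cI^{\Ch}\subseteq\cI^{\Om^{\spin}}$. But Step~1 carries essentially all the content, and there the mechanism you propose cannot work, while the idea that does work is absent.

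With $\Ch^r_{g,n}=(-1)^nr^{1-g}t^{\rank(\mathcal V)}c_{r/t}(\mathcal V)$ shifted by $\epsilon v_0$ (no $t$-dependent shift is needed), $\Th^{r,\epsilon}$ is the $t\to0$ limit only after restricting insertions to $\underline V=\langle v_1,\dots,v_{r-1}\rangle$. So the rank drops from $r$ to $r-1$: one root of $p_t(x)=x^r+(-1)^r\epsilon x-(-1)^rt$ degenerates, with $\zeta_0\sim t/\epsilon$ and $\xi_0\sim t^{1/2}$.

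The reconstruction data do not converge under any rescaling. The vacuum is exactly $\mathbf v(z)=\mathbf 1\sum_k z^kt^{-k}$ with $\mathbf 1=(\epsilon v_0-v_{r-1})/t$, so $\mathbf v_k$ has a pole whose order grows linearly in $k$ in every idempotent direction. A fixed $t$-dependent change of basis of $V$ cannot remove poles growing with $k$. Rescaling $z$ (i.e.\ the cohomological grading) to absorb them would send the positive-degree part of the nonzero-color sector to $0$ rather than to $\underline R,\underline T$. In addition, $R^k_{0,0}$ is fixed by flatness only up to an integration constant, and a priori has $t$-order as low as $-k$; for $\eta(T_k,\widetilde e_0)$ only the bound $-k-\tfrac12$ is available. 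Finally, the explicit Bernoulli-type $R$-matrix from Chiodo's formula belongs to the unshifted theory. After the $\epsilon v_0$-shift you only have the flatness ODE.

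What is needed is the argument behind Theorem~\ref{ThmD}.
\begin{enumerate}
\item Expand each graph contribution as a sum over colorings of the vertices by idempotents $0,\dots,r-1$.
\item For colors $i\ge1$, show that $\eta(R_k^*v_j,\widetilde e_i)$ and $\eta(T_k,\widetilde e_i)$ converge to their theta counterparts. Flatness leaves the diagonal constants free, so the paper gets these from Harer stability. On $\cM_{g,1+n}$ with $g>3k$, the $\psi$- and $\kappa$-coefficients of $\Ch^{r,\epsilon}$ are polynomial in $\epsilon,t$ and tend to those of $\Th^{r,\epsilon}$ for $\underline V$ inputs. Inverting $\Psi$, using $\Psi^{-1}_{0,i}\to0$, then isolates the entries.
\item For any coloring with a $0$-colored vertex, prove strictly positive $t$-order. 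Flatness plus homogeneity give $\operatorname{ord}_tR^k_{i,0}\ge -k+1-\delta_{i,0}$. At each $0$-colored vertex, weigh the TFT factor $(\zeta_0\xi_0)^{2g-2+n+l}$, of order $\tfrac32(2g-2+n+l)$, against the leg, edge and $T$ poles. The dimension bound $\mathsf a+\mathsf b+\sum k_j\le 3g-3+n+l$ then gives order at least $\#\{\text{legs}\}+\#\{\text{half-edges to nonzero colors}\}$ at that vertex. This is positive precisely because $n>0$ and $s_j\ge1$.
\end{enumerate}

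Only this shows that the Chiodo reconstruction with $\underline V$ inputs has coefficients in $\QQ[\epsilon,t,\Disc(p_t(x))^{-1}]$, with $t=0$ specialization equal to $\Th^{r,\epsilon,\strata}$. After that, your closure-under-limits argument can be made precise. The paper instead composes the $\epsilon$-polar functional with $t=0$ and extends it to one vanishing on $\QQ[\epsilon,t^{\pm1}]$.

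In Step~2, \cite{paper1} is applied to the full shift $\Ch^{r,\vec\epsilon}$ at fixed $t\neq0$. Convergence in the $v_{r-1}$-direction uses $\Ch(\cdots\otimes v_{r-1})=-(t-\psi)\mathpzc{p}^*\Ch(\cdots)$. You still need two further steps: that specializing to the $\epsilon v_0$-shift only loses relations, and an interpolation in $t$ to pass to the ideal over $\QQ[\epsilon,t^{\pm1}]$.
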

As an immediate
corollary, we obtain a Chow refinement of a recent result of \cite{CGG25}.
\begin{alephtheorem} \label{ThmB}
For $r\geq 2$, the negative $r$-spin relations hold in Chow.
\end{alephtheorem}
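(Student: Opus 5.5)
Theorem~\ref{ThmB} follows directly from Theorem~\ref{ThmA}. Theorem~\ref{ThmA} places every negative $r$-spin relation in the ideal $\cI_{g,n}^{\Om^{\spin}}$ generated by Pixton's $3$-spin relations. The plan is therefore to argue at the level of ideals: each element of $\cI_{g,n}^{\Th^{r,\epsilon}}$ can be written as a finite combination of $3$-spin relations. These combinations consist of pushforwards along gluing maps, products with tautological classes, and pullbacks along forgetful maps.

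The first step is to invoke the result of \cite{Ja17}. Janda obtains the $3$-spin relations from the torus-equivariant Gromov--Witten theory of $\mathbb{P}^1$ via virtual localization \cite{GrPa99}. Since virtual localization holds in Chow, every generator $\mathsf{P}_{g,n,d,\sigma,a_1,\ldots,a_n}$ in the range \eqref{eqn:Pixtons_relations} vanishes in $R^*(\ocM_{g,n})$. It follows that the whole ideal $\cI_{g,n}^{\Om^{\spin}}$ vanishes in Chow.

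The one point that needs care is that the ideal of CohFT relations must be a genuine system of ideals in the Chow sense. It has to be closed under the tautological operations (gluing pushforwards, forgetful pullbacks, multiplication by $\psi$- and $\kappa$-classes), and these operations must preserve vanishing in $R^*$. This holds because all of these operations are defined on Chow rings and are compatible with the cycle map. With that in hand, a relation lying in $\cI_{g,n}^{\Om^{\spin}}$ is a tautological combination of classes that vanish in Chow, so it vanishes in Chow as well.

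Combining these steps, every negative $r$-spin relation lies in $\cI_{g,n}^{\Om^{\spin}}$ by Theorem~\ref{ThmA}, and therefore vanishes in $R^*(\ocM_{g,n})$. The main obstacle is thus entirely contained in Theorem~\ref{ThmA}, namely the limit analysis from Chiodo's class to $\Th^{r,\epsilon}$ together with the generalized comparison theorem of \cite{paper1}. The deduction of Theorem~\ref{ThmB} from it is formal. For $r=2$, this specializes to the statement that the Kazarian--Norbury $K$-relations hold in Chow, assuming, as in \cite{CGG25}, that those relations are among the negative $2$-spin relations.
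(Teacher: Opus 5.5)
Your proposal is correct and is the same as the paper's proof: Theorem~\ref{ThmA} gives $\cI_{g,n}^{\Th^{r,\epsilon}}\subseteq\cI_{g,n}^{\Om^{\spin}}$, and \cite{Ja17} places the $3$-spin ideal inside $\ker(q_A)$, so every negative $r$-spin relation vanishes in Chow. One small correction: the paper defines an ideal as closed under multiplication and under push-forward along gluing and forgetful maps, not forgetful pullbacks, but $\ker(q_A)$ is closed under all of these operations, so your argument is unaffected.
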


\begin{proof}
  As proven in \cite{Ja17}, the $3$-spin relations are tautological relations in Chow.
  Since $\cI_{g,n}^{\Th^{r,\epsilon}}  \subseteq  \cI_{g,n}^{\Om^{\spin}}$ by  Theorem~\ref{ThmA}, the negative $r$-spin relations must also hold in Chow.
\end{proof}

The case $r=2$ has a concrete consequence for the polynomial relations among $\kappa$ classes conjectured by Kazarian--Norbury \cite{KaNo24}. Let $s_m\in \QQ$ be defined by
\begin{equation*}
\exp \left( -\sum_{m=1}^{\infty}s_m z^m\right)=\sum_{k=0}^{\infty} (-1)^k (2k+1)!! z^k
\end{equation*}
and set
\begin{equation*}
K=\exp \left( \sum_{m=1}^{\infty}s_m \kappa_m\right)=\sum_{d=0}^{\infty}K_d.
\end{equation*}
Chidambaram--Garcia-Failde--Giacchetto identified the vanishing of the classes $K_d$
conjectured in part (1) of \cite[Conjecture 1]{KaNo24} with their $r=2$ negative spin
relations. Hence, they proved Kazarian--Norbury's $K$-relations in  cohomology \cite[Corollary C]{CGG25}. Moreover, in the same corollary, they identified the top degree class $K_{2g-2+n}$ with Norbury's theta class $\Th^2_{g,n}$, proving \cite[Conjecture 4]{KaNo24} in cohomology. Our Theorem~\ref{ThmB} therefore yields the following
Chow refinement of their results for the classes $K_d$.

\begin{alephtheorem} \label{ThmC}
  Let $g,n$ satisfy $2g-2+n>0$ with $n>0$. Then,
  \begin{equation*}
    K_d = 0 \in R^{d}(\ocM_{g,n})  \quad\text{for all } d>2g-2+n.
  \end{equation*}
That is, the Kazarian--Norbury $K$-relations hold in the tautological Chow ring $R^*(\ocM_{g,n})$.
\end{alephtheorem}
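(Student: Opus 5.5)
The plan is to deduce Theorem~\ref{ThmC} directly from Theorem~\ref{ThmB} in the case $r=2$, combined with the identification made in \cite[Corollary C]{CGG25}. There the authors show that, for $r=2$, the negative $2$-spin relations coming from the deformed theta CohFT $\Th^{2,\epsilon}$ are exactly the vanishing statements $K_d=0$ for $d>2g-2+n$. The first step is to recall this identification concretely. For $r=2$ the CohFT $\Th^{2,\epsilon}$ has rank one, so the Givental--Teleman reconstruction degenerates: the $R$-matrix is a scalar series, the translation by the vacuum acts only through $\kappa$-classes, and the graph sum reduces to the exponential of a linear combination of $\kappa$-classes. Normalizing by the topological field theory factor $\epsilon^{?(2g-2+n)}$, one recovers $K=\exp(\sum_m s_m\kappa_m)$, with the $s_m$ determined by the asymptotic expansion of the relevant $R$-matrix, i.e.\ by $\sum(-1)^k(2k+1)!!z^k$. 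The degree-$d$ part of the $\epsilon$-expansion, for $d$ above the degree $2g-2+n$ of $\Th^2_{g,n}$, is then an element of the ideal $\cI_{g,n}^{\Th^{2,\epsilon}}$ of negative $2$-spin relations, and it equals $K_d$ up to a nonzero constant.

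The second step is to note that these classes are tautological relations in cohomology by \cite{CGG25}, and that Theorem~\ref{ThmA} places them inside the ideal $\cI_{g,n}^{\Om^{\spin}}$. By \cite{Ja17} every element of that ideal vanishes in $R^*(\ocM_{g,n})$, which is precisely the content of Theorem~\ref{ThmB}. Therefore $K_d=0$ in $R^d(\ocM_{g,n})$ for all $d>2g-2+n$. The hypothesis $n>0$ enters only through \cite{CGG25}'s setup: the theta class and the vacuum deformation are defined on $\ocM_{g,n}$ with at least one marking, since the log anticanonical bundle $\omega_{\log}$ needs the roots to exist with the chosen twisting.

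The main obstacle I expect is bookkeeping. I have to verify that the ideal $\cI_{g,n}^{\Th^{2,\epsilon}}$, as defined in the body of the paper (the version Theorem~\ref{thm:main} applies to), literally contains the classes $K_d$. It is not enough for it to contain some classes that \cite{CGG25} shows are equivalent to them in cohomology, because cohomological equivalence would not transfer to Chow. To settle this, I would take the explicit rank-one graph sum and check that its degree-$d$ component is a nonzero multiple of $K_d$ as a formal tautological expression, that is, as a polynomial in $\kappa$-classes. This should follow from the single-vertex nature of the rank-one graph sum, where no boundary terms appear beyond those absorbed into $\kappa$-classes.
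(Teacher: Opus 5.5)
Your main line is the paper's: Theorem~\ref{ThmC} is Theorem~\ref{ThmB} at $r=2$, combined with the identification in \cite[Corollary C]{CGG25} of the $r=2$ negative spin relations with the classes $K_d$ for $d>2g-2+n$. The paper does nothing beyond this.

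Two of your supporting remarks are wrong, although neither breaks the deduction.

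First, your justification of the bookkeeping step fails as stated. A rank-one Givental--Teleman graph sum is not automatically a single-vertex $\kappa$-polynomial. A nontrivial scalar $R(z)$ puts $R^{-1}(\psi_j)$ on every leg and gives a nonzero edge factor $\bigl(1-R^{-1}(\psi')R^{-1}(\psi'')\bigr)/(\psi'+\psi'')$, exactly as for the Hodge CohFT. What actually makes $\Th^{2,\epsilon,\strata}_{g,n}(v_1^{\otimes n})$ a formal $\kappa$-polynomial is that the $R$-matrix of $\Th^{2,\epsilon}$ is the identity. This is what underlies the identification in \cite{CGG25}; via Harer stability as in Lemma~\ref{lem:R-limit-nonzero}, it amounts to the absence of $\psi_1^k$ terms on $\cM_{g,1}$. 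Given that, all edge factors vanish and the leg factors are constants. The $s_m$ then come from the translation $\underline T(z)=z\bigl(\underline{\mathbf 1}-\underline{\mathbf v}(z)\bigr)$, i.e. from the vacuum vector, not from an $R$-matrix. With this correction your concern is settled: each degree-$d$ coefficient is, up to a nonzero scalar and a power of $\epsilon$, literally $K_d$ as an element of $\strata_{g,n}$.

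Second, the hypothesis $n>0$ has nothing to do with the existence of roots of $\omega_{\log}$. Theta classes and their $\epsilon v_0$-deformations exist for $n=0$. The hypothesis is needed because Givental--Teleman reconstruction with a non-flat unit holds only for $n>0$. Consequently the generators of $\cI^{\Th^{r,\epsilon}}$ to which Theorem~\ref{ThmA} applies are the reconstructions with $n>0$. The graphwise limit of Theorem~\ref{ThmD} also uses a leg at a zero-colored vertex.
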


\subsection{Proof strategy and additional results}
Next, we outline the proof strategy for Theorem~\ref{ThmA}.
At the same time, we state other main theorems and important structural results. The proof of Theorem~\ref{ThmA} is obtained by composing the following three inclusions:
\begin{equation}\label{eqn:inclusion_breakdown_of_the_proof}
  \cI^{\Th^{r,\epsilon}}
  \;\overset{\text{Theorem~\ref{ThmE}}}{\subseteq}\;
  \cI^{\Ch^{r,\epsilon},\,t\neq0}
  \;\subseteq\;
  \cI^{\Ch^{r,\vec\epsilon},\,t}
  \;\overset{\text{Theorem~\ref{ThmF}}}{=}\;
  \cI^{\Om^{\spin}}.
\end{equation}
First, the inclusion on the left is Theorem~\ref{ThmE} and is proved by the technical input of our paper, the graphwise limit of Theorem~\ref{ThmD}. The middle inclusion follows from the observation that specializing the shift parameters can only lose relations. Lastly, the equality on the right is Theorem F and it is obtained by applying our results of \cite{paper1}. We note that only the inclusion $\subseteq$ is needed in the last step. The equality of Theorem~\ref{ThmF} is a strictly stronger structural result and it is of independent interest. Theorem~\ref{ThmF} is proved using the main result of \cite{paper1}.

\subsubsection{Givental--Teleman reconstruction and graph limits}
First, we show that the $\epsilon v_0$-shifted Chiodo CohFT $\Ch^{r,\epsilon}$ is generically semisimple with a non-flat unit over the vector space ${V}=\left\langle v_0,\ldots,v_{r-1}\right\rangle_{\QQ}$. Here $t$ records the Chern grading in the normalization of the Chiodo
class, so that specialization at $t=0$ retains the normalized
top-Chern contribution. The parameter $\epsilon$, on the other hand, records the additional
$v_0$-insertions due to CohFT shift. For precise definition of $\Ch^{r,\epsilon}$, see Section~\ref{sec:chiodo-theta-background}. 

The $\epsilon v_0$-deformed theta CohFT $\Th^{r,\epsilon}$ may be defined as the limit of restriction of $\Ch^{r,\epsilon}$ to the subspace $\underline{V}=\left\langle v_1,\ldots,v_{r-1}\right\rangle_{\QQ}\subset V$:
\begin{equation*}
\Th^{r,\epsilon} \coloneqq \lim_{t\to 0}\Ch^{r,\epsilon}\vert_{\underline{V}}.
\end{equation*}
Both theories are generically semisimple, but over different base rings. The CohFT $\Ch^{r,\epsilon}$ is defined over $\QQ[\epsilon,t^{\pm 1}]$ and is semisimple exactly where the discriminant function $(-1)^{r-1} r^r t^{r - 1}-(r - 1)^{r - 1} \epsilon^r$ is invertible, while $\Th^{r,\epsilon}$ is defined over $\QQ[\epsilon]$ and is semisimple exactly for $\epsilon\neq0$. Hence, they can be reconstructed from their topological field theories by Givental--Teleman classification away from their non-semisimple loci. Yet, it is not clear a priori if the  reconstruction ``commutes'' with this limiting procedure. The first obstacle we face is that the unit of $\Ch^{r,\epsilon}$ is
\begin{equation*}
  \mathbf{1} = \frac{-v_{r - 1} + \epsilon v_0}t.
\end{equation*}
and it does not admit a limit as $t\to 0$. Moreover, it substantially differs from the unit of $\Th^{r,\epsilon}$:
\begin{equation*}
  \underline{\mathbf{1}}
  =
  \begin{cases}
    -\frac{v_1}{\epsilon^2}\quad &\text{if}\quad r=2,\\
    -\frac{v_{r - 2}}\epsilon \quad &\text{if}\quad r\geq 3.
  \end{cases}
\end{equation*}
Similarly, the vacuum vector $\mathbf{v}(z)$ of $\Ch^{r,\epsilon}$ is a Laurent series in $t$:
\begin{equation*}
\mathbf{v}(z)
=
\frac{\epsilon v_0-v_{r-1}}{t-z}
=
(\epsilon v_0-v_{r-1})
\sum_{k=0}^{\infty} z^k t^{-k-1}.
\end{equation*}
In particular, for $k\geq 0$, the coefficient of $z^k$ in $\mathbf{v}(z)$ is
\begin{equation*}
t^{-k-1}(\epsilon v_0-v_{r-1}).
\end{equation*}
So, the $z$-expansion of the vacuum vector  $\mathbf{v}(z)$ has no limit as $t \to 0$. Another difficulty is that the flatness differential equation for the $R$-matrix of $\Ch^{r,\epsilon}$ is quite complex to solve in closed form. Moreover, the $R$-matrix is
determined by the flatness equation only up to integration constants whose orders of $t$ are potentially negative. Thus, it is not the case that the ingredients in the reconstruction of $\Th^{r,\epsilon}$ are obtained by taking limits of those of $\Ch^{r,\epsilon}$ individually. The correct limiting statement must therefore be formulated after all vertex, leg, and edge factors associated with a fixed stable graph have been combined.

Our first main technical result is that the limits of individual graph contributions for shifted Chiodo CohFT $\Ch^{r,\epsilon}$ nevertheless exist, and they are equal to the corresponding graph contribution of deformed theta CohFT $\Th^{r,\epsilon}$.
\begin{alephtheorem} \label{ThmD}
Let $2g-2+n>0$ with $n>0$, and let $s_1,\dotsc,s_n\in\{1,\dotsc,r-1\}$. Then, for every stable graph $\Gamma$ of genus $g$ with $n$ legs,
  \begin{equation*}
    \lim_{t\to 0}\mathrm{Cont}^{\Ch^{r,\epsilon}}_{\Gamma}
      (v_{s_1}\otimes\cdots\otimes v_{s_n})
    = \mathrm{Cont}^{\Th^{r,\epsilon}}_{\Gamma}
      (v_{s_1}\otimes\cdots\otimes v_{s_n}),
  \end{equation*}
where $\mathrm{Cont}_{\Gamma}$ denotes the contribution of $\Gamma$ to the Givental--Teleman reconstruction.
\end{alephtheorem}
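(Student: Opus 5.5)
The plan is to compare the two Givental--Teleman reconstructions graph by graph, after first moving both into a common normal form. For a CohFT $\Omega$ with vacuum, Teleman's classification gives
\[
\mathrm{Cont}_\Gamma^{\Omega}=\frac{1}{|\Aut\Gamma|}\,\xi_{\Gamma*}\Bigl(\prod_{v}\text{vertex}\cdot\prod_{\text{legs}}R^{-1}(\psi)\cdot\prod_{e}\frac{\eta^{-1}-R^{-1}(\psi')\eta^{-1}R^{-1}(\psi'')^{T}}{\psi'+\psi''}\Bigr).
\]
Each vertex factor is the topological field theory with extra $\kappa$-insertions, i.e.\ pushforwards of dilaton leg insertions of $T(z)=z\bigl(\mathbf{1}-R^{-1}(z)\mathbf{v}(z)\bigr)$.

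I would work in the normalized idempotent frame of $\Ch^{r,\epsilon}$. Its quantum product in the basis $v_0,\dots,v_{r-1}$ is explicit, so its canonical coordinates are roots of an explicit degree-$r$ equation. The discriminant $(-1)^{r-1}r^rt^{r-1}-(r-1)^{r-1}\epsilon^r$ shows how these roots behave as $t\to0$: one root runs off with $t$-order reflecting the non-invertible direction $v_0$, while the remaining $r-1$ roots converge, after rescaling, to the $r-1$ idempotents of $\Th^{r,\epsilon}$ on $\underline V$. I would make this precise with Puiseux expansions in $t$ over $\QQ(\epsilon)$.

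The key structural step is to split $V=\langle v_0\rangle\oplus\underline V$ and track each ingredient. This applies to the metric, the TFT values $\eta(e_i,e_i)^{-1}$, the $R$-matrix, and the translation $T(z)$. The goal is to show that the combined vertex-and-edge weights have limits, even though individual pieces blow up.

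For the $R$-matrix I would not solve the flatness equation. Instead I would use its uniqueness up to constants, fixed by homogeneity or by the Chiodo formula. Chiodo's class is given by Mumford-type exponentials, so the Chiodo CohFT is itself an $R$-matrix action on a trivial-like CohFT. This should give closed forms for the $\underline V$-block of $R^{-1}(z)$ and for $R^{-1}(z)\mathbf v(z)$.

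The main cancellation should be that the singular part $\frac{\epsilon v_0-v_{r-1}}{t-z}$ of the vacuum combines with $\mathbf 1=\frac{-v_{r-1}+\epsilon v_0}{t}$ in $T(z)$. This makes the $t^{-k-1}$ poles collapse. In the resulting expansion, $T(z)$ converges to the theta translation $z\bigl(\underline{\mathbf 1}-\underline R^{-1}(z)\underline{\mathbf v}(z)\bigr)$, and the $v_0$-components, which the restriction to $\underline V$ sees only through the metric pairing with $v_{r-1}$, carry positive powers of $t$.

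Having limits for the leg factors (legs are in $\underline V$), the edge kernel, and the translated vertex TFT, I would conclude by multiplying them. The vertex-by-vertex sums over idempotents must be regrouped so that the divergent idempotent contributes zero in the limit. This needs a $t$-order count: at each vertex, the TFT weight $\eta(e_0,e_0)^{-(2g_v-2+n_v)/2}$ of the escaping idempotent has to be compensated by the vanishing of the edge, leg, and $\kappa$-insertion projections onto that idempotent. I expect this order count to be the main obstacle. Its difficulty is that negative powers from the vertex weights must be balanced against positive powers from the projections simultaneously over all half-edges and $\kappa$ insertions, and that balance is only visible in the combined graph sum, not term by term. As a sanity check I would first verify the count for $r=2$, where all roots are explicit, and then for the trivalent genus-zero vertex.
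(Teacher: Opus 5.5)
There is a genuine gap. Your skeleton matches the paper's: work in the normalized idempotent frame, split colorings into those avoiding the escaping idempotent $\widetilde e_0$ and those using it, show the former converge to the theta graph terms and the latter vanish. But both halves rest on steps that fail or are missing.

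\textbf{The ingredients do not converge individually.} You therefore cannot obtain the nonzero-color factors as limits of closed forms. Chiodo's formula gives the $R$-matrix only at $\epsilon=0$, i.e. for the unshifted theory, whereas the limit $t\to0$ is taken at fixed $\epsilon\neq0$. There you only have the flatness equation, and homogeneity fixes the diagonal integration constants only up to $c_k t^{-k}$. The claimed collapse of poles in $T(z)=z(\mathbf 1-R^{-1}(z)\mathbf v(z))$ is false. For $r=2$, integrate $\partial_\epsilon R^1_{00}=-Y_{01}R^1_{10}=-t(\epsilon^2+4t)^{-5/2}$ starting from the Chiodo value $R^1_{00}|_{\epsilon=0}=-\tfrac{1}{24t}$. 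This gives $R^1_{00}=-\tfrac{1}{12t}+O(t)$. Hence $T_2=-t^{-1}\mathbf 1+R_1^*\mathbf 1$ has $\eta(T_2,\widetilde e_0)$ of exact order $-\tfrac52$, and a double pole along $\epsilon v_0-v_1$ in the flat basis, including the $\underline V$-direction. The paper never takes limits of $R$ or $T$ themselves. It takes limits only of the projections $\eta(R_k^*\phi,\widetilde e_i)$ and $\eta(T_{k+1},\widetilde e_i)$ with $i\ge1$. It gets their existence \emph{and} their identification with $\underline R,\underline T$ from Harer stability on the smooth locus. In the Harer range, the coefficients of $\psi_1^k$, resp. $\mathpzc{p}_{1*}\psi_2^{k+1}$, of the Chiodo classes on $\cM_{g,1+n}$ are polynomial in $t$ and tend to the corresponding theta coefficients. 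Inverting $\Psi$, with $\Psi^{-1}_{0,i}\to0$, expresses those projections through these coefficients.

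\textbf{The zero-color vanishing is missing, and your heuristic for it is inverted.} This vanishing is half of the theorem, and you leave it open. The vertex weight $\eta(e_0,e_0)^{-(2g-2+n+l)/2}=(\zeta_0\xi_0)^{2g-2+n+l}$ has $t$-order $\tfrac32(2g-2+n+l)>0$, so it vanishes rather than blows up. It is the projections of $R_k^*v_j$ and $T_k$ onto $\widetilde e_0$ that blow up, with pole order growing in $k$. No cancellation between different terms of the graph sum is needed. Two inputs are missing.
\begin{enumerate}
\item[(i)] The bound $\operatorname{ord}_t R^k_{i,0}\ge -k+1-\delta_{i,0}$, proved by induction on the flatness equation. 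The diagonal entry is handled by homogeneity: $R^k_{00}$ has degree $-k$, so regularity of $t^k\partial_\epsilon R^k_{00}$ forces regularity of $t^kR^k_{00}$. This yields $\operatorname{ord}_t\eta(R_k^*v_j,\widetilde e_0)\ge-\tfrac12-k+\min\{j,\tfrac32\}$ and $\operatorname{ord}_t\eta(T_k,\widetilde e_0)\ge-k-\tfrac12$.
\item[(ii)] The dimension constraint $\mathsf a(\mathfrak v)+\mathsf b(\mathfrak v)+\sum_jk_j\le3\mathrm g(\mathfrak v)-3+\mathrm n(\mathfrak v)+l_{\mathfrak v}$ at each $0$-colored vertex. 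It trades the $k$-linear losses from (i) against the vertex gain. For each decorated graph and flag weighting, it leaves a total order at least the number of legs plus half-edges joining $0$-colored to nonzero-colored vertices at $0$-colored vertices. This is positive by connectivity and $n>0$.
\end{enumerate}
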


Theorem~\ref{ThmD} is the technical heart of the paper.  The roots of characteristic polynomial $p_t(x)$ of $\Ch^{r,\epsilon}$ determine the  canonical coordinates of $\Ch^{r,\epsilon}$. Away from the discriminant locus, these roots split as $t \to 0$. One of the roots tends to zero and the remaining $r-1$ roots converge to the canonical roots of $\Th^{r,\epsilon}$.  We first determine the resulting $t$-orders of the transition matrices and of the topological field theory of $\Ch^{r,\epsilon}$.  We then use the flatness equation for its $R$-matrix, together with Harer stability on the smooth locus, to control the integration constants and the possible polar terms in the $R$-matrix and $T$-vector.  Individual factors may still have negative or fractional $t$-orders.  The key step is to show that, for each fixed stable graph, these singular orders cancel after all factors are combined, and that the constant term is exactly the corresponding $\Th^{r,\epsilon}$ graph contribution.  Thus, the reconstruction commutes with $t\to0$ graph by graph, even though the reconstruction data does not converge individually.

\subsubsection{From Chiodo relations to theta relations}

Theorem~\ref{ThmD} shows that the $t \to 0$ limit exists for every stable graph contribution to $\Ch^{r,\epsilon}$.
This allows us to prove that the tautological relations obtained from the shifted Chiodo CohFT $\Ch^{r,\epsilon}$ imply the negative $r$-spin relations.
\begin{alephtheorem}[{see Proposition~\ref{prop:chiodo-to-theta-rels}}] \label{ThmE}
Let $r\ge2$. Let $\cI^{\Ch^{r,\epsilon},t\neq0}$ denote the ideal of tautological relations of $\Ch^{r,\epsilon}$, viewed as a CohFT over $\QQ[\epsilon,t^{\pm 1}]$, and let $\cI^{\Th^{r,\epsilon}}$ denote the ideal of negative $r$-spin relations. Then,
\begin{equation*}
    \cI^{\Th^{r,\epsilon}} \subseteq \cI^{\Ch^{r,\epsilon},t\neq0}.
\end{equation*}
\end{alephtheorem}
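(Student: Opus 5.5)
The plan is to show that every generator of $\cI^{\Th^{r,\epsilon}}$ is the $t\to0$ limit of an element of $\cI^{\Ch^{r,\epsilon},t\neq0}$, and that this ideal, being defined over $\QQ[\epsilon,t^{\pm1}]$ and spanned by graph sums, contains such limits. First I recall how the relations arise. For $\Th^{r,\epsilon}$, the Givental--Teleman formula writes
\[
\Th^{r,\epsilon}_{g,n}(v_{s_1}\otimes\cdots\otimes v_{s_n})=\sum_\Gamma \mathrm{Cont}^{\Th^{r,\epsilon}}_\Gamma(v_{s_1}\otimes\cdots\otimes v_{s_n}).
\]
The negative $r$-spin relations are the homogeneous components (in cohomological degree, after expanding in $\epsilon$) above the geometric degree bound of this graph sum. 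Equivalently, they are the coefficients that must vanish because $\Th^r$ is pure of top degree. The same procedure applied to $\Ch^{r,\epsilon}$ gives $\cI^{\Ch^{r,\epsilon},t\neq0}$. There, the degree constraint coming from Chiodo's formula (the Chern grading recorded by $t$) forces the vanishing of suitable coefficients of the graph sum $\sum_\Gamma\mathrm{Cont}^{\Ch^{r,\epsilon}}_\Gamma$, now viewed as a $\QQ[\epsilon,t^{\pm1}]$-combination of tautological classes.

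Next, I restrict to insertions $v_{s_i}$ with $s_i\in\{1,\dots,r-1\}$, i.e. to $\underline V$. By Theorem~\ref{ThmD}, each graph contribution $\mathrm{Cont}^{\Ch^{r,\epsilon}}_\Gamma$ on $\underline V^{\otimes n}$ has no polar part in $t$. (Fractional powers must cancel as well, since the total lies in $\QQ[\epsilon,t^{\pm1}]$; I would record this explicitly.) Its value at $t=0$ is $\mathrm{Cont}^{\Th^{r,\epsilon}}_\Gamma$. The key bookkeeping step is to match gradings. The Chiodo class in the normalization used carries $t^{\,\text{(rank)}-\deg}$, so the coefficient of $t^0$ in the degree-$d$ part of $\Ch^{r,\epsilon}$ restricted to $\underline V$ corresponds to $\Th^{r,\epsilon}$ in degree $d$. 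I would check that the vanishing of $\Th^r$ above its top degree translates, on the Chiodo side, into the vanishing of the $t^0$-coefficient (or of nonnegative powers of $t$) of the degree-$d$ part for $d$ above the bound. Once the $\epsilon$-grading is also kept, this is exactly a Chiodo relation.

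The final step is an algebraic observation. Let $R=\sum_\Gamma \mathrm{Cont}^{\Ch^{r,\epsilon}}_\Gamma$ be the degree-$d$ component. It lies in the tautological ring tensored with $\QQ[\epsilon,t]$ (by Theorem~\ref{ThmD}, no negative powers occur after summing), and it is a Chiodo relation. Its individual $t$-coefficients are then also relations in $\cI^{\Ch^{r,\epsilon},t\neq0}$, since that ideal is by definition spanned by the coefficients of all monomials in $\epsilon,t$. In particular the $t^0$-coefficient $R|_{t=0}=\sum_\Gamma\mathrm{Cont}^{\Th^{r,\epsilon}}_\Gamma$ lies in the ideal, and these are precisely the generators of $\cI^{\Th^{r,\epsilon}}$. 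Pushforwards along gluing maps and multiplication by tautological classes commute with taking coefficients, so the inclusion passes to the generated ideals.

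I expect the main obstacle to be the grading/normalization match in the second step. One must verify that the degree range in which $\Th^{r,\epsilon}$ relations are asserted lies inside the range in which the Chiodo CohFT is forced to vanish, and that the $\epsilon$-weights line up after restricting from $V$ to $\underline V$. The analytic difficulty of exchanging the limit with the reconstruction is already absorbed by Theorem~\ref{ThmD}.
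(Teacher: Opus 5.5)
Your skeleton matches the paper's: restrict to insertions in $\underline V$, use Theorem~\ref{ThmD} to identify the $t=0$ specialization of the Chiodo reconstruction with the theta reconstruction, then transport relations. The last step, however, has a genuine gap, and it is exactly where the content of the statement lies beyond Theorem~\ref{ThmD}.

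\textbf{The two false claims in your last step.} You assert that the degree-$d$ component $R$ lies in $\strata_{g,n}\otimes\QQ[\epsilon,t]$, and that $\cI^{\Ch^{r,\epsilon},t\neq0}$ is ``spanned by the coefficients of all monomials in $\epsilon,t$''. Both are false.
\begin{itemize}
\item Theorem~\ref{ThmD} only controls the Laurent expansion at $t=0$. The reconstruction still carries denominators $\Disc(p_t(x))^{-k}$, so what you actually get is $R\in\strata_{g,n}\otimes\QQ[\epsilon,t,\Disc(p_t(x))^{-1}]$. Its $t$-expansion has coefficients in $\QQ[\epsilon^{\pm1}]$, and this is precisely where the negative powers of $\epsilon$, i.e.\ the theta relations, come from.
\item If $R$ really had coefficients in $\QQ[\epsilon,t]$, every admissible functional would kill it, so it would contribute nothing to the Chiodo ideal. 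Worse, being homogeneous of weight $D^r_{g;s_1,\ldots,s_n}-d<0$, it would vanish identically, and with it its $t^0$-coefficient $[\Th^{r,\epsilon,\strata}_{g,n}]_d$.
\item By Section~\ref{ss:taut-cohft}, $\cI^{\Ch^{r,\epsilon},t\neq0}$ is generated by $(\mathrm{id}\otimes\lambda)(\Ch^{r,\epsilon,\strata}_{g,n}(\cdots))$ for $\QQ$-linear $\lambda\colon\QQ[\epsilon,t^{\pm1},\Disc(p_t(x))^{-1}]\to\QQ$ vanishing on $\QQ[\epsilon,t^{\pm1}]$. ``Take the $t^0$-coefficient'' is not even defined on this whole ring, so it is not automatically an admissible operation.
\end{itemize}
The same slip appears in your remark that the total lies in $\QQ[\epsilon,t^{\pm1}]$.

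\textbf{The missing step.} It is a small piece of linear algebra, and it is how the paper argues.
\begin{enumerate}
\item Let $\rho\colon\QQ[\epsilon,t,\Disc(p_t(x))^{-1}]\to\QQ[\epsilon^{\pm1}]$ be the ring map $t\mapsto0$. It is well defined because $\Disc(p_t(x))|_{t=0}$ is a nonzero multiple of $\epsilon^r$.
\item Theorem~\ref{ThmD}, together with $\CC[\epsilon^{\pm\frac{1}{2(r-1)}}][\![t^{\frac12}]\!]\cap\QQ[\epsilon,t^{\pm1},\Disc(p_t(x))^{-1}]=\QQ[\epsilon,t,\Disc(p_t(x))^{-1}]$, shows two things on $\underline V^{\otimes n}$: $\Ch^{r,\epsilon,\strata}_{g,n}$ has coefficients in the source of $\rho$, and $\rho(\Ch^{r,\epsilon,\strata}_{g,n})=\Th^{r,\epsilon,\strata}_{g,n}$.
\item Take a generator $\mathrm{pr}(\Th^{r,\epsilon,\strata}_{g,n}(\cdots))$, with $\mathrm{pr}\colon\QQ[\epsilon^{\pm1}]\to\QQ$ vanishing on $\QQ[\epsilon]$. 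The functional $\mathrm{pr}\circ\rho$ vanishes on $\QQ[\epsilon,t]$. Since $t\nmid\Disc(p_t(x))$, this equals $\QQ[\epsilon,t,\Disc(p_t(x))^{-1}]\cap\QQ[\epsilon,t^{\pm1}]$.
\item Hence $\mathrm{pr}\circ\rho$ extends to a functional $\mathrm{pr}'$ on $\QQ[\epsilon,t^{\pm1},\Disc(p_t(x))^{-1}]$ vanishing on $\QQ[\epsilon,t^{\pm1}]$. Then $\mathrm{pr}(\Th^{r,\epsilon,\strata}_{g,n})=\mathrm{pr}'(\Ch^{r,\epsilon,\strata}_{g,n})\in\cI^{\Ch^{r,\epsilon},t\neq0}$.
\end{enumerate}

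\textbf{Why the obstacle you named is not the real one.} Both ideals are polar-part ideals; on the theta side this is Lemma~\ref{lem:theta_relations_agree_with_cohft_ideal}. So the degree and normalization matching you single out as the main obstacle is not needed. Conversely, checking that the theta classes vanish above the degree bound only shows they are relations in cohomology. It does not show they lie in the specific ideal $\cI^{\Ch^{r,\epsilon},t\neq0}$, which is what Theorem~\ref{ThmF} then compares with the $3$-spin ideal.
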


\subsubsection{From $3$-spin relations to Chiodo relations}
Now, it remains to prove the other two comparisons in the chain \eqref{eqn:inclusion_breakdown_of_the_proof}. These will be proved via the techniques and main results of our paper \cite{paper1}.

To apply the result, we have to fix a value of $t \neq 0$, and allow shifting by a general element $\Vec\epsilon=\sum_{i=0}^{r-1}\epsilon_i v_i$ of the state space $V$.
Call the shifted Chiodo CohFT $\Ch^{r,\Vec\epsilon}$; the one-parameter family
$\Ch^{r,\epsilon}$ of Theorem~\ref{ThmE} is the specialization $\epsilon_0=\epsilon$,
$\epsilon_1=\cdots=\epsilon_{r-1}=0$.
By Lemma~\ref{lem:general_shift_polynomiality_lemma} the classes of $\Ch^{r,\Vec\epsilon}$
depend on $\epsilon_{r-1}$ only through nonnegative powers of
$(1-\epsilon_{r-1})^{-1}$, so we take as base the polynomial ring
\begin{equation*}
  \QQ[\epsilon_0,\dotsc,\epsilon_{r-2},(1-\epsilon_{r-1})^{-1}].
\end{equation*}
\begin{alephtheorem}[{see Corollary~\ref{cor:3spin-to-chiodo-rels}}] \label{ThmF}
  Fix an arbitrary nonzero value for $t$ and $r \ge 2$.
  View $\Ch^{r, \Vec\epsilon}$ as a CohFT with parameters in  $\QQ[\epsilon_0, \dotsc, \epsilon_{r - 2},(1-\epsilon_{r-1})^{-1}]$, and let $\cI_{g,n}^{\Ch^{r, \Vec\epsilon}, t}$ be the corresponding ideal of tautological relations.
  We then have
  \begin{equation*}
    \cI_{g,n}^{\Ch^{r, \Vec\epsilon}, t} = \cI_{g,n}^{\Om^\spin}.
  \end{equation*}
\end{alephtheorem}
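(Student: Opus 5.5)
The plan is to deduce Theorem~\ref{ThmF} from the main comparison theorem of \cite{paper1}. That theorem says the following: for a CohFT with vacuum that is convergent and generically semisimple, whose shifted family over the full state space has a nonempty discriminant locus, the ideal of tautological relations of the shifted family equals the ideal of $3$-spin relations. (This is the extension of \cite{Ja18} to the case where the unit is not flat.) Accordingly, the work consists of checking these hypotheses for the Chiodo CohFT at a fixed value $t\neq 0$. We must also check that the base ring $\QQ[\epsilon_0,\dotsc,\epsilon_{r-2},(1-\epsilon_{r-1})^{-1}]$ is the right coordinate ring for the shifted family.

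\textbf{Step 1: polynomiality of the shift.} Lemma~\ref{lem:general_shift_polynomiality_lemma} shows that the classes of $\Ch^{r,\Vec\epsilon}$ are polynomial in $\epsilon_0,\dotsc,\epsilon_{r-2}$ and in $(1-\epsilon_{r-1})^{-1}$. Hence the shifted family is defined over an open affine chart of the state space $V$, and convergence is automatic. A shift by a multiple of $v_{r-1}$, which is essentially the unit direction up to the factor $-t$, only rescales the theory. This is why the coordinate $\epsilon_{r-1}$ enters only through $(1-\epsilon_{r-1})^{-1}$.

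\textbf{Step 2: existence of a vacuum.} The vacuum is $\mathbf v(z)=(\epsilon v_0-v_{r-1})/(t-z)$, shifted accordingly. It is invertible as a power series in $z$ because $t\neq0$.

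\textbf{Step 3: generic semisimplicity and nonempty discriminant.} First compute the quantum product at a general point $\Vec\epsilon$ from the genus-$0$ three-point classes of Chiodo's formula. Along the line $\epsilon_0=\epsilon$ the characteristic polynomial is $p_t(x)$, whose discriminant is $(-1)^{r-1}r^rt^{r-1}-(r-1)^{r-1}\epsilon^r$. This is not identically zero, which gives generic semisimplicity. It also vanishes at some $\epsilon\neq0$, which gives a nonempty discriminant locus inside the chart. The relevant quantities are rational in $\epsilon$, so for $r\ge2$ one passes to an algebraic closure or rational points as the framework of \cite{paper1} requires.

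\textbf{Step 4: matching the ideals.} Apply the main theorem of \cite{paper1} to get $\cI^{\Ch^{r,\Vec\epsilon},t}_{g,n}=\cI^{\Om^\spin}_{g,n}$. Here we must check that the ideal of relations in Corollary~\ref{cor:3spin-to-chiodo-rels} is defined exactly as in \cite{paper1}. Concretely, it should be generated by the coefficients of Givental--Teleman reconstructions in degrees where the CohFT vanishes, taken over the whole shifted family. The degree bound comes from the Chern-grading normalization: the reconstructed classes must vanish above the degree allowed by the geometric support.

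\textbf{Main obstacle.} I expect the main obstacle to be matching these hypotheses precisely, in particular the shape of the discriminant locus. The argument of \cite{paper1} needs to approach a point of the discriminant where the theory degenerates in a controlled way, i.e.\ a generic point of a divisor where exactly two canonical coordinates collide. I would first check that at a generic root of the discriminant above, $p_t$ has a single double root. If that holds, the local model is the $A_2$ (that is, $3$-spin) situation, and the comparison of \cite{paper1} goes through.
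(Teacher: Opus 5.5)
Your proposal follows the paper's route. The paper also gets the equality directly from the comparison theorem of \cite{paper1}, with convergence supplied by Lemma~\ref{lem:general_shift_polynomiality_lemma} and generic semisimplicity obtained essentially as you do, by specializing to the $\epsilon_0$-line (the homomorphism $\varphi$ in Lemma~\ref{lem:general-shift-chiodo-semisimple}); it verifies no local $A_2$ structure at the discriminant, so what you call the main obstacle is left to \cite{paper1} rather than checked as a hypothesis (your single-double-root check is nonetheless correct). One correction to Step~4: the ideal is by definition generated by the polar parts of the strata-valued reconstruction along $\Disc$ (Section~\ref{ss:taut-cohft}), not by its components in degrees above the geometric support, and at fixed $t\neq0$ there is no homogeneity that would convert one description into the other.
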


\subsection{Some remarks}

\subsubsection{General $(r,s)$-theta CohFTs and taking limits }
The negative $r$-spin construction admits variants for $-r+1\leq s\leq-1$.  As observed in \cite[Remark 2.10]{CGG25}, the same construction gives generalized classes $\Th^{r,s}$ and deformations $\Th^{r,s,\epsilon}$. With the notation of \cite[Remark 2.10]{CGG25}, the CohFTs $\Th^{r}$ and $\Th^{r,\epsilon}$ correspond to $\Th^{r,-1}$ and  $\Th^{r,-1,\epsilon}$ respectively. The explicit Givental--Teleman reconstruction and the tautological relations obtained in \cite{CGG25} are developed for $s=-1$. Our results likewise concern only $s=-1$.

There is now a broader picture for the generalized classes $\Th^{r,s}$.  In \cite{BCGS25}, the topological recursion data associated with the classes $\Th^{r,s}$ are further analyzed\footnote{The positive parameter denoted $s$ in \cite{BCGS25}
corresponds to $r+s$ in our conventions.} in the context of taking limits in topological recursion to show that the descendant invariants of $\Th^{r,s}$ can be computed from $(r,s)$-spectral curve. In \cite{BBCKS25}, Borot--Bouchard--Chidambaram--Kramer--Shadrin develop a general criterion to explain when taking limits commute with the topological recursion data. To the best of our knowledge, our Theorem~\ref{ThmD} can be considered as the first instance of analogous version for the limits commuting with the Givental--Teleman reconstruction at the level of stable graphs, despite the singular behavior of the reconstruction data. Moreover, our case concerns $\epsilon\neq 0$ for the limit $t\to 0$, whose topological recursion counterpart has not been studied explicitly for the shifted Chiodo CohFT $\Ch^{r,\epsilon}$ where $s=-1$.

\subsubsection{Kazarian--Norbury's $J$-relations} Kazarian--Norbury also conjectured a second family of tautological relations in part (2) of \cite[Conjecture 1]{KaNo24}; namely, the $J$-relations. Alexandrov--Bychkov--Dunin-Barkowski--Kazarian--Shadrin prove these relations in the tautological Chow ring \cite[Theorem 1.1 and Theorem 1.2]{ABDKS25}.  Their proof uses localized virtual classes and a localization/materialization analysis of the spin Gromov--Witten theory of $(\mathbb{P}^1,\mathcal O(-1))$.

\subsubsection{An open question} In \cite{CGG25}, Chidambaram--Garica-Failde-Giacchetto also ask the reverse question:
\begin{question}
Do negative $r$-spin relations imply $3$-spin relations?  
\end{question}
We remark that our paper does not address this question.

\subsubsection{Notation}
An underlined symbol stands for data belonging to the theta CohFTs. For example, all of the symbols
$\underline\eta$, $\ubullet$, $\underline e_i$, $\underline\zeta_i$,
$\underline\xi_i$, $\underline\Psi$, $\underline R$,
$\underline T$, and $\underline\omega$ are data relevant to deformed theta CohFT $\Th^{r,\epsilon}$. The corresponding non-underlined symbols refer to the shifted Chiodo CohFT $\Ch^{r,\epsilon}$.

\subsection{Outline of the paper}
Section \ref{sec:background} reviews stable graphs, CohFTs, actions of CohFTs, the Givental--Teleman reconstruction of semisimple CohFTs, and the definition of the ideal of relations in the strata algebra. Section \ref{sec:chiodo-theta-background} fixes the Chiodo and theta conventions and identifies the negative $r$-spin relations with the ideal of the deformed theta CohFT.  Section \ref{sec:chiodo-analysis} computes the Frobenius algebra of $\Ch^{r,\epsilon}$ and compares its canonical data with those of $\Th^{r,\epsilon}$ as $t\to0$.  Section \ref{sec:chiodo-rt} derives the flatness equations and the vacuum and translation vectors.  In Section \ref{sec:reconstruction}, the two strata-valued reconstructions are written as a sum over stable graphs. Section \ref{sec:asymptotic-analysis} establishes the required $t$-adic estimates, and Section \ref{sec:Limit_of_Givental_Teleman_Formula} proves the graphwise limit. Section \ref{sec:main-proof} combines that limit with the comparison theorem of \cite{paper1} to prove Theorem \ref{thm:main}.  The appendices derive the pairing and quantum product for the shifted Chiodo CohFT $\Ch^{r,\epsilon}$ and prove the convergence statement for more general shifts used in the main argument.

\subsection{Acknowledgements} We thank Nitin Kumar Chidambaram, Elba Garcia-Failde, and Alessandro Giacchetto for the open questions they raised in their paper \cite{CGG25} as well as for useful discussions and correspondence. D.G.\ was partially supported by an AMS-Simons Travel Grant.
F.J.\ was partially supported by NSF grant DMS-2422291.




\section{Background on Cohomological Field Theories}
\label{sec:background}

\subsection{Stable graphs} In this part, we will recall stable graphs and certain related notions. We refer the reader to \cite{PPZ15} and \cite{JS20} for more details.

Let $g$, $n$ be non-negative integers such that $2g-2+n>0$. A \textit{stable graph} $\Gamma$ of genus $g$ with $n$ legs consists of the following data
\begin{equation*}
\left(\mathrm{V}_{\Gamma},   \mathrm{H}_{\Gamma}, \mathrm{E}_{\Gamma},\mathrm{L}_{\Gamma}, \mathrm{g} \colon  \mathrm{V}_{\Gamma} \rightarrow \mathbb{Z}_{\geq 0}, \iota \colon  \mathrm{H}_{\Gamma} \rightarrow \mathrm{H}_{\Gamma},  \nu \colon  \mathrm{H}_{\Gamma} \rightarrow \mathrm{V}_{\Gamma}, \ell \colon \mathrm{L}_{\Gamma}\rightarrow\{1,\ldots,n\}\right)
\end{equation*}
where
\begin{enumerate}
\item $\mathrm{V}_{\Gamma}$ is the set of vertices together with a genus assignment $\mathrm{g} \colon \mathrm{V}_{\Gamma}\rightarrow\mathbb{Z}_{\geq 0}$,
    
\item $\mathrm{H}_{\Gamma}$ is the set of half-edges equipped with an involution $\iota \colon  \mathrm{H}_{\Gamma} \rightarrow \mathrm{H}_{\Gamma}$ and a vertex assignment $\nu \colon  \mathrm{H}_{\Gamma} \rightarrow \mathrm{V}_{\Gamma}$,
    
\item An edge of $\Gamma$ is given by a size two orbit of $\iota \colon  \mathrm{H}_{\Gamma} \rightarrow \mathrm{H}_{\Gamma}$, and $\mathrm{E}_{\Gamma}$ is the set of edges,\footnote{Self-edges are allowed.}  and $(\mathrm{V}_{\Gamma},\mathrm{E}_{\Gamma})$ forms a connected graph,
    
\item A leg is a half-edge that is fixed by $\iota \colon  \mathrm{H}_{\Gamma} \rightarrow \mathrm{H}_{\Gamma}$, $\mathrm{L}_{\Gamma}$ is the set of legs, and the map $\ell \colon \mathrm{L}_{\Gamma}\rightarrow\{1,\ldots,n\}$ is a bijection that labels the legs,
    
\item At each vertex $\mathfrak{v}$ the following stability condition is satisfied:
\begin{equation*}
2\mathrm{g}(\mathfrak{v})-2+\mathrm{n}(\mathfrak{v})>0
\end{equation*}
where $\mathrm{n}(\mathfrak{v})=\vert \nu^{-1}(\mathfrak{v})\vert$ is the valence of the vertex $\mathfrak{v}$.
\end{enumerate}
The \textit{genus} of $\Gamma$ is defined by
\begin{equation*}
\mathrm{g}(\Gamma)=h^1(\Gamma)+\sum_{\mathfrak{v}\in\mathrm{V}_{\Gamma}}\mathrm{g}(\mathfrak{v})
\end{equation*}
where $h^1(\Gamma)=\vert \mathrm{E}_{\Gamma}\vert -\vert \mathrm{V}_{\Gamma} \vert +1$ is the first Betti number of the underlying graph and we require $\mathrm{g}(\Gamma)=g$.
An \emph{isomorphism} of stable graphs consists of a bijection of vertex sets and a bijection of half-edge sets, which preserve the genus assignments, vertex assignments, labeling maps and involutions of half-edge sets. Let $\mathsf{G}_{g,n}$ be the \emph{set of isomorphism classes of stable graphs} of genus $g$ with $n$ legs.

Given a stable graph $\Gamma$, the canonical gluing map is denoted by
\begin{equation*}
    \mathpzc{gl}_\Gamma\colon \ocM_{\Gamma} \to \ocM_{g, n} ,
\end{equation*}
where $\ocM_{\Gamma}$ is the product space associated to $\Gamma$
\begin{equation*}
\ocM_{\Gamma}= \prod_{ \mathfrak{v} \in \mathrm{V}_{\Gamma}} \ocM_{\mathrm{g}(\mathfrak{v}), \mathrm{n}(\mathfrak{v})}.
\end{equation*}
This canonical gluing map has degree $\vert \Aut(\Gamma)\vert$ where $\Aut(\Gamma)$ is the automorphism group of the stable graph $\Gamma$.

\subsection{Cohomological field theories} \label{subsec:CohFTs}
Here, we provide some background on cohomological field theories. 

Let $A$ be a commutative $\QQ$-algebra, $V$ be a finite rank free $A$-module and $$\eta \colon V \otimes V \rightarrow A$$ be a perfect, symmetric bilinear form (i.e. a \emph{pairing}). Let $r=\rank_A(V)$ be the rank of $V$. For a choice of basis $\{w_0,\ldots, w_{r-1}\}$, set
\begin{equation*}
\eta_{i,j}=\eta (w_i,w_j)
\end{equation*}
and let $\eta^{i,j}$ denote the entries of the inverse matrix of $\eta_{i,j}$. Equivalently, the inverse pairing is the bivector
\begin{equation*}
\eta^{-1}=\sum_{i,j}\eta^{i,j}w_i\otimes w_j.
\end{equation*}
A \emph{cohomological field theory} on $(V,\eta)$ is a system $\Om=\left(\Om_{g,n}\right)_{2g-2+n>0}$ of tensors
\begin{equation*}
\Om_{g,n}\in H^*(\ocM_{g,n},A)\otimes (V^*)^{\otimes n}\quad \left(\text{or equivalently multilinear maps}\quad \Om_{g,n} \colon V^{\otimes n}\rightarrow H^*\left(\ocM_{g,n},A \right)\right)
\end{equation*}
satisfying the following axioms:
\begin{itemize}
    \item \emph{Symmetry axiom:} Each $\Om_{g,n}$ is invariant under the canonical action of the symmetric group $S_n$ permuting the markings and the copies of $V$,
    
    \item \emph{Gluing axiom:} The pullbacks by the canonical gluing maps of moduli spaces of stable curves\footnote{Here we have $g=g_1+g_2$, $n=n_1+n_2$ and $2g_i-2+(n_i+1)>0$ for $i=1,2$. }
    \begin{equation*}
    \mathpzc{q}\, \colon \ocM_{g-1,n+2}\rightarrow \ocM_{g,n}\quad \text{and} \quad  \mathpzc{r}\, \colon\ocM_{g_1,n_1+1}\times \ocM_{g_2,n_2+1}\rightarrow \ocM_{g,n}
    \end{equation*}
    satisfy the following contraction rules
    \begin{equation*}
    \begin{aligned}
    \mathpzc{q}^*(\Om_{g,n}(\phi_1\otimes \cdots \otimes \phi_n))&=\sum_{i,j}\eta^{i,j}\Om_{g-1,n+2}(\phi_1 \otimes \cdots \otimes \phi_n \otimes w_i \otimes w_j),\\
    \mathpzc{r}^*(\Om_{g,n}(\phi_1\otimes \cdots \otimes \phi_n))&=\sum_{i,j} \eta^{i,j}\, \Om_{g_1,n_1+1}(\phi_1 \otimes \cdots \otimes \phi_{n_1} \otimes w_i) \otimes \Om_{g_2,n_2+1}(\phi_{n_1+1} \otimes \cdots \otimes \phi_n \otimes  w_j).
    \end{aligned}
    \end{equation*}
    for all $\phi_k\in V$.
\end{itemize}

For a cohomological field theory $\Om$ over $(V,\eta)$, if there is a distinguished element $\mathbf{1}\in V$ satisfying the following unit axiom, then $\Om$ is said to be a \emph{cohomological field theory with a flat unit} $\mathbf{1}$.
\begin{itemize}
    \item \emph{Unit axiom:} For all $\phi_k\in V$, we have
    
    \begin{equation*}
    \begin{aligned}
    \Om_{0,3}(\phi_i\otimes \phi_j\otimes \mathbf{1})&=\eta(\phi_i,\phi_j)\quad &\text{(\emph{unit property})},\\
    \Om_{g,n+1}(\phi_1\otimes \cdots \otimes \phi_n\otimes \mathbf{1})&=\mathpzc{p}^*(\Om_{g,n}(\phi_1\otimes \cdots \otimes \phi_n)) \quad &\text{(\emph{flatness property})},
     \end{aligned} 
    \end{equation*}
    where 
    \begin{equation*}
    \mathpzc{p}\,\colon \ocM_{g,n+1}\rightarrow \ocM_{g,n}
    \end{equation*}
    is the forgetful map that forgets the last marked point.
\end{itemize}

A cohomological field theory $\Om$ induces a \emph{quantum product} $\bullet$ on $V$ via the identification
\begin{equation*}
\Om_{0,3}(\phi_i\otimes \phi_j\otimes \phi_k)=\eta(\phi_i\bullet \phi_j,\phi_k)
\end{equation*}
for all $\phi_i, \phi_j, \phi_k \in V$. The quantum product $\bullet$ is commutative and associative. Moreover, if $\Om$ is a CohFT with a flat unit $\mathbf{1}$, then $\mathbf{1}$ is the identity of the quantum product.

If the quantum product of a CohFT $\Om$ on $(V,\eta)$ has an identity element $\mathbf{1}$ that fails to satisfy the flatness property (pullback property) of the unit axiom, then $\Om$ is said to be a \emph{CohFT with a non-flat unit}.

Let $\Om$ be a CohFT on $(V,\eta)$.  A \emph{vacuum vector} $\mathbf{v}(z)$ is a series
\begin{equation*}
  \mathbf{v}(z)=\sum_{k\geq0}\mathbf{v}_k z^k\in V[\![z]\!] 
\end{equation*}
such that, for all $(g,n)$ in the stable range $2g-2+n>0$ and for all $\phi_1,\ldots,\phi_n \in V$, we have
\begin{equation*}
  \mathpzc p^*\Om_{g,n}(\phi_1\otimes\cdots\otimes\phi_n)
  =
  \Om_{g,n+1}
  (\phi_1\otimes\cdots\otimes\phi_n\otimes
   \mathbf{v}(\psi_{n+1})). 
\end{equation*}
A CohFT equipped with such a series is called a \emph{CohFT with
vacuum}.

\begin{remark}\label{rem:vacuum_at_zero}
 If the quantum product $\bullet$ admits an identity element
  $\mathbf{1}$ (which is the case for every CohFT considered in this paper, and
  in particular for every generically semisimple one), then $\mathbf{v}(0)=\mathbf{1}$. Moreover, if $\Om$ has a \emph{flat} unit $\mathbf{1}$, then the constant series
  $\mathbf{v}(z)=\mathbf{1}$ is a vacuum vector.
\end{remark}

A CohFT $\omega=(\omega_{g,n})_{2g-2+n>0}$ taking values in the degree $0$ part of the cohomology, i.e. 
\begin{equation*}
\omega_{g,n}\in H^0(\ocM_{g,n},A)\otimes (V^*)^{\otimes n}
\end{equation*}
is said to be a \emph{topological field theory (TFT).} Projection $\left[ \Om\right]_0$ of a CohFT $\Om$ to its cohomology degree zero part is a topological field theory and it is called the \emph{topological part of $\Om$.}

For more details on CohFTs, we refer the reader to \cite{PaICM18} and the references therein. For more on CohFTs with vacuum and their recent applications, see \cite{GZ25}.

\subsection{Actions on CohFTs}
\label{ss:cohft-actions}

We will start with describing two actions on CohFTs. Let $\Om$ be a CohFT on $(V,\eta)$.

\subsubsection{The translation action} Let $T$ be a $V$-valued power series
\begin{equation*}
T(z)=\sum_{k=2}^{\infty} T_k z^k = T_2z^2+T_3z^3+\cdots \in z^2 V[\![z]\!]
\end{equation*}
with vanishing terms at degrees $0$ and $1$.

The \emph{translation $T\Om=((T\Om)_{g,n})_{2g-2+n>0}$ of $\Om$ by $T$} is defined by
\begin{equation*}
(T\Om)_{g,n}(\phi_1\otimes\cdots\otimes \phi_n)=\sum_{m=0}^{\infty}\frac{1}{m!}(\mathpzc{p}_{m})_{*}\Om_{g,n+m}(\phi_1\otimes\cdots \otimes \phi_n \otimes T(\psi_{n+1})\otimes \cdots \otimes T(\psi_{n+m}))
\end{equation*}
 where 
    \begin{equation*}
    \mathpzc{p}_{m}\,\colon \ocM_{g,n+m}\rightarrow \ocM_{g,n}
    \end{equation*}
is the forgetful map that forgets the last $m$ marked points and the $i^{\text{th}}$ psi-class $\psi_i=c_1(\mathcal{L}_i)\in H^2(\ocM_{g,n},\QQ)$ is the first Chern class of the cotangent line bundle $\mathcal{L}_i$ at the $i^{\text{th}}$ marked point. The translation $T\Om$ is also a CohFT. We also write \begin{equation*}
  \kappa_i \coloneqq \mathpzc{p}_{\star}\bigl(\psi_{n+1}^{i+1}\bigr)
  \in H^{2i}(\ocM_{g,n},\QQ),  \quad i\geq 0,
\end{equation*}
for the  $i^{\text{th}}$ kappa-class, where $\mathpzc{p}\,\colon\ocM_{g,n+1}\to\ocM_{g,n}$ forgets
the last marked point. In particular, we have $\kappa_0=2g-2+n$.

\subsubsection{The $R$-matrix action} Let $R$ be a $\End(V)$-valued power series
\begin{equation*}
R(z)=\sum_{k=0}^{\infty}R_kz^k=\mathrm{Id}+R_1z+R_2z^2+\cdots \in \mathrm{Id}+z\cdot \End(V)[\![z]\!]
\end{equation*}
satisfying the \emph{symplectic condition}\footnote{Here $(-)^{\star}$ is the adjoint with respect to the bilinear form $\eta$.}
\begin{equation}
  \label{eqn:symplectic-condition}
R^*(-z)R(z)=\mathrm{Id}.
\end{equation}

Another important action on CohFTs is defined through such a matrix series $R(z)$ and the action is called \emph{$R$-matrix action}. The resulting CohFT of the action is the system $R\Om=((R\Om)_{g,n})_{2g-2+n>0}$ defined as a sum over stable graphs
\begin{equation*}
(R\Om)_{g,n} \coloneqq \sum_{\Gamma\in\mathsf{G}_{g,n}}\mathrm{Cont}^{R\Om}_{\Gamma}
\end{equation*}
with
\begin{equation*}
\mathrm{Cont}^{R\Om}_{\Gamma}
\coloneqq\frac{1}{\vert\Aut(\Gamma)\vert}\,(\mathpzc{gl}_{\Gamma})_{\star}\!\left(
\prod_{\mathfrak{v}\in \mathrm{V}_{\Gamma}}\mathrm{Cont}_{\Gamma,\mathfrak{v}}
\prod_{\mathfrak{l}\in \mathrm{L}_{\Gamma}}\mathrm{Cont}_{\Gamma,\mathfrak{l}}
\prod_{\mathfrak{e}\in \mathrm{E}_{\Gamma}}\mathrm{Cont}_{\Gamma,\mathfrak{e}}
\right)
\end{equation*}
where for an isomorphism class of a stable graph $\Gamma$, the vertex contribution $\mathrm{Cont}_{\Gamma,\mathfrak{v}}$, the leg contribution  $\mathrm{Cont}_{\Gamma,\mathfrak{l}}$, the edge contribution $\mathrm{Cont}_{\Gamma,\mathfrak{e}}$ are defined by

\begin{itemize}
\item [(i)] $\mathrm{Cont}_{\Gamma,\mathfrak{v}}=\Om_{\mathrm{g}(\mathfrak{v}), \mathrm{n}(\mathfrak{v})}$,

\item [(ii)] $\mathrm{Cont}_{\Gamma,\mathfrak{l}}=R^{-1}(\psi_{\mathfrak{l}})\,$ where $\psi_{\mathfrak{l}}$ 
is the cotangent class at the marked point corresponding to the leg $\mathfrak{l}$,

\item [(iii)] and lastly\footnote{Here, $(-)^{\top}$ denotes the matrix transpose. For the precise meaning of the notation for the edge contribution, see \cite[Section 2]{PPZ15}.}
\begin{equation}\label{eqn:R_matrix_action_edge_contribution_formula}
\mathrm{Cont}_{\Gamma,\mathfrak{e}}=\frac{\eta^{-1}-R^{-1}(\psi'_{\mathfrak{e}})\eta^{-1} R^{-1}(\psi''_{\mathfrak{e}})^{\top}}{\psi'_{\mathfrak{e}}+\psi''_{\mathfrak{e}}}\, , 
\end{equation}
where $\psi'_{\mathfrak{e}}$ and $\psi''_{\mathfrak{e}}$ are the cotangent classes at the node corresponding to the edge $\mathfrak{e}$.

\end{itemize}

For a detailed treatment of translation and $R$-matrix actions on CohFTs and properties of these actions, we refer the reader to \cite[Section 2]{PPZ15}.

\subsection{The shift of a CohFT}
Let $\Om=(\Om_{g,n})_{2g-2+n>0}$ be a CohFT over $(V,\eta)$. The shift $\Om^{\gamma}\coloneqq (\Om^{\gamma}_{g,n})_{2g-2+n>0}$ of $\Om$ by $\gamma \in V$ is defined by formally setting
\begin{equation*}
\Om^{\gamma}_{g,n}(\phi_1\otimes \cdots \otimes \phi_n)\coloneqq \sum_{m\geq 0}\frac{1}{m!}\left({\mathpzc{p}_{m}}\right)_\star\Om_{g,n+m}(\phi_{1}\otimes\cdots\otimes \phi_{n}\otimes \gamma^{\otimes m}).
\end{equation*}

\begin{remark}
Although, the definition of a shift looks similar to the definition of a translation action. Unlike translations by $T(z)\in z^2V[\![z]\!]$, the shifts are not automatically finite sums or well-defined. It must therefore be interpreted formally, or shown separately to converge or truncate in the theory under consideration.
\end{remark}

\subsection{Frobenius algebras and semisimplicity}
\label{subsec:frobenius-algebras}

Let $A$ be a reduced $\QQ$-algebra.  A Frobenius algebra over $A$ is a
finite free commutative unital $A$-algebra $(V,\bullet)$ equipped
with a perfect symmetric pairing $\eta$ satisfying the \emph{Frobenius property}:
\begin{equation*}
  \eta(u\bullet v,w)=\eta(u,v\bullet w).
\end{equation*}

A basis $e_0,\ldots,e_{r-1}$ of $V$ is a \emph{basis of orthogonal idempotents} if they satisfy
\begin{equation*}
  e_i\bullet e_j=\delta_{i,j}e_i.
\end{equation*}
The Frobenius property then gives $\eta(e_i,e_j)=0$ for $i\neq j$, and the unit of the product $\bullet$ is $\sum_i e_i$. For such a basis set
\begin{equation*}
\Disc_i \coloneqq \eta(e_i,e_i)^{-1}.
\end{equation*}

We say that $(V,\bullet)$ is \emph{generically semisimple} if it admits a basis of orthogonal idempotents after a flat extension of the base.
We say that $(V,\bullet)$ is \emph{semisimple at $\mathfrak p \in \Spec(A)$} if it admits a basis of orthogonal idempotents after passing to the algebraic closure of the residue field of $A$ at $\mathfrak p$.
In particular, if $A = \QQ$, the Frobenius algebra $(V,\bullet)$ is semisimple at the unique point of $\Spec(A)$ if it admits a basis of orthogonal idempotents after extending scalars to $\CC$.

Following \cite{paper1}, given a basis $w_0, \dotsc, w_{r-1}$, the \emph{discriminant} is defined as
\begin{equation*}
  \Disc \coloneqq \frac{\det(\Tr(w_i w_j)_{ij})}{\det(\eta(w_i, w_j)_{ij})}.
\end{equation*}
By \cite
{paper1}, a Frobenius algebra is generically semisimple if and only if $\Disc$ is not a zero divisor, and in this case, we may write
\begin{equation*}
\Disc = \prod_{i=0}^{r-1}\Disc_i.
\end{equation*}
For a generically semisimple Frobenius algebra, given a choice of roots $\Disc_i^{1/2}$ (which may require a base-change),
the \emph{normalized idempotents} are
\begin{equation*}
\widetilde e_i \coloneqq \Disc_i^{1/2}e_i.
\end{equation*}
They satisfy $\eta(\widetilde e_i,\widetilde e_j)=\delta_{ij}$.

\subsection{Semisimple CohFTs and the Givental--Teleman reconstruction}

Here, we explain the Givental--Teleman reconstruction (classification) of semisimple CohFTs. A CohFT $\Om$ over $(V,\eta)$ is said to be semisimple if the underlying Frobenius algebra $(V,\bullet)$ is semisimple.

\begin{theorem}[Givental--Teleman Classification \cite{Te12}, \cite{MOPPZ17}, \cite{paper1}] \label{thm:Givental--Teleman_Classification}
Let $A$ be a reduced $\QQ$-algebra and let $\Om$ be a generically semisimple
  CohFT on $(V,\eta)$ over $A$ whose quantum product has an identity $\mathbf{1}$ that is not necessarily flat. Let $\Disc\in A$ be its discriminant, set
  $B\coloneqq A[\Disc^{-1}]$, and let $\omega=[\Om]^{0}$ be its topological part.
  Then, there exists a unique symplectic series
    \begin{equation*}
    R(z)\in\operatorname{Id}+z\End(V)\otimes_A B[\![z]\!]
    \end{equation*}
  and a unique vacuum vector $\mathbf{v}(z)\in V\otimes_A B[\![z]\!]$ such that,
  setting
    \begin{equation*}
    T(z)\coloneqq z\bigl(\mathbf{1}-R^{-1}(z)\mathbf{v}(z)\bigr)\in z^{2}V\otimes_A B[\![z]\!],
    \end{equation*}
  one has $\Om_{g,n}=(RT\omega)_{g,n}$ in cohomology for every $g,n$ with
  $2g-2+n>0$ and $n>0$. If $\mathbf{1}$ is a flat unit, the same holds for $n=0$.
\end{theorem}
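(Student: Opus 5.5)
The plan is to reduce to the known classification over algebraically closed fields, and then to descend the reconstruction data to $B=A[\Disc^{-1}]$ using uniqueness.

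First, the setup. The rational cohomology of $\ocM_{g,n}$ is a finite-dimensional $\QQ$-vector space, so $H^*(\ocM_{g,n},A)=H^*(\ocM_{g,n},\QQ)\otimes_{\QQ}A$, and a CohFT over $A$ is a compatible family of tensors with coefficients in $A$. Since $A$ is reduced, so is $B$. Hence $B$ embeds into $\prod_{\mathfrak p}\overline{\kappa(\mathfrak p)}$, the product over $\mathfrak p\in\Spec B$ of the algebraic closures of the residue fields.

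It follows that an identity $\Om_{g,n}=(RT\omega)_{g,n}$, once all data are known to have coefficients in $B$, can be checked after base change to each $\overline{\kappa(\mathfrak p)}$. At such a point $\Disc$ is invertible. By the characterization of semisimplicity through the discriminant from \cite{paper1}, quoted in Section~\ref{subsec:frobenius-algebras}, the specialized Frobenius algebra is then semisimple. So over each such field we may apply Teleman's classification \cite{Te12}, in the form for CohFTs whose unit need not be flat. In that form the translation is not tied to the unit via the dilaton shift but is encoded by a vacuum vector, as in \cite{MOPPZ17}. This gives an $R$ and a $\mathbf v$ over $\overline{\kappa(\mathfrak p)}$ reconstructing $\Om$ for $n>0$, and for $n=0$ as well when the unit is flat.

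The second step is descent. Over the étale extension $B'\supset B$ obtained by adjoining the idempotents $e_i$ and the square roots $\Disc_i^{1/2}$, I would make the proof of uniqueness explicit. One writes $R=\mathrm{Id}+\sum_k R_kz^k$ in the normalized idempotent basis and compares the degree-$k$ parts of $\Om_{0,3}$, $\Om_{0,4}$ and $\Om_{1,1}$ with the graph sum for $RT\omega$. This determines $R_k$ recursively, through equations that are linear in the new unknowns with coefficients that are units in $B'$, built from $\Disc_i^{\pm1/2}$. Likewise, the pullback property $\mathpzc p^*\Om_{g,n}=\Om_{g,n+1}(\cdots\otimes\mathbf v(\psi_{n+1}))$, applied to $\Om_{0,3}$, determines $\mathbf v_k$ recursively, with $\mathbf v_0=\mathbf 1$ by Remark~\ref{rem:vacuum_at_zero}.

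These recursions show two things. First, $R$ and $\mathbf v$ exist over $B'$ and are unique. Second, they are invariant under the Galois action permuting idempotents and flipping the signs of the square roots, since the symplectic condition and the formula for $T(z)$ are basis-independent. By faithfully flat descent, $R$ and $\mathbf v$ are therefore defined over $B$. Once $R$ and $\mathbf v$ live over $B$, the condition $T(z)=z(\mathbf 1-R^{-1}(z)\mathbf v(z))\in z^2V\otimes B[\![z]\!]$ follows from $R_0=\mathrm{Id}$ and $\mathbf v_0=\mathbf 1$. The two Taylor coefficients that must vanish are the $z^0$ coefficient, which is automatic from the factor $z$, and the $z^1$ coefficient, which is $\mathbf 1-\mathbf v_0=0$. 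Specializing to each geometric point recovers Teleman's data there, by uniqueness over fields, so the identity holds pointwise and hence over $B$.

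The main obstacle is making uniqueness effective enough for descent, in particular establishing that the recursive equations for $R_k$ and $\mathbf v_k$ have invertible leading coefficients over $B'$ and not merely over each field. I would approach this through the genus-zero and genus-one reconstruction used in \cite{paper1}, where the relevant determinants are products of the $\Disc_i$ and hence powers of $\Disc$. If that route stalls, I would instead argue with the universal CohFT over the moduli of semisimple data, using that $\Spec B'$ is reduced and that a function vanishing at all geometric points vanishes identically.
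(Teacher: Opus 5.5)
Your reduction to geometric points and your closing step (an identity with coefficients in the reduced ring $B$ can be checked at every $\overline{\kappa(\mathfrak p)}$) are fine. The gap is the descent step, which has nothing to act on until you produce $R$ and $\mathbf v$ as elements over $B'$, and the recursion you propose for that does not exist.

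First, the dimensions are too small. Since $\dim\ocM_{0,3}=0$ and $\dim\ocM_{0,4}=\dim\ocM_{1,1}=1$, the classes $\Om_{0,3},\Om_{0,4},\Om_{1,1}$ have no degree-$k$ part for $k\ge 2$, so they constrain at most $R_1$. Likewise, the pullback identity applied to $\Om_{0,3}$ only involves $\mathbf v_0$ and $\mathbf v_1$, because $\psi_4^2=0$ on $\ocM_{0,4}$.

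Second, adding finitely many further $(g,n)$ cannot fix this. Genus-zero data never determine the odd diagonal part of $R$ (compare Remark~\ref{rem:flatness-does-not-fix-diagonal-calibration}), and that part stays invisible in low genus. For example, the rank-one CohFTs $\exp(s\,\mathrm{ch}_3(\mathbb E))$ have a flat unit and agree for all $s$ in genus $0$ and $1$, where $\mathrm{ch}_3(\mathbb E)=0$. Yet by Mumford's formula their $R$-matrices are $\exp(c\,s\,z^3)$ with $c\neq 0$. In general $R_k$ is only detected in genus growing with $k$.

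Your fallback does not help either. The principle that a function vanishing at all geometric points of a reduced ring is zero only verifies identities among elements already known to lie in $B'$. Pointwise existence of Teleman's data at each $\overline{\kappa(\mathfrak p)}$ does not by itself glue into an element of $\End(V)\otimes B'$.

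The missing input is Harer stability, Proposition~\ref{prop:stability} (from \cite[Lemma 2.2]{MOPPZ17}), used in genus $g>3k$.
On $\cM_{g,2}$ only the trivial graph survives. The coefficient of the basis element $\psi_1^k$ in $\Om_{g,2}(\phi\otimes w)|_{\cM_{g,2}}$ is $(-1)^k\sum_i\Disc_i^{g}\,\eta(R_k^*\phi,\widetilde e_i)\,\eta(w,\widetilde e_i)$, and it lies in $A$.
Let $w$ run through a basis and invert $\Psi$. This gives a formula for $R_k^*\phi$ over $B'$ with three useful features:
\begin{itemize}
\item any reconstructing $R$ must satisfy it, so uniqueness is built in;
\item it is obtained by contracting $A$-valued coefficients with the tensor $\sum_i\Disc_i^{1-g}e_i\otimes e_i$;
\item that tensor is invariant under permuting idempotents and flipping the signs of $\Disc_i^{1/2}$, so the formula descends to $B$.
\end{itemize}
The coefficient of $\mathpzc p_{1*}\psi_2^{k+1}$ in $\Om_{g,1}(w)|_{\cM_{g,1}}$ determines $T_{k+1}$ in the same way, and then you set $\mathbf v=R(\mathbf 1-T/z)$. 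The reconstruction identity and the vacuum axiom then hold over $B$. At each geometric point Teleman's data must satisfy the same formulas, so they equal your specializations there, and $B$ is reduced.

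This is exactly the mechanism the paper runs concretely in Lemmas~\ref{lem:R-limit-nonzero} and~\ref{lem:T-limit-nonzero}. For the theorem itself the paper gives no argument and imports it from \cite{Te12}, \cite{MOPPZ17} and \cite{paper1}.
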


\subsection{The strata algebra and tautological relations}\label{subsec:strata_algebra}
The \emph{strata algebra} is a system of $\QQ$-algebras
\begin{equation*}
  \strata = (\strata_{g, n})_{2g - 2 + n > 0}
\end{equation*}
equipped with algebra homomorphisms
\begin{equation*}
q_A\colon \strata_{g, n} \to A^*(\ocM_{g, n}; \QQ).
\end{equation*}
As a $\QQ$-vector space, $\strata_{g, n}$ is spanned by pairs $(\Gamma, \gamma)$, where $\Gamma$ is an isomorphism class of stable graphs for $\ocM_{g, n}$ and $\gamma$ is a \emph{formal basic class} decorating $\Gamma$, that is, a monomial
\begin{equation*}
  \gamma
  = \prod_{\mathfrak{v} \in \mathrm{V}_\Gamma} \prod_{i > 0} \kappa_i[\mathfrak{v}]^{x_i[\mathfrak{v}]}
    \cdot \prod_{\mathfrak{h} \in \mathrm{H}_{\Gamma}} \psi_{\mathfrak{h}}^{y[\mathfrak{h}]},
   \quad
  \sum_{i>0} i\, x_i[\mathfrak{v}] + \sum_{\mathfrak{h} \text{ at } \mathfrak{v}} y[\mathfrak{h}]
  \le 3\mathrm{g}(\mathfrak{v}) - 3 + \mathrm{n}(\mathfrak{v})
  \quad \text{for all } \mathfrak{v} \in \mathrm{V}_\Gamma,
\end{equation*}
in the kappa-classes and psi-classes on the factors of $\ocM_\Gamma$. The product on $\strata_{g, n}$ is a formal implementation of the excess-intersection algorithm of \cite[Appendix]{GrPa03}; see \cite[Section 0.3]{PPZ15}. Interpreting $\gamma$ as the corresponding element of $A^*(\ocM_\Gamma; \QQ)$, the homomorphism $q_A$ is given by
\begin{equation*}
  q_A(\Gamma, \gamma) = \left(\mathpzc{gl}_{\Gamma}\right)_{\star}(\gamma),
\end{equation*}
and post-composing with the cycle class map yields
\begin{equation*}
q_H\colon \strata_{g, n} \to H^*(\ocM_{g, n}; \QQ).
\end{equation*}
The image of $q_A$ is the tautological Chow ring $R^*(\ocM_{g,n})$, and the image of $q_H$ is the tautological cohomology ring $RH^*(\ocM_{g,n})$. 

A \emph{tautological relation in Chow} (respectively, \emph{in cohomology}) is an element of $\ker(q_A)$ (respectively, $\ker(q_H)$).
Tautological relations are closed under multiplication by elements of the strata algebra and under push-forward and pull-back along gluing and forgetful morphisms. In particular, $\ker(q_A)$ is an ideal of $\strata$ in the sense that an \emph{ideal} $\mathcal I \subset \strata$ is a collection of ideals $\mathcal I_{g, n} \subset \strata_{g, n}$ closed under push-forward along the gluing and forgetful morphisms of Section \ref{subsec:CohFTs}.
Any collection $\{T_{g, n} \subset \ker(q_A)\}_{g, n}$ generates an ideal, obtained by closing under multiplication and these push-forwards.
In what follows, we compare systems of tautological relations by comparing the ideals they generate.

\subsection{Tautological relations from CohFTs}
\label{ss:taut-cohft}

Let $\Om$ be a generically semisimple CohFT over a reduced
$\QQ$-algebra $A$, and let $\Disc$ be its discriminant.

\subsubsection{Strata-valued Givental--Teleman reconstruction}
\label{sss:strata_valued_Givental_Teleman}
The
\emph{strata-valued Givental--Teleman reconstruction}  of  $\Om$ is
\begin{equation*}
  \Om^{\strata} \coloneqq RT\omega,
\end{equation*}
viewed as a system with values in $  \strata_{g,n}\otimes_{\QQ}A[\Disc^{-1}].$

For $n>0$, the reconstruction theorem gives
\begin{equation*}
  (q_H\otimes\operatorname{id})
  \bigl(\Om^{\strata}_{g,n}(\phi_1 \otimes \cdots \otimes  \phi_n)\bigr)
  =\Om_{g,n}(\phi_1\otimes \cdots \otimes \phi_n),
\end{equation*}
and the right-hand side is defined over $A$, not merely over
$A[\Disc^{-1}]$.

Let
\begin{equation*}
  q_{\mathrm{pol}}\colon A[\Disc^{-1}]
  \longrightarrow A[\Disc^{-1}]/A
\end{equation*}
be the polar-part quotient.  It follows that
\begin{equation*}
 (q_H\otimes\operatorname{id})
 \bigl((\operatorname{id}\otimes q_{\mathrm{pol}})
 \Om^{\strata}_{g,n}(\phi_1,\ldots,\phi_n)\bigr)=0.
\end{equation*}
The \emph{ideal of tautological relations associated to $\Om$}, denoted
$\cI^{\Om}\subset\strata$, is the tautological ideal generated by
\begin{equation*}
(\operatorname{id}_{\strata_{g,n}} \otimes \lambda)
  \bigl(\Om^{\strata}_{g,n}
  (\phi_1\otimes\cdots\otimes\phi_n)\bigr),
\end{equation*}
where $2g-2+n>0$, $n>0$, $\phi_i\in V$, and
$\lambda \, \colon A[\Disc^{-1}]\to\QQ$ is any $\QQ$-linear map vanishing
on $A$.

\subsubsection{Pixton's relations as the relations of a CohFT}
\label{sss:3-spin-ideal}

Let $\Om^{\spin}$ denote Witten's $3$-spin CohFT, see \cite[Section 3]{PPZ15}. Its Frobenius manifold is
two-dimensional and its Frobenius algebra at the origin is non-semisimple, although the Frobenius manifold is generically semisimple. Hence, the shifted Witten's $3$-spin CohFT
$(\Om^{\spin})^{\tau}$ by $\tau=(0,y)$ is generically semisimple over
$A=\QQ[y]$. We write
\begin{equation*}
  \cI^{\Om^{\spin}} \coloneqq \cI^{(\Om^{\spin})^{\tau}} \subset \strata
\end{equation*}
for the associated ideal of tautological relations in the sense above. More concretely,
it is generated by the coefficients of the strictly negative powers of $y$ in
$(\Om^{\spin})^{\tau,\strata}_{g,n}(\phi_1\otimes\cdots\otimes\phi_n)$.
By \cite
{paper1}, this ideal coincides with the
ideal generated by Pixton's relations \eqref{eqn:Pixtons_relations}, so we may and
do refer to $\cI^{\Om^{\spin}}$ as the \emph{ideal of $3$-spin relations}. By
\cite{Ja17} it consists of tautological relations in Chow.

\section{Background on Chiodo and Theta CohFTs}\label{sec:chiodo-theta-background}

\subsection{Chiodo classes}
Let $r, s, g, a_1, \ldots, a_n$ be integers with $r\geq 1$, and $g\geq 0$, satisfying
\begin{equation*}
\sum_{k=1}^na_k\equiv s(2g-2+n)\quad (\mathrm{mod}\ r)\,.
\end{equation*}
The \emph{moduli space of twisted stable spin curves} $\ocM_{g;a_1,\ldots,a_n}^{r,s}$ is the moduli space parametrizing the tuples $(C, x_1,\ldots,x_n, L)$ where $(C, x_1,\ldots,x_n)$ is a twisted stable curve in the sense of Abramovich--Vistoli \cite{AbVi02} of genus $g$ with $n$ (non-stacky) markings and $L$ is a (representable) line bundle on $C$ satisfying
\begin{equation*}
L^{\otimes r}\cong \omega_{\mathrm{log}}^{\otimes s}\left(-\sum_{k=1}^na_kx_k\right).
\end{equation*}
Here, $\omega_{\mathrm{log}}$ is the log canonical bundle of the curve $C$. The moduli space $\ocM_{g;a_1,\ldots,a_n}^{r,s}$ carries a universal curve $\overline{\mathcal{U}}_{g;a_1,\ldots,a_n}^{r,s}$ and a universal line bundle $\mathcal{L}_{g;a_1,\ldots,a_n}^{r,s}$:
\begin{equation*}
\begin{tikzcd}
\mathcal{L}_{g;a_1,\ldots,a_n}^{r,s} \arrow[r] & \overline{\mathcal{U}}_{g;a_1,\ldots,a_n}^{r,s} \arrow{d}{\pi} \\
& \ocM_{g;a_1,\ldots,a_n}^{r,s}.
\end{tikzcd}
\end{equation*}
The \emph{Chiodo class} is defined as
\begin{equation}\label{eqn:Chiodo_class_tilde}
\widetilde{\Ch}_{g,n}^{r,s,\tau}(a_1,\ldots,a_n) \coloneqq \mathpzc{f}_{\star}c_{\tau}(-R^{\bullet}\pi_{\star}\mathcal{L}_{g;a_1,\ldots,a_n}^{r,s})
\end{equation}
where $R^{\bullet}\pi_{\star}\mathcal{L}_{g;a_1,\ldots,a_n}^{r,s}$ is the derived pushforward of the universal line bundle viewed as a class in K-theory, $c_{\tau}(\bullet)$ is the Chern polynomial, and
\begin{equation*}
\mathpzc{f} \colon \ocM_{g;a_1,\ldots,a_n}^{r,s}\rightarrow \ocM_{g,n}
\end{equation*}
is the canonical forgetful map to moduli space of stable curves forgetting the extra data \cite{Ch08}.
\begin{remark}
The degree of the forgetful map $\mathpzc{f} \colon \ocM_{g;a_1,\ldots,a_n}^{r,s}\rightarrow \ocM_{g,n}$ is $r^{2g-1}$.
\end{remark}
Let
\begin{equation*}
V\coloneqq \left\langle v_0,\ldots,v_{r-1}\right\rangle_{\QQ}
\end{equation*}
be the $\QQ$-vector space spanned by $v_0,\ldots,v_{r-1}$ and
\begin{equation*}
\underline{V} \coloneqq \left\langle v_1,\ldots,v_{r-1}\right\rangle_{\QQ}
\end{equation*}
be the subspace of $V$ spanned by $v_1,\ldots,v_{r-1}$. Consider the multilinear map
\begin{equation*}
\widetilde{\Ch}_{g,n}^{r,s,\tau} \colon V^{\otimes n}\rightarrow H^*\left(\ocM_{g,n},\QQ[\tau]\right)\quad \text{defined by}\quad v_{a_1} \otimes \cdots \otimes v_{a_n}\mapsto \widetilde{\Ch}_{g,n}^{r,s,\tau}(a_1,\ldots,a_n).
\end{equation*}
The system $\widetilde{\Ch}^{r,s,\tau}=\left(\widetilde{\Ch}_{g,n}^{r,s,\tau}\right)_{2g-2+n>0}$ of multilinear maps forms a CohFT with the pairing $$\widetilde{\eta} (v_a,v_b)=\frac{1}{r}\delta_{a+b\equiv 0 (\mathrm{mod}\ r)}.$$

From now on, we assume $r\geq2$. For $-r+1\leq s\leq -1$, a Riemann-Roch calculation \cite[Section 2.2]{GLN23} shows that $-R^{\bullet} \pi_{\star}\mathcal{L}_{g;a_1,\ldots,a_n}^{r,s}$ in the definition of the Chiodo class is a genuine vector bundle $\mathcal{V}_{g;a_1,\ldots,a_n}^{r,s}$ of rank
\begin{equation}\label{eqn:rank_general_s}
\rank\left(\mathcal{V}_{g;a_1,\ldots,a_n}^{r,s}\right)
= \frac{(2g-2+n)(-s) + r(g-1) +\vert a\vert}{r}\quad \text{where} \quad \vert a\vert=\sum_{i=1}^{n}a_i.
\end{equation}
For $s=-1$, this reduces to
\begin{equation}\label{eqn:rank_general_minus_one}
\rank\left({\mathcal{V}_{g;a_1,\ldots,a_n}^{r,-1}}\right)=\frac{(r+2)(g-1)+n+\vert a\vert}{r} \quad \text{where} \quad \vert a\vert=\sum_{i=1}^{n}a_i.
\end{equation}
\begin{remark}
In this paper, we will mainly focus on the $s=-1$ case and we will often drop $s=-1$ from the notation in that case. For example, we will write $\mathcal{V}^r_{g;a_1,\ldots,a_n}$ to denote $\mathcal{V}^{r,-1}_{g;a_1,\ldots,a_n}$.
\end{remark}
Define the multilinear map $\Ch^{r}_{g,n} \colon V^{\otimes n}\rightarrow H^*\left(\ocM_{g,n},\QQ[t]\right)$ via\footnote{The polynomiality in $t$ follows from the identity
\begin{equation*}
t^{\rank(\mathcal{V})}c_{r/t}(\mathcal{V})=\sum_{k=0}^{\rank (\mathcal{V})}r^k t^{\rank(\mathcal{V})-k}\overline{c}_k(\mathcal{V})
\end{equation*} where $\overline{c}_k(\bullet)$ is the $k^{\text{th}}$ Chern class of the vector bundle $\mathcal{V}$.}
\begin{equation*}
\Ch^{r}_{g,n}(v_{a_1}\otimes\cdots\otimes v_{a_n})\coloneqq(-1)^nr^{1-g}{t}^{\rank\left({\mathcal{V}_{g;a_1,\ldots,a_n}^{r,-1}}\right)}\widetilde{\Ch}^{r,-1,r/t}_{g,n}(v_{a_1}\otimes\cdots\otimes v_{a_n})\in H^*\left(\ocM_{g,n},\QQ\right)[t].
\end{equation*}
The system $\Ch^{r}=\left(\Ch^{r}_{g,n}\right)_{2g-2+n>0}$ of multilinear maps form a CohFT over\footnote{Although each class $\Ch^{r}_{g,n}(v_{a_1}\otimes\cdots\otimes v_{a_n})$ lies in $H^*\left(\ocM_{g,n},\QQ\right)[t]$, since the pairing has a $t^{-1}$ term, $\Ch^r$ is a CohFT over the base ring $\QQ[t^{\pm 1}]$.} $\QQ[t^{\pm 1}]$ with the pairing
\begin{equation}\label{eqn:Chiodo_pairing_t}
\eta(v_i,v_j)=
\begin{cases}
t^{-1}\quad \text{if}\quad i=j=0,\\
1\quad \text{if} \quad 1\leq i=r-j\leq r-1,\\
0\quad \text{otherwise}.
\end{cases}
\end{equation}
The derivation of the rescaled pairing \eqref{eqn:Chiodo_pairing_t} is given in Appendix \ref{appendix:Metric_and_Quantum_Product}. We will refer to $\Ch^{r}$ as \emph{Chiodo CohFT}. We also consider the \emph{$\epsilon v_0$-shifted Chiodo CohFT} $\Ch^{r,\epsilon}=\left(\Ch^{r,\epsilon}_{g,n}\right)_{2g-2+n>0}$ defined by\footnote{The polynomiality in $\epsilon$ is a special case of Lemma \ref{lem:general_shift_polynomiality_lemma}.}
\begin{equation*}
\Ch_{g,n}^{r,\epsilon}(v_{a_1}\otimes\cdots\otimes v_{a_n}) \coloneqq \sum_{m\geq 0}\frac{\epsilon^m}{m!}{\mathpzc{p}_{m}}_\star\Ch_{g,n+m}^{r}(v_{a_1}\otimes\cdots\otimes v_{a_n}\otimes v_0^{\otimes m})\in H^*\left(\ocM_{g,n},\QQ\right)[\epsilon,t]
\end{equation*}
for any $v_{a_i}\in V$. Although every value of the shifted theory is polynomial in $t$, the pairing is unchanged, so $\Ch^{r,\epsilon}$ is naturally a CohFT over $\QQ[\epsilon,t^{\pm {1}}]$.

For further background on Chiodo classes, their graph expansions, and their functorial identities, see \cite[Section 2]{GLN23}. The pairing \eqref{eqn:Chiodo_pairing_t} is derived in Appendix \ref{appendix:Metric_and_Quantum_Product}.

\subsection{Theta classes}
Consider the multilinear map $\mathsf{\Upsilon}_{g,n}^{r} \colon V^{\otimes n}\rightarrow H^*\left(\ocM_{g,n},\QQ\right)$ defined by
\begin{equation*}
v_{a_1} \otimes \cdots \otimes v_{a_n}\mapsto 
\mathsf{\Upsilon}_{g,n}^{r}(v_{a_1}\otimes\cdots\otimes v_{a_n})=(-1)^nr^{\frac{2g-2+n+\vert a\vert}{r}}\mathpzc{f}_{\star}c_{\mathrm{top}}({\mathcal{V}_{g;a_1,\ldots,a_n}^{r}})
\end{equation*}
where $c_{\mathrm{top}}(\bullet)$ is the top Chern class. The \emph{theta class} is defined as
\begin{equation*}
\Th_{g,n}^{r} \coloneqq \mathsf{\Upsilon}_{g,n}^{r}\vert_{\underline{V}^{\otimes n}}.
\end{equation*}
The theta class was first constructed in \cite{Nor23} for $r=2$, and the construction was generalized to $r\geq 3$ in \cite{CGG25}.
\begin{proposition}[\cite{CGG25}]
The system $\Th^{r}=\left(\Th_{g,n}^{r}\right)_{2g-2+n>0}$ of multilinear maps forms a CohFT over $\QQ$ with the pairing $$\underline{\eta} (v_i,v_j)=\delta_{i+j\equiv 0 (\mathrm{mod}\ r)}.$$
\end{proposition}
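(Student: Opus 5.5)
The plan is to deduce both the gluing axiom and the symmetry axiom for $\Th^r$ from the corresponding properties of the Chiodo CohFT. The cleanest route is to view $\Th^r_{g,n}$ as the top-degree (or $t\to 0$) part of the rescaled Chiodo class $\Ch^r_{g,n}$, restricted to $\underline{V}$.

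\textbf{Step 1: $\Th^r$ as the $t=0$ specialization.} From the expansion in the footnote,
\begin{equation*}
t^{\rank\mathcal{V}}c_{r/t}(\mathcal{V})=\sum_k r^k t^{\rank\mathcal{V}-k}\,\overline c_k(\mathcal{V}).
\end{equation*}
Setting $t=0$ keeps only the term $r^{\rank\mathcal{V}}c_{\mathrm{top}}(\mathcal{V})$. Using \eqref{eqn:rank_general_minus_one}, I will check that the prefactors match:
\begin{equation*}
(-1)^n r^{1-g}\, r^{\rank\mathcal{V}}\, \mathpzc{f}_\star c_{\mathrm{top}}
= (-1)^n r^{\frac{2g-2+n+|a|}{r}}\, \mathpzc{f}_\star c_{\mathrm{top}}.
\end{equation*}
This is exponent bookkeeping: $1-g+\frac{(r+2)(g-1)+n+|a|}{r}=\frac{2g-2+n+|a|}{r}$. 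Hence $\Ch^r_{g,n}|_{t=0}=\mathsf{\Upsilon}^r_{g,n}$. Symmetry is inherited immediately.

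\textbf{Step 2: gluing.} Take the Chiodo gluing identity with the pairing \eqref{eqn:Chiodo_pairing_t}, whose inverse is $t\,v_0\otimes v_0+\sum_{i=1}^{r-1}v_i\otimes v_{r-i}$. This identity holds over $\QQ[t]$, with both sides polynomial in $t$. Setting $t=0$ kills the $v_0\otimes v_0$ node term. The remaining inverse bivector is exactly $\underline\eta^{-1}$, since $\underline\eta$ is its own inverse on $\underline V$.

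\textbf{Step 3: closure on $\underline V$.} There is one subtlety. With all external insertions in $\underline V$, the surviving node terms insert only $v_i$ with $1\le i\le r-1$, so everything stays within $\underline V$ and the axiom closes up. Even so, I must justify that the Chiodo gluing axiom holds as written. This relies on Chiodo's result (via \cite{GLN23}) that $\widetilde\Ch^{r,s,\tau}$ is a CohFT with pairing $\widetilde\eta$. The rescaling by $(-1)^n r^{1-g}t^{\rank}$ then produces \eqref{eqn:Chiodo_pairing_t}. To confirm this, I will verify that rank is additive under gluing, up to the node contribution: the node residue $a$ determines whether $h^0$ of the node term contributes. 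This is what generates the extra $t$ in the $v_0$ term, and it is the content of the appendix derivation.

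\textbf{Main obstacle.} The hardest part is this node bookkeeping for residue $0$. The node term on the normalization carries a rank shift of one, which yields the factor $t$ and makes it vanish at $t=0$. Once that is in place, the limit is formal, because everything is polynomial in $t$.
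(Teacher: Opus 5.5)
Your argument is correct and is essentially the route the paper itself sets up; the paper does not prove this statement but cites \cite{CGG25}. The remark following the proposition identifies $\mathsf{\Upsilon}^r_{g,n}$ with $\Ch^r_{g,n}|_{t=0}$ by exactly your exponent check, and Appendix~\ref{appendix:Metric_and_Quantum_Product} gives the rank bookkeeping that puts the factor $t$ on the $v_0\otimes v_0$ node term, so specializing $t=0$ and restricting to $\underline V$ yields the gluing axiom with $\underline\eta$. The appendix writes this out only for the separating map $\mathpzc r$, but the loop map $\mathpzc q$ follows from the same identity $\rank\mathcal V_{g-1;a,i,j}=\rank\mathcal V_{g;a}-1+\frac{i+j}{r}$, so nothing is missing.
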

The CohFT $\Th^{r}$ is called the \emph{theta CohFT}.
\begin{remark}
We should note that $\Th_{g,n}^{r}$ is actually the specialization of
$\Ch^{r}_{g,n}\vert_{\underline{V}^{\otimes n}}$ to $t=0$ since
\begin{equation*}
\begin{aligned}
\Ch^{r}_{g,n}(v_{a_1}\otimes\cdots\otimes v_{a_n})\vert_{t=0}
&=(-1)^nr^{1-g}{t}^{\rank\left({\mathcal{V}_{g;a_1,\ldots,a_n}^{r}}\right)}\widetilde{\Ch}^{r,-1,r/t}_{g,n}(v_{a_1}\otimes\cdots\otimes v_{a_n})\vert_{t=0}\\
&=(-1)^nr^{1-g}{t}^{\rank\left({\mathcal{V}_{g;a_1,\ldots,a_n}^{r}}\right)}\mathpzc{f}_{\star}c_{r/t}({\mathcal{V}_{g;a_1,\ldots,a_n}^{r}})\vert_{t=0}\\
&=(-1)^nr^{1-g}r^{\rank\left({\mathcal{V}_{g;a_1,\ldots,a_n}^{r}}\right)}\mathpzc{f}_{\star}c_{\mathrm{top}}({\mathcal{V}_{g;a_1,\ldots,a_n}^{r}})\\
&=(-1)^nr^{\frac{2g-2+n+\vert a\vert}{r}}\mathpzc{f}_{\star}c_{\mathrm{top}}({\mathcal{V}_{g;a_1,\ldots,a_n}^{r}})\\
&=\mathsf{\Upsilon}_{g,n}^{r}(v_{a_1}\otimes\cdots\otimes v_{a_n})
\end{aligned}
\end{equation*}
for all $v_{a_i}\in V$; the restriction to $\underline{V}$ enters only in the
definition $\Th^{r}_{g,n}=\mathsf{\Upsilon}^{r}_{g,n}\vert_{\underline{V}^{\otimes n}}$. Also, we  note that bilinear forms $\eta$ and $\underline{\eta}$ agree on $\underline{V}$ :
\begin{equation*}
\underline{\eta}=\eta\vert_{\underline{V}^{\otimes 2}}.
\end{equation*}
\end{remark}

In \cite{CGG25}, the authors define the \emph{$\epsilon v_0$-deformed theta class} to be
\begin{equation}\label{eqn:deformed_theta_class_definition}
\Th_{g,n}^{r,\epsilon}(v_{a_1}\otimes\cdots\otimes v_{a_n}) \coloneqq \sum_{m\geq 0}\frac{\epsilon^m}{m!}{\mathpzc{p}_{m}}_\star\mathsf{\Upsilon}_{g,n+m}^{r}(v_{a_1}\otimes\cdots\otimes v_{a_n}\otimes v_0^{\otimes m})
\end{equation}
for any $v_{a_i}\in \underline{V}$. However, as emphasized in \cite{CGG25}, we should note that this is not a shift in the sense of CohFTs since $v_0\notin \underline{V}$. The CohFT $\Th^{r,\epsilon}=\left(\Th_{g,n}^{r,\epsilon}\right)_{2g-2+n>0}$ is called the \emph{$\epsilon v_0$-deformed Theta CohFT}. 

\begin{remark}
In the expression \eqref{eqn:deformed_theta_class_definition}, specialization at $t=0$ commutes with
${\mathpzc{p}_{m}}_\star$ and with the sum over $m$ . Applying the previous remark on all of $V$ to each summand, we obtain that $\Th_{g,n}^{r,\epsilon}$ is the specialization of $\Ch_{g,n}^{r,\epsilon}\vert_{\underline{V}}$ to $t=0$. More precisely, for $v_{a_i}\in\underline V$,
\begin{equation}\label{eqn:theta_as_chiodo_specialization}
\Ch_{g,n}^{r,\epsilon}(v_{a_1}\otimes\cdots\otimes v_{a_n})\vert_{t=0}
=
\Th_{g,n}^{r,\epsilon}(v_{a_1}\otimes\cdots\otimes v_{a_n}).
\end{equation}
Again, this is an identity of multilinear classes after restricting the insertions, not an identification of the two CohFTs over the same state space.
\end{remark}

\subsection{Frobenius structures of $\Th^{r,\epsilon}$}

We denote by $\underline{\bullet}$ the quantum product on $\underline{V}$ induced by the deformed  theta CohFT $\Th^{r,\epsilon}$. In \cite[Lemma 3.4 and Proposition 3.5]{CGG25}, the quantum product $\underline{\bullet}$ is calculated as
\begin{equation}\label{eqn:quantum_product_of_deformed_theta}
v_i \ubullet v_j=
\begin{cases}
-v_{i+j+1}\quad &\text{if}\quad 2\leq i+j<r-1,\\
-\epsilon v_{i+j+2-r}\quad &\text{if}\quad r-1\leq i+j <2r-2,\\
-\epsilon^2v_1\quad &\text{if}\quad i+j=2r-2
\end{cases}
\end{equation} 
for $1\leq i,j\leq r-1$ and its unit is
\begin{equation}\label{eqn:unit_of_deformed_theta}
  \underline{\mathbf{1}}
  =
  \begin{cases}
    -\frac{v_1}{\epsilon^2}\quad &\text{if}\quad r=2,\\
    -\frac{v_{r - 2}}\epsilon \quad &\text{if}\quad r\geq 3.
  \end{cases}
\end{equation}
Set
\begin{equation*}
\mu \coloneqq e^{\frac{2\pi \sqrt{-1}}{r-1}}
\end{equation*}
and define
\begin{equation*}
\underline{p}(x)\coloneqq x^{r-1}+(-1)^r \epsilon.
\end{equation*}
The roots of $\underline{p}(x)$ are given by
\begin{equation}\label{eqn:underline_zeta_i_definition}
\underline{\zeta}_i\coloneqq-\mu^i\epsilon^{\frac{1}{r-1}} \quad \text{for} \quad 1\leq i \leq r-1.
\end{equation}
Furthermore, we define
\begin{equation}
  \underline{\xi}_i \coloneqq \sqrt{(1-r)\epsilon \underline{\zeta}_i}.
\end{equation}

\begin{remark}
For $r=2$, the above formulas degenerate: $\mu=1$, the index $i$ takes only the value
$1$, and $\underline{\zeta}_1=-\epsilon$, $\underline{\xi}_1^2=\epsilon^2$. As we noted, the unit \eqref{eqn:unit_of_deformed_theta} is correspondingly different. We treat the $r=2$ case separately wherever it matters in the paper.
\end{remark}

\begin{proposition}[\cite{CGG25}]\label{prop:CGG_idempotent_basis}
For $r\geq 2$, the deformed theta CohFT $\Th^{r,\epsilon}$ is semisimple for $\epsilon \neq 0$ and the idempotent basis is given by\footnote{
Our terminology differs from that of \cite{CGG25}. 
In \cite{CGG25}, the vectors $\underline{e}_k$ satisfying 
$\underline{e}_i \ubullet \underline{e}_j = \delta_{i,j}\underline{e}_j$ 
are called \emph{normalized idempotents}. In our paper, we reserve 
the term \emph{normalized idempotent} for vectors $\widetilde{\underline{e}}_k$ 
satisfying $\underline{\eta}(\widetilde{\underline{e}}_i, \widetilde{\underline{e}}_j)=\delta_{i,j}$, 
and we call $\underline{e}_k$ simply \emph{idempotents}.} are given by
\begin{equation*}
\underline{e}_{i}=-\frac{1}{r-1}\sum_{k=1}^{r-1}\mu^{-i(k+1)}\epsilon^{\frac{k+1}{1-r}}v_k
\end{equation*}
for $1\leq i \leq r-1$.
\end{proposition}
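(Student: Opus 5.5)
The plan is to verify the claim directly from the explicit product \eqref{eqn:quantum_product_of_deformed_theta}. The algebra $(\underline V,\ubullet)$ has rank $r-1$, so it suffices to produce $r-1$ elements $\underline e_i$, indexed by $1\le i\le r-1$, that are nonzero, pairwise orthogonal idempotents, and sum to the unit $\underline{\mathbf 1}$ of \eqref{eqn:unit_of_deformed_theta}. Such a set is automatically a basis, and semisimplicity at $\epsilon\neq0$ follows.

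The main step is to find an algebra isomorphism that trivializes the product. The multiplication rule $v_i\ubullet v_j=-v_{i+j+1}$, with wrap-around at the cost of a factor of $\epsilon$, suggests the change of variables $u_k\coloneqq -v_{k-1}$ (up to the scaling of the wrap-around), indexed by $k\in\{2,\dots,r\}$. In these variables the product becomes $u_a\,u_b=u_{a+b}$ with indices reduced modulo $r-1$, and each reduction contributes a factor of $\pm\epsilon$. I would therefore define a map
\begin{equation*}
\underline V\otimes\CC[\epsilon^{\pm\frac{1}{r-1}}]\longrightarrow \CC[x]/(\underline p(x)), \qquad v_k\longmapsto -x^{k+1}\ \ \text{(suitably normalized)},
\end{equation*}
and check three things on the generators: the product is preserved, the reduction $x^{r-1}=(-1)^{r-1}\epsilon$ reproduces the $-\epsilon v_{i+j+2-r}$ case, and the double wrap at $i+j=2r-2$ reproduces the $-\epsilon^2v_1$ case. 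This case bookkeeping, together with confirming that the image of $\underline{\mathbf 1}$ is $1$, is the one place I expect to require care. The case $r=2$ has no nontrivial wrapping and should be done by hand: there $\underline V=\langle v_1\rangle$, $v_1\ubullet v_1=-\epsilon^2v_1$, and $\underline e_1=-v_1/\epsilon^2$ matches the formula.

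For $\epsilon\neq0$, the polynomial $\underline p$ has the $r-1$ distinct roots $\underline\zeta_i$. The Chinese remainder theorem then gives $\CC[x]/(\underline p)\cong\CC^{r-1}$, with idempotents given by the Lagrange interpolation polynomials
\begin{equation*}
L_i(x)=\frac{\underline p(x)}{(x-\underline\zeta_i)\,\underline p'(\underline\zeta_i)}=\frac{1}{r-1}\sum_{m=0}^{r-2}\underline\zeta_i^{-m-1}x^{m}\cdot(\text{normalization}).
\end{equation*}
This uses the fact that $\underline p'(\underline\zeta_i)=(r-1)\underline\zeta_i^{r-2}$ and that $\underline\zeta_i^{r-1}=(-1)^{r-1}\epsilon$. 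Pulling $L_i$ back along the isomorphism and substituting $\underline\zeta_i=-\mu^i\epsilon^{1/(r-1)}$ should produce exactly the coefficients $-\frac{1}{r-1}\mu^{-i(k+1)}\epsilon^{\frac{k+1}{1-r}}$. The exponent shift $k+1$ comes from $v_k\mapsto x^{k+1}$, and the sign comes from the leading minus.

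As a final sanity check, I would confirm that $\sum_i\underline e_i=\underline{\mathbf 1}$ using $\sum_i\mu^{-i(k+1)}=(r-1)\delta_{k+1\equiv0\ (\mathrm{mod}\ r-1)}$. This sum leaves only $k=r-2$, giving $-\epsilon^{-1}v_{r-2}$, which matches \eqref{eqn:unit_of_deformed_theta}.
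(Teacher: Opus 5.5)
The paper gives no proof of this proposition; it imports it from \cite{CGG25}. Your route is a valid, self-contained alternative: realize $(\underline V,\ubullet)$ as $\CC[x]/(\underline p(x))$ and read off the idempotents as Lagrange interpolants. It is the same mechanism the paper uses for the Chiodo algebra in Section~\ref{sec:chiodo-analysis}, where $e_i=q_i(v_0)/q_i(\zeta_i)$.

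However, the normalization you wrote down is wrong, exactly at the step you flagged. Under the relation $x^{r-1}=(-1)^{r-1}\epsilon$, the map $v_k\mapsto -x^{k+1}$ sends $v_i\ubullet v_j$ with $r-1\le i+j<2r-2$ to $x^{i+j+2}=(-1)^{r-1}\epsilon\,x^{i+j+3-r}$. But $-\epsilon v_{i+j+2-r}\mapsto \epsilon\,x^{i+j+3-r}$, so for even $r\ge4$ the map is not a homomorphism. For odd $r$ it is a homomorphism, but evaluating at $\underline\zeta_i=-\mu^i\epsilon^{1/(r-1)}$ then gives coefficients $-\frac{1}{r-1}\underline\zeta_i^{-(k+1)}$. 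These differ from the claimed ones by $(-1)^{k+1}$, so you obtain $\underline e_{i+(r-1)/2}$ with indices mod $r-1$.

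The correct map is
\begin{equation*}
v_k\longmapsto(-1)^k x^{k+1}\in\CC[x]/(\underline p(x)).
\end{equation*}
This is the $t=0$ shadow of the identity $v_k=(-1)^kv_0^{\bullet(k+1)}$ in \eqref{eqn:v_i_interms_of_v_0}, since $p_0(x)=x\,\underline p(x)$. Equivalently, keep $v_k\mapsto -x^{k+1}$ but work in $\CC[x]/(x^{r-1}-\epsilon)$, whose roots are $-\underline\zeta_i$. With this choice all three cases of \eqref{eqn:quantum_product_of_deformed_theta} check, the unit \eqref{eqn:unit_of_deformed_theta} maps to $1$, and $r=2$ needs no separate treatment.

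You should also fix the interpolant: it is $L_i(x)=\frac{1}{r-1}\sum_{m=0}^{r-2}(x/\underline\zeta_i)^m$, with exponent $-m$ rather than $-m-1$. Since $(x/\underline\zeta_i)^{r-1}=1$ in the quotient, you may shift the summation range to $m=2,\dots,r$. Then $L_i=\frac{1}{r-1}\sum_{k=1}^{r-1}\underline\zeta_i^{-(k+1)}x^{k+1}$ pulls back to
\begin{equation*}
\frac{1}{r-1}\sum_{k=1}^{r-1}(-1)^k\underline\zeta_i^{-(k+1)}v_k=-\frac{1}{r-1}\sum_{k=1}^{r-1}(-\underline\zeta_i)^{-(k+1)}v_k .
\end{equation*}
This is the claimed formula because $-\underline\zeta_i=\mu^i\epsilon^{1/(r-1)}$, and it is exactly the form recorded in Lemma~\ref{lem:theta_normalized_idempotents}. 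So the overall minus sign comes from $(-1)^k$ together with the minus sign in $\underline\zeta_i$, not from a leading minus in the map.

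Once corrected, your argument does more than the bare citation. It proves semisimplicity at $\epsilon\neq0$ directly, and it explains why the roots of $\underline p$ govern the later theta formulas and their $t\to0$ comparison with the roots of $p_t$.
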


Based on Proposition \ref{prop:CGG_idempotent_basis}, we get the following lemma.

\begin{lemma}\label{lem:theta_normalized_idempotents}
For $r\geq 2$ and $1\leq i\leq r-1$, we have
\begin{equation}\label{eqn:norms_theta}
\underline{\Disc}_{i,\Th^{r,\epsilon}} \coloneqq \left(\underline{\eta}(\underline{e}_i,\underline{e}_i)\right)^{-1}=\underline{\zeta}_i^2\underline{\xi}_i^2,
\end{equation}
and the normalized idempotents of $\Th^{r,\epsilon}$ are given by
\begin{equation}\label{eqn:normalized_idempotents_theta}
\widetilde{\underline{e}}_i
=\sum_{k=1}^{r-1}
(-1)^{r-k}\frac{\underline{\zeta}_i^{r-k}}{\underline{\xi}_i}v_k.
\end{equation}
\end{lemma}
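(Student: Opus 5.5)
The plan is a direct computation from the explicit idempotents of Proposition~\ref{prop:CGG_idempotent_basis}, using only the pairing $\underline{\eta}(v_k,v_l)=\delta_{k+l\equiv 0 \,(\mathrm{mod}\ r)}$. The whole argument rests on two relations satisfied by $\underline{\zeta}_i$:
\begin{equation*}
\underline{\zeta}_i^{\,r-1}=(-1)^{r-1}\epsilon,
\qquad
\underline{\zeta}_i^{-(k+1)}=(-1)^{k+1}\mu^{-i(k+1)}\epsilon^{-\frac{k+1}{r-1}}.
\end{equation*}
The first holds because $\underline{\zeta}_i$ is a root of $\underline{p}$. The second follows directly from \eqref{eqn:underline_zeta_i_definition}.

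\emph{Step 1: the norms.} On $\{v_1,\dots,v_{r-1}\}$ the pairing only pairs $v_k$ with $v_{r-k}$. Hence
\begin{equation*}
\underline{\eta}(\underline e_i,\underline e_i)=\frac{1}{(r-1)^2}\sum_{k=1}^{r-1}\mu^{-i(k+1)-i(r-k+1)}\epsilon^{-\frac{r+2}{r-1}} .
\end{equation*}
The exponent of $\mu$ is $-i(r+2)$, independent of $k$. Since $\mu^{r-1}=1$, this equals $\mu^{-3i}$. The sum therefore has $r-1$ equal terms, and
\begin{equation*}
\underline{\Disc}_{i,\Th^{r,\epsilon}}=(r-1)\,\mu^{3i}\,\epsilon^{\frac{r+2}{r-1}} .
\end{equation*}
On the other side, by definition $\underline{\xi}_i^2=(1-r)\epsilon\underline{\zeta}_i$, so
\begin{equation*}
\underline{\zeta}_i^2\underline{\xi}_i^2=(1-r)\epsilon\,\underline{\zeta}_i^3=(r-1)\mu^{3i}\epsilon^{1+\frac{3}{r-1}} .
\end{equation*}
This agrees with the previous expression because $1+\frac{3}{r-1}=\frac{r+2}{r-1}$, which proves \eqref{eqn:norms_theta}.

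\emph{Step 2: the normalized idempotents.} Choose the square root $\underline{\Disc}_i^{1/2}=\underline{\zeta}_i\underline{\xi}_i$, so that $\widetilde{\underline e}_i=\underline{\zeta}_i\underline{\xi}_i\,\underline e_i$. The coefficient of $v_k$ is then
\begin{equation*}
-\frac{\underline{\zeta}_i\underline{\xi}_i}{r-1}\,\mu^{-i(k+1)}\epsilon^{-\frac{k+1}{r-1}} .
\end{equation*}
To compare with \eqref{eqn:normalized_idempotents_theta}, multiply through by $\underline{\xi}_i$. Substituting $\underline{\xi}_i^2=(1-r)\epsilon\underline{\zeta}_i$ cancels the factor $-\frac{1}{r-1}$ and leaves
\begin{equation*}
\epsilon\,\underline{\zeta}_i^2\,\mu^{-i(k+1)}\epsilon^{-\frac{k+1}{r-1}}.
\end{equation*}
The second relation above rewrites this as $(-1)^{k+1}\epsilon\,\underline{\zeta}_i^{1-k}$. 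The first relation turns $\epsilon$ into $(-1)^{r-1}\underline{\zeta}_i^{r-1}$, giving $(-1)^{r-k}\underline{\zeta}_i^{r-k}$. This is exactly the claimed coefficient times $\underline{\xi}_i$, which proves \eqref{eqn:normalized_idempotents_theta}.

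\emph{The case $r=2$.} Nothing in the argument uses $r\ge 3$. For $r=2$ we have $\mu=1$, $\underline e_1=-\epsilon^{-2}v_1$ and $\underline{\Disc}_1=\epsilon^4=\underline{\zeta}_1^2\underline{\xi}_1^2$, consistent with the general computation.

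The only real subtlety is bookkeeping. One must use the branch of $\epsilon^{1/(r-1)}$ fixed in \eqref{eqn:underline_zeta_i_definition} consistently inside $\underline e_i$. One must also note that $\widetilde{\underline e}_i$ depends on the choice of $\underline{\xi}_i$ only through the global sign of the square root, which is the choice the lemma fixes.
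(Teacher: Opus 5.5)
Your proof is correct. It differs from the paper's mainly in Step~1.

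The paper computes the norm through the Frobenius identity $\underline{\eta}(\underline{e}_i,\underline{e}_i)=\underline{\eta}(\underline{e}_i\ubullet\underline{e}_i,\underline{\mathbf{1}})=\underline{\eta}(\underline{e}_i,\underline{\mathbf{1}})$ combined with the explicit unit \eqref{eqn:unit_of_deformed_theta}. Since $\underline{\mathbf{1}}\propto v_{r-2}$ when $r\ge 3$, only the coefficient of $v_2$ in $\underline{e}_i$ survives. The cost is that the argument relies on the unit formula quoted from the literature. It also needs a separate check for $r=2$, where the unit takes a different form.

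You instead pair $\underline{e}_i$ with itself directly. You observe that the exponent $-i(r+2)$ of $\mu$ does not depend on $k$, so the sum collapses to $r-1$ equal terms. This is uniform in $r$, so your $r=2$ paragraph is only a sanity check. It uses nothing beyond the pairing and Proposition~\ref{prop:CGG_idempotent_basis}.

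The paper's route has the advantage of mirroring the Chiodo-side computation leading to \eqref{eqn:norm_of_idempotents_shifted_chiodo}. There, $\eta(e_i,e_i)=\eta(\mathbf{1},e_i)$ is the natural tool, because a direct pairing would also involve the $t^{-1}$ entry of $\eta$.

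Your Step~2 is the same computation as the paper's, done in the opposite order. The paper first rewrites $\underline{e}_i=\sum_k(-1)^{r-k}\underline{\zeta}_i^{r-k-1}\underline{\xi}_i^{-2}v_k$ using the same two relations for $\underline{\zeta}_i$, and then multiplies by $\underline{\zeta}_i\underline{\xi}_i$.
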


\begin{proof}
Firstly, note that
\begin{equation*}
\underline{\zeta}_i^{r-1}=(-1)^{r+1}\epsilon \quad \text{and}\quad \underline{\xi}_i^2=(1-r)\epsilon\underline{\zeta}_i.
\end{equation*}
Using these identities, we see that
\begin{equation*}
(-1)^{r-k}\frac{\underline{\zeta}_i^{r-k-1}}{\underline{\xi}_i^2}
=(-1)^{r-k}\frac{\underline{\zeta}_i^{r-1}\underline{\zeta}_i^{-k}}{\underline{\xi}_i^2}
=(-1)^{r-k}\frac{(-1)^{r+1}\epsilon\underline{\zeta}_i^{-k}}{(1-r)\epsilon\underline{\zeta}_i}=-\frac{1}{r-1}\left(-\underline{\zeta}_i\right)^{-(k+1)}=-\frac{1}{r-1}\mu^{-i(k+1)}\epsilon^{\frac{k+1}{1-r}}.
\end{equation*}
Hence, we have
\begin{equation*}
\underline{e}_{i}=\sum_{k=1}^{r-1}(-1)^{r-k}\frac{\underline{\zeta}_i^{r-k-1}}{\underline{\xi}_i^2}v_k.
\end{equation*}
Using the Frobenius property
\begin{equation*}
\underline{\eta} (\underline{e}_i,\underline{e}_i)=\underline{\eta} (\underline{e}_i,\underline{e}_i\ubullet \underline{\mathbf{1}})=\underline{\eta} (\underline{e}_i \ubullet \underline{e}_i,\underline{\mathbf{1}})=\underline{\eta} (\underline{e}_i,\underline{\mathbf{1}}),
\end{equation*}
and Proposition \ref{prop:CGG_idempotent_basis}, we get
\begin{equation*}
\underline{\eta} (\underline{e}_i,\underline{e}_i)
=\underline{\eta} (\underline{e}_i,\underline{\mathbf{1}})=\sum_{k=1}^{r-1}
(-1)^{r-k}\frac{\underline{\zeta}_i^{r-k-1}}{\underline{\xi}_i^2}\underline{\eta} (v_k,\underline{\mathbf{1}}).
\end{equation*}
For $r\geq 3$, we have
\begin{equation*}
\underline{\eta} (v_k,\underline{\mathbf{1}})=\underline{\eta} \left(v_k,-\frac{v_{r-2}}{\epsilon}\right)=-\epsilon^{-1}\delta_{k,2}=(-1)^r\underline{\zeta}_i^{1-r}\delta_{k,2}.
\end{equation*}
Combining the last two equations, we get
\begin{equation*}
\underline{\eta} (\underline{e}_i,\underline{e}_i)=\sum_{k=1}^{r-1}
(-1)^{r-k}\frac{\underline{\zeta}_i^{r-k-1}}{\underline{\xi}_i^2}(-1)^r\underline{\zeta}_i^{1-r}\delta_{k,2}=\sum_{k=1}^{r-1}
(-1)^{-k}\frac{\underline{\zeta}_i^{-k}}{\underline{\xi}_i^2}\delta_{k,2}=\frac{1}{\underline{\zeta}_i^2\underline{\xi}_i^2}.
\end{equation*}
Although $\underline{\mathbf{1}}$ is different for $r=2$, the formula for $\underline{\eta} (\underline{e}_i,\underline{e}_i)$ holds since
\begin{equation*}
\underline{\eta} (\underline{e}_1,\underline{e}_1)=\underline{\eta} \left(-\frac{v_1}{\epsilon^2},-\frac{v_1}{\epsilon^2}\right)=\frac{1}{\epsilon^4}=\frac{1}{\underline{\zeta}_1^2\underline{\xi}_1^2}.
\end{equation*}
So, we prove the equation \eqref{eqn:norms_theta}. Moreover, we see that
\begin{equation*}
\widetilde{\underline{e}}_i
=\underline{\Disc}_i^{\frac{1}{2}}\underline{e}_i
=\underline{\zeta}_i\underline{\xi}_i\underline{e}_i
=\underline{\zeta}_i\underline{\xi}_i\sum_{k=1}^{r-1}
  (-1)^{r-k}\frac{\underline{\zeta}_i^{r-k-1}}{\underline{\xi}_i^{2}}v_k
=\sum_{k=1}^{r-1}
  (-1)^{r-k}\frac{\underline{\zeta}_i^{r-k}}{\underline{\xi}_i}v_k .
\end{equation*}
which gives us the formula \eqref{eqn:normalized_idempotents_theta} for the normalized idempotents and completes the proof.
\end{proof}
Let $\underline{\Psi}$ be the transition matrix from the flat basis $\{v_i\}_{i=1}^{r-1}$ to the normalized idempotent basis $\{\widetilde{\underline{e}}_i\}_{i=1}^{r-1}$ and $\underline{\Psi}^{-1}$ be its inverse.
\begin{lemma}\label{lem:Psi_and_Psi_inverse_for_deformed_theta}
For $1\leq i,j\leq r-1$, the entries of the transition matrix $\underline{\Psi}$ and its inverse $\underline{\Psi}^{-1}$  are given by
\begin{equation*}
\underline\Psi_{i,j} =(-1)^j\frac{\underline{\zeta}_i^j}{\underline{\xi}_i} \quad \text{and}\quad \underline{\Psi}_{i,j}^{-1}=(-1)^{r-i}\frac{\underline{\zeta}_j^{r-i}}{\underline{\xi}_j}.
\end{equation*}
\end{lemma}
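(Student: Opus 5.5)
The plan is to read $\underline{\Psi}^{-1}$ off directly from Lemma~\ref{lem:theta_normalized_idempotents} and then compute $\underline{\Psi}$ from the orthonormality of the normalized idempotents, so that no matrix inversion is needed. The convention I will use is that the columns of $\underline{\Psi}^{-1}$ express the normalized idempotents in the flat basis, $\widetilde{\underline{e}}_j=\sum_{i}\underline{\Psi}^{-1}_{i,j}v_i$, and the columns of $\underline{\Psi}$ express the flat vectors in the idempotent basis, $v_j=\sum_i\underline{\Psi}_{i,j}\widetilde{\underline{e}}_i$. With this convention, the formula \eqref{eqn:normalized_idempotents_theta} immediately gives
\begin{equation*}
\underline{\Psi}^{-1}_{i,j}=(-1)^{r-i}\frac{\underline{\zeta}_j^{r-i}}{\underline{\xi}_j}.
\end{equation*}

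For $\underline{\Psi}$, I will use that $\underline{\eta}(\widetilde{\underline{e}}_i,\widetilde{\underline{e}}_k)=\delta_{i,k}$ by Lemma~\ref{lem:theta_normalized_idempotents}. Hence any $v\in\underline V$ expands as $v=\sum_i\underline{\eta}(v,\widetilde{\underline{e}}_i)\,\widetilde{\underline{e}}_i$, which gives
\begin{equation*}
\underline{\Psi}_{i,j}=\underline{\eta}(v_j,\widetilde{\underline{e}}_i).
\end{equation*}
Since $\underline{\eta}(v_j,v_k)=\delta_{j+k,r}$ for $1\le j,k\le r-1$, pairing $v_j$ against \eqref{eqn:normalized_idempotents_theta} selects the single term with $k=r-j$. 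That term has coefficient $(-1)^{r-(r-j)}\underline{\zeta}_i^{\,r-(r-j)}/\underline{\xi}_i=(-1)^j\underline{\zeta}_i^j/\underline{\xi}_i$, which is the claimed formula.

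As a consistency check, I would verify that $\sum_j\underline{\Psi}_{i,j}\underline{\Psi}^{-1}_{j,k}=\delta_{i,k}$. The sum is
\begin{equation*}
(-1)^r\,\frac{\sum_{j=1}^{r-1}\underline{\zeta}_i^j\underline{\zeta}_k^{r-j}}{\underline{\xi}_i\underline{\xi}_k}.
\end{equation*}
For $i\neq k$, the ratio $\underline{\zeta}_i/\underline{\zeta}_k=\mu^{i-k}$ is a nontrivial $(r-1)$-st root of unity, so the geometric sum $\underline{\zeta}_k^{r}\sum_{j=1}^{r-1}\mu^{(i-k)j}$ vanishes. For $i=k$, the sum equals $(r-1)\underline{\zeta}_i^r$. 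Using $\underline{\zeta}_i^{r-1}=(-1)^{r+1}\epsilon$ and $\underline{\xi}_i^2=(1-r)\epsilon\underline{\zeta}_i$, the whole expression becomes $(-1)^r(r-1)(-1)^{r+1}\epsilon\underline{\zeta}_i/\bigl((1-r)\epsilon\underline{\zeta}_i\bigr)=1$.

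There is no real obstacle in this argument. The only points needing care are fixing the index convention for a transition matrix so that it matches the stated formulas, and the degenerate case $r=2$, where $\mu=1$ and the root-of-unity argument is vacuous. For $r=2$ the identities $\underline{\zeta}_1=-\epsilon$ and $\underline{\xi}_1^2=\epsilon^2$ still hold, and the $1\times1$ check reduces to $\underline{\zeta}_1^2/\underline{\xi}_1^2=1$.
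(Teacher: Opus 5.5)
Your proof is correct and matches the paper's: both read $\underline{\Psi}^{-1}$ off directly from \eqref{eqn:normalized_idempotents_theta}, and both compute $\underline{\Psi}_{i,j}=\underline{\eta}(v_j,\widetilde{\underline{e}}_i)$ by letting $\underline{\eta}(v_j,v_k)=\delta_{j+k,r}$ select the single term $k=r-j$. Your root-of-unity verification that $\underline{\Psi}\,\underline{\Psi}^{-1}=\mathrm{Id}$, including the $r=2$ case, is a correct extra sanity check that the paper does not include.
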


\begin{proof}
The expression for $\underline{\Psi}_{i,j}^{-1}$ follows directly from Lemma \ref{lem:theta_normalized_idempotents}. The expression for $\underline{\Psi}_{i,j}$ is calculated as follows:
\begin{equation*}
\underline\Psi_{i,j} =
\underline\eta(v_j,\widetilde{\underline e}_i)=\sum_{k=1}^{r-1}
(-1)^{r-k}\frac{\underline{\zeta}_i^{r-k}}{\underline{\xi}_i}\underline\eta(v_j,v_k)=\sum_{k=1}^{r-1}
(-1)^{r-k}\frac{\underline{\zeta}_i^{r-k}}{\underline{\xi}_i}\delta_{j+k,r}=(-1)^j\frac{\underline{\zeta}_i^j}{\underline{\xi}_i}.
\end{equation*}
\end{proof}

\begin{corollary}\label{cor:theta_discriminant}
  The discriminant of $\Th^{r,\epsilon}$ is
  \begin{equation*}
    \underline{\Disc}_{\Th^{r,\epsilon}}
    =\prod_{i=1}^{r-1}\underline{\Disc}_{i,\Th^{r,\epsilon}}
    =-(1-r)^{r-1}\epsilon^{r+2}.
  \end{equation*}
  In particular $\QQ[\epsilon][\underline{\Disc}_{\Th^{r,\epsilon}}^{-1}]
  =\QQ[\epsilon^{\pm 1}]$, so that the ideal $\cI^{\Th^{r,\epsilon}}$ of
  Section \ref{ss:taut-cohft} is generated by the coefficients of the strictly negative
  powers of $\epsilon$.
\end{corollary}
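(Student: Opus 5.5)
The plan is to evaluate the product $\prod_i \underline{\Disc}_{i,\Th^{r,\epsilon}}$ directly from Lemma~\ref{lem:theta_normalized_idempotents}, and then read off the localized base ring and the description of the ideal.

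\emph{Step 1: the identification.} By the discussion in Section~\ref{subsec:frobenius-algebras} (quoting \cite{paper1}), for a generically semisimple Frobenius algebra the trace-form discriminant $\Disc$ equals $\prod_i \Disc_i$, computed after any base change in which an idempotent basis exists. By Proposition~\ref{prop:CGG_idempotent_basis}, $\Th^{r,\epsilon}$ is semisimple after adjoining $\epsilon^{1/(r-1)}$ and $\mu$. Since $\Disc$ is defined basis-independently over $\QQ[\epsilon]$, it suffices to compute the product there and observe that it lies in $\QQ[\epsilon]$.

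\emph{Step 2: the computation.} Using $\underline\xi_i^2=(1-r)\epsilon\underline\zeta_i$, each factor is
\begin{equation*}
\underline\Disc_{i,\Th^{r,\epsilon}}=\underline\zeta_i^2\underline\xi_i^2=(1-r)\epsilon\,\underline\zeta_i^3 .
\end{equation*}
Hence the product is
\begin{equation*}
(1-r)^{r-1}\epsilon^{r-1}\Bigl(\prod_{i=1}^{r-1}\underline\zeta_i\Bigr)^3 .
\end{equation*}
The $\underline\zeta_i$ are exactly the roots of the monic polynomial $\underline p(x)=x^{r-1}+(-1)^r\epsilon$. By Vieta, their product is $(-1)^{r-1}\cdot(-1)^r\epsilon=-\epsilon$. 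Substituting gives
\begin{equation*}
\underline\Disc_{\Th^{r,\epsilon}}=-(1-r)^{r-1}\epsilon^{r+2}.
\end{equation*}
For $r=2$ I would also check directly that $\underline\Disc_1=\epsilon^4$, which matches the formula, since the unit degenerates in that case.

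\emph{Step 3: consequences.} The discriminant is a nonzero rational constant times $\epsilon^{r+2}$, so inverting it is the same as inverting $\epsilon$, giving $\QQ[\epsilon][\underline\Disc^{-1}]=\QQ[\epsilon^{\pm1}]$. The quotient $\QQ[\epsilon^{\pm1}]/\QQ[\epsilon]$ has $\QQ$-basis given by the classes of $\epsilon^{-k}$ for $k\ge1$. A $\QQ$-linear map $\lambda\colon\QQ[\epsilon^{\pm1}]\to\QQ$ vanishing on $\QQ[\epsilon]$ factors through this quotient. Any element of $\strata_{g,n}\otimes\QQ[\epsilon^{\pm1}]$ has only finitely many nonzero coefficients, so $(\mathrm{id}\otimes\lambda)$ applied to it is a finite $\QQ$-linear combination of the coefficients of the strictly negative powers of $\epsilon$. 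Conversely, extracting the coefficient of $\epsilon^{-k}$ is itself such a $\lambda$. Therefore both systems generate the same ideal $\cI^{\Th^{r,\epsilon}}$.

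The only mildly delicate point is Step~1: justifying that the product of the $\underline\Disc_i$, computed over an extension containing $\epsilon^{1/(r-1)}$, equals the intrinsic discriminant defined over $\QQ[\epsilon]$. This is exactly the result quoted from \cite{paper1}, so no real obstacle remains.
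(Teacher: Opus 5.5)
Your proof is correct and follows the paper's argument. The paper likewise combines $\underline\Disc_i=\underline\zeta_i^2\underline\xi_i^2$ and $\underline\xi_i^2=(1-r)\epsilon\underline\zeta_i$ with Vieta's $\prod_i\underline\zeta_i=-\epsilon$; it only groups the product as $(\prod_i\underline\zeta_i)^2\prod_i(1-r)\epsilon\underline\zeta_i$ instead of your $(1-r)^{r-1}\epsilon^{r-1}(\prod_i\underline\zeta_i)^3$. Your Steps 1 and 3 (identifying the trace-form discriminant with $\prod_i\underline\Disc_i$, and reducing an arbitrary $\lambda$ to the coefficients of $\epsilon^{-k}$) spell out points the paper leaves implicit, and both are sound.
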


\begin{proof}
  Since $\underline{p}(x)=x^{r-1}+(-1)^{r}\epsilon$ is monic with constant term
  $(-1)^{r}\epsilon$, we have
  \begin{equation*}
    \prod_{i=1}^{r-1}\underline{\zeta}_i
  =(-1)^{r-1}(-1)^{r}\epsilon=-\epsilon .
  \end{equation*}
Hence, by equation \eqref{eqn:norms_theta},
  \begin{equation*}
    \prod_{i=1}^{r-1}\underline{\zeta}_i^{2}\underline{\xi}_i^{2}
    =\Bigl(\prod_{i=1}^{r-1}\underline{\zeta}_i\Bigr)^{2}
     \prod_{i=1}^{r-1}(1-r)\epsilon\underline{\zeta}_i
    =\epsilon^{2}\cdot(1-r)^{r-1}\epsilon^{r-1}\cdot(-\epsilon)
    =-(1-r)^{r-1}\epsilon^{r+2}. \qedhere
  \end{equation*}
\end{proof}

\subsection{Tautological relations from the Theta and Chiodo classes}
\label{subsec:theta_chiodo_relations}
We consider the ideal of tautological relations of the CohFTs $\Th^{r,\epsilon}$ and $\Ch^{r,\epsilon}$ in the sense of Section \ref{ss:taut-cohft}.
\subsubsection{The ideal of relations of $\Th^{r,\epsilon}$ and negative $r$-spin relations}
For $\epsilon\neq0$, the deformed Theta CohFT is semisimple.  Let
\begin{equation*}
  \Th^{r,\epsilon,\strata} \coloneqq 
  \underline R\,\underline T\,\underline\omega
\end{equation*}
denote its strata-valued Givental--Teleman reconstruction.  We write
$\cI^{\Th^{r,\epsilon}}$ for the ideal obtained from Section \ref{ss:taut-cohft}. Equivalently, it is generated by
the polar coefficients at $\epsilon=0$ of
\begin{equation*}
  \Th^{r,\epsilon,\strata}_{g,n}
  (v_{a_1}\otimes\cdots\otimes v_{a_n}),
   \quad 1\leq a_i\leq r-1,
\end{equation*}
with $n>0$. Set\footnote{When the modularity constraint holds, the number $D^r_{g;a_1,\ldots,a_n}$ is the rank of the vector bundle $\mathcal V_{g;a_1,\ldots,a_n}^{r,-1}$.}
\begin{equation*}
D^r_{g;a_1,\ldots,a_n}
 \coloneqq \frac{(r+2)(g-1)+n+\vert a\vert}{r} \quad \text{where} \quad \vert a\vert=\sum_{i=1}^{n}a_i.
\end{equation*}
The geometric class $\Th^{r,\epsilon}_{g,n}(v_{a_1}\otimes\cdots\otimes v_{a_n})$ is homogeneous of total (complex) degree $D^r_{g;a_1,\ldots,a_n}$ after assigning $\deg(\epsilon)=\frac{r-1}{r}$ and in particular has no
component in degree higher than $D^r_{g;a_1,\ldots,a_n}$. Hence \cite[Corollary 3.24]{CGG25} gives, for $n>0$,
\begin{equation*}
 q_H\left(\left[
  \Th^{r,\epsilon,\strata}_{g,n}(v_{a_1}\otimes\cdots\otimes v_{a_n})
 \right]_d\right)=0 \in H^{2d}(\ocM_{g,n};\QQ[\epsilon^{\pm 1}]),
  \quad d>D^r_{g;a_1,\ldots,a_n},
\end{equation*}
that is, the classes $\left[\Th^{r,\epsilon,\strata}_{g,n}(v_{a_1}\otimes\cdots\otimes
v_{a_n})\right]_d$ are tautological relations in cohomology for
$d>D^r_{g;a_1,\ldots,a_n}$. By homogeneity, every nonzero degree-$d$ part is a single Laurent monomial in $\epsilon$ multiplied by an element of $\strata_{g,n}$. When we speak of the corresponding relation, we mean this strata-algebra coefficient. We denote by $\cI^{\mathrm{neg},r}\subseteq\strata$ the
ideal they generate. These relations are the \emph{negative $r$-spin relations} of
\cite{CGG25}.

\begin{remark}
  In \cite[Proposition~3.15]{CGG25}, Chidambaram--Garcia-Failde--Giacchetto note that the Givental--Teleman reconstruction theorem holds for $\Th^{r,\epsilon,\strata}_{g,n}$ even when $n = 0$, except in degree $3g - 3$.
  It is a consequence of the proof of \cite[Proposition~3.15]{CGG25} and the definition of ideals of tautological relations, that even though we only used $\Th^{r,\epsilon,\strata}_{g,n}$ positive $n$ in the definition of $\cI^{\mathrm{neg},r}\subseteq\strata$, the ideal $\cI^{\mathrm{neg},r}\subseteq\strata$ does contain all relations considered in \cite{CGG25} including those for $n = 0$.
\end{remark}

\begin{lemma}
\label{lem:theta_relations_agree_with_cohft_ideal}
The ideal of negative $r$-spin relations agrees with the ideal of
tautological relations associated to the generically semisimple CohFT
$\Th^{r,\epsilon}$:
\begin{equation*}
\mathcal{I}^{\mathrm{neg},r}
=
\mathcal{I}^{\Th^{r,\epsilon}}.
\end{equation*}
\end{lemma}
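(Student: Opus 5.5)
We prove the two inclusions separately; both reduce to a homogeneity analysis of the strata-valued reconstruction $\Th^{r,\epsilon,\strata}$. The key input is that $\underline R$, $\underline T$ and $\underline\omega$ are homogeneous when we assign $\deg\epsilon=\frac{r-1}{r}$, give the cohomological degree of strata classes its usual weight, and give $z$ weight one. One checks this from the explicit data: the idempotents of Proposition~\ref{prop:CGG_idempotent_basis} and the norms \eqref{eqn:norms_theta} are Laurent monomials in $\epsilon^{1/(r-1)}$, and the flatness/Euler-field equations for $\underline R$ and the vacuum are compatible with this grading, as in \cite{CGG25}. Consequently, for $1\le a_i\le r-1$ and each $d$, the degree-$d$ part $[\Th^{r,\epsilon,\strata}_{g,n}(v_{a_1}\otimes\cdots\otimes v_{a_n})]_d$ equals $\epsilon^{m(d)}\,\sigma_d$ with $\sigma_d\in\strata_{g,n}$ and
\begin{equation*}
  m(d)=\frac{r}{r-1}\bigl(D^r_{g;a_1,\ldots,a_n}-d\bigr).
\end{equation*}
This is the homogeneity statement already used to define $\cI^{\mathrm{neg},r}$.

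\emph{Inclusion $\cI^{\mathrm{neg},r}\subseteq\cI^{\Th^{r,\epsilon}}$.} For $d>D^r_{g;a_1,\ldots,a_n}$ we have $m(d)<0$. Let $\lambda\colon\QQ[\epsilon^{\pm1}]\to\QQ$ be the coefficient extraction of $\epsilon^{m(d)}$ when $m(d)\in\mathbb Z$. This map vanishes on $\QQ[\epsilon]$. Applying $\mathrm{id}\otimes\lambda$ to $\Th^{r,\epsilon,\strata}_{g,n}(v_{a_1}\otimes\cdots\otimes v_{a_n})$ returns exactly $\sigma_d$, so every generator of $\cI^{\mathrm{neg},r}$ lies in $\cI^{\Th^{r,\epsilon}}$.

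\emph{Inclusion $\cI^{\Th^{r,\epsilon}}\subseteq\cI^{\mathrm{neg},r}$.} By Corollary~\ref{cor:theta_discriminant}, the generators of $\cI^{\Th^{r,\epsilon}}$ are the coefficients of $\epsilon^{k}$, $k<0$, in $\Th^{r,\epsilon,\strata}_{g,n}(\phi_1\otimes\cdots\otimes\phi_n)$. By multilinearity it suffices to take $\phi_i=v_{a_i}$. By homogeneity, the $\epsilon^k$ coefficient is the sum of $\sigma_d$ over those $d$ with $m(d)=k$, and by the displayed formula this is a single $d$. Since $k<0$ forces $d>D^r_{g;a_1,\ldots,a_n}$, this $\sigma_d$ is a negative $r$-spin relation.

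\emph{Main obstacle.} The hard part is justifying that the whole reconstruction is homogeneous and involves only integral powers of $\epsilon$. The normalized idempotents involve $\epsilon^{1/(r-1)}$ and square roots $\underline\xi_i$, so the individual data are only defined after base change. I would handle this by noting that each graph sum $\underline R\,\underline T\,\underline\omega$ is Galois-invariant under the action permuting the $\underline\zeta_i$ and changing the signs of the $\underline\xi_i$, so it lies in $\strata_{g,n}\otimes\QQ[\epsilon^{\pm1}]$. Homogeneity then follows from uniqueness in Theorem~\ref{thm:Givental--Teleman_Classification}: rescaling $\epsilon$ together with the grading produces another valid reconstruction datum, so uniqueness forces it to equal the original, which is exactly the homogeneity statement. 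A secondary point is that the parity/modularity condition may make $m(d)$ non-integral; in that case the component must vanish, and Galois invariance again gives this.
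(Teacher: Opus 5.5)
Your proposal is correct and follows the same route as the paper. Homogeneity of the strata-valued reconstruction, with $\deg\epsilon=\frac{r-1}{r}$, makes each degree-$d$ part a single Laurent monomial $\epsilon^{\frac{r}{r-1}(D^r_{g;a_1,\ldots,a_n}-d)}$ times a strata element, so the polar coefficients are exactly the components with $d>D^r_{g;a_1,\ldots,a_n}$. The only difference is that the paper cites \cite[Proposition 3.6 and Theorem 3.20]{CGG25} for that homogeneity, whereas you sketch it yourself via the uniqueness in Theorem~\ref{thm:Givental--Teleman_Classification}; your Galois-invariance step is unnecessary, since that theorem already places $\underline R$ and the vacuum vector over $\QQ[\epsilon^{\pm1}]$ in the flat basis.
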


\begin{proof}
Recall that the CohFT $\Th^{r,\epsilon}$ over $\QQ [\epsilon]$ is defined by
\begin{equation*}
\Th_{g,n}^{r,\epsilon}(v_{a_1}\otimes\cdots\otimes v_{a_n}) \coloneqq \sum_{m\geq 0}\frac{\epsilon^m}{m!}{\mathpzc{p}_{m}}_\star\mathsf{\Upsilon}_{g,n+m}^{r}(v_{a_1}\otimes\cdots\otimes v_{a_n}\otimes v_0^{\otimes m}).
\end{equation*}
By \cite[Section 2.3]{CGG25}, the class    ${\mathpzc{p}_{m}}_\star\mathsf{\Upsilon}_{g,n+m}^{r}(v_{a_1}\otimes\cdots\otimes v_{a_n}\otimes v_0^{\otimes m})$ has (complex) cohomological degree
\begin{equation*}
\deg \left({\mathpzc{p}_{m}}_\star\mathsf{\Upsilon}_{g,n+m}^{r}(v_{a_1}\otimes\cdots\otimes v_{a_n}\otimes v_0^{\otimes m})\right)
=
D^r_{g;a_1,\ldots,a_n}-m\frac{(r-1)}{r}.
\end{equation*}
With the degree assignment $\deg (\epsilon)=\frac{r-1}{r}$, the total (complex) degree of $ \Th_{g,n}^{r,\epsilon}(v_{a_1}\otimes\cdots\otimes v_{a_n})$ is $D^r_{g;a_1,\ldots,a_n}$.

The Givental--Teleman reconstruction preserves this homogeneity \cite[Proposition 3.6 and Theorem 3.20]{CGG25}. Hence, the (complex) cohomological degree $d$ part $\left[
  \Th^{r,\epsilon,\strata}_{g,n}
  (v_{a_1}\otimes\cdots\otimes v_{a_n})
 \right]_d$
of the strata-valued reconstruction satisfies
\begin{equation*}
\operatorname{ord}_{\epsilon} \left(\left[
  \Th^{r,\epsilon,\strata}_{g,n}(v_{a_1}\otimes\cdots\otimes v_{a_n})
 \right]_d\right)=\frac{r}{r-1}\left(D^r_{g;a_1,\ldots,a_n}-d\right),
\end{equation*}
where $\operatorname{ord}_{\epsilon}$ denotes the exponent of $\epsilon$. In
particular,
\begin{equation*}
\operatorname{ord}_{\epsilon} \left(\left[
  \Th^{r,\epsilon,\strata}_{g,n}(v_{a_1}\otimes\cdots\otimes v_{a_n})
 \right]_d\right)<0 \quad \text{if and only if} \quad d> D^r_{g;a_1,\ldots,a_n}.
\end{equation*}
Therefore, the ideal generated by the polar parts in the $\epsilon$ expansion is the same as the ideal given by the negative $r$-spin relations in \cite{CGG25}. Hence, we complete the proof.
\end{proof}
\subsubsection{The ideal of relations of $\Ch^{r,\epsilon}$ }
For later use, we also fix notation for the relations associated with the shifted Chiodo CohFT $\Ch^{r,\epsilon}$. Set
\begin{equation*}
A_{\Ch}\coloneqq \QQ[\epsilon,t^{ \pm 1}],
\end{equation*}
and let $\Disc_{\Ch^{r,\epsilon}}\in A_{\Ch}$ denote the Frobenius discriminant of $\Ch^{r,\epsilon}$. The calculations in the next section show that the CohFT $\Ch^{r,\epsilon}$ is generically semisimple. We write
\begin{equation*}
\Ch^{r,\epsilon,\strata}
\coloneqq RT\omega
\in
\strata\otimes_{\QQ}A_{\Ch}[\Disc_{\Ch^{r,\epsilon}}^{-1}]
\end{equation*}
for its strata-valued Givental--Teleman reconstruction. We denote by
\begin{equation*}
\cI^{\Ch^{r,\epsilon},\,t\neq0}\subseteq\strata
\end{equation*}
the ideal of tautological relations associated with this CohFT in the sense of Section \ref{ss:taut-cohft}.

\section{Analysis of $\epsilon v_0$-Shifted Chiodo CohFT}\label{sec:chiodo-analysis}
In this section, we determine the Frobenius algebra of the $\epsilon v_0$-shifted Chiodo CohFT $\Ch^{r,\epsilon}$, construct its idempotent and normalized idempotent bases, compute the transition matrices, and analyze the degeneration of this canonical data as $t\to0$. For the asymptotic statements, we fix $\epsilon\neq0$ and choose branches compatible with the deformed Theta theory.

\subsection{The quantum product of $\Ch^{r,\epsilon}$} 
The Chiodo pairing is given by
\begin{equation*}
  \eta(v_0, v_0) = t^{-1} \quad \text{and} \quad
  \eta(v_i, v_{r - i}) = 1\quad \text{for } i=1,\ldots,r-1
\end{equation*}
and its inverse is
\begin{equation*}
  \eta^{-1} = t v_0 \otimes v_0 + \sum_{i = 1}^{r-1} v_i \otimes v_{r - i}.
\end{equation*}

\begin{lemma}\label{lem:quantum_product_of_shifted_chiodo}
The quantum product for $\epsilon v_0$-shifted Chiodo CohFT $\Ch^{r,\epsilon}$ is given by
\begin{equation}\label{eqn:quantum_product_of_shifted_chiodo}
v_a\bullet v_b=
\begin{cases}
-v_{a+b+1}\quad &\text{if}\quad 0\leq a+b<r-1,\\
-\epsilon v_{a+b+2-r}-tv_{a+b+1-r}\quad &\text{if}\quad r-1\leq a+b <2r-2,\\
-\epsilon^2v_1-tv_{r-1}-t\epsilon v_0 \quad &\text{if}\quad a+b=2r-2.
\end{cases}
\end{equation} 
\end{lemma}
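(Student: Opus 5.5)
The plan is to compute the genus-zero three-point classes $\Ch^{r,\epsilon}_{0,3}(v_a\otimes v_b\otimes v_c)$ directly and convert them into a product using the pairing $\eta$. By definition of the shift,
\begin{equation*}
\Ch^{r,\epsilon}_{0,3}(v_a\otimes v_b\otimes v_c)=\sum_{m\ge0}\frac{\epsilon^m}{m!}\,{\mathpzc p_m}_\star\Ch^r_{0,3+m}(v_a\otimes v_b\otimes v_c\otimes v_0^{\otimes m}).
\end{equation*}
Since $\ocM_{0,3}$ is a point, only the degree-zero part of the pushforward matters, so only $\ocM_{0,3+m}$-classes of cohomological degree exactly $m$ contribute.

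The first step is the degree count. On $\ocM_{0,n}$ with $s=-1$, the modularity condition $\sum a_k\equiv -(n-2)\pmod r$ must hold. The rank of $\mathcal V$ is $D=(-2+n+|a|)/r-1$. The class $\Ch^r_{0,n}$ is $(-1)^n r\sum_k r^k t^{D-k}\overline c_k(\mathcal V)$, pushed forward by $\mathpzc f$ of degree $r^{-1}$. For $n=3+m$ with $m$ insertions of $v_0$, we have $|a|=a+b+c$, and we need $k=m$. Hence $m\le D=(m+1+a+b+c)/r-1$, which bounds $m$ to $0,1,2$ since $a+b+c\le 3r-3$. The $t$-exponent is $D-m$, which matches the three cases in the lemma: $m=0$ gives pure $t^{D}$, $m=1$ brings one $\epsilon$, and $m=2$ brings $\epsilon^2$.

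The second step is to evaluate the needed integrals:
\begin{itemize}
\item $\int_{\ocM_{0,3}}\mathpzc f_\star c_0=1/r$ in rank-$0$ cases and $0$ otherwise;
\item $\int_{\ocM_{0,4}}\mathpzc f_\star c_1(\mathcal V)$;
\item $\int_{\ocM_{0,5}}\mathpzc f_\star c_2(\mathcal V)$.
\end{itemize}
Rather than grinding through these, I would invoke Chiodo's Grothendieck--Riemann--Roch formula \cite{Ch08} in genus zero. In genus $0$ the Chern character is explicitly a combination of $\kappa_1$, $\psi_i$, and boundary terms with Bernoulli polynomial coefficients $B_2(a_i/r)$. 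Alternatively, I would reuse the $t=0$ computations of \cite[Lemma~3.4]{CGG25}, which already give the $\epsilon$ and $\epsilon^2$ coefficients for indices in $\underline V$. The new terms are the $t$-linear ones with a $v_0$ output and the contributions from inputs involving $v_0$. These come from the $m=0$ and $m=1$ cases of rank $1$, where only $t^{D-m}$ with $D-m=1$ appears, and from $m=0$ of rank $0$.

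The third step is to convert these three-point numbers into the product. Since $\eta$ pairs $v_i$ with $v_{r-i}$ and pairs $v_0$ with itself with weight $t^{-1}$, the coefficient of $v_j$ in $v_a\bullet v_b$ is $\Ch^{r,\epsilon}_{0,3}(v_a,v_b,v_{r-j})$ for $j\ge1$, and $t\cdot\Ch^{r,\epsilon}_{0,3}(v_a,v_b,v_0)$ for $j=0$. The rank-$0$ case $a+b+c=r-1$ gives $-v_{a+b+1}$, with the sign $(-1)^3$ and normalization $r\cdot r^{-1}=1$. The remaining cases are sorted by $a+b$ relative to $r-1$ and $2r-2$.

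The main obstacle is the correct evaluation, including signs, of the one- and two-point degree integrals $\int\mathpzc f_\star c_1$ and $\int\mathpzc f_\star c_2$ on $\ocM_{0,4}$ and $\ocM_{0,5}$ with $v_0$ insertions, where $a_i=0$ lies at the boundary of the Bernoulli range. I would handle this by checking against known values: the $t=0$ values from \cite{CGG25}, and the consistency requirements of associativity and the Frobenius property for $\bullet$. These constraints largely determine the remaining constants once the $t$-exponents are fixed. The derivation will be placed in Appendix~\ref{appendix:Metric_and_Quantum_Product}.
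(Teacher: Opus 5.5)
Your outline is the paper's argument. You expand the shift, use modularity and $m\le\rank(\mathcal V)$ to reduce to $(m,a+b+c)\in\{(0,r-1),(0,2r-1),(1,2r-2),(2,3r-3)\}$, insert the genus-zero Chiodo integrals (the paper quotes all of them, for $0\le a,b,c\le r-1$, from the proof of \cite[Lemma 3.4]{CGG25}), and raise the third index with $\eta^{00}=t$, $\eta^{i,r-i}=1$. However, two of your evaluations are wrong as written.

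First, $\int_{\ocM_{0,3}}\mathpzc{f}_\star c_0=1/r$ whenever $a+b+c\equiv r-1 \pmod r$, not only in rank $0$. For $a+b+c=2r-1$ (rank one, $m=0$) the degree-zero term is $(-1)^3 r\cdot t\cdot\frac1r=-t$. This is exactly what produces $-tv_{a+b+1-r}$ for $r\le a+b\le 2r-2$, including $-tv_{r-1}$ when $a+b=2r-2$. Setting it to $0$ loses these terms.

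Second, for $m=1$ one has $D=1$, so $t^{D-m}=t^{0}$ and the $m=1$ contribution is the $t$-free number $-\epsilon$. The $t$ in $-tv_0$ (for $a+b=r-1$) and in $-t\epsilon v_0$ (for $a+b=2r-2$) comes from $\eta^{00}=t$ with $c=0$, not from the class. Your third step sets this up correctly; only the attribution in your second step is off.

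Associativity and the Frobenius property also cannot substitute for the $\ocM_{0,4}$ integral. Since $v_k=(-1)^k v_0^{\bullet(k+1)}$ holds with no unknowns, every product is determined by the single $m=1$ coefficient in $v_0\bullet v_{r-1}$. The constraints are invariant under $\epsilon\mapsto\lambda\epsilon$, so they only force all $m=1$ coefficients to agree and the $m=2$ coefficient to be the square of the $m=1$ one.

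For $r\ge 3$ that is enough. Comparing $v_0\bullet(v_0\bullet v_{r-2})$ with $(v_0\bullet v_0)\bullet v_{r-2}$ matches the triple $(0,r-1,r-1)$ with $(1,r-2,r-1)$, whose value $-\epsilon$ is the $t=0$ datum from \cite{CGG25}.

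For $r=2$ it is not enough. The only $m=1$ triple is $(1,1,0)$, and $v_1\ubullet v_1=-\epsilon^2 v_1$ fixes only the square of its coefficient, leaving a sign. You must actually compute $\int_{\ocM_{0,4}}\widetilde{\Ch}^{2,-1,1}_{0,4}(v_1\otimes v_1\otimes v_0\otimes v_0)=-\frac14$. Your GRR route does this: in the genus-zero graph formula, the $\kappa_1$, leg and three boundary terms give $\frac12\bigl(-\frac{11}{24}+\frac{2}{24}-\frac{3}{24}\bigr)$.

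The weight-$0$ insertions you were worried about cause no trouble. We have $B_{d+1}(0)=B_{d+1}(1)$ for $d\ge 1$, so only the rank depends on that convention.
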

We refer the reader to Appendix \ref{appendix:Metric_and_Quantum_Product} for detailed derivations of the pairing \eqref{eqn:Chiodo_pairing_t} and quantum product \eqref{eqn:quantum_product_of_shifted_chiodo} given in Lemma \ref{lem:quantum_product_of_shifted_chiodo}  for the CohFT $\Ch^{r,\epsilon}$.

We see that the products by $v_0$ and $v_{r-1}$ are
\begin{equation}\label{eqn:special_cases_of_quantum_product}
  v_0 \bullet v_i =
  \begin{cases}
    - v_{i + 1} & \text{if }i < r-1, \\
    -\epsilon v_1 - t v_0 & \text{if }i = r-1,
  \end{cases}
\quad \text{and}\quad
  v_{r-1} \bullet v_{i} =
  \begin{cases}
    - \epsilon v_{i + 1}-t v_i & \text{if }i < r-1, \\
    -\epsilon^2 v_1 - t v_{r-1}-t\epsilon v_0 & \text{if }i = r-1.
  \end{cases}
\end{equation}
Hence, we conclude that the unit of the $\epsilon v_0$-shifted Chiodo CohFT $\Ch^{r,\epsilon}$ is given by
\begin{equation}\label{eqn:unit_of_shifted_Chiodo}
  \mathbf{1}
  = \frac{-v_{r - 1} + \epsilon v_0}t.
\end{equation}

By equation (\ref{eqn:special_cases_of_quantum_product}), we see that the basis vectors are generated cyclically\footnote{We define $v^{\bullet m}\coloneqq \underbrace{v\bullet \cdots \bullet v}_{m-\text{times}}$ for any $v\in V$.} by $v_0$:
\begin{equation}\label{eqn:v_i_interms_of_v_0}
  v_k=(-1)^{k}v_0^{\bullet (k+1)}
\end{equation}
for $0\leq k \leq r-1$. This implies that
\begin{equation}\label{eqn:characteristic_polymomial_of_chiodo}
0=-v_{r-1}+\epsilon v_0 -t\mathbf{1}=(-1)^rv_0^{\bullet r}+\epsilon v_0-t\mathbf{1}
\end{equation}
by the expression (\ref{eqn:unit_of_shifted_Chiodo}) of the unit $\mathbf{1}$ and equation (\ref{eqn:v_i_interms_of_v_0}). As a result, we see that $$p_t(x) = x^r + (-1)^r \epsilon x - (-1)^r t$$
is the characteristic polynomial of the linear operator
\begin{equation}\label{eqn:L_v0_operator}
\mathds{L}_{v_0} \colon V\rightarrow V \quad \text{defined linearly by}\quad \mathds{L}_{v_0}(v_i)=v_0\bullet v_i.
\end{equation}

\subsection{The characteristic polynomial $p_t(x)$}  
In this part, we will list certain properties of the  characteristic polynomial $p_t(x) = x^r + (-1)^r \epsilon x - (-1)^r t$.

Its discriminant is
\begin{equation}\label{eqn:disc_of_p_t_of_x}
 \Disc (p_t(x))=(-1)^{\frac{r(r-1)}{2}} ((-1)^{r-1} r^r t^{r - 1}-(r - 1)^{r - 1} \epsilon^r).
\end{equation}
Let $\zeta_i$ be the $r$ roots of $p_t(x)$. It is a well-known result that the roots of a polynomial over complex numbers are analytic in the coefficients of the polynomial away from the discriminant locus\cite{Br66}. We are especially interested in the behavior of $p_t(x)$ when $\epsilon\neq 0$ and $\vert t \vert$ is sufficiently small so that $\Disc (p_t(x))\neq 0$ for the values of $t$ we consider.

Firstly, note that we have
\begin{equation*}
p_0(x)=p_t(x)\vert_{t=0}=x^r + (-1)^r \epsilon x =x\underline{p}(x).
\end{equation*}
Hence, for $\vert t \vert$ sufficiently small it is expected that roots of $p_t(x)$ should be close to the roots $ \underline{\zeta}_1,\ldots, \underline{\zeta}_{r-1}$ of the polynomial $p_0(x)$ and $\underline{\zeta}_0\coloneqq 0$, which the following lemma establishes analytically.

\begin{lemma}\label{lem:series_expansion_for_roots}
For $\epsilon \neq 0$, the roots $\zeta_i$ of $p_t(x)$ have a power series expansion of the form
\begin{equation*}
\zeta_i=\underline{\zeta}_i+\frac{t}{\epsilon(1-r)}+O(t^2)\in \mathbb{C}[\epsilon^{\pm\frac{1}{r-1}}][\![t]\!] \quad\text{for}\quad i=1,\ldots,r-1,
\end{equation*}
and
\begin{equation*}
\zeta_0=\frac{t}{\epsilon}+O(t^2)\in \mathbb{C}[\epsilon^{\pm 1}][\![t]\!]\subset \mathbb{C}[\epsilon^{\pm\frac{1}{r-1}}][\![t]\!]
\end{equation*}
around $t=0$ with a positive radius of convergence.
\end{lemma}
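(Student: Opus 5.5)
We will apply the analytic implicit function theorem to $F(x,t)\coloneqq p_t(x)=x^r+(-1)^r\epsilon x-(-1)^r t$ at each of the $r$ points $(x,t)=(\underline{\zeta}_i,0)$, $i=0,\dots,r-1$, with $\underline{\zeta}_0=0$. Since $p_0(x)=x\,\underline{p}(x)$ and $\underline{p}(x)=x^{r-1}+(-1)^r\epsilon$ has the $r-1$ distinct nonzero roots $\underline{\zeta}_1,\dots,\underline{\zeta}_{r-1}$ for $\epsilon\neq0$, all $r$ roots of $p_0$ are simple. Concretely, we compute $\partial_x F(x,0)=p_0'(x)=r x^{r-1}+(-1)^r\epsilon$. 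At $x=0$ this equals $(-1)^r\epsilon\neq0$. At $x=\underline{\zeta}_i$ with $i\ge1$, we use $\underline{\zeta}_i^{r-1}=(-1)^{r+1}\epsilon$ to get $p_0'(\underline{\zeta}_i)=(-1)^{r+1}r\epsilon+(-1)^r\epsilon=(-1)^{r+1}(r-1)\epsilon\neq0$.

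By the holomorphic implicit function theorem, each simple root therefore continues to a unique holomorphic function $\zeta_i(t)$ on a disc $|t|<\rho$ with $\zeta_i(0)=\underline{\zeta}_i$. We shrink $\rho$ so that the discs around the $r$ distinct values $\underline{\zeta}_i$ stay disjoint. Then the $\zeta_i(t)$ are distinct, and since $\deg p_t=r$, they are all the roots of $p_t$; this is consistent with \eqref{eqn:disc_of_p_t_of_x} being nonzero for small $t$.

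The first-order coefficient comes from implicit differentiation: $\zeta_i'(0)=-\partial_tF/\partial_xF=(-1)^r/p_0'(\underline{\zeta}_i)$. For $i=0$ this gives $1/\epsilon$, and for $i\ge1$ it gives $(-1)^r/((-1)^{r+1}(r-1)\epsilon)=\frac{1}{\epsilon(1-r)}$, which are exactly the stated leading terms.

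It remains to justify the coefficient rings. Here we use a formal Newton/Hensel iteration: over the ring $\mathbb{C}[\epsilon^{\pm\frac{1}{r-1}}]$, which contains each $\underline{\zeta}_i$, the derivative $p_0'(\underline{\zeta}_i)$ is a unit, a nonzero constant times $\epsilon$. Hensel's lemma in $\mathbb{C}[\epsilon^{\pm\frac{1}{r-1}}][\![t]\!]$ then produces a unique power series root lifting $\underline{\zeta}_i$. Inductively, each coefficient is a polynomial in the previous coefficients divided by powers of $p_0'(\underline{\zeta}_i)$, so it lies in the stated ring. For $i=0$ all the data lie in $\mathbb{Q}[\epsilon^{\pm1}]$, so the series for $\zeta_0$ lies in $\mathbb{C}[\epsilon^{\pm1}][\![t]\!]$.

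By uniqueness of roots with the given constant term, the formal and analytic roots agree, and the analytic statement supplies the positive radius of convergence. The only mildly delicate point is matching the formal Hensel root with the analytic one, and the uniqueness of the continuation resolves it.
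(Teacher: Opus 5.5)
Your proof is correct, but it takes a different route from the paper's.

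\textbf{The paper's route.} The paper rescales $x=-\epsilon^{\frac{1}{r-1}}y$, $t=-\rho\,\epsilon^{\frac{r}{r-1}}$ to reduce to the trinomial $y^r-y+\rho$. It then quotes an explicit Lagrange-inversion series (Glasser) for the $r-1$ roots near the $(r-1)$-st roots of unity. From this it reads off closed-form coefficients $-\frac{1}{(r-1)\epsilon}\bigl(\frac{1}{\epsilon\underline{\zeta}_i}\bigr)^k\frac{\Gamma(\frac{rk}{r-1}+1)}{\Gamma(k+2)\Gamma(\frac{k}{r-1}+1)}$ for $t^{k+1}$. Finally, it obtains $\zeta_0$ from Vieta's relation for the sum of the roots.

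\textbf{Your route.} You use only that $p_0(x)=x\,\underline{p}(x)$ has $r$ simple roots with $p_0'(\underline{\zeta}_i)$ a nonzero multiple of $\epsilon$, hence a unit. From there, three steps do the work:
\begin{itemize}
\item The holomorphic implicit function theorem gives convergence and the first-order coefficients $(-1)^r/p_0'(\underline{\zeta}_i)$.
\item Hensel's lemma in $\mathbb{C}[\epsilon^{\pm\frac{1}{r-1}}][\![t]\!]$ gives the coefficient rings.
\item Uniqueness of the lift in $\mathbb{C}[\![t]\!]$, after specializing $\epsilon$ and a choice of $\epsilon^{\frac{1}{r-1}}$, identifies the formal and analytic roots.
\end{itemize}

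\textbf{Comparison.} Your argument is self-contained and treats $\zeta_0$ on the same footing as the other roots. So $\zeta_0\in\mathbb{C}[\epsilon^{\pm1}][\![t]\!]$ (indeed $\mathbb{Q}[\epsilon^{\pm1}][\![t]\!]$) is immediate. In the paper's Vieta step this containment tacitly needs the observation that $\sum_{i=1}^{r-1}\underline{\zeta}_i^{-k}$ vanishes unless $(r-1)\mid k$, so only integral powers of $\epsilon$ survive. The paper's route buys closed-form coefficients, which make facts such as the weighted homogeneity of $\zeta_i$ (used later) visible at a glance. With your approach that homogeneity follows instead from the scaling symmetry of $p_t$ together with uniqueness of the Hensel lift. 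For the statement itself, and for its downstream uses (leading terms, $t$-orders, ring containments), your argument suffices.
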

\begin{proof}
Let $x=-\epsilon^{\frac{1}{r-1}}y$ and $t=-\rho\epsilon^{\frac{r}{r-1}}$. Then, we have
\begin{equation*}
p_t(x)=(-1)^r\epsilon^{\frac{r}{r-1}}y^r-(-1)^r\epsilon^{\frac{r}{r-1}}y-(-1)^rt=(-1)^r\epsilon^{\frac{r}{r-1}}\left(y^r-y+\rho\right).
\end{equation*}
For sufficiently small $\rho$, using the Lagrange inversion formula, one can express the roots of the polynomial $y^r-y+\rho$ as power series with nonzero radius of convergence \cite{Gl00}. Using the expansion\footnote{This result dates back to a century ago, even in a more general form \cite{B1927}. Yet, we thought it could be useful to refer the reader to a relatively modern reference \cite{Gl00}.} in \cite{Gl00}, we conclude that for fixed $\epsilon\neq 0$ and $t$ sufficiently small, the roots of $p_t(x)$ have the power series expansion\footnote{Recall that $\mu=e^{\frac{2\pi \sqrt{-1}}{r-1}}$.}
\begin{equation}\label{eqn:zeta_i_ring_containment}
\begin{aligned}
\zeta_i
&=-\epsilon^{\frac{1}{r-1}}\left(\mu^{i}
+\frac{t\epsilon^{-\frac{r}{r-1}}}{(r-1)}
\sum_{k=0}^{\infty}
\frac{\left(-t\epsilon^{-\frac{r}{r-1}}\mu^{-i}\right)^{k}}{\Gamma(k+2)}
\cdot
\frac{\Gamma \left(\frac{r k}{r-1}+1\right)}
     {\Gamma \left(\frac{k}{r-1}+1\right)}\right)\in \mathbb{C}[\epsilon^{\pm\frac{1}{r-1}}][\![t]\!]\\
&=\underline{\zeta}_i-\frac{t}{(r-1)\epsilon}\sum_{k=0}^{\infty}\left(\frac{t}{\epsilon \underline{\zeta}_i}\right)^k\frac{\Gamma \left(\frac{r k}{r-1}+1\right)}
     {\Gamma(k+2)\Gamma \left(\frac{k}{r-1}+1\right)}\in \mathbb{C}[\epsilon^{\pm\frac{1}{r-1}}][\![t]\!]\\
&=\underline{\zeta}_i+\frac{t}{\epsilon(1-r)}+O(t^2)\in \mathbb{C}[\epsilon^{\pm\frac{1}{r-1}}][\![t]\!] \quad\text{for}\quad i=1,\ldots,r-1,
\end{aligned}
\end{equation}
and
\begin{equation}\label{eqn:zeta_0_ring_containment}
\zeta_0=(-1)^{r-1}\epsilon \delta_{r,2}-\sum_{i=1}^{r-1}\zeta_i=\frac{t}{\epsilon}+O(t^2)\in \mathbb{C}[\epsilon^{\pm 1}][\![t]\!]\subset \mathbb{C}[\epsilon^{\pm\frac{1}{r-1}}][\![t]\!].
\end{equation}
\end{proof}

Define the degree $(r-1)$ polynomial $q_i(x)$ via
\begin{equation*}
q_i(x)= \prod_{j \neq i}(x - \zeta_j)=\sum_{k=0}^{r-1}q_{ik}x^k.
\end{equation*}
Since  $p_t(x)=(x-\zeta_i)q_i(x)$ for $0\leq i \leq r-1$, an inductive computation shows that the coefficients $q_{ik}$ are given by 
\begin{equation}\label{eqn:q_i(x)_coefficients}
q_{i0}=\frac{(-1)^rt}{\zeta_i} \quad \text{and} \quad q_{ik}=\zeta_i^{r-k-1} \quad \text{for}\quad k=1,\ldots,r-1.
\end{equation}

Let $\xi_i$ be a square root of
\begin{equation}\label{eqn:xi_i_definition}
  (-1)^r\zeta_i q_i(\zeta_i)=(-1)^r\zeta_i \prod_{j \neq i} (\zeta_i - \zeta_j) = ((1 - r)\epsilon \zeta_i + rt)
\end{equation}
chosen compatible with the limit
\begin{equation} \label{eqn:xi_limit_comparison}
  \lim_{t\to 0}\xi_i =\sqrt{(1-r)\epsilon \underline{\zeta}_i}=\underline{\xi}_i.
\end{equation}
Note also that we have the following identities
\begin{equation}\label{eqn:properties_of_q_i(x)}
\zeta_j q_i(\zeta_j)=\delta_{i,j}\zeta_i q_i(\zeta_i)=(-1)^r ((1 - r)\epsilon \zeta_i +rt)\delta_{i,j}=(-1)^r\xi_i^2\delta_{i,j}.
\end{equation}

Some of the quantities that we need to analyze involve finite negative
powers of $t$, while the appearance of square roots requires us to work with
$t^{1/2}$. For this reason, we consider the ring
\begin{equation*}
\mathbb{C}[\epsilon^{\pm\frac{1}{2(r-1)}}](\!(t^{\frac{1}{2}})\!)\coloneqq \left\{\sum_{m\geq m_0} \alpha_m t^{m/2}\; \vert \; \alpha_m\in \mathbb{C}[\epsilon^{\pm\frac{1}{2(r-1)}}],\ m_0\in \mathbb{Z}\right\},
\end{equation*}
i.e. the ring of Laurent series in $t^{1/2}$ with coefficients in the ring $\mathbb{C}[\epsilon^{\pm\frac{1}{2(r-1)}}]$. As usual, define the order of $t$
\begin{equation*}
\operatorname{ord}_t \colon  \mathbb{C}[\epsilon^{\pm\frac{1}{2(r-1)}}](\!(t^{\frac{1}{2}})\!)\rightarrow \frac{1}{2}\mathbb{Z}\cup\{+\infty\}
\end{equation*}
by
\begin{equation*}
\operatorname{ord}_t (\alpha)=
\begin{cases}
k \quad & \text{if} \quad   \alpha=t^k\sum_{j=0}^{\infty}\alpha_jt^{\frac{j}{2}} \quad \text{with}\, \alpha_0\neq 0\, \text{and}\, k\in \frac{1}{2}\mathbb{Z},\\
+\infty \quad & \text{if} \quad  \alpha=0.
\end{cases}
\end{equation*}

\begin{lemma}\label{lem:xi_i_and_zeta_i_and_inverses_ring_containment}
For $0\leq i\neq j \leq r-1$, we have $\zeta_i^{\pm 1},\xi_i^{\pm 1}, (\zeta_i-\zeta_j)^{-1} \in \mathbb{C}[\epsilon^{\pm\frac{1}{2(r-1)}}](\!(t^{\frac{1}{2}})\!)$ with
\begin{equation}
\operatorname{ord}_t (\zeta_i)=\delta_{i,0},\quad \operatorname{ord}_t (\xi_i)=\frac{1}{2}\delta_{i,0} \quad 
\text{and}\quad \operatorname{ord}_t (\zeta_i-\zeta_j)=0.
\end{equation}
\end{lemma}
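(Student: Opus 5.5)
The plan is to read everything off Lemma~\ref{lem:series_expansion_for_roots} together with the explicit formula \eqref{eqn:xi_i_definition} for $\xi_i^2$. The key tool is that the ring $\mathbb{C}[\epsilon^{\pm\frac{1}{2(r-1)}}](\!(t^{\frac{1}{2}})\!)$ is closed under inversion of any nonzero element whose leading coefficient is a unit of $\mathbb{C}[\epsilon^{\pm\frac{1}{2(r-1)}}]$. Since $\epsilon\neq0$, such leading coefficients are nonzero monomials $c\,\epsilon^{m/(2(r-1))}$ with $c\in\mathbb{C}^\times$. Concretely, if $\alpha=\alpha_0 t^k(1+u)$ with $\alpha_0$ a unit and $\operatorname{ord}_t(u)>0$, then $\alpha^{-1}=\alpha_0^{-1}t^{-k}\sum_{m\ge0}(-u)^m$, and this converges $t$-adically. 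Likewise, $\sqrt{\alpha}=\alpha_0^{1/2}t^{k/2}(1+u)^{1/2}$ is defined by the binomial series provided $\alpha_0^{1/2}$ lies in the coefficient ring. In both cases the order is additive: $\operatorname{ord}_t(\alpha^{-1})=-k$ and $\operatorname{ord}_t(\sqrt\alpha)=k/2$.

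\textbf{Roots $\zeta_i$.} By Lemma~\ref{lem:series_expansion_for_roots}, for $1\le i\le r-1$ the leading term of $\zeta_i$ is $\underline\zeta_i=-\mu^i\epsilon^{\frac1{r-1}}$, which is a unit. Hence $\operatorname{ord}_t\zeta_i=0$ and $\zeta_i^{-1}$ exists. For $i=0$ the leading term is $t/\epsilon$, so $\operatorname{ord}_t\zeta_0=1$ and $\zeta_0^{-1}=\epsilon t^{-1}(1+O(t))$ lies in the ring.

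\textbf{Differences $\zeta_i-\zeta_j$.} For $i,j\ge1$ with $i\neq j$, the leading term of $\zeta_i-\zeta_j$ is $\underline\zeta_i-\underline\zeta_j=-(\mu^i-\mu^j)\epsilon^{\frac1{r-1}}$. This is nonzero because $\mu$ is a primitive $(r-1)$-st root of unity and $1\le i,j\le r-1$ are distinct modulo $r-1$. For $j=0$, the leading term of $\zeta_i-\zeta_0$ is $\underline\zeta_i$. In every case the order is $0$ and the inverse exists.

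\textbf{Square roots $\xi_i$.} I would use $\xi_i^2=(1-r)\epsilon\zeta_i+rt$. For $i\ge1$, its leading term is $(1-r)\epsilon\underline\zeta_i=(r-1)\mu^i\epsilon^{\frac{r}{r-1}}$. Its square root $\sqrt{r-1}\,\mu^{i/2}\epsilon^{\frac{r}{2(r-1)}}$ lies in $\mathbb{C}[\epsilon^{\pm\frac1{2(r-1)}}]$, which is exactly why the exponent $\frac1{2(r-1)}$ is needed. The branch is fixed by \eqref{eqn:xi_limit_comparison}, so $\xi_i=\underline\xi_i(1+O(t))$ with $\operatorname{ord}_t\xi_i=0$, and its inverse exists. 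For $i=0$, $\xi_0^2=(1-r)\epsilon\zeta_0+rt=(1-r)t+rt+O(t^2)=t+O(t^2)$. Hence $\xi_0=t^{1/2}(1+O(t))^{1/2}$ with $\operatorname{ord}_t\xi_0=\tfrac12$, and $\xi_0^{-1}=t^{-1/2}(1+O(t))$. This is where half-integer powers of $t$ enter.

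\textbf{Main obstacle.} The step I expect to be delicate is the leading-order computation for $\xi_0^2$. The $t$-coefficients of $(1-r)\epsilon\zeta_0$ and $rt$ must not cancel; they do not, since they sum to $(1-r)+r=1$. This relies on the exact first-order coefficient $t/\epsilon$ of $\zeta_0$ from Lemma~\ref{lem:series_expansion_for_roots}. For $r=2$, $\mu=1$ and there is only one index $i\ge1$, so no difference $\zeta_i-\zeta_j$ with $i,j\ge1$ arises. The remaining checks go through unchanged.
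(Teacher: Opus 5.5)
Your proposal is correct and follows essentially the same route as the paper. Both read the orders and ring memberships directly off the expansions in Lemma~\ref{lem:series_expansion_for_roots} and the formula $\xi_i^2=(1-r)\epsilon\zeta_i+rt$, and both rely on the non-cancellation $\xi_0^2=t+O(t^2)$. Your explicit check that the leading coefficients are nonzero monomials in $\epsilon^{\frac{1}{2(r-1)}}$, so that inverses and square roots stay in the ring, makes precise a step the paper leaves implicit.
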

\begin{proof}
By Lemma \ref{lem:series_expansion_for_roots}, we see that
\begin{equation}
\begin{aligned}\label{eqn:ring_containment_1}
\zeta_0^{-1}=&\frac{\epsilon}{t}\left(1+O(t)\right)\in \frac{1}{t}\cdot \mathbb{C}[\epsilon^{\pm\frac{1}{r-1}}][\![t]\!],\\
\zeta_i^{-1}=&\underline{\zeta}_i^{-1}\left(1+O(t)\right)\in \mathbb{C}[\epsilon^{\pm\frac{1}{r-1}}][\![t]\!] \quad 1\leq i \leq r-1,\\
\zeta_i-\zeta_j=&\underbrace{\underline{\zeta}_i-\underline{\zeta}_j}_{\neq 0}+O(t)\in \mathbb{C}[\epsilon^{\pm\frac{1}{r-1}}][\![t]\!]^\times, \quad 0\leq i\neq j \leq r-1,
\end{aligned}
\end{equation}
and consequently, we have
\begin{equation}\label{eqn:ring_containment_2}
(\zeta_i-\zeta_j)^{-1}=(\underline{\zeta}_i-\underline{\zeta}_j)^{-1}+O(t) \in \mathbb{C}[\epsilon^{\pm\frac{1}{r-1}}][\![t]\!]^\times \quad 0\leq i \neq j\leq r-1.
\end{equation}

Also, by equation \eqref{eqn:xi_i_definition} and Lemma \ref{lem:series_expansion_for_roots}, we observe that
\begin{equation}\label{eqn:ring_containment_3}
\begin{aligned}
\xi_0&=\sqrt{(1 - r)\epsilon \zeta_0 + rt}=\sqrt{t+O(t^2)}\in \sqrt{t}\cdot \mathbb{C}[\epsilon^{\pm\frac{1}{r-1}}][\![t]\!],\\
\xi_i&=\sqrt{ (1 - r)\epsilon \zeta_i + rt}=\sqrt{(1-r)\underline{\zeta}_i\epsilon+(r+1)t+O(t^2)}\in\sqrt{(1-r)\underline{\zeta}_i\epsilon}\cdot \mathbb{C}[\epsilon^{\pm\frac{1}{r-1}}][\![t]\!],\quad 1\leq i \leq r-1,
\end{aligned}
\end{equation}
which imply
\begin{equation}\label{eqn:ring_containment_4}
\xi_0^{-1}\in\frac{1}{\sqrt{t}}\cdot \mathbb{C}[\epsilon^{\pm 1}][\![t]\!]\quad \text{and} \quad \xi_i^{-1} \in \frac{1}{\sqrt{(1-r)\underline{\zeta}_i\epsilon}}\cdot \mathbb{C}[\underline{\zeta}_i^{-1},\epsilon^{-1}][\![t]\!], \quad 1\leq i \leq r-1.
\end{equation}
By the definition \eqref{eqn:underline_zeta_i_definition} of $\underline{\zeta_i}$, we see that all of the rings that appear in equations \eqref{eqn:ring_containment_1}, \eqref{eqn:ring_containment_2}, \eqref{eqn:ring_containment_3} and \eqref{eqn:ring_containment_4} are contained in the ring $\mathbb{C}[\epsilon^{\pm\frac{1}{2(r-1)}}](\!(t^{\frac{1}{2}})\!)$
which completes the first assertion of lemma.  The orders of  $\zeta_i$ and $\xi_i$ in $t$ follow from Lemma \ref{lem:series_expansion_for_roots} and equation \eqref{eqn:ring_containment_3} completing the proof.
\end{proof}

\subsection{Idempotents and transition matrices}
We start with a quick observation following from equation (\ref{eqn:v_i_interms_of_v_0}):
\begin{equation}\label{eqn:v_0bulletq_i(v_0)_coefficients}
v_0\bullet q_i(v_0)=\sum_{k=0}^{r-1}q_{ik}v_0^{\bullet (k+1)}=\sum_{k=0}^{r-1}(-1)^{k}q_{ik}v_{k}.
\end{equation}
Define\footnote{We define $\prod^{\bullet}_{j\neq i}\phi_j \coloneqq \phi_0 \bullet \cdots \bullet \phi_{i-1}\bullet \phi_{i+1}\bullet \cdots \bullet \phi_{r-1} $.}
\begin{equation*}
  e_i
  =\frac{q_i(v_0)}{q_i(\zeta_i)}=\frac{\prod^{\bullet}_{j \neq i}(v_0 - \zeta_j\mathbf{1})}{\prod_{j \neq i} (\zeta_i - \zeta_j)}.
\end{equation*}
Since $p_t(v_0)=0$ and $p_t(x)=(x-\zeta_i)q_i(x)$, we see that each $e_i$ is an eigenvector of $\mathds{L}_{v_0}$:
\begin{equation*}
  v_0 \bullet e_i
  = \zeta_i e_i.
\end{equation*}
Combining with equation (\ref{eqn:properties_of_q_i(x)}), we obtain an alternative expression for $e_i$:
\begin{equation}\label{eqn:idempotents_alternate_expression}
e_i=\frac{v_0\bullet q_i(v_0)}{\zeta_i q_i(\zeta_i)}=\frac{v_0\bullet q_i(v_0)}{(-1)^r\xi_i^2}
\end{equation}
which turns out to be more useful in computations.

Also, equation (\ref{eqn:properties_of_q_i(x)}) implies that $e_i$ forms an idempotent basis
\begin{equation*}
e_i\bullet e_j=\delta_{i,j}e_i
\end{equation*}
for all $0\leq i,j\leq r-1$. We also see that
\begin{equation}
  \label{eqn:v0-in-idempotents}
  v_0 = \sum_{i=0}^{r-1} \zeta_i e_i.
\end{equation}
This quickly generalizes, by equation (\ref{eqn:v_i_interms_of_v_0}), we get
\begin{equation}\label{eqn:vk-in-idempotents}
v_{k} = (-1)^k\sum_{i=0}^{r-1} \zeta_i^{k+1} e_i
\end{equation}
for $0\leq k \leq r-1$.

Observe the following computation for the norms of the idempotents:
\begin{equation*}
\eta(e_i,e_i)
= \eta(\mathbf{1},e_i)
= \frac{1}{t} \eta(-v_{r - 1} + \epsilon v_0, \frac{v_0\bullet q_i(v_0)}{(-1)^r\xi_i^2})
= \frac{1}{t(-1)^r\xi_i^2}\left( -\eta(v_{r - 1}, v_0\bullet q_i(v_0))+\epsilon\eta(v_0, v_0\bullet q_i(v_0))\right).
\end{equation*}
Then, by equation (\ref{eqn:v_0bulletq_i(v_0)_coefficients}) and the definition of $\eta$, we obtain that
\begin{equation}
\eta(e_i,e_i)=\frac{1}{t(-1)^r\xi_i^2}(q_{i1}+\epsilon t^{-1}q_{i0})=\frac{1}{t(-1)^r\xi_i^2}(\zeta_{i}^{r-2}+\epsilon t^{-1}(-1)^rt\zeta_i^{-1})=\frac{(-1)^r}{t\zeta_i^2\xi_i^2}(\zeta_{i}^{r}+\epsilon (-1)^r\zeta_i).
\end{equation}
Since $p_t(\zeta_i)=0$, we obtain that the norms of the idempotents are
\begin{equation}\label{eqn:norm_of_idempotents_shifted_chiodo}
\eta(e_i,e_i)=\frac{1}{\zeta_i^2\xi_i^2}
\end{equation}
and we set
\begin{equation} \label{eqn:Disc_i}
\Disc_{i,\Ch^{r,\epsilon}} \coloneqq \left(\eta(e_i,e_i)\right)^{-1}= \zeta_i^2 \xi_i^2.
\end{equation}
Define the Frobenius discriminant $\Delta_{\Ch^{r,\epsilon}}$ of $\Ch^{r,\epsilon}$
\begin{equation*}
\Disc_{\Ch^{r,\epsilon}} \coloneqq \prod_{i=0}^{r-1}\Disc_{i,\Ch^{r,\epsilon}} .
\end{equation*}
\begin{lemma}\label{lem:chiodo-frobenius-discriminant}
The discriminant $\Delta \left(p_t(x)\right)$ of $p_t(x)$
and the Frobenius discriminant $\Delta_{\Ch^{r,\epsilon}}$ for $\Ch^{r,\epsilon}$ is related by the identity:
\begin{equation*}
  \Disc_{\Ch^{r,\epsilon}}=(-1)^{\frac{r^2+r+2}{2}}t^3\Delta(p_t(x)).
\end{equation*}
\end{lemma}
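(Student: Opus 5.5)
The plan is to compute $\Disc_{\Ch^{r,\epsilon}}=\prod_{i=0}^{r-1}\zeta_i^2\xi_i^2$ directly from \eqref{eqn:Disc_i}. The two factors $\prod_i\zeta_i^2$ and $\prod_i\xi_i^2$ will be expressed separately through the coefficients of $p_t(x)$ and through the standard formula for the discriminant of a monic polynomial in terms of its roots. This is a purely algebraic identity among symmetric functions of the roots. So it suffices to verify it where the roots are distinct, i.e.\ on the open locus where $\Disc(p_t(x))\neq0$, and then conclude by polynomial identity. No analytic input from Lemma~\ref{lem:series_expansion_for_roots} is needed.

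First, since $p_t(x)=x^r+(-1)^r\epsilon x-(-1)^r t$ is monic of degree $r$, Vieta gives
\begin{equation*}
\prod_{i=0}^{r-1}\zeta_i=(-1)^r\cdot\bigl(-(-1)^r t\bigr)=-t,
\qquad\text{hence}\qquad \prod_{i=0}^{r-1}\zeta_i^2=t^2 .
\end{equation*}
Second, by \eqref{eqn:xi_i_definition} we have $\xi_i^2=(-1)^r\zeta_i\prod_{j\neq i}(\zeta_i-\zeta_j)$. Taking the product over $i$ gives
\begin{equation*}
\prod_{i=0}^{r-1}\xi_i^2=(-1)^{r^2}\Bigl(\prod_i\zeta_i\Bigr)\prod_{i\neq j}(\zeta_i-\zeta_j)
=(-1)^{r}(-t)\prod_{i\neq j}(\zeta_i-\zeta_j).
\end{equation*}
For a monic polynomial of degree $r$, the discriminant satisfies $\Disc(p_t(x))=\prod_{i<j}(\zeta_i-\zeta_j)^2=(-1)^{r(r-1)/2}\prod_{i\neq j}(\zeta_i-\zeta_j)$. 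Substituting this yields $\prod_i\xi_i^2=(-1)^{r+1+r(r-1)/2}\,t\,\Disc(p_t(x))$.

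Multiplying the two factors gives $\Disc_{\Ch^{r,\epsilon}}=(-1)^{r+1+r(r-1)/2}t^3\Disc(p_t(x))$. Since $r+1+\tfrac{r(r-1)}{2}=\tfrac{r^2+r+2}{2}$, this is the claimed identity. The only step requiring care is the sign bookkeeping. We must use the convention for $\Disc(p_t(x))$ consistent with \eqref{eqn:disc_of_p_t_of_x}, namely the root-product convention. As a cross-check, one can compare with the resultant formula $\Disc=(-1)^{r(r-1)/2}\prod_i p_t'(\zeta_i)$. Indeed $q_i(\zeta_i)=p_t'(\zeta_i)$, which recovers the same expression. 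Finally, both sides are polynomials in $(\epsilon,t)$, so the identity extends from the locus of distinct roots to all of $A_{\Ch}$.
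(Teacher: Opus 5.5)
Your proof is correct and follows essentially the same route as the paper. Both arguments expand $\prod_i \zeta_i^2\xi_i^2$ via \eqref{eqn:xi_i_definition}, use Vieta's relation $\prod_i \zeta_i = -t$, and use the root-product form $\prod_{i\neq j}(\zeta_i-\zeta_j) = (-1)^{r(r-1)/2}\Delta(p_t(x))$. The only differences are that the paper groups $\zeta_i^2\cdot\zeta_i$ into $\prod_i \zeta_i^3 = (-t)^3$ instead of splitting off $t^2$ and $-t$, and that your closing polynomial-identity remark is a harmless extra the paper does not need.
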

\begin{proof}
By equations \eqref{eqn:xi_i_definition} and \eqref{eqn:Disc_i}, we get
\begin{equation*}
\Disc_{\Ch^{r,\epsilon}}
= \prod_{i=0}^{r-1} \Disc_{i,\Ch^{r,\epsilon}} 
=\prod_{i=0}^{r-1}\zeta_i^2 \xi_i^2=\prod_{i=0}^{r-1}\left(\zeta_i^2\cdot(-1)^r\zeta_i \prod_{j \neq i} (\zeta_i - \zeta_j)\right)
=(-1)^{r^2}\left(\prod_{i=0}^{r-1}\zeta_i^3\right)\left(\prod_{i=0}^{r-1}\prod_{j \neq i} (\zeta_i - \zeta_j)\right).
\end{equation*}
Then, by the definition of the discriminant $\Delta (p_t(x))$ and the formula of the product $\zeta_0 \zeta_1 \cdot \cdots \cdot \zeta_{r-1}=-t$ for the roots of $p_t(x)$, we conclude
\begin{equation*}
\Disc_{\Ch^{r,\epsilon}}=(-1)^{r^2}(-t)^3(-1)^{\frac{r(r-1)}{2}}\Delta(p_t(x))=(-1)^{\frac{r^2+r+2}{2}}t^3\Delta(p_t(x)).
\end{equation*}
\end{proof}

\begin{corollary}\label{cor:chiodo-semisimple-locus}
Over $A_{\Ch}=\QQ[\epsilon,t^{ \pm 1}]$, localizing at $\Disc_{\Ch^{r,\epsilon}}$ is equivalent to localizing at $\Delta \left(p_t(x)\right)$. Hence, the CohFT $\Ch^{r,\epsilon}$ is generically semisimple, and its semisimple locus is the complement of
\begin{equation*}
(-1)^{r-1}r^rt^{r-1}-(r-1)^{r-1}\epsilon^r=0.
\end{equation*}
\end{corollary}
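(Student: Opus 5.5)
The plan is to read the corollary off Lemma~\ref{lem:chiodo-frobenius-discriminant} together with the explicit formula \eqref{eqn:disc_of_p_t_of_x}. By that lemma,
\begin{equation*}
\Disc_{\Ch^{r,\epsilon}}=\pm t^3\,\Delta(p_t(x)).
\end{equation*}
Since $t$ (hence $t^3$) is a unit in $A_{\Ch}=\QQ[\epsilon,t^{\pm1}]$, the two elements differ by a unit. So $A_{\Ch}[\Disc_{\Ch^{r,\epsilon}}^{-1}]=A_{\Ch}[\Delta(p_t(x))^{-1}]$, which is the first claim.

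Next I would observe that, although $\Disc_{\Ch^{r,\epsilon}}$ was defined through the roots $\zeta_i$ and the square roots $\xi_i$, it agrees with the Frobenius discriminant $\det(\Tr(v_iv_j))/\det(\eta(v_i,v_j))$ of Section~\ref{subsec:frobenius-algebras}. Both are basis-independent, and after base change to the splitting field they equal $\prod_i\eta(e_i,e_i)^{-1}$. In particular it lies in $A_{\Ch}$, as the identity with $\pm t^3\Delta(p_t)$ also shows.

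For generic semisimplicity I would invoke the criterion quoted from \cite{paper1}: a Frobenius algebra is generically semisimple if and only if its discriminant is not a zero divisor. The base $A_{\Ch}$ is a domain, so it suffices that
\begin{equation*}
\Delta(p_t(x))=\pm\bigl((-1)^{r-1}r^rt^{r-1}-(r-1)^{r-1}\epsilon^r\bigr)
\end{equation*}
is a nonzero element. This holds because its $\epsilon^r$ coefficient $-(r-1)^{r-1}$ is nonzero for $r\ge2$. Alternatively, one can argue directly: over $A_{\Ch}[\Delta^{-1}]$ with the roots adjoined, the $\zeta_i$ are distinct, so the $e_i=q_i(v_0)/q_i(\zeta_i)$ constructed above form an idempotent basis.

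Finally, for the semisimple locus, I would take a point $\mathfrak p$ of $\Spec A_{\Ch}$. Semisimplicity at $\mathfrak p$ means the fiber algebra over $\overline{k(\mathfrak p)}$ has an idempotent basis, equivalently the trace form is nondegenerate there. Since $V$ is generated by $v_0$ with characteristic polynomial $p_t$, the fiber algebra is $\overline{k(\mathfrak p)}[x]/(p_t)$. This algebra is reduced, i.e.\ semisimple, exactly when $p_t$ has distinct roots, i.e.\ when $\Delta(p_t)\notin\mathfrak p$. Dropping the harmless sign gives the stated equation. I expect no genuine obstacle here; the only point needing care is matching the root-based definition of $\Disc_{\Ch^{r,\epsilon}}$ with the trace-form discriminant, and that is handled by the basis-independence argument above.
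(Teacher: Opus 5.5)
Your proposal is correct and follows the paper's argument: the paper deduces the corollary directly from Lemma~\ref{lem:chiodo-frobenius-discriminant} together with the invertibility of $t$ in $A_{\Ch}$. Your additional checks make explicit what the paper leaves implicit, namely that $\Delta(p_t(x))$ is a nonzero element of the domain $A_{\Ch}$, and that the fiber at $\mathfrak p$ is $\overline{k(\mathfrak p)}[x]/(p_t)$, which is semisimple exactly when $\Delta(p_t(x))\notin\mathfrak p$.
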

\begin{proof}
The assertion follows from Lemma \ref{lem:chiodo-frobenius-discriminant}, since $t$ is invertible in $A$.
\end{proof}

By equation \eqref{eqn:norm_of_idempotents_shifted_chiodo}, the normalized\footnote{Normalized in the sense $\eta(\widetilde{e}_i,\widetilde{e}_j)=\delta_{i,j}$.} idempotents are given by
\begin{equation*}
  \widetilde{e}_i \coloneqq \zeta_i\xi_i e_i.
\end{equation*}
We observe that equations (\ref{eqn:q_i(x)_coefficients}), (\ref{eqn:v_0bulletq_i(v_0)_coefficients})  and (\ref{eqn:idempotents_alternate_expression}) imply
\begin{equation}\label{eqn:normalized_idempotents_in_vk}
\widetilde{e}_i=(-1)^r\frac{\zeta_i}{\xi_i} v_0 \bullet q_i(v_0)=\frac{t}{\xi_i}v_0+\sum_{k=1}^{r-1}(-1)^{r-k}\frac{\zeta_i^{r-k}}{\xi_i}v_k.
\end{equation}
Also, by the equation (\ref{eqn:vk-in-idempotents}), we get
\begin{equation}\label{eqn:vk_in_normalized_idempotents}
\begin{aligned}
v_k=\sum_{i=0}^{r-1} (-1)^k\zeta_i^{k+1}\frac{1}{\zeta_i\xi_i}\widetilde{e}_i=\sum_{i=0}^{r-1}\frac{(-1)^k\zeta_i^k}{\xi_i}\widetilde{e}_i
\end{aligned}
\end{equation}
for $0\leq k \leq r-1$.

Let $\Psi$ be the transition matrix from the flat basis $\{v_i\}_{i=0}^{r-1}$ to the normalized idempotent basis $\{\widetilde{e}_i\}_{i=0}^{r-1}$. Then, by equation (\ref{eqn:vk_in_normalized_idempotents}), we conclude that the entries of $\Psi$ are given by
\begin{equation} \label{eqn:Psi-formula}
\Psi_{i,j}=\eta(v_j,\widetilde{e}_i)=\frac{(-1)^j\zeta_i^j}{\xi_i}
\end{equation}
for $0\leq i, j \leq r-1$. Similarly, equation (\ref{eqn:normalized_idempotents_in_vk}) implies that the entries of the inverse $\Psi^{-1}$ are given by
\begin{equation}\label{eqn:Psi_Inverse}
\Psi^{-1}_{i,j}=\frac{1}{\xi_j}\cdot
\begin{cases}
t\quad &\text{if}\quad i=0,\\
(-1)^{r-i}\zeta_j^{r-i}\quad &\text{if}\quad 1\leq i\leq r-1
\end{cases}
\end{equation}
for $0\leq j \leq r-1$.

\subsection{Comparison of Frobenius structures as $t \to 0$}

\begin{remark}[Convention for the limit $t\to0$]
\label{rem:t-limit-convention}
Fix $\epsilon\neq0$ and the branches of the roots $\zeta_i(t)$ and
square roots $\xi_i(t)$ chosen above.  We regard all quantities as
Laurent series in $t^{1/2}$ with coefficients in $  \CC[\epsilon^{\pm 1/(2(r-1))}].$ The symbol $t\to0$ always refers to the constant term specialization of those Laurent expansions when it exists.

For a vector in $V$, the limit $t\to0$ is taken coefficientwise in the
fixed flat basis $v_0,\ldots,v_{r-1}$; for a matrix, it is taken
entrywise in the bases specified in its definition.  When comparing
with the deformed Theta theory, we identify
$\underline V=\langle v_1,\ldots,v_{r-1}\rangle$ with the corresponding
subspace of $V$ and restrict all inputs and matrix blocks to the
nonzero colors $1,\ldots,r-1$.
\end{remark}

\subsubsection{Limits recovering the deformed Theta Frobenius structure}
We see that the quantum product $\bullet$ of $\epsilon v_0$-shifted Chiodo CohFT $\Ch^{r,\epsilon}$ admits a limit as $t\to 0$, which coincides with the quantum product of $\epsilon v_0$-deformed theta CohFT $\Th^{r,\epsilon}$ when restricted to the state space $\underline{V}$  of  $\Th^{r,\epsilon}$.

\begin{corollary}\label{cor:limit_of_quantum_product} For all $\vv, \ww \in {V}$, the limit in the fixed flat basis
\begin{equation*}
\lim_{t \to 0} \vv \bullet \ww 
\end{equation*}
exists. Moreover, if $\vv, \ww \in \underline{V}$, then we have the limit that coincides with the quantum product of $\Th^{r,\epsilon}$:
  \begin{equation*}
   \lim_{t \to 0} \vv \bullet \ww = \vv \bullet \ww \vert_{t = 0}
    = \vv \ubullet \ww.
  \end{equation*}
\end{corollary}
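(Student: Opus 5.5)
The plan is to work directly from the explicit multiplication table in Lemma~\ref{lem:quantum_product_of_shifted_chiodo}. By bilinearity it suffices to treat basis vectors $v_a, v_b$ with $0\le a,b\le r-1$. In each of the three cases of \eqref{eqn:quantum_product_of_shifted_chiodo}, the coefficients of $v_a\bullet v_b$ in the flat basis are polynomials in $\epsilon$ and $t$ with no negative powers of $t$. This holds even though the pairing $\eta$ contains $t^{-1}$, since the $t$ coming from $\eta^{-1}$ cancels it in the only affected slot, $v_0$. Hence the constant-term specialization of Remark~\ref{rem:t-limit-convention} exists and equals the value at $t=0$. For general $\vv,\ww\in V$ with coefficients in $\QQ[\epsilon]$ (or Laurent in $t$ only as allowed), bilinearity gives the first assertion, $\lim_{t\to0}\vv\bullet\ww=\vv\bullet\ww|_{t=0}$.

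For the second assertion, I restrict to $1\le a,b\le r-1$ and set $t=0$ case by case. If $a+b<r-1$, then $a+b\ge 2$ and $v_a\bullet v_b=-v_{a+b+1}$ with $a+b+1\le r-1$, which matches the first line of \eqref{eqn:quantum_product_of_deformed_theta}. If $r-1\le a+b<2r-2$, the term $-tv_{a+b+1-r}$ vanishes and we are left with $-\epsilon v_{a+b+2-r}$. Here $1\le a+b+2-r\le r-1$, so this lies in $\underline V$ and matches the second line. If $a+b=2r-2$, both $t$-terms drop out and we get $-\epsilon^2 v_1$, matching the third line. In every case the result lies in $\underline V$, and it agrees with $v_a\ubullet v_b$ as computed in \cite[Lemma 3.4, Proposition 3.5]{CGG25}.

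I expect the only delicate point to be conceptual rather than computational. The theta product is defined intrinsically through $\Th^{r,\epsilon}_{0,3}$ and $\underline\eta$ on $\underline V$, whereas the Chiodo product uses $\eta^{-1}$ on all of $V$, including the $v_0\otimes v_0$ component. As an alternative check independent of the CGG formula, I would note the following. By \eqref{eqn:theta_as_chiodo_specialization}, $\Ch^{r,\epsilon}_{0,3}|_{t=0}=\Th^{r,\epsilon}_{0,3}$ on $\underline V^{\otimes 3}$. For $u\in\underline V$ we have $\eta(v_0,u)=0$ and $\underline\eta=\eta|_{\underline V}$. It follows that the $\underline V$-component of $v_a\bullet v_b$ is determined by pairing against $\underline V$. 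Therefore it agrees with $v_a\ubullet v_b$ at $t=0$. What remains is to show that the $v_0$-component vanishes at $t=0$. This is exactly the content of the table above: the $v_0$-coefficient is $-t\epsilon\,\delta_{a+b,2r-2}$, or $-t$ when $a+b=r-1$.
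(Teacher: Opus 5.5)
Your proposal is correct and is essentially the paper's argument: you read off from \eqref{eqn:quantum_product_of_shifted_chiodo} that all flat-basis coefficients of $v_a\bullet v_b$ are polynomial in $\epsilon$ and $t$, set $t=0$, and match case by case with \eqref{eqn:quantum_product_of_deformed_theta}; your alternative check via \eqref{eqn:theta_as_chiodo_specialization} and the nondegeneracy of $\underline\eta=\eta|_{\underline V}$ is also sound, and there the $v_0$-coefficient $t\,\Ch^{r,\epsilon}_{0,3}(v_a\otimes v_b\otimes v_0)$ visibly vanishes at $t=0$ without needing the table. One small caution: drop the parenthetical that allows coefficients Laurent in $t$, because the limit can fail for such inputs; for example $\mathbf{1}\bullet\mathbf{1}=\mathbf{1}=(-v_{r-1}+\epsilon v_0)/t$ has no limit as $t\to0$.
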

\begin{proof}
It is clear that the quantum product expression \eqref{eqn:quantum_product_of_shifted_chiodo} admits a limit as $t\to 0$ and it matches with the quantum product \eqref{eqn:quantum_product_of_deformed_theta} of deformed theta CohFT when restricted on $\underline{V}$.
\end{proof}
\begin{lemma}\label{lem:limit_of_frobenius_structures}
The following limits hold.

\begin{enumerate}
    \item For $1\leq i,j\leq r-1$, we have
\begin{equation*}
\lim_{t \to 0}\Disc_{i,\Ch^{r,\epsilon}}=\underline{\Disc}_{i,\Th^{r,\epsilon}},\quad 
\lim_{t \to 0} \widetilde{e}_i=\widetilde{\underline{e}}_i,\quad
\lim_{t \to 0} \Psi_{i,j}=\underline{\Psi}_{i,j},\quad
\lim_{t \to 0} \Psi_{i,j}^{-1}=\underline{\Psi}^{-1}_{i,j}.
\end{equation*}
    \item For $1\leq i,j\leq r-1$, we have
\begin{equation*}
\lim_{t\to 0}\Psi_{0,j}=0,\quad \lim_{t\to 0}\Psi_{i,0}=\underline{\xi}_i^{-1}.
\end{equation*}
\item For $0\leq i\leq r-1$, we have
\begin{equation*}
\lim_{t\to 0}\Psi_{0,i}^{-1}=0.
\end{equation*}
\end{enumerate}

\end{lemma}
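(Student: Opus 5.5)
The plan is to substitute the limits of the roots and of the square roots into the explicit formulas already derived, entry by entry. By Lemma~\ref{lem:series_expansion_for_roots}, for $1\le i\le r-1$ we have $\zeta_i\to\underline{\zeta}_i$, while $\zeta_0=t/\epsilon+O(t^2)\to 0$. By the branch convention \eqref{eqn:xi_limit_comparison}, $\xi_i\to\underline{\xi}_i$. By Lemma~\ref{lem:xi_i_and_zeta_i_and_inverses_ring_containment}, $\underline{\xi}_i\neq0$ for $i\ge1$, so $\xi_i^{-1}\to\underline{\xi}_i^{-1}$. In contrast, $\xi_0=\sqrt{t}\,(1+O(t))$, so $\operatorname{ord}_t\xi_0^{-1}=-\tfrac12$. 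All of these are statements in $\CC[\epsilon^{\pm\frac{1}{2(r-1)}}](\!(t^{1/2})\!)$. Hence a product of such factors has a limit equal to the product of the limits as soon as every factor has nonnegative order. For the index-$0$ entries we instead compare orders.

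For part (1), use \eqref{eqn:Disc_i}: $\Disc_{i,\Ch^{r,\epsilon}}=\zeta_i^2\xi_i^2\to\underline{\zeta}_i^2\underline{\xi}_i^2$. This equals $\underline{\Disc}_{i,\Th^{r,\epsilon}}$ by Lemma~\ref{lem:theta_normalized_idempotents}. For the normalized idempotents, use \eqref{eqn:normalized_idempotents_in_vk}. The $v_0$-coefficient $t/\xi_i$ tends to $0$ because $\xi_i^{-1}$ has order $0$. The coefficients $(-1)^{r-k}\zeta_i^{r-k}/\xi_i$ tend to $(-1)^{r-k}\underline{\zeta}_i^{r-k}/\underline{\xi}_i$, which matches \eqref{eqn:normalized_idempotents_theta} coefficientwise in the flat basis, as required by Remark~\ref{rem:t-limit-convention}. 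The matrix entries then follow by comparing \eqref{eqn:Psi-formula} and \eqref{eqn:Psi_Inverse} for $1\le i,j\le r-1$ with Lemma~\ref{lem:Psi_and_Psi_inverse_for_deformed_theta}: $(-1)^j\zeta_i^j/\xi_i\to(-1)^j\underline{\zeta}_i^j/\underline{\xi}_i$ and $(-1)^{r-i}\zeta_j^{r-i}/\xi_j\to(-1)^{r-i}\underline{\zeta}_j^{r-i}/\underline{\xi}_j$.

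For part (2), $\Psi_{i,0}=\zeta_i^0/\xi_i=\xi_i^{-1}\to\underline{\xi}_i^{-1}$. For $\Psi_{0,j}=(-1)^j\zeta_0^j/\xi_0$ with $j\ge1$, the order is $j-\tfrac12\ge\tfrac12>0$, so the limit is $0$. For part (3), \eqref{eqn:Psi_Inverse} gives $\Psi^{-1}_{0,i}=t/\xi_i$. For $i\ge1$ its order is $1$. For $i=0$ it is $t/\xi_0=\sqrt{t}\,(1+O(t))$, of order $\tfrac12$. In both cases the constant term vanishes.

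The only delicate point is the bookkeeping at the index $0$, where $\xi_0$ has half-integral order and the limit must be read as the constant term of a Laurent series in $t^{1/2}$. Once the order computations of Lemma~\ref{lem:xi_i_and_zeta_i_and_inverses_ring_containment} are used, each such entry has strictly positive order and the claims follow. I expect no real obstacle beyond checking that the branch choice of $\xi_i$ is the one fixed in \eqref{eqn:xi_limit_comparison}, so that the signs of $\underline{\Psi}$ are matched exactly.
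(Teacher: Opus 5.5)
Your proposal is correct and follows the paper's proof essentially step for step. You substitute $\zeta_i\to\underline{\zeta}_i$ and $\xi_i\to\underline{\xi}_i$ into \eqref{eqn:Disc_i}, \eqref{eqn:normalized_idempotents_in_vk}, \eqref{eqn:Psi-formula} and \eqref{eqn:Psi_Inverse} for the nonzero indices. For the index-$0$ entries you use the order counts $\operatorname{ord}_t\Psi_{0,j}=j-\tfrac12$ and $\operatorname{ord}_t\Psi^{-1}_{0,i}=1-\tfrac12\delta_{i,0}$ from Lemma~\ref{lem:xi_i_and_zeta_i_and_inverses_ring_containment}, exactly as the paper does.
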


\begin{proof} For part (1), let $1\leq i,j\leq r-1$.
By
Lemma \ref{lem:series_expansion_for_roots} and the choice of branches in
equation \eqref{eqn:xi_limit_comparison}, we have
\begin{equation*}
  \lim_{t\to 0}\zeta_i=\underline{\zeta}_i \quad \text{and} \quad
  \lim_{t\to 0}\xi_i=\underline{\xi}_i.
\end{equation*}
Hence, by equations \eqref{eqn:norms_theta} and \eqref{eqn:Disc_i}, we obtain
\begin{equation*}
  \lim_{t\to 0}\Disc_{i,\Ch^{r,\epsilon}}
  =\lim_{t\to 0}\zeta_i^2\xi_i^2=
  \underline{\zeta}_i^2\underline{\xi}_i^2
  =\underline{\Disc}_{i,\Th^{r,\epsilon}}.
\end{equation*}
Since $\lim_{t\to 0}\xi_i=\underline{\xi}_i\neq0$, the coefficient $t/\xi_i$
tends to zero. As a result, we have
\begin{equation*}
\lim_{t\to 0}\widetilde e_i
=\lim_{t\to 0}\left(\frac{t}{\xi_i}v_0 +\sum_{k=1}^{r-1}(-1)^{r-k}\frac{\zeta_i^{r-k}}{\xi_i}v_k\right)=\sum_{k=1}^{r-1}
(-1)^{r-k}\frac{\underline{\zeta}_i^{r-k}}{\underline{\xi}_i}v_k
=\widetilde{\underline e}_i.
\end{equation*}
by equations \eqref{eqn:normalized_idempotents_theta} and \eqref{eqn:normalized_idempotents_in_vk}. The transition matrix formulas \eqref{eqn:Psi-formula} and
\eqref{eqn:Psi_Inverse} now give
\begin{equation*}
\lim_{t\to 0}\Psi_{i,j}
=\lim_{t\to 0}\left((-1)^j\frac{\zeta_i^j}{\xi_i}\right)
=(-1)^j\frac{\underline{\zeta}_i^j}{\underline{\xi}_i}
=\underline\Psi_{i,j}
\end{equation*}
and
\begin{equation*}
\lim_{t\to 0}\Psi^{-1}_{i,j}
=\lim_{t\to 0}\left((-1)^{r-i}\frac{\zeta_j^{r-i}}{\xi_j}\right)
=(-1)^{r-i}\frac{\underline{\zeta}_j^{r-i}}{\underline{\xi}_j}
=\underline\Psi^{-1}_{i,j},
\end{equation*}
where the last equalities use
Lemma \ref{lem:Psi_and_Psi_inverse_for_deformed_theta}.  This proves
part (1).

For part (2), first let $1\leq j\leq r-1$.  From
equation \eqref{eqn:Psi-formula} and
Lemma \ref{lem:xi_i_and_zeta_i_and_inverses_ring_containment}, we have
\begin{equation*}
\operatorname{ord}_t(\Psi_{0,j})
=j-\frac{1}{2}>0.
\end{equation*}
So, we see that
\begin{equation*}
\lim_{t\to 0}\Psi_{0,j}=0
\end{equation*}
If $1\leq i\leq r-1$, then
\begin{equation*}
\lim_{t\to 0}\Psi_{i,0}
=\lim_{t\to 0}\xi_i^{-1}
=\underline{\xi}_i^{-1}.
\end{equation*}

Finally, by equation \eqref{eqn:Psi_Inverse} and Lemma \ref{lem:xi_i_and_zeta_i_and_inverses_ring_containment}, we have
\begin{equation*}
  \operatorname{ord}_t\Psi^{-1}_{0,i}=1-\operatorname{ord}_t\xi_i=1-\frac{1}{2}\delta_{i,0}>0
\end{equation*}
for $0\leq i \leq r-1$. Hence, we get
\begin{equation*}
\lim_{t\to 0}\Psi_{0,i}^{-1}=0.
\end{equation*}
This proves part (3) and completes the proof.
\end{proof}

\subsubsection{Singular behaviors in Chiodo Frobenius structure}

\begin{lemma}
The unit
\begin{equation*}
  \mathbf{1}
  = \frac{-v_{r - 1} + \epsilon v_0}t
\end{equation*}
does not have a limit as $t \to 0$.
\end{lemma}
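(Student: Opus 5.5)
The plan is a direct coefficient check in the fixed flat basis $v_0,\ldots,v_{r-1}$, using the convention of Remark~\ref{rem:t-limit-convention}: a vector has a limit as $t\to0$ exactly when each of its coordinates in this basis, viewed as a Laurent series in $t^{1/2}$ with coefficients in $\CC[\epsilon^{\pm 1/(2(r-1))}]$, has no polar part.

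First, write out the coordinates of $\mathbf{1}$ from \eqref{eqn:unit_of_shifted_Chiodo}. The coefficient of $v_{r-1}$ is $-t^{-1}$, and the coefficient of $v_0$ is $\epsilon t^{-1}$; all other coefficients vanish. Since $\operatorname{ord}_t(-t^{-1})=-1<0$, the $v_{r-1}$-coordinate has a nonzero polar part. This coefficient is independent of $\epsilon$, so the conclusion holds even at $\epsilon=0$. For $r=1$ one would have $v_{r-1}=v_0$, but here $r\ge2$, so the basis vectors $v_0$ and $v_{r-1}$ are distinct and no cancellation can occur between the two terms.

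Optionally, as a consistency check, one can verify the same conclusion through idempotents. We have $\mathbf{1}=\sum_i e_i=\sum_i (\zeta_i\xi_i)^{-1}\widetilde e_i$, and by Lemma~\ref{lem:xi_i_and_zeta_i_and_inverses_ring_containment}, $\operatorname{ord}_t\bigl((\zeta_0\xi_0)^{-1}\bigr)=-\tfrac32$. This shows that the divergence comes from the idempotent $e_0$ associated with the root $\zeta_0\to0$. The first argument alone suffices, however.

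There is no real obstacle here. The only point that needs care is to state explicitly that the limit is understood coordinatewise in the flat basis, since the unit of $\Th^{r,\epsilon}$ is a different vector from any would-be limit of $\mathbf{1}$.
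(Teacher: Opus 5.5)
Your proposal is correct and matches the paper's own argument: the coefficient of $v_{r-1}$ in the fixed flat basis is $-t^{-1}$, a pole of order one, so $\mathbf{1}$ has no limit. The idempotent cross-check is a harmless aside. It is not needed, and it would not stand alone as a proof: a divergent coefficient in the $t$-dependent idempotent basis does not, without further bookkeeping, imply divergence of a flat coordinate.
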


\begin{proof}
In the fixed flat basis, the unit $\mathbf{1}$ has a pole of order one at $t=0$
and has no limit in $V$.
\end{proof}

\begin{remark}
Although the limit of quantum product $\bullet$ matches with $\underline{\bullet}$ as $t\to 0$, we see that the unit  $\mathbf 1$ of the $\epsilon v_0$-shifted CohFT $\Ch^{r,\epsilon}$ does not admit a limit as $t \to 0$, and is very different from the unit $\underline{\mathbf 1}$ of the $\epsilon v_0$-deformed theta CohFT $\Th^{r,\epsilon}$ given by equation \eqref{eqn:unit_of_deformed_theta}. This limiting behavior is quite interesting and we are not aware of such a limiting phenomenon in the literature of CohFTs arising from enumerative geometry.
\end{remark}

\begin{lemma}
The limit
\begin{equation*}
\lim_{t \to 0}\Psi_{0,0}
\end{equation*}
does not exist. 
\end{lemma}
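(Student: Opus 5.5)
We read off the formula for $\Psi_{0,0}$ from \eqref{eqn:Psi-formula} and determine its $t$-order using Lemma~\ref{lem:xi_i_and_zeta_i_and_inverses_ring_containment}. Taking $i=j=0$ in \eqref{eqn:Psi-formula} gives
\begin{equation*}
\Psi_{0,0}=\frac{(-1)^0\zeta_0^0}{\xi_0}=\xi_0^{-1}.
\end{equation*}
So the statement reduces to showing that $\xi_0^{-1}$ has no constant-term limit in the sense of Remark~\ref{rem:t-limit-convention}. That is, its Laurent expansion in $t^{1/2}$ must have a genuinely negative leading order.

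For the order of $\xi_0^{-1}$, recall from \eqref{eqn:xi_i_definition} that $\xi_0^2=(1-r)\epsilon\zeta_0+rt$. Lemma~\ref{lem:series_expansion_for_roots} gives $\zeta_0=t/\epsilon+O(t^2)$, hence
\begin{equation*}
\xi_0^2=(1-r)t+rt+O(t^2)=t+O(t^2).
\end{equation*}
The leading coefficient is $1\neq 0$, independently of $r$ and of the nonzero value of $\epsilon$. This is exactly the content of \eqref{eqn:ring_containment_3} and Lemma~\ref{lem:xi_i_and_zeta_i_and_inverses_ring_containment}, which give $\operatorname{ord}_t(\xi_0)=\tfrac12$.

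It follows that $\operatorname{ord}_t(\Psi_{0,0})=-\tfrac12$. Concretely, by \eqref{eqn:ring_containment_4},
\begin{equation*}
\Psi_{0,0}=t^{-1/2}\bigl(1+O(t)\bigr)
\end{equation*}
for the chosen branch of the square root. Since the leading coefficient is nonzero, the series has a pole in $t^{1/2}$, and $\Psi_{0,0}$ does not converge as $t\to0$. Equivalently, $|\Psi_{0,0}|\to\infty$ analytically for fixed $\epsilon\neq 0$.

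The only step requiring care is checking that the leading coefficient of $\xi_0^2$ does not cancel. The two contributions $(1-r)t$ and $rt$ combine to $t$, not $0$. This is already recorded in the cited lemma, so I expect no real obstacle.
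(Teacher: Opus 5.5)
Your proposal is correct and follows the paper's own proof essentially step for step. You identify $\Psi_{0,0}=\xi_0^{-1}$, expand $\xi_0^2=(1-r)\epsilon\zeta_0+rt=t+O(t^2)$, and conclude that $\Psi_{0,0}=t^{-1/2}(1+O(t))$ has a pole of order $\tfrac12$, hence no limit as $t\to0$.
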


\begin{proof}
By definition, $\Psi_{0,0}=\xi_0^{-1}$.  Since
$\zeta_0=t/\epsilon+O(t^2)$, one has
\begin{equation*}
 \xi_0^2=(1-r)\epsilon\zeta_0+rt
 =(1-r)t+rt+O(t^2)=t+O(t^2).
\end{equation*}
With the chosen branch of the square root,
\begin{equation*}
\xi_0=t^{1/2}(1+O(t)),
  \quad
 \Psi_{0,0}=t^{-1/2}(1+O(t)).
\end{equation*}
Hence, $\Psi_{0,0}$ has a pole of order $1/2$ in $t$ and no finite limit.
\end{proof}

\section{$R$-Matrix, $T$-Vector of Shifted Chiodo CohFT $\Ch^{r,\epsilon}$}\label{sec:chiodo-rt}

In this section, we will analyze the dependence of the $R$-matrix and vacuum vector of shifted Chiodo CohFT $\Ch^{r,\epsilon}$ with respect to the parameter $\epsilon$. We will provide the explicit forms of the vacuum vector and the flatness equation governing the  $\epsilon$-dependence of $R$-matrix of $\Ch^{r,\epsilon}$.

Let
\begin{equation*}
R(z)=\sum_{k=0}^\infty R_kz^k=\mathrm{Id}+R_1z+R_2z^2+\cdots \in \mathrm{Id}+z\cdot \End(V)[\![z]\!]
\end{equation*}
be the $R$-matrix of the shifted Chiodo CohFT $\Ch^{r,\epsilon}$.  First, observe that we have
\begin{equation}\label{eqn:R_inverse_as_adjoint_series}
R^{-1}(z)=R^*(-z)=\sum_{k\geq 0}(-1)^kR^*_kz^k
\end{equation}
by the symplectic condition \eqref{eqn:symplectic-condition}.

\subsection{Flatness equations and $R$-matrix }
For background on Frobenius manifolds, canonical coordinates, and the discriminant locus, we refer to \cite[Section 2]{Ja18} and the references therein.

For $k\geq 0$, let $R_k^{\widetilde{e}}$ denote the matrix representation of $R_k$ in the
normalized idempotent basis $\{\widetilde{e}_i\}_{i=0}^{r-1}$, and let
$R_{i,j}^{k}$ denote its entries, so that
$R_k\widetilde{e}_j=\sum_{i}R^{k}_{i,j}\widetilde{e}_i$. Since the basis is
orthonormal with respect to the pairing $\eta$, this is equivalent to
\begin{equation}\label{eqn:R_matrix_in_normalized_idempotent_coordinates}
R_{i,j}^k=\eta (\widetilde{e}_i, R_k \widetilde{e}_j)=\eta (R_k^* \widetilde{e}_i, \widetilde{e}_j).
\end{equation}

\begin{proposition}\label{prop:flatness_equation}
For all $k\geq 0$, the following flatness equation holds\footnote{By convention, we have $R_{-1}^{\widetilde{e}}=0$.}
\begin{equation*}
\Psi \frac{\partial \Psi^{-1}}{\partial \epsilon}R_{k-1}^{\widetilde{e}}+\frac{\partial R_{k-1}^{\widetilde{e}}}{\partial \epsilon}=\frac{\partial U}{\partial \epsilon}R_k^{\widetilde{e}}-R_k^{\widetilde{e}}\frac{\partial U}{\partial \epsilon}
\end{equation*}
where $U$ denotes the diagonal matrix of canonical coordinates, and
\begin{equation*}
\frac{\partial U }{\partial \epsilon}=\diag (\zeta_0,\ldots, \zeta_{r-1}).
\end{equation*}
\end{proposition}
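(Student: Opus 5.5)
We derive the statement from the Givental flatness equation for the $R$-matrix of a semisimple Frobenius manifold, specialized to the one-parameter family $\epsilon\mapsto\Ch^{r,\epsilon}$. This family is the restriction of the Frobenius manifold of $\Ch^r$ to the line $\epsilon v_0$. The starting point is the standard equation
\begin{equation*}
[R_{k},dU]=(d+\Psi d\Psi^{-1})R_{k-1}
\end{equation*}
in the normalized idempotent frame, with $R_{-1}=0$ (see \cite[Section 2]{Ja18} and \cite{Te12}). Its validity does not need a flat unit. Teleman's argument only uses that the $R$-matrix of a shifted CohFT is the $R$-matrix of the Frobenius manifold at the shifted point, and that the connection $\nabla_{\partial_\epsilon}$ is the trivial connection in the flat coordinate $\epsilon$. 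The vacuum accounts for the non-flatness of the unit, so the equation for $R$ is unchanged; see \cite{paper1}. We then contract with $\partial/\partial\epsilon$, the direction $v_0$. In the idempotent frame the $(i,j)$ entry is exactly
\begin{equation*}
(\Psi\partial_\epsilon\Psi^{-1}R^{\widetilde e}_{k-1}+\partial_\epsilon R^{\widetilde e}_{k-1})_{ij}=(\partial_\epsilon u_i-\partial_\epsilon u_j)R^k_{ij},
\end{equation*}
which is the displayed identity once we know $\partial U/\partial\epsilon$.

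The canonical coordinates $u_i$ are characterized by $\partial/\partial u_i=e_i$. Equivalently, $du_i(X)$ is the $e_i$-eigenvalue of quantum multiplication by $X$, so $du_i=\eta(e_i,\,\cdot\,)/\eta(e_i,e_i)$ up to constants. Applying this to $X=\partial_\epsilon=v_0$, equation \eqref{eqn:v0-in-idempotents} gives $v_0=\sum_i\zeta_ie_i$. Hence $\partial u_i/\partial\epsilon=\zeta_i$, i.e.\ $\partial U/\partial\epsilon=\diag(\zeta_0,\ldots,\zeta_{r-1})$. This uses only that the quantum product of Lemma \ref{lem:quantum_product_of_shifted_chiodo} is the product at the point $\epsilon v_0$, and that $\partial_\epsilon$ of the shifted CohFT inserts $v_0$.

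The remaining point is the conventions: $\Psi$ maps flat coordinates to normalized-idempotent coordinates, and $R_k$ is written in the $\widetilde e$-frame by \eqref{eqn:R_matrix_in_normalized_idempotent_coordinates}. With these, the connection term is $\Psi\,\partial_\epsilon\Psi^{-1}$ and not its negative. I would check this by writing $R^{\widetilde e}_k=\Psi R_k^{\text{flat}}\Psi^{-1}$, so that
\begin{equation*}
\partial_\epsilon R^{\text{flat}}_{k-1}=\Psi^{-1}\bigl(\partial_\epsilon R^{\widetilde e}_{k-1}+[\Psi\,\partial_\epsilon\Psi^{-1},R^{\widetilde e}_{k-1}]\bigr)\Psi.
\end{equation*}
I would then compare this with the flat-frame equation $\partial_\epsilon R^{\text{flat}}_{k-1}=[R^{\text{flat}}_k,\,v_0\bullet]$ (possibly up to a $\Psi^{-1}d\Psi$ term absorbed on the right). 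The $k=0$ case reads $[R_0,dU]=0$ and is trivial.

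I expect this sign and frame bookkeeping to be the main obstacle. Matching the paper's exact form, with a one-sided $\Psi\partial\Psi^{-1}$ term rather than a commutator, depends on how the flat-frame equation is stated. If the commutator form arises, the discrepancy has to be absorbed: $\Psi\partial_\epsilon\Psi^{-1}$ has zero diagonal part in the orthonormal frame (it is antisymmetric), and the term $R^{\widetilde e}_{k-1}\Psi\partial_\epsilon\Psi^{-1}$ reorganizes via the $(k-1)$ equation. I would settle the sign by checking the case $k=1$ against the known formula for $R_1$ off the diagonal.
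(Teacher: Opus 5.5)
Your route is the paper's: it also takes the general flatness equation $dR^{\widetilde e}+\Psi\,d\Psi^{-1}R^{\widetilde e}=\frac1z[dU,R^{\widetilde e}]$ (cited there from Lee--Pandharipande), identifies $\partial_\epsilon$ with contraction along $v_0=\partial_{\tau_0}$ at $\vec\tau=0$, and reads off $\partial u_i/\partial\epsilon=\zeta_i$ from $v_0=\sum_i\zeta_ie_i$ by the chain rule. The paper makes the identification precise by noting that $D=\partial_{\tau_0}-\partial_\epsilon$ annihilates all Frobenius data of $\mathcal F^r_0(\vec\tau+\epsilon v_0)$.

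Two small corrections. First, in the paper's conventions your displayed starting equation has the commutator reversed: it must read $[dU,R_k]$, which is what your entrywise formula $(\partial_\epsilon u_i-\partial_\epsilon u_j)R^k_{ij}$ actually uses. Second, the one-sided term $\Psi\,\partial_\epsilon\Psi^{-1}$ comes out directly when you insert $\Psi^{-1}R^{\widetilde e}(z)e^{U/z}$ into the quantum differential equation, so no commutator form ever arises. The absorption you sketch would not work anyway, since the two forms differ by $R^{\widetilde e}_{k-1}\Psi\,\partial_\epsilon\Psi^{-1}$, which is nonzero already for $k=1$.
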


\begin{proof}
To prove the statement, we will follow an approach that is similar to the strategies in \cite[Section 1.5]{GT24}, \cite[Section 3.4]{LP19} \cite[Section 3.3]{PT19}.
Let
\begin{equation*}
\Vec{\tau}\coloneqq \sum_{i=0}^{r-1}\tau_iv_i.
\end{equation*}
The genus-zero potential of the unshifted Chiodo CohFT $\Ch^{r}$ is given by
\begin{equation*}
\mathcal{F}_0^r(\Vec{\tau})\coloneqq   \sum_{n\ge 3}\frac1{n!}
  \int_{\ocM_{0,n}}
  \Ch^r_{0,n}(\Vec{\tau}^{\otimes n}).
\end{equation*}
Set $\widehat{\mathcal{F}}_0^r (\Vec{\tau})\coloneqq \mathcal{F}_0^r(\Vec{\tau}+\epsilon v_0) $. We have
\begin{equation*}
\widehat{\mathcal{F}}_0^r (\Vec{\tau})
=\sum_{n\ge 3}\frac1{n!} \int_{\ocM_{0,n}} \Ch^r_{0,n}(\left(\Vec{\tau}+\epsilon v_0\right)^{\otimes n})
=\sum_{\substack{n,m\geq 0\\ n+m\geq 3}}\frac{\epsilon^m}{n!\,m!} \int_{\ocM_{0,n+m}}\Ch^{r}_{0,n+m}\big(\Vec{\tau}^{\otimes n}\otimes v_0^{\otimes m}\big)
\end{equation*}
and
\begin{equation}\label{eqn:partial_wrt_tau_is_partial_wrt_epsilon}
\frac{\partial \widehat{\mathcal{F}}_0^r (\Vec{\tau})}{\partial \tau_0}=\frac{\partial \widehat{\mathcal{F}}_0^r (\Vec{\tau})}{\partial \epsilon}.
\end{equation}
The genus-zero potential of the shifted Chiodo CohFT $\Ch^{r,\epsilon}$ is given by
\begin{equation*}
\mathcal{F}_0^{r,\epsilon}(\Vec{\tau})\coloneqq   \sum_{n\ge 3}\frac1{n!}
  \int_{\ocM_{0,n}}
  \Ch^{r,\epsilon}_{0,n}(\Vec{\tau}^{\otimes n})
\end{equation*}
and we have
\begin{equation*}
\widehat{\mathcal{F}}_0^r (\Vec{\tau})=\mathcal{F}_0^{r,\epsilon}(\Vec{\tau})+\text{Terms degree at most } 2 \text{ in }\tau_i.
\end{equation*}
Since computing the quantum product requires taking third partial derivatives in $\tau_i$, we see that both $\mathcal{F}_0^{r,\epsilon}(\Vec{\tau})$ and $\widehat{\mathcal{F}}_0^r (\Vec{\tau})$ yield the same quantum product on $V$ and induce the same Frobenius manifold structures.
By equation \eqref{eqn:partial_wrt_tau_is_partial_wrt_epsilon}, we conclude that the operator
\begin{equation*}
D\coloneqq \frac{\partial}{\partial \tau_0}-\frac{\partial}{\partial \epsilon}
\end{equation*}
also annihilates the Frobenius structures associated to shifted Chiodo CohFT $\Ch^{r,\epsilon}$ at a semisimple point $\Vec{\tau}$; namely, the transition matrix $\Psi_{\Vec{\tau}}$, its inverse $\Psi_{\Vec{\tau}}^{-1}$, the canonical coordinates $u_{\Vec{\tau},i}$, and the $R$-matrix $R_{\Vec{\tau}}(z)$ in the full set of variables $\tau_0,\ldots, \tau_{r-1}$:

\begin{equation*}
  D u_{\Vec{\tau},i}=0,
  \quad
  D\Psi_{\Vec{\tau}}=0, \quad D\Psi_{\Vec{\tau}}^{-1}=0, \quad \text{and}\quad DR_{\Vec{\tau}}(z)=0.
\end{equation*}
The flatness equation is given by
\begin{equation}\label{eqn:general_flatness_equation}
dR^{\widetilde{e}}_{\Vec{\tau}}(z)+\Psi_{\Vec{\tau}} (d\Psi^{-1}_{\Vec{\tau}})R^{\widetilde{e}}_{\Vec{\tau}}(z)=\frac{1}{z}\left [dU_{\Vec{\tau}},R^{\widetilde{e}}_{\Vec{\tau}}(z)\right ]
\end{equation}
where $d$ is the total differential with respect to $\tau_0,\ldots, \tau_{r-1}$, $U_{\Vec{\tau}}$ is the diagonal matrix with diagonal entries $u_{\Vec{\tau},i}$, and $[-,-]$ is the commutator, c.f. \cite[Corollary 1]{LP04}.

The transition matrix $\Psi$ and $R$-matrix $R^{\widetilde{e}}(z)$ of the shifted Chiodo $\Ch^{r,\epsilon}$ CohFT are the values of $\Psi_{\Vec{\tau}}$ and $R^{\widetilde{e}}_{\Vec{\tau}}(z)$ at the origin $\Vec{\tau}=\Vec{0}$:
\begin{equation*}
\Psi=\Psi_{\Vec{0}},\quad \text{and} \quad R^{\widetilde{e}}(z)=R^{\widetilde{e}}_{\Vec{0}}(z).
\end{equation*}
Moreover, we set
\begin{equation*}
U\coloneqq U_{\Vec{0}}.
\end{equation*}

Note that we have
\begin{equation}\label{eqn:R-matrix_mixed_partial}
\frac{\partial R^{\widetilde{e}}(z)}{\partial \epsilon}=\frac{\partial R^{\widetilde{e}}_{\Vec{0}}(z)}{\partial \epsilon}=\frac{\partial R^{\widetilde{e}}_{\Vec{\tau}}(z)}{\partial \epsilon}\bigg \vert_{\Vec{\tau}=\Vec{0}}=\frac{\partial R^{\widetilde{e}}_{\Vec{\tau}}(z)}{\partial \tau_0}\bigg \vert_{\Vec{\tau}=\Vec{0}}
\end{equation}
and similarly, we have
\begin{equation}\label{eqn:transition_canonical_coords_mixed_partial}
\frac{\partial \Psi^{\pm 1}}{\partial \epsilon}=\frac{\partial \Psi^{\pm 1}_{\Vec{\tau}}}{\partial \tau_0}\bigg \vert_{\Vec{\tau}=\Vec{0}} \quad \text{and}\quad \frac{\partial U}{\partial \epsilon}=\frac{\partial U_{\Vec{\tau}}}{\partial \tau_0}\bigg \vert_{\Vec{\tau}=\Vec{0}}.
\end{equation}

Since $v_0=\frac{\partial}{\partial \tau_0}\big\vert_{\Vec{\tau}=\Vec{0}}$ and $e_i=\frac{\partial}{\partial u_i}\big\vert_{\Vec{\tau}=\Vec{0}}$,  we have
\begin{equation}\label{eqn:v_0_in_e_i}
v_0
=\frac{\partial}{\partial \tau_0}\bigg\vert_{\Vec{\tau}=\Vec{0}}
=\sum_{i=0}^{r-1}\frac{\partial u_{\Vec{\tau},i}}{\partial \tau_0}\bigg \vert_{\Vec{\tau}
=\Vec{0}}\frac{\partial}{\partial u_i}\bigg\vert_{\Vec{\tau}=\Vec{0}}
=\sum_{i=0}^{r-1}\frac{\partial u_{\Vec{\tau},i}}{\partial \tau_0}\bigg \vert_{\Vec{\tau}
=\Vec{0}}e_i
=\sum_{i=0}^{r-1}\left [\frac{\partial U_{\Vec{\tau}}}{\partial \tau_0}\bigg \vert_{\Vec{\tau}=\Vec{0}}\right]_{i,i}e_i
=\sum_{i=0}^{r-1}\left[\frac{\partial U}{\partial \epsilon}\right]_{i,i}e_i
\end{equation}
by the chain rule and equation \eqref{eqn:transition_canonical_coords_mixed_partial}. Comparing equation \eqref{eqn:v_0_in_e_i} with the equation \eqref{eqn:v0-in-idempotents}, we get
\begin{equation*}
\left[\frac{\partial U}{\partial \epsilon}\right]_{i,i}
=\zeta_i
\end{equation*}
for all $0\leq i \leq r-1$.

By contracting the equation \eqref{eqn:general_flatness_equation} with $\frac{\partial}{\partial \tau_0}$, setting $\Vec{\tau}=\Vec{0}$ and using equations \eqref{eqn:R-matrix_mixed_partial}-\eqref{eqn:transition_canonical_coords_mixed_partial}, we get
\begin{equation*}
\Psi \frac{\partial \Psi^{-1}}{\partial \epsilon}R^{\widetilde{e}}(z)+\frac{\partial R^{\widetilde{e}}(z)}{\partial \epsilon}=\frac{1}{z}\left(\frac{\partial U}{\partial \epsilon}R^{\widetilde{e}}(z)-R^{\widetilde{e}}(z)\frac{\partial U}{\partial \epsilon}\right ).
\end{equation*}
Keeping track of the coefficient of $z^{k-1}$ on both sides, we complete the proof.
\end{proof}

Set
\begin{equation}\label{eqn:The_Matrix_A}
Y \coloneqq \Psi\frac{\partial\Psi^{-1}}{\partial\epsilon}=-\frac{\partial\Psi}{\partial\epsilon}\Psi^{-1}.
\end{equation}

The flatness equation takes the following form for the entries $R_{ij}^k$ of $R$-matrix coefficients.
\begin{corollary}[Flatness  equation for $\Ch^{r,\epsilon}$] \label{cor:flatness_equation_in_ij_form}
The flatness equations in the normalized idempotent basis take the form
\begin{equation}\label{eqn:flatness_in_normalized_idempotent_explicit}
\frac{\partial R_{i,j}^{k-1}}{\partial   \epsilon}+\sum_{l=0}^{r-1}Y_{i,l}R_{l,j}^{k-1}=(\zeta_i-\zeta_j)R_{i,j}^k
\end{equation}
for $0\leq i,j\leq r-1$ and $k\geq 0$. Here, the matrix $Y$ is a skew-symmetric matrix with the entries
\begin{equation}\label{eqn:The_Matrix_A_entries_explicit}
    Y_{i,j}=\frac{\zeta_i\zeta_j}{\xi_i \xi_j (\zeta_i-\zeta_j)}\quad \text{for} \quad i\neq j\quad \text{and}\quad Y_{i,i}=0
\end{equation}
where $0\leq i,j\leq r-1$. 
\end{corollary}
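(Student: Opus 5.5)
The corollary has two parts. The first is the rewriting of Proposition~\ref{prop:flatness_equation} in components. The second is the explicit formula for $Y=\Psi\,\partial_\epsilon\Psi^{-1}$ together with its skew-symmetry.

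For the first part, take the $(i,j)$ entry of the matrix identity in Proposition~\ref{prop:flatness_equation}. Because $\partial U/\partial\epsilon=\diag(\zeta_0,\ldots,\zeta_{r-1})$ is diagonal, the commutator contributes $(\zeta_i-\zeta_j)R^k_{i,j}$. The left side gives $\partial_\epsilon R^{k-1}_{i,j}+\sum_l Y_{i,l}R^{k-1}_{l,j}$ by the definition \eqref{eqn:The_Matrix_A}. The second expression in \eqref{eqn:The_Matrix_A} follows from differentiating $\Psi\Psi^{-1}=\mathrm{Id}$.

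\textbf{Skew-symmetry.} Both bases are orthonormal for the $\epsilon$-independent pairing up to the fixed Gram matrix $G=(\eta(v_a,v_b))$, so $\Psi G^{-1}\Psi^{\top}=\mathrm{Id}$. Hence $\Psi^{-1}=G^{-1}\Psi^{\top}$, and therefore
\begin{equation*}
Y=\Psi G^{-1}\,\partial_\epsilon\Psi^{\top}.
\end{equation*}
Differentiating $\Psi G^{-1}\Psi^{\top}=\mathrm{Id}$ gives $Y+Y^{\top}=0$. In particular $Y_{i,i}=0$.

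\textbf{Off-diagonal entries.} I would compute directly, writing
\begin{equation*}
Y_{i,j}=\eta\bigl(\widetilde e_i,\partial_\epsilon\widetilde e_j\bigr),
\end{equation*}
which is the $(i,j)$ entry of $\Psi\,\partial_\epsilon\Psi^{-1}$ since $\Psi_{i,a}=\eta(v_a,\widetilde e_i)$. Start from $\widetilde e_j=\zeta_j\xi_je_j$, so that $\partial_\epsilon\widetilde e_j=\partial_\epsilon(\zeta_j\xi_j)\,e_j+\zeta_j\xi_j\,\partial_\epsilon e_j$. For $i\neq j$ the first term pairs to zero with $\widetilde e_i$, leaving
\begin{equation*}
Y_{i,j}=\zeta_i\xi_i\,\zeta_j\xi_j\,\eta(e_i,\partial_\epsilon e_j)=\frac{\zeta_j\xi_j}{\zeta_i\xi_i}\,\bigl[\partial_\epsilon e_j\bigr]_{e_i},
\end{equation*}
where $[\,\cdot\,]_{e_i}$ denotes the $e_i$-coefficient and we used $\eta(e_i,e_i)=(\zeta_i\xi_i)^{-2}$.

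\textbf{Computing $[\partial_\epsilon e_j]_{e_i}$.} I see two routes and would try the second first.

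The first route is the standard Frobenius-manifold identity $[\partial_\epsilon e_j]_{e_i}=\partial_{e_j}(\partial_\epsilon u_i)/(u_i-u_j)$-type formula. It requires identifying $\partial_\epsilon$ with $v_0$, which holds by the $D$-invariance established in the proof of Proposition~\ref{prop:flatness_equation}.

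The second route differentiates the idempotent relation $e_j\bullet e_j=e_j$, using the fact that the product itself depends on $\epsilon$ through \eqref{eqn:quantum_product_of_shifted_chiodo}. Write $\bullet'$ for the $\epsilon$-derivative of the structure constants. Differentiating gives
\begin{equation*}
2e_j\bullet\partial_\epsilon e_j+e_j\bullet' e_j=\partial_\epsilon e_j.
\end{equation*}
Taking the $e_i$-component for $i\neq j$ yields $[\partial_\epsilon e_j]_{e_i}=[e_j\bullet' e_j]_{e_i}$. The derivative $\bullet'$ is explicit: it comes only from the $\epsilon$-terms, which give $-v_{a+b+2-r}$ for $a+b\geq r-1$, plus the $a+b=2r-2$ corrections. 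Equivalently, and more cleanly, I would differentiate $v_0\bullet e_j=\zeta_je_j$. Since $v_0\bullet$ has $\epsilon$-derivative $-v_1$ only on $v_{r-1}$, this gives
\begin{equation*}
(\zeta_i-\zeta_j)\,[\partial_\epsilon e_j]_{e_i}=[\partial_\epsilon\zeta_j\,e_j-(\partial_\epsilon\mathds{L}_{v_0})e_j]_{e_i}.
\end{equation*}
Here $(\partial_\epsilon\mathds{L}_{v_0})e_j=-[e_j]_{v_{r-1}}v_1$. By \eqref{eqn:idempotents_alternate_expression} and \eqref{eqn:q_i(x)_coefficients}, $[e_j]_{v_{r-1}}=\pm 1/\xi_j^2$, and by \eqref{eqn:vk-in-idempotents}, $[v_1]_{e_i}=-\zeta_i^2$. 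This produces
\begin{equation*}
[\partial_\epsilon e_j]_{e_i}=\pm\frac{\zeta_i^2}{\xi_j^2(\zeta_i-\zeta_j)}.
\end{equation*}
Substituting into the expression for $Y_{i,j}$ gives $\frac{\zeta_i\zeta_j}{\xi_i\xi_j(\zeta_i-\zeta_j)}$ up to sign.

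The main obstacle is bookkeeping of signs: the factors $(-1)^r$, $(-1)^k$, and the orientation in $\zeta_i-\zeta_j$. I would settle these by checking the case $r=2$ by hand, and then confirm that the result is consistent with skew-symmetry.
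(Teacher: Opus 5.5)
Your argument is correct and is essentially the paper's. Differentiating $\mathds{L}_{v_0}e_j=\zeta_je_j$ and taking $e_i$-components is the vector form of the paper's differentiation of $B=\Psi^{-1}\frac{\partial U}{\partial\epsilon}\Psi$, and both rest on $\partial_\epsilon\mathds{L}_{v_0}$ being the rank-one map $v_{r-1}\mapsto -v_1$. The one real difference is that you prove skew-symmetry directly from $\Psi G^{-1}\Psi^{\top}=\mathrm{Id}$ and the $\epsilon$-independence of $\eta$, where the paper cites \cite[Lemma 7]{LP04}.

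The signs you left open are fixed by the explicit coefficients, with no $r=2$ check needed. Skew-symmetry could not catch an overall sign error anyway, since the target expression is itself skew. Concretely, $[e_j]_{v_{r-1}}=(-1)^{r-1}/\bigl((-1)^r\xi_j^2\bigr)=-\xi_j^{-2}$ and $[v_1]_{e_i}=-\zeta_i^2$. These give $[\partial_\epsilon e_j]_{e_i}=\zeta_i^2/\bigl(\xi_j^2(\zeta_i-\zeta_j)\bigr)$, hence exactly $Y_{i,j}=\zeta_i\zeta_j/\bigl(\xi_i\xi_j(\zeta_i-\zeta_j)\bigr)$ for every $r$.
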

\begin{proof}
Substituting the expression given for $Y$ in equation \eqref{eqn:The_Matrix_A} into the flatness equation proved in Proposition \ref{prop:flatness_equation}, we get
\begin{equation*}
YR_{k-1}^{\widetilde{e}}+\frac{\partial R_{k-1}^{\widetilde{e}}}{\partial \epsilon}=\frac{\partial U}{\partial \epsilon}R_k^{\widetilde{e}}-R_k^{\widetilde{e}}\frac{\partial U}{\partial \epsilon},
\end{equation*}
and taking the $(i,j)$-entry of both sides, we obtain equation \eqref{eqn:flatness_in_normalized_idempotent_explicit}.

To complete the proof, we need to prove the equation \eqref{eqn:The_Matrix_A_entries_explicit}. By \cite[Lemma 7]{LP04}, the matrix $Y$ is skew-symmetric. In particular, we have $Y_{i,i}=0$ for all diagonal entries. Hence, it remains to calculate its off-diagonal entries.

Consider the linear operator $\mathds{L}_{v_0}$ given by equation \eqref{eqn:L_v0_operator}. Let $B$ be the matrix representation of $\mathds{L}_{v_0}$ in the flat basis $\{v_0,\ldots,v_{r-1}\}$. Note that we have
\begin{equation*}
B=\Psi^{-1} \frac{\partial U}{\partial \epsilon} \Psi
\end{equation*}
as each $\zeta_i$ is the eigenvalue of quantum multiplication by $v_0$ corresponding to the eigenvector $\widetilde{e}_i$ by equation \eqref{eqn:v0-in-idempotents}. As a result, we have
\begin{equation*}
\frac{\partial B}{\partial \epsilon}=\frac{\partial \Psi^{-1}}{\partial \epsilon} \frac{\partial U}{\partial \epsilon} \Psi +\Psi^{-1}\frac{\partial^2 U}{\partial \epsilon^2}\Psi+\Psi^{-1} \frac{\partial U}{\partial \epsilon} \frac{\partial \Psi}{\partial \epsilon}.
\end{equation*}
By the definition of $Y$, we see that
\begin{equation}
\Psi\frac{\partial B}{\partial \epsilon} \Psi^{-1}=Y \frac{\partial U}{\partial \epsilon}+ \frac{\partial^2 U}{\partial \epsilon^2}-\frac{\partial U}{\partial \epsilon} Y.
\end{equation}
Looking at the off-diagonal entries of both sides, we conclude that
\begin{equation}\label{eqn:A_ij_wrt_B_zeta}
Y_{i,j}=-\frac{\left [\Psi\frac{\partial B}{\partial \epsilon} \Psi^{-1}\right]_{i,j}}{\zeta_i-\zeta_j}
\end{equation}
for $0\leq i\neq j \leq r-1$.

It is clear that from the definition \eqref{eqn:L_v0_operator} of $\mathds{L}_{v_0}$ and the product formula \eqref{eqn:special_cases_of_quantum_product}, we see that the only $\epsilon$-dependent column of the matrix of $\mathds L_{v_0}$ is the column corresponding to $v_{r-1}$, and
\begin{equation*}
\frac{\partial (\mathds{L}_{v_0}(v_i))}{\partial \epsilon}=-v_1\delta_{i,r-1}
\end{equation*}
which implies that
\begin{equation*}
\left [ \frac{\partial B}{\partial \epsilon}\right ]_{i,j}=-\delta_{i,1} \cdot \delta_{j,r-1}.
\end{equation*}
By substituting this expression into equation \eqref{eqn:A_ij_wrt_B_zeta}, we obtain
\begin{equation*}
Y_{i,j}=\frac{1}{\zeta_i-\zeta_j}\sum_{k=0}^{r-1}\sum_{l=0}^{r-1}\Psi_{i,k}\delta_{k,1}\delta_{l,r-1}\Psi_{l,j}^{-1}=\frac{\Psi_{i,1}\Psi^{-1}_{r-1,j}}{\zeta_i-\zeta_j}=\frac{\zeta_i\zeta_j}{\xi_i \xi_j (\zeta_i-\zeta_j)}
\end{equation*}
for $0\leq i\neq j \leq r-1$ and complete the proof.
\end{proof}

\begin{remark}\label{rem:flatness-does-not-fix-diagonal-calibration}
Equation\eqref{eqn:flatness_in_normalized_idempotent_explicit} determines the off-diagonal entries recursively. The  diagonal entries $R^k_{i,i}$ at the same stage are determined up to an integration constant. In the asymptotic analysis below, the orders of $t$ in these integration constants are controlled by restricting the reconstruction to the smooth locus and applying Harer stability and using the equation \eqref{eqn:flatness_in_normalized_idempotent_explicit} itself.
\end{remark}

\subsection{Vacuum vector and $T$-vector}

\begin{proposition}\label{prop:vacuum_vector_of_shifted_chiodo}
For $r\geq 2$, the vacuum vector of shifted Chiodo CohFT $\Ch^{r,\epsilon}$ is given by
\begin{equation*}
\mathbf{v}(z)=(\epsilon v_0-v_{r-1})\sum_{k=0}^{\infty} z^k t^{-k-1}=\mathbf{1}\sum_{k=0}^{\infty} z^k t^{-k}.
\end{equation*}
\end{proposition}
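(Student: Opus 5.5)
We verify the defining identity $\mathpzc p^*\Ch^{r,\epsilon}_{g,n}(\phi_1\otimes\cdots\otimes\phi_n)=\Ch^{r,\epsilon}_{g,n+1}(\phi_1\otimes\cdots\otimes\phi_n\otimes\mathbf v(\psi_{n+1}))$ directly, in two steps. First we treat the unshifted Chiodo CohFT $\Ch^r$ and show that its vacuum is $-v_{r-1}/(t-z)$. Then we pass to the $\epsilon v_0$-shift, which produces the additional $\epsilon v_0/(t-z)$ term. Uniqueness of the vacuum in Theorem~\ref{thm:Givental--Teleman_Classification} is not needed, but it serves as a check: setting $z=0$ gives $(\epsilon v_0-v_{r-1})/t=\mathbf 1$, in agreement with \eqref{eqn:unit_of_shifted_Chiodo} and Remark~\ref{rem:vacuum_at_zero}.

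\emph{Step 1 (unshifted, insertion $v_{r-1}$).} Let $x=x_{n+1}$. On a spin curve with $a_{n+1}=r-1$ and $s=-1$ we have $\omega_{\log}=p^*\omega_{\log}'(x)$ away from contracted components. Hence
\begin{equation*}
\omega_{\log}^{-1}\bigl(-\textstyle\sum_{i\le n} a_ix_i-(r-1)x\bigr)=p^*\bigl(\omega_{\log}'^{-1}(-\textstyle\sum_{i\le n} a_ix_i)\bigr)(-rx),
\end{equation*}
so $L\cong p^*L'(-x)$, with no stacky twist at $x$. Equivalently, $\ocM^{r}_{g;a_1,\dots,a_n,r-1}$ is the pullback of $\ocM^{r}_{g;a_1,\dots,a_n}$ under $\mathpzc p$, and the map $\mathpzc f$ commutes with $\mathpzc p$ up to the degree factor. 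The sequence $0\to L\to p^*L'\to p^*L'|_x\to0$ gives
\begin{equation*}
-R^\bullet\pi_*L=p^*(-R^\bullet\pi_*L')+\sigma_x^*p^*L'.
\end{equation*}
Thus the rank goes up by one, matching \eqref{eqn:rank_general_minus_one} with $a_{n+1}=r-1$. The line bundle $\sigma_x^*p^*L'$ satisfies $(\sigma_x^*p^*L')^{\otimes r}\cong\sigma_x^*\omega_{\log}^{-1}$ up to the pullback twist, so its first Chern class is $-\psi_{n+1}/r$; this uses the standard identification $\sigma_x^*\omega(x)\simeq\mathcal O$ and the comparison of $\psi$ with its pullback, which we must track carefully. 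Its contribution to $t^{\rank}c_{r/t}$ is therefore $t(1-\psi_{n+1}/t)=t-\psi_{n+1}$. The normalizations $(-1)^{n}$ and $r^{1-g}$ change by the factor $-1$, and the degree of $\mathpzc f$ is unchanged. This yields
\begin{equation*}
\Ch^r_{g,n+1}(\phi_1\otimes\cdots\otimes\phi_n\otimes v_{r-1})=-(t-\psi_{n+1})\,\mathpzc p^*\Ch^r_{g,n}(\phi_1\otimes\cdots\otimes\phi_n),
\end{equation*}
and since $\mathpzc p^*$ is a ring map, $\mathpzc p^*\Ch^r_{g,n}=\Ch^r_{g,n+1}\bigl(\cdots\otimes\tfrac{-v_{r-1}}{t-\psi_{n+1}}\bigr)$.

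\emph{Step 2 (shift).} Write $\Ch^{r,\epsilon}_{g,n}=\sum_m\frac{\epsilon^m}{m!}\mathpzc p_{m*}\Ch^r_{g,n+m}(\cdots\otimes v_0^{\otimes m})$ and use base change: $\mathpzc p^*\mathpzc p_{m*}=\mathpzc p_{m*}\mathpzc p^*$ on the fiber square that forgets $x_{n+1}$. Apply Step 1 on $\ocM_{g,n+m+1}$. The class $\psi_{n+1}$ upstairs equals the pullback of $\psi_{n+1}$ from $\ocM_{g,n+1}$ plus the boundary divisors $D_{n+1,j}$, where $x_{n+1}$ bubbles off with one shift point $x_j$. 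Expand $(t-\psi_{n+1})^{-1}$ as a geometric series in $\psi_{n+1}/t$. On each divisor $D_{n+1,j}$ the vertex is $\ocM_{0,3}$, and $\Ch^r_{0,3}(v_{r-1}\otimes v_0\otimes\,\cdot\,)$ is computed by \eqref{eqn:special_cases_of_quantum_product}: $v_{r-1}\bullet v_0=-\epsilon$-independent $(-tv_0)+\dots$ in the unshifted theory, namely $-tv_0$ there. Since $\psi_{n+1}$ restricted to $D_{n+1,j}$ vanishes, only the constant term $1/t$ of the series survives on these divisors, and the new point carries $-tv_0\cdot(-1/t)=v_0$ times the remaining $\psi$-dependence. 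Resumming over the $m$ choices of $j$ reabsorbs one factor $\epsilon$ and one factorial, and converts these corrections into $\epsilon v_0/(t-\psi_{n+1})$. Adding this to the main term $-v_{r-1}/(t-\psi_{n+1})$ gives the claimed $\mathbf v(z)=(\epsilon v_0-v_{r-1})/(t-z)$.

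The main obstacle is Step 2's bookkeeping. Precisely, we must show that the iterated boundary corrections (several shift points bubbling together with $x_{n+1}$, and $\psi$-powers restricted to these strata) sum exactly to $\epsilon v_0/(t-z)$ with no higher-order $\epsilon$ terms. If this direct approach becomes unwieldy, we will instead use the differential route: by \eqref{eqn:partial_wrt_tau_is_partial_wrt_epsilon}, $\partial_\epsilon\Ch^{r,\epsilon}_{g,n}=\mathpzc p_*\Ch^{r,\epsilon}_{g,n+1}(\cdots\otimes v_0)$. We then check that both sides of the vacuum identity satisfy the same linear ODE in $\epsilon$ and agree at $\epsilon=0$ by Step 1. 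The ODE is derived from Step 1 together with the genus-zero product $v_0\bullet v_{r-1}=-\epsilon v_1-tv_0$.
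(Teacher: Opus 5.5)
Your Step~1 is correct. It is exactly Lemma~\ref{lem:v_r_minus_1_pullback_propety}, i.e.\ \cite[Theorem 4.1]{GLN23}, so you can cite it rather than re-derive $c_1(\sigma_x^*p^*L')$.

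Your overall route also differs from the paper's. The paper uses no boundary geometry at all. It takes Teleman's equation \cite[Proposition 7.13]{Te12} for the vacuum and uses $\partial_{\tau_0}=\partial_\epsilon$ to obtain $\partial_\epsilon\mathbf v=\frac{v_0}{z}\bullet(\mathbf v-\mathbf 1)$. It then solves this recursively, using that $\mathds L_{v_0}$ has determinant $-t$.

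The gap is in Step~2, which carries all the content and is not actually carried out.
\begin{itemize}
\item Your $\psi$-comparison is wrong for $m\ge2$. Let $\hat\psi$ be the $\psi$-class of the new point on $\ocM_{g,n+1+m}$. Then $\hat\psi=\mathpzc p_m^*\psi_{n+1}+\sum_{S\neq\emptyset}D_S$, summed over all nonempty sets $S$ of shift points, where $D_S$ is the divisor on which the new point and $S$ lie alone on a rational tail.
\item Expanding $(t-\hat\psi)^{-1}$ as a geometric series then produces products $D_{\{j\}}\cdot D_S$ for $j\in S$, and terms $\hat\psi\cdot D_S\neq0$ for $|S|\ge2$. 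This is precisely the bookkeeping you leave open.
\item Even the $|S|=1$ computation is only heuristic. On $D_{\{j\}}$, $\mathpzc p_m^*\psi_{n+1}$ restricts to $\psi_\bullet\neq0$, so the factor $1/(t-\psi_{n+1})$ has to be extracted from the self-intersections $D_{\{j\}}^k$, which you do not do.
\item The differential fallback is only announced; its key computation is not given.
\end{itemize}

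The missing idea is to avoid inverting $t-\hat\psi$ at all. Put $\Phi=\phi_1\otimes\cdots\otimes\phi_n$. Since $t-\psi_{n+1}$ is invertible, the vacuum identity is equivalent to
\begin{equation*}
\Ch^{r,\epsilon}_{g,n+1}(\Phi\otimes v_{r-1})=-(t-\psi_{n+1})\,\mathpzc p^*\Ch^{r,\epsilon}_{g,n}(\Phi)+\epsilon\,\Ch^{r,\epsilon}_{g,n+1}(\Phi\otimes v_0).
\end{equation*}
Expand the left side in $m$ and apply Step~1 in multiplicative form:
\begin{equation*}
\Ch^r_{g,n+1+m}(\Phi\otimes v_{r-1}\otimes v_0^{\otimes m})=-(t-\hat\psi)\,\mathpzc p^*\alpha_m,\qquad \alpha_m=\Ch^r_{g,n+m}(\Phi\otimes v_0^{\otimes m}).
\end{equation*}
Now only the linear difference $\hat\psi-\mathpzc p_m^*\psi_{n+1}$ enters.
\begin{itemize}
\item For $|S|\ge2$, the restriction of $\mathpzc p^*\alpha_m$ to $D_S$ is pulled back along the map forgetting the new point on the tail factor. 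Since $\mathpzc p_m$ integrates over that factor, $\mathpzc p_{m*}(D_S\cdot\mathpzc p^*\alpha_m)=0$.
\item For $S=\{j\}$, forgetting the new point contracts the tail. Hence $\mathpzc p_{m*}(D_{\{j\}}\cdot\mathpzc p^*\alpha_m)=\mathpzc p_{(m-1)*}\Ch^r_{g,n+m}(\Phi\otimes v_0\otimes v_0^{\otimes(m-1)})$, with the first $v_0$ at the new point.
\end{itemize}
Use the projection formula and $\mathpzc p_{m*}\mathpzc p^*=\mathpzc p^*\mathpzc p_{m*}$. This identity is valid, although your square is not cartesian: $\ocM_{g,n+1+m}$ is a birational modification of the fibre product. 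Summing over $j$ and over $m$ with weights $\epsilon^m/m!$ then gives exactly the display.

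Your fallback can also be completed. Let $G_{g,n}(\Phi)$ be the difference of the two sides of the vacuum identity. A single divisor $D$ (with $\hat\psi\cdot D=0$) together with $v_0\bullet(\epsilon v_0-v_{r-1})=tv_0$ yields $\partial_\epsilon G_{g,n}(\Phi)=\mathpzc p_*G_{g,n+1}(\Phi\otimes v_0)$. This equation couples $(g,n)$ to $(g,n+1)$, so you must conclude from Taylor coefficients rather than ODE uniqueness: $\partial_\epsilon^kG_{g,n}(\Phi)|_{\epsilon=0}=\mathpzc p_{k*}G_{g,n+k}(\Phi\otimes v_0^{\otimes k})|_{\epsilon=0}=0$ by Step~1 for all $(g,n+k)$, together with polynomiality in $\epsilon$.

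Once repaired, your argument verifies the defining identity directly for all $\epsilon$ and $t\neq0$. It is independent of semisimplicity and of Teleman's equation, at the cost of the geometric input of Step~1, which the paper only uses in its appendix.
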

\begin{proof}
Firstly, note that two given expressions agree due to equation \eqref{eqn:unit_of_shifted_Chiodo}, the explicit expression of $\mathbf{1}$.

Now, let 
\begin{equation*}
\Vec{\tau}\coloneqq \sum_{i=0}^{r-1}\tau_iv_i
\end{equation*}
and $\mathcal{F}_0^{r,\epsilon}(\Vec{\tau})$ be
as in the proof of Proposition \ref{prop:flatness_equation} and let $\mathbf{v}_{\Vec{\tau}}(z)$ be the associated vacuum vector in the full set of variables $\tau_0,\ldots, \tau_{r-1}$. Then, the vacuum vector $\mathbf{v}_{\Vec{\tau}}(z)$ satisfies the equation\footnote{Note that the shift formulation of \cite{Te12} is alternating.}
\begin{equation*}
\frac{\partial \mathbf{v}_{\Vec{\tau}}}{\partial \tau_0}=\frac{1}{z}\frac{\partial}{\partial \tau_0}\bullet_{\Vec{\tau}} ( \mathbf v_{\Vec{\tau}} (z)-\mathbf 1_{\Vec{\tau}})
\end{equation*}
by \cite[Proposition 7.13]{Te12}. Restricting $\Vec{\tau}=\Vec{0}$, and using the fact $v_0
=\frac{\partial}{\partial \tau_0}\big\vert_{\Vec{\tau}=\Vec{0}}$ and the annihilation argument
\begin{equation*}
\frac{\partial \mathbf{v}(z)}{\partial \epsilon}=\frac{\partial \mathbf{v}_{\Vec{\tau}}(z)}{\partial \tau_0}\bigg \vert_{\Vec{\tau}=\Vec{0}}
\end{equation*}
as in the proof of Proposition \ref{prop:flatness_equation}, we see that
the vacuum vector $\mathbf v (z)$ of the shifted Chiodo CohFT $\Ch^{r,\epsilon}$
 satisfies
\begin{equation*}
  \frac{\partial\mathbf v (z)}{\partial\epsilon} = \frac{v_0}z \bullet (\mathbf v (z)-\mathbf 1 ).
\end{equation*}
Writing $\mathbf v(z)=\sum_{k\geq0}\mathbf v_k z^k$ and clearing denominators, this
differential equation is equivalent to the system
\begin{equation*}
  v_0\bullet(\mathbf v_0 -\mathbf 1)=0,
   \quad
  \frac{\partial \mathbf v_{k-1}}{\partial\epsilon}=\,v_0\bullet\mathbf v_k
  \quad (k\geq 1).
\end{equation*}
By equation \eqref{eqn:v0-in-idempotents}, the operator $\mathds{L}_{v_0}$ has determinant
$\zeta_0\cdots\zeta_{r-1}=-t$ and is therefore invertible for $t\neq0$. Hence the
first equation forces $\mathbf v_0=\mathbf 1$, and the remaining equations determine
$\mathbf v_k=\mathds{L}_{v_0}^{-1}(\partial\mathbf v_{k-1}/\partial\epsilon)$
uniquely. Since $\partial\mathbf 1/\partial\epsilon=v_0/t$ and
$\mathds{L}_{v_0}^{-1}v_0=\mathbf 1$, induction gives
$\mathbf v_k= t^{-k}\mathbf 1$, which is the asserted formula.
\end{proof}

Let
\begin{equation*}
T(z)=\sum_{k=2}^{\infty} T_{k}z^{k} \in z^2V[\![z]\!]
\end{equation*}
be the translation vector of shifted Chiodo CohFT $\Ch^{r,\epsilon}$.

\begin{lemma} \label{lem:translation_vector_of_chiodo}
For $k\geq 1$, the coefficient $T_{k+1}$ of $z^{k+1}$ in the translation vector of shifted Chiodo CohFT $\Ch^{r,\epsilon}$ is given by
\begin{equation*}
T_{k+1}=\sum_{m=0}^{k} (-1)^{m+1}t^{-k+m}R^*_m\mathbf{1}.
\end{equation*}
\end{lemma}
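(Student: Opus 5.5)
The plan is to start from the defining formula for the translation vector in the Givental--Teleman classification (Theorem~\ref{thm:Givental--Teleman_Classification}),
\begin{equation*}
T(z)=z\bigl(\mathbf{1}-R^{-1}(z)\mathbf{v}(z)\bigr),
\end{equation*}
and substitute the two explicit inputs already available. The first is the symplectic identity \eqref{eqn:R_inverse_as_adjoint_series}, $R^{-1}(z)=\sum_{m\ge0}(-1)^mR^*_mz^m$. The second is the vacuum vector of Proposition~\ref{prop:vacuum_vector_of_shifted_chiodo}, $\mathbf v(z)=\sum_{j\ge0}t^{-j}z^j\,\mathbf 1$. Everything is then a Cauchy product of two power series in $z$, so the proof should be a short coefficient extraction.

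First I multiply out the two series:
\begin{equation*}
R^{-1}(z)\mathbf v(z)=\sum_{k\ge0}z^k\sum_{m=0}^{k}(-1)^m t^{-(k-m)}R^*_m\mathbf 1 .
\end{equation*}
The $z^0$ coefficient is $R_0^*\mathbf 1=\mathbf 1$, because $R_0=\mathrm{Id}$. Hence $\mathbf 1-R^{-1}(z)\mathbf v(z)$ has no constant term. Multiplying by $z$, the coefficient of $z^{k+1}$ in $T(z)$ for $k\ge1$ is minus the $z^k$ coefficient above:
\begin{equation*}
T_{k+1}=\sum_{m=0}^{k}(-1)^{m+1}t^{-k+m}R^*_m\mathbf 1.
\end{equation*}
This is exactly the claimed formula.

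As a consistency check, I would also verify that the $z^1$ coefficient of $T$ vanishes, so that $T\in z^2V[\![z]\!]$ as Theorem~\ref{thm:Givental--Teleman_Classification} guarantees. The formula is only claimed for $k\ge1$, so this is not strictly needed for the lemma.

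There is no real obstacle here. The only points needing care are the sign conventions: the alternating sign coming from $R^{-1}(z)=R^*(-z)$, and the sign and shift conventions for $T$ in the classification theorem, whose formula $T(z)=z(\mathbf 1-R^{-1}(z)\mathbf v(z))$ I use exactly as stated. One also has to work over the localized base $A_{\Ch}[\Disc_{\Ch^{r,\epsilon}}^{-1}]$, where $t$ is invertible, so the negative powers of $t$ are legitimate.
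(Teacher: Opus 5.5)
Your proposal is correct and is the same argument as the paper's: substitute $R^{-1}(z)=R^*(-z)$ and the vacuum vector $\mathbf v(z)=\mathbf 1\sum_{j\ge0}t^{-j}z^j$ into $T(z)=z\bigl(\mathbf 1-R^{-1}(z)\mathbf v(z)\bigr)$. Then take the Cauchy product and read off the $z^{k+1}$ coefficient, using $R_0^*\mathbf 1=\mathbf 1$ to cancel the constant term.
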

\begin{proof}
The translation vector for $\epsilon v_0$-shifted Chiodo CohFT $\Ch^{r,\epsilon}$ is given by
\begin{equation*}
T(z)=z(\mathbf{1}-R^{-1}(z)\mathbf{v}(z)).
\end{equation*}
By equation \eqref{eqn:R_inverse_as_adjoint_series} and Proposition \ref{prop:vacuum_vector_of_shifted_chiodo}, we compute
\begin{equation*}
\begin{aligned}
T(z)
&=z\left(\mathrm{Id}-\left(\sum_{l=0}^{\infty}t^{-l}z^l\right)\left(\sum_{m=0}^{\infty} (-1)^mR^*_mz^m\right)\right) \mathbf{1}\\
&=z\left(\mathrm{Id}-\sum_{k=0}^{\infty}\sum_{m=0}^{k} (-1)^mt^{-(k-m)}R^*_mz^k\right) \mathbf{1}\\
&=\sum_{k=1}^{\infty}\sum_{m=0}^{k} (-1)^{m+1}t^{-k+m}R^*_m\mathbf{1}z^{k+1}
\end{aligned}
\end{equation*}
which completes the proof.
\end{proof}

\section{Givental--Teleman Reconstruction of CohFTs $\Ch^{r,\epsilon}$ and $\Th^{r,\epsilon}$} \label{sec:reconstruction}

In this section, we write the Givental--Teleman reconstructions of the shifted Chiodo CohFT $\Ch^{r,\epsilon}$ and the deformed Theta CohFT $\Th^{r,\epsilon}$ as sums over vertex-colored stable graphs. Since the reconstruction theorem for CohFTs with non-flat unit is used here only for classes with insertions, we impose $n>0$ throughout.

The standard graph-sum convention is the one recalled in Section \ref{ss:cohft-actions}. We denote the reconstruction data of $\Ch^{r,\epsilon}$ by $(R,T,\omega^{r,\epsilon})$ and those of $\Th^{r,\epsilon}$ by $(\underline{R},\underline{T},\underline{\omega}^{r,\epsilon})$. The deformed Theta reconstruction for $n>0$ is the one established in \cite[Theorem 3.20]{CGG25}, we only re-express it in our convention.

The formulas below can be read either as tautological cohomology classes after applying the gluing maps or formally in the strata algebra. The latter interpretation is the one used in the graph by graph limit argument of the next section.

\subsection{Reconstruction formula} 

The following expression for the topological field theory will be useful. Here $\omega^{r,\epsilon}=[\Ch^{r,\epsilon}]_0$ is the topological part of $\Ch^{r,\epsilon}$.
\begin{lemma}
  \label{lem:tft-formula}
  \begin{equation*}
    \omega_{g, n}^{r,\epsilon}(\phi_{1} \otimes \cdots \otimes \phi_{n} )
    = \sum_{i = 0}^{r-1} (\xi_i \zeta_i)^{2g - 2 + n} \prod_{j = 1}^n \eta(\phi_j, \widetilde{e}_i).
  \end{equation*}
\end{lemma}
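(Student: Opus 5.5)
The plan is to use the standard structure theorem for semisimple topological field theories. A TFT is determined by its genus-zero three-point function, that is, by the Frobenius algebra $(V,\bullet,\eta)$. Concretely, $\omega_{g,n}$ is obtained by cutting a genus $g$ surface with $n$ legs into pairs of pants, which the gluing axiom allows. In the idempotent basis everything diagonalizes, so only the unit of the product and the norms $\eta(e_i,e_i)$ are needed. Since $\omega^{r,\epsilon}=[\Ch^{r,\epsilon}]_0$ is a TFT whose Frobenius algebra is exactly the one computed in Section \ref{sec:chiodo-analysis}, it suffices to verify the formula on the basis $\widetilde e_{i_1}\otimes\cdots\otimes\widetilde e_{i_n}$. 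By multilinearity and $\eta(\widetilde e_k,\widetilde e_i)=\delta_{ki}$, we must show
\begin{equation*}
\omega_{g,n}^{r,\epsilon}(\widetilde e_{i_1}\otimes\cdots\otimes\widetilde e_{i_n})=\delta_{i_1=\cdots=i_n=i}\,\Disc_i^{\,g-1+n/2},
\end{equation*}
where $\Disc_i=\zeta_i^2\xi_i^2$ by \eqref{eqn:Disc_i}. Note that $(\xi_i\zeta_i)^{2g-2+n}=\Disc_i^{(2g-2+n)/2}$ with the chosen branch of $\widetilde e_i=\zeta_i\xi_i e_i$.

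The steps are as follows. First, in genus $0$ with $n=3$,
\begin{equation*}
\omega_{0,3}(\widetilde e_a\otimes\widetilde e_b\otimes\widetilde e_c)=\eta(\widetilde e_a\bullet\widetilde e_b,\widetilde e_c).
\end{equation*}
Since $\widetilde e_a\bullet\widetilde e_b=\delta_{ab}\,\zeta_a\xi_a\widetilde e_a$, this equals $\delta_{abc}\,\zeta_a\xi_a=\delta_{abc}\Disc_a^{1/2}$, which matches the formula. Second, for general $(g,n)$, I would use that the degree-zero part of a CohFT satisfies the gluing axiom on the open stratum. A degree-zero class is a constant, so its pullback to any boundary divisor is that same constant. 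Choosing a maximally degenerate stable graph (all vertices of type $(0,3)$) therefore expresses $\omega_{g,n}$ as a contraction of $\omega_{0,3}$ tensors along the edges, using $\eta^{-1}=\sum_i\widetilde e_i\otimes\widetilde e_i$ in the orthonormal basis.

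Third, I would carry out this contraction. Each vertex forces all of its incident colors to be equal, and the graph is connected, so a single color $i$ survives. The resulting factor is $(\Disc_i^{1/2})^{|V|}$. For a trivalent graph of genus $g$ with $n$ legs, the vertex count is $|V|=2g-2+n$. This gives $\Disc_i^{(2g-2+n)/2}$, as claimed.

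No real obstacle is expected. The only points needing care are these:
\begin{itemize}
\item The unit is non-flat, so one must avoid any argument that uses forgetting points (for example, for $n=0$ or for reducing $n$). Using only gluing pullbacks avoids this. Existence of a trivalent degeneration is fine for $2g-2+n>0$.
\item The identification $\widetilde e_i\bullet\widetilde e_i=\zeta_i\xi_i\widetilde e_i$ should be checked to be consistent with the branch conventions for $\xi_i$. Alternatively, one can verify the formula directly for $n=3$, $g=0$ in the flat basis via \eqref{eqn:Psi-formula}.
\end{itemize}
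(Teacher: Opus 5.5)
Your proof is correct, and it is more self-contained than the paper's. The paper does not derive the formula. It quotes the general semisimple TFT formula $\omega_{g,n}(\phi_1\otimes\cdots\otimes\phi_n)=\sum_i \Disc_i^{\,g-1+n/2}\prod_j\eta(\phi_j,\widetilde e_i)$ from \cite{paper1} and then substitutes $\Disc_i=\zeta_i^2\xi_i^2$ from \eqref{eqn:Disc_i}.

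You instead reprove that formula for this particular TFT. You compute $\omega_{0,3}$ in the normalized idempotent basis from $\widetilde e_a\bullet\widetilde e_b=\delta_{ab}\,\zeta_a\xi_a\,\widetilde e_a$. You then pull back along a trivalent stable graph, where connectivity forces a single surviving color and $|\mathrm{V}_\Gamma|=2g-2+n$.

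Your route buys two things beyond self-containedness:
\begin{itemize}
\item It makes explicit that only the gluing axiom is used, not a flat unit or forgetful pullbacks. This matters for $\Ch^{r,\epsilon}$, whose unit is non-flat.
\item It fixes the branch of $\Disc_i^{(2g-2+n)/2}$ automatically through the definition $\widetilde e_i=\zeta_i\xi_i e_i$. The paper's substitution leaves this choice implicit.
\end{itemize}

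The paper's route is shorter and rests on a result valid for any generically semisimple CohFT, so it needs no case-specific computation.
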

\begin{proof}
It follows from \cite
{paper1} that  we have
\begin{equation*}
\omega_{g, n}^{r,\epsilon}(\phi_{1} \otimes \cdots \otimes \phi_{n} )
=
\sum_{i = 0}^{r-1} \Disc_{i,\Ch^{r,\epsilon}}^{g-1+\frac{n}{2}} \prod_{j = 1}^n \eta(\phi_j, \widetilde{e}_i).
\end{equation*}
Combining with the formula \eqref{eqn:Disc_i} for $\Disc_{i,\Ch^{r,\epsilon}}$, we complete the proof.
\end{proof}

The identical argument gives the topological field theory of the deformed Theta theory.

\begin{lemma}\label{lem:theta-tft-formula}
For every stable pair $(g,n)$ and all $\phi_1, \ldots, \phi_n \in \underline{V}$,
\begin{equation*}
\underline{\omega}_{g,n}^{r,\epsilon}(\phi_1\otimes\cdots\otimes\phi_n)
=
\sum_{i=1}^{r-1}(\underline{\xi}_i\underline{\zeta}_i)^{2g-2+n}
\prod_{j=1}^n\underline{\eta}(\phi_j,\widetilde{\underline e}_i).
\end{equation*}
\end{lemma}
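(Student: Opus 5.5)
The plan is to mirror the proof of Lemma~\ref{lem:tft-formula}, now with the Frobenius data of $\Th^{r,\epsilon}$ from Section~\ref{sec:chiodo-theta-background}. The formula is the standard description of a semisimple topological field theory in a normalized idempotent basis, and that description only needs the Frobenius algebra $(\underline V,\ubullet,\underline\eta)$, which we already know explicitly.

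First, by Proposition~\ref{prop:CGG_idempotent_basis}, after inverting $\epsilon$ and adjoining the roots $\underline\zeta_i,\underline\xi_i$, the algebra $(\underline V,\ubullet,\underline\eta)$ is semisimple with idempotent basis $\underline e_1,\dots,\underline e_{r-1}$. By Lemma~\ref{lem:theta_normalized_idempotents}, the normalized idempotents $\widetilde{\underline e}_i$ are orthonormal for $\underline\eta$. Hence every $\phi\in\underline V$ decomposes as $\phi=\sum_i\underline\eta(\phi,\widetilde{\underline e}_i)\,\widetilde{\underline e}_i$, and by multilinearity it suffices to evaluate $\underline\omega_{g,n}$ on tuples $\widetilde{\underline e}_{i_1}\otimes\cdots\otimes\widetilde{\underline e}_{i_n}$.

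Second, we compute these values. The topological part $\underline\omega=[\Th^{r,\epsilon}]_0$ is a TFT. Its genus-zero three-point function is $\underline\eta(\phi_1\ubullet\phi_2,\phi_3)$, because the degree-zero part of $\Th^{r,\epsilon}_{0,3}$ determines the quantum product. Applying the gluing axiom in degree zero, where all boundary pullbacks are the identity on $H^0$, a TFT is determined by its $(0,3)$ values and the pairing.
\begin{itemize}
\item For genus zero, we cut a tree into trivalent pieces. The relations $\widetilde{\underline e}_i\ubullet\widetilde{\underline e}_j=\delta_{ij}\underline\Disc_i^{1/2}\widetilde{\underline e}_i$ and $\underline\eta(\widetilde{\underline e}_i,\widetilde{\underline e}_j)=\delta_{ij}$ then give $\underline\omega_{0,n}(\widetilde{\underline e}_{i_1},\dots)=\delta_{i_1=\cdots=i_n}\underline\Disc_{i}^{(n-2)/2}$.
\item For higher genus, each nonseparating gluing inserts $\sum_j\widetilde{\underline e}_j\otimes\widetilde{\underline e}_j$. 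This multiplies by $\underline\Disc_i$ for each extra loop, yielding $\underline\Disc_i^{g-1+n/2}$.
\end{itemize}
Finally, Lemma~\ref{lem:theta_normalized_idempotents} gives $\underline\Disc_i=\underline\zeta_i^2\underline\xi_i^2$, so $\underline\Disc_i^{g-1+n/2}=(\underline\xi_i\underline\zeta_i)^{2g-2+n}$.

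The main obstacle is the branch choice for $\underline\Disc_i^{1/2}$, which has to be matched with the sign convention in $\widetilde{\underline e}_i=\underline\zeta_i\underline\xi_i\underline e_i$. For odd $n$ the expression $\underline\Disc_i^{n/2}$ must be read as $(\underline\zeta_i\underline\xi_i)^{n}$, which is exactly the normalization used in Lemma~\ref{lem:theta_normalized_idempotents}. With that convention fixed, the argument is the same as the one cited from \cite{paper1} for Lemma~\ref{lem:tft-formula}. That citation applies verbatim, since $\Th^{r,\epsilon}$ is generically semisimple over $\QQ[\epsilon]$, and its non-flat unit plays no role in the TFT formula.
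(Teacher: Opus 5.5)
Your proposal is correct and follows the paper's route. The paper invokes the standard semisimple TFT formula $\underline{\omega}_{g,n}=\sum_i \underline{\Disc}_i^{\,g-1+n/2}\prod_j\underline{\eta}(\phi_j,\widetilde{\underline e}_i)$ from \cite{paper1} and substitutes $\underline{\Disc}_i=\underline{\zeta}_i^2\underline{\xi}_i^2$ from Lemma~\ref{lem:theta_normalized_idempotents}; your gluing-axiom derivation of that formula, with $\underline{\Disc}_i^{1/2}$ read as $\underline{\zeta}_i\underline{\xi}_i$, is accurate and only makes explicit what the citation supplies.
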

\begin{proof}
By Lemma~\ref{lem:theta_normalized_idempotents}, we have
$\underline{\Disc}_{i,\Th^{r,\epsilon}}=(\underline{\zeta}_i \underline{\xi}_i)^2$. The standard semisimple TFT formula gives the claim \cite
{paper1}.
\end{proof}
\subsubsection{Trivial graph contribution}

Before stating the full graph sum formula, we record the contribution of the trivial stable graph (one vertex of genus $g$ with $n$ legs and no edges). This will also serve as a local model for arbitrary stable graphs and help us understand the sum over decorated stable graphs with vertex coloring decorations.

Now, we are ready to state the trivial graph contribution for the shifted Chiodo CohFT $\Ch^{r,\epsilon}$.
\begin{lemma}\label{lem:trivial_graph_contribution}
Let $2g-2+n>0$ and $s_1,\ldots, s_n \in \{0,1, \ldots, r-1\}$. The contribution $$\mathrm{Cont}^{\Ch^{r, \epsilon}}_{{\Gamma}_{\mathrm{tr}}}(v_{s_1} \otimes \cdots \otimes v_{s_n})
$$ of the trivial stable graph ${\Gamma}_{\mathrm{tr}}$  to the Givental--Teleman reconstruction of $\Ch^{r,\epsilon}(v_{s_1}\otimes \cdots \otimes v_{s_n})$ is given by
\begin{equation*}
\sum_{i = 0}^{r-1}\sum_{a_1,\ldots,a_n\geq 0}\prod_{j = 1}^n (-1)^{a_j} \eta(R_{a_j}^*v_{s_j}, \widetilde{e}_i)\psi_j^{a_j}\sum_{l=0}^{\infty}\frac{(\xi_i\zeta_i)^{2g - 2 + n+l}}{l!} \sum_{ k_1,\ldots,k_l\geq 2}    \prod_{j = {1}}^{l} \eta(T_{k_j}, \widetilde{e}_i) (\mathpzc{p}_l)_{\star} \left( \prod_{j=1}^{l}\psi_{n+j}^{k_j} \right).
\end{equation*}
\end{lemma}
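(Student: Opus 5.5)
The plan is to unwind the definition of $RT\omega$ on the trivial graph, in the order the actions are applied. By definition $(RT\omega)_{g,n}$ is a sum over stable graphs, and the trivial graph $\Gamma_{\mathrm{tr}}$ has no edges and no automorphisms. Its contribution is therefore the vertex factor $(T\omega)_{g,n}$, evaluated on the inputs after each leg has been acted on by $R^{-1}(\psi_j)$. So I will first write
\begin{equation*}
\mathrm{Cont}^{\Ch^{r,\epsilon}}_{\Gamma_{\mathrm{tr}}}(v_{s_1}\otimes\cdots\otimes v_{s_n})
=(T\omega^{r,\epsilon})_{g,n}\bigl(R^{-1}(\psi_1)v_{s_1}\otimes\cdots\otimes R^{-1}(\psi_n)v_{s_n}\bigr).
\end{equation*}
Using \eqref{eqn:R_inverse_as_adjoint_series}, $R^{-1}(\psi_j)v_{s_j}=\sum_{a_j\ge0}(-1)^{a_j}R^*_{a_j}v_{s_j}\,\psi_j^{a_j}$. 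By multilinearity this produces the sum over $a_1,\dots,a_n$ with the factors $(-1)^{a_j}\psi_j^{a_j}$ pulled out.

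Next I expand the translation. By definition,
\begin{equation*}
(T\omega)_{g,n}(\phi_1\otimes\cdots\otimes\phi_n)=\sum_{l\ge0}\frac1{l!}(\mathpzc p_l)_\star\,\omega_{g,n+l}\bigl(\phi_1\otimes\cdots\otimes\phi_n\otimes T(\psi_{n+1})\otimes\cdots\otimes T(\psi_{n+l})\bigr).
\end{equation*}
Writing $T(\psi_{n+j})=\sum_{k_j\ge2}T_{k_j}\psi_{n+j}^{k_j}$ turns this into the sum over $k_1,\dots,k_l\ge2$. Since $\omega$ is degree zero, its value is a scalar, so the $\psi_{n+j}^{k_j}$ factors can be moved outside $\omega$. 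The $\psi_j$ for $j\le n$ and the $\psi_{n+j}$ are not compatible with pushforward in general; however, the $\psi_j^{a_j}$ in the statement sit outside $(\mathpzc p_l)_\star$. I must check that this is legitimate.

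This check is the one subtle point. In the $R$-action formalism the leg factors $R^{-1}(\psi_{\mathfrak l})$ are classes on $\ocM_{g,n}$, because the $R$-action is applied after the translation: $RT\omega=R(T\omega)$. Hence the leg $\psi$'s multiply the already pushed-forward class $(T\omega)_{g,n}$. The formula in the statement is consistent with this, placing $\psi_j^{a_j}$ outside $(\mathpzc p_l)_\star$. I will only need to be careful to read the product in the statement in $H^*(\ocM_{g,n})$ (or in the strata algebra), not upstairs. No pullback comparison is needed.

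Finally I insert Lemma~\ref{lem:tft-formula} with $n+l$ points, which gives
\begin{equation*}
\omega_{g,n+l}(\cdots)=\sum_i(\xi_i\zeta_i)^{2g-2+n+l}\prod_{j=1}^n\eta(R^*_{a_j}v_{s_j},\widetilde e_i)\prod_{j=1}^l\eta(T_{k_j},\widetilde e_i).
\end{equation*}
I then move $\sum_i$ and the leg factors outside the sums over $l$ and $k$. Exchanging the finite sum over $i$ with the others is harmless: for fixed degree only finitely many terms survive, since $\dim\ocM_{g,n+l}$ bounds $\sum k_j$ and hence bounds $l$. This yields exactly the stated expression. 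I expect no real obstacle beyond this bookkeeping of the ordering of the $R$ and $T$ actions.
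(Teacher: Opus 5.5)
Your proposal is correct and follows essentially the same route as the paper: write the trivial-graph contribution as $(T\omega^{r,\epsilon})_{g,n}$ evaluated on the inputs $R^{-1}(\psi_j)v_{s_j}$, expand these via \eqref{eqn:R_inverse_as_adjoint_series}, unfold the translation action, and insert Lemma~\ref{lem:tft-formula} with $n+l$ points before reorganizing. Your observation is also right that the leg classes $\psi_j^{a_j}$ multiply the already pushed-forward class on $\ocM_{g,n}$ rather than being pulled back under $\mathpzc{p}_l$, and this is exactly how the paper's formula is to be read.
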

\begin{proof}
By the Givental--Teleman classification, we have $\Ch^{r,\epsilon}_{g,n}=(RT\omega^{r,\epsilon})_{g,n}$. The contribution of the trivial stable graph is obtained by applying $R^{-1}(\psi_j)$ to each leg input of $T\omega^{r,\epsilon}$, that is,
\begin{equation*}
(T\omega^{r,\epsilon})(R^{-1}(\psi_1)v_{s_1}\otimes \cdots \otimes R^{-1}(\psi_n)v_{s_n}).
\end{equation*}
Using equation \eqref{eqn:R_inverse_as_adjoint_series}, multilinearity and applying translation action, we get
\begin{equation*}
\begin{aligned}
& (T\omega^{r,\epsilon})_{g,n}
\left( R^{-1}(\psi_1)v_{s_1}\otimes\cdots\otimes
R^{-1}(\psi_n)v_{s_n} \right)\\
&= \sum_{a_1,\ldots,a_n\geq 0} (-1)^{a_1+\cdots+a_n} \left(\prod_{i=1}^n\psi_i^{a_i}\right)(T\omega^{r,\epsilon})_{g,n}
\left(R_{a_1}^*v_{s_1}\otimes\cdots\otimes R_{a_n}^*v_{s_n}\right)\\
&= \sum_{a_1,\ldots,a_n\geq 0} (-1)^{a_1+\cdots+a_n} \left(\prod_{i=1}^n\psi_i^{a_i}\right) \sum_{l=0}^{\infty}\frac{1}{l!} (\mathpzc{p}_l)_{\star} \omega^{r,\epsilon}_{g,n+l} \left( R_{a_1}^*v_{s_1}\otimes\cdots\otimes R_{a_n}^*v_{s_n} \otimes T(\psi_{n+1})\otimes\cdots\otimes T(\psi_{n+l}) \right).
\end{aligned}
\end{equation*}
Unfolding the translation action and using Lemma \ref{lem:tft-formula}, we obtain
\begin{equation*}
\begin{aligned}
& (T\omega^{r,\epsilon})_{g,n}
\left( R^{-1}(\psi_1)v_{s_1}\otimes\cdots\otimes
R^{-1}(\psi_n)v_{s_n} \right)\\
&=
\sum_{l=0}^{\infty}\frac{1}{l!} \sum_{\substack{a_1,\ldots,a_n\geq 0\\ k_1,\ldots,k_l\geq 2}} (-1)^{a_1+\cdots+a_n} \omega^{r,\epsilon}_{g,n+l} \left(R_{a_1}^*v_{s_1}\otimes\cdots\otimes R_{a_n}^*v_{s_n} \otimes T_{k_1}\otimes\cdots\otimes T_{k_l}\right) \left(\prod_{i=1}^n\psi_i^{a_i}\right) (\mathpzc{p}_l)_{\star} \left( \prod_{j=1}^{l}\psi_{n+j}^{k_j} \right)\\
&=\sum_{l=0}^{\infty}\frac{1}{l!} \sum_{\substack{a_1,\ldots,a_n\geq 0\\ k_1,\ldots,k_l\geq 2}} (-1)^{a_1+\cdots+a_n} \sum_{i = 0}^{r-1} (\xi_i \zeta_i)^{2g - 2 + n+l} \prod_{j = 1}^n \eta(R_{a_j}^*v_{s_j}, \widetilde{e}_i) \prod_{j = {1}}^{l} \eta(T_{k_j}, \widetilde{e}_i) \left(\prod_{i=1}^n\psi_i^{a_i}\right) (\mathpzc{p}_l)_{\star} \left( \prod_{j=1}^{l}\psi_{n+j}^{k_j} \right)
\end{aligned}
\end{equation*}
Reorganizing the final expression, we complete the proof.
\end{proof}

\begin{remark}\label{rmk:decorated_stable_graph_refinement}
By the $R$-matrix action and the Givental--Teleman classification
(Theorem \ref{thm:Givental--Teleman_Classification}), the contribution of a
general stable graph ${\Gamma}$ is obtained by placing the trivial
graph contribution at each vertex and gluing these vertex contributions
together using the bivectors determined by the edges. The key observation is
that the sum $\sum_{i=0}^{r-1}$ appearing in Lemma
\ref{lem:trivial_graph_contribution} records the choice of normalized
idempotent at the unique vertex of the trivial stable graph. This naturally
leads to decorated stable graphs whose vertices are colored by normalized
idempotents, allowing us to express graph contributions more efficiently. For
a similar detailed discussion, see \cite[Section 3.2]{GT24}.
\end{remark}

We define a \emph{decorated stable graph} to be a stable graph equipped with a coloring decoration of its vertex set, i.e.\ it is a tuple $\widetilde{\Gamma}_c=({\Gamma}, c)$ where $\Gamma$ is a stable graph and 
\begin{equation*}
c \, \colon \mathrm{V}_{{\Gamma}}\rightarrow \mathrm{C}
\end{equation*}
is a function which is a \emph{coloring decoration} of ${\Gamma}$. Here, $\mathrm{C}$ is a finite set.

A \emph{flag weighting} is a tuple
\begin{equation*}
\mathrm A=(a_{\mathfrak h})_{\mathfrak h\in\mathrm H_\Gamma}
\in\mathbb Z_{\geq 0}^{\vert \mathrm{H}_{\Gamma}\vert}.
\end{equation*}
For a leg $\mathfrak l$, we write $a_{\ell(\mathfrak l)}:=a_{\mathfrak l}$. For each edge $\mathfrak e$, choose temporarily an ordering $\mathfrak e=(\mathfrak e_1,\mathfrak e_2)$ and put $b_{\mathfrak e_j}:=a_{\mathfrak e_j}$. The edge contribution is independent of this ordering by the symplectic condition, see Remark~\ref{rem:alternate_version_of_edge_contribution}.

\begin{remark}
By definition $\widetilde{\Gamma}_c$ and $\Gamma$ just differ by a coloring of vertices. In particular, they have the same vertex, edge, leg sets. For this reason, if a structure does not depend on coloring we will use $\Gamma$ instead of $\widetilde{\Gamma}_c$ in its notation. For example, we will write $\mathrm{V}_{\Gamma}$ instead of $\mathrm{V}_{\widetilde{\Gamma}_c}$ when we are talking about the vertex set of the decorated stable graph $\widetilde{\Gamma}_c$.
\end{remark}

\begin{remark}
For the CohFT $\Ch^{r,\epsilon}$, the coloring set $C_{\Ch^{r,\epsilon}}$ is $\{0,1,\ldots, r-1\}$ and for the CohFT $\Th^{r,\epsilon}$, the coloring set $C_{\Th^{r,\epsilon}}$ is $\{1,\ldots, r-1\}$.
\end{remark}

\subsubsection{Reconstruction formula for the $\epsilon v_0$-shifted Chiodo CohFT} 
For a stable graph $\Gamma$, we refine its Givental--Teleman contribution by summing over all vertex-colorings\footnote{We emphasize that the sum runs over all colorings of the fixed representative ${\Gamma}$ of an element of $\mathsf{G}_{g,n}$.}:
\begin{equation*}
\mathrm{Cont}^{\Ch^{r, \epsilon}}_{{\Gamma}}(v_{s_1} \otimes \cdots \otimes v_{s_n})
=\sum_{c\, \colon  \, \mathrm{V}_{{\Gamma}}\to\{0,\dots,r-1\}}
\mathrm{Cont}^{\Ch^{r, \epsilon}}_{\widetilde{\Gamma}_c}(v_{s_1} \otimes \cdots \otimes v_{s_n}).
\end{equation*}

\begin{proposition}\label{prop:Graph_Sum_Formula_Shifted_Chiodo} Let $2g-2+n>0$, $n>0$, and $s_1,\ldots, s_n \in \{0,1, \ldots, r-1\}$.
For a decorated stable graph $\widetilde{\Gamma}_c=({\Gamma},c)$, its contribution  to the Givental--Teleman reconstruction of $\Ch^{r,\epsilon}(v_{s_1}\otimes \cdots \otimes v_{s_n})$ is given by\footnote{Here $\mathrm{Inv} \colon \{0,1,\ldots,r-1\}\rightarrow \{0,1,\ldots,r-1\}$  is the involution defined by $\mathrm{Inv}(0)=0$ and $\mathrm{Inv}(i)=r-i$\quad for $i=1,\ldots,r-1$.}
\begin{equation*}
\mathrm{Cont}^{\Ch^{r, \epsilon}}_{\widetilde{\Gamma}_c}(v_{s_1} \otimes \cdots \otimes v_{s_n})=\frac{1}{\vert \mathrm{Aut}({\Gamma})\vert}\left({\mathpzc{gl}_{{\Gamma}}}\right)_{\star}\left(\sum_{\mathrm{A}\in \mathbb{Z}_{\geq 0}^{\vert \mathrm{H}_{\Gamma}\vert}}\prod_{\mathfrak{v}\in \mathrm{V}_{\Gamma}}\mathrm{Cont}^{\mathrm{A},\Ch^{r, \epsilon}}_{\widetilde{\Gamma}_c,\mathfrak{v}}\prod_{\mathfrak{l}\in \mathrm{L}_{\Gamma}}\mathrm{Cont}^{\mathrm{A},\Ch^{r, \epsilon}}_{\widetilde{\Gamma}_c,\mathfrak{l}}\prod_{\mathfrak{e}\in \mathrm{E}_{\Gamma}} \mathrm{Cont}^{\mathrm{A},\Ch^{r, \epsilon}}_{\widetilde{\Gamma}_c,\mathfrak{e}} \right)
\end{equation*}
where
\begin{equation*}
\begin{aligned}
\mathrm{Cont}^{\mathrm{A},\Ch^{r, \epsilon}}_{\widetilde{\Gamma}_c,\mathfrak{v}}
=&\sum_{l_{\mathfrak{v}}\geq 0}\frac{\left(\zeta_{\mathrm{c}(\mathfrak{v})}\xi_{\mathrm{c}(\mathfrak{v})}\right)^{2\mathrm{g}(\mathfrak{v})-2+\mathrm{n}(\mathfrak{v})+l_{\mathfrak{v}}}}{l_{\mathfrak{v}}!}\sum_{k_1,\ldots,k_{l_{\mathfrak{v}}}\geq 2}\prod_{j=1}^{l_{\mathfrak{v}}}\eta(T_{k_j},\widetilde{e}_{c(\mathfrak{v})}){\mathpzc{p}_{l_{\mathfrak{v}}}}_{*}\left(\prod_{j=1}^{l_{\mathfrak{v}}}\psi_{\mathrm{n}(\mathfrak{v})+j}^{k_j}\right),\\
\mathrm{Cont}^{\mathrm{A},\Ch^{r, \epsilon}}_{\widetilde{\Gamma}_c,\mathfrak{l}}
=&(-1)^{a_{\ell(\mathfrak{l})}}\eta(R^*_{a_{\ell(\mathfrak{l})}}v_{s_{\ell(\mathfrak{l})}},\widetilde{e}_{c(\nu (\mathfrak{l}))})\psi_{\ell(\mathfrak{l})}^{a_{\ell(\mathfrak{l})}},\\
\mathrm{Cont}^{\mathrm{A},\Ch^{r, \epsilon}}_{\widetilde{\Gamma}_c,\mathfrak{e}}
=&(-1)^{b_{\mathfrak{e}_1}+b_{\mathfrak{e}_2}}\sum_{m=0}^{b_{\mathfrak{e}_2}}(-1)^m \sum_{i = 0}^{r-1} \eta(R^*_{b_{\mathfrak{e}_1}+m+1}v_i,\widetilde{e}_{c(\mathfrak{v}_{\mathfrak{e}_1})}) \cdot\eta^{i,\mathrm{Inv}(i)}\cdot\eta(R^*_{b_{\mathfrak{e}_2}-m}v_{\mathrm{Inv}(i)},\widetilde{e}_{c(\mathfrak{v}_{\mathfrak{e}_2})})\psi_{\mathfrak{e}_1}^{b_{\mathfrak{e}_1}}\psi_{\mathfrak{e}_2}^{b_{\mathfrak{e}_2}}.
\end{aligned}
\end{equation*}
\end{proposition}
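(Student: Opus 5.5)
The statement is a bookkeeping consequence of the definition of the $R$-matrix action, the translation action, and the TFT formula of Lemma~\ref{lem:tft-formula}. I would unfold $(RT\omega^{r,\epsilon})_{g,n}$ graph by graph, as recalled in Section~\ref{ss:cohft-actions}, and then insert the idempotent decomposition of the topological part at every vertex.

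\textbf{Vertices and legs.} By definition, the $R$-action contribution of a stable graph $\Gamma$ uses $(T\omega^{r,\epsilon})_{\mathrm g(\mathfrak v),\mathrm n(\mathfrak v)}$ at each vertex, $R^{-1}(\psi_{\mathfrak l})$ at each leg, and the edge bivector \eqref{eqn:R_matrix_action_edge_contribution_formula} at each edge. At a vertex I would expand $T\omega^{r,\epsilon}$ exactly as in the proof of Lemma~\ref{lem:trivial_graph_contribution}. Lemma~\ref{lem:tft-formula} writes $\omega^{r,\epsilon}_{\mathrm g(\mathfrak v),\mathrm n(\mathfrak v)+l}$ as a sum over a color $i\in\{0,\dots,r-1\}$ of $(\xi_i\zeta_i)^{2\mathrm g(\mathfrak v)-2+\mathrm n(\mathfrak v)+l}$ times the product of the pairings of each insertion with $\widetilde e_i$. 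Choosing one color per vertex therefore gives the sum over colorings $c$. For fixed $c$, the $T$-insertions at $\mathfrak v$ produce $\mathrm{Cont}^{\mathrm A}_{\widetilde\Gamma_c,\mathfrak v}$ as stated. Each half-edge at $\mathfrak v$ contributes a factor $\eta(\,\cdot\,,\widetilde e_{c(\mathfrak v)})$ applied to whatever enters along that half-edge. For a leg, the input is $R^{-1}(\psi)v_s=\sum_a(-1)^aR_a^*v_s\psi^a$ by \eqref{eqn:R_inverse_as_adjoint_series}, which yields the stated leg factor with $a=a_{\ell(\mathfrak l)}$.

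\textbf{Edges.} This is the only step needing care, and I expect it to be the main obstacle: matching the index shift and signs in the stated formula. Write $R^{-1}(w)=\sum_b(-1)^bR_b^*w^b$ and use $\eta^{-1}=\sum_i\eta^{i,\mathrm{Inv}(i)}v_i\otimes v_{\mathrm{Inv}(i)}$. The numerator $\eta^{-1}-R^{-1}(x)\eta^{-1}R^{-1}(y)^{\top}$ then becomes
\[
-\sum_{(p,q)\neq(0,0)}(-1)^{p+q}x^py^q\sum_i\eta^{i,\mathrm{Inv}(i)}R_p^*v_i\otimes R_q^*v_{\mathrm{Inv}(i)} .
\]
To divide by $x+y$, I would use the symplectic condition in the form $\sum_{p+q=N}(-1)^{p}\,R_p^*\eta^{-1}R_q^{*\top}$-type identities. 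Equivalently, the numerator vanishes at $y=-x$, and the quotient can be written explicitly. Using $x^py^q/(x+y)$ telescoping via $\frac{x^{p}y^q+\cdots}{x+y}$, I would check that the coefficient of $x^{b_1}y^{b_2}$ equals
\[
(-1)^{b_1+b_2}\sum_{m=0}^{b_2}(-1)^m\sum_i\eta^{i,\mathrm{Inv}(i)}\,R^*_{b_1+m+1}v_i\otimes R^*_{b_2-m}v_{\mathrm{Inv}(i)} .
\]
Concretely, I would verify the polynomial identity
\[
(x+y)\sum_{b_1,b_2}x^{b_1}y^{b_2}(-1)^{b_1+b_2}\sum_{m=0}^{b_2}(-1)^m R^*_{b_1+m+1}\otimes R^*_{b_2-m}=\text{numerator}.
\]
Comparing coefficients of $x^py^q$, the left side telescopes in $m$. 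The boundary terms are of two kinds. Terms with $b_2=q$, $m=q$ reproduce $R^*_{p+q}$-type pieces, which are killed by the symplectic relation $\sum_{p+q=N}(-1)^pR_p^*\eta^{-1}R_q^{*\top}=0$ for $N\geq 1$. The remaining boundary terms give exactly $-(-1)^{p+q}R^*_p\otimes R^*_q$. This settles the edge formula, and it also shows independence of the ordering of $\mathfrak e$, via the remark cited in the paper.

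\textbf{Assembly.} Each edge endpoint is paired with $\widetilde e_{c(\mathfrak v_{\mathfrak e_j})}$ by the vertex TFT factor, which yields the stated $\mathrm{Cont}^{\mathrm A}_{\widetilde\Gamma_c,\mathfrak e}$ with $\psi$-powers $b_{\mathfrak e_j}$. Collecting all half-edge exponents into the flag weighting $\mathrm A$, and keeping the prefactor $\frac{1}{|\mathrm{Aut}\Gamma|}(\mathpzc{gl}_\Gamma)_\star$, gives the proposition. The restriction $n>0$ is needed only to invoke Theorem~\ref{thm:Givental--Teleman_Classification}.
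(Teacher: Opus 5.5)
Your proposal is correct and follows essentially the same route as the paper. Both decompose the vertex TFT factor of $RT\omega^{r,\epsilon}$ into colorings via Lemma~\ref{lem:tft-formula}, expand the legs via $R^{-1}(z)=R^*(-z)$, and pair the expanded edge bivector with $\widetilde e_{c(\mathfrak v_{\mathfrak e_1})}\otimes\widetilde e_{c(\mathfrak v_{\mathfrak e_2})}$. The one place you go beyond the paper is the edge coefficient $E_{b_1,b_2}$, which the paper simply writes down: your telescoping check does go through, with the cancellation purely telescopic when the $\psi_{\mathfrak e_1}$-exponent is positive, and the symplectic identity $\sum_{m=0}^{N}(-1)^m(R^*_m\otimes R^*_{N-m})\eta^{-1}=0$ for $N\ge 1$ needed only for the coefficients of $\psi_{\mathfrak e_1}^{0}\psi_{\mathfrak e_2}^{q}$.
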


\begin{proof}
The expression for the trivial stable graph in Lemma \ref{lem:trivial_graph_contribution}, can be broken down to a sum over decorated trivial stable graphs whose unique vertex has color $c(\mathfrak{v})=i$:
\begin{equation*}
\sum_{a_1,\ldots,a_n\geq 0}\prod_{j = 1}^n \underbrace{(-1)^{a_j} \eta(R_{a_j}^*v_{s_j}, \widetilde{e}_i)\psi_j^{a_j}}_{=\mathrm{Cont}^{\mathrm{A},\Ch^{r, \epsilon}}_{\widetilde{\Gamma}_c,\mathfrak{l}}\,\text{with}\,\ell(\mathfrak{l})=j}\underbrace{\sum_{l=0}^{\infty}\frac{{(\zeta_i\xi_i)}^{2g - 2 + n+l}}{l!} \sum_{ k_1,\ldots,k_l\geq 2}    \prod_{j = {1}}^{l} \eta(T_{k_j}, \widetilde{e}_i) (\mathpzc{p}_l)_{\star} \left( \prod_{j=1}^{l}\psi_{n+j}^{k_j} \right)}_{=\mathrm{Cont}^{\mathrm{A},\Ch^{r, \epsilon}}_{\widetilde{\Gamma}_c,\mathfrak{v}}}.
\end{equation*}
As the automorphism group of the stable graph is trivial, the map $\mathpzc{gl}_{\Gamma}$ is identity and we have $\ocM_{\mathrm{g}(\mathfrak{v}),\mathrm{n}(\mathfrak{v})}=\ocM_{g,n}$. Hence, we have proved the proposition for decorated trivial stable graphs.

As we proved the statement for decorated trivial stable graphs and explained how it interplays with arbitrary decorated stable graphs in Remark \ref{rmk:decorated_stable_graph_refinement}, it only remains to prove part of the statement for the edge contributions of a decorated stable graph. The expression for the edge contribution in the $R$-matrix action corresponds to the bi-vector
\begin{equation*}
\frac{\eta^{-1}-\left(R^{-1}(\psi_{\mathfrak{e}_1}) \otimes R^{-1}(\psi_{\mathfrak{e}_2})\right)\eta^{-1}}{\psi_{\mathfrak{e}_1}+\psi_{\mathfrak{e}_2}}=\sum_{b_{\mathfrak{e}_1},b_{\mathfrak{e}_2}\geq 0}E_{b_{\mathfrak{e}_1},b_{\mathfrak{e}_2}}\psi_{\mathfrak{e}_1}^{b_{\mathfrak{e}_1}}\psi_{\mathfrak{e}_2}^{b_{\mathfrak{e}_2}}
\end{equation*}
where
\begin{equation*}
\begin{aligned}
E_{b_{\mathfrak{e}_1},b_{\mathfrak{e}_2}}
&=(-1)^{b_{\mathfrak{e}_1}+b_{\mathfrak{e}_2}} \sum_{m=0}^{b_{\mathfrak{e}_2}}(-1)^m(R^*_{b_{\mathfrak{e}_1}+m+1}\otimes R^*_{b_{\mathfrak{e}_2}-m})\eta^{-1}\\
&=(-1)^{b_{\mathfrak{e}_1}+b_{\mathfrak{e}_2}}\sum_{m=0}^{b_{\mathfrak{e}_2}}(-1)^m(R^*_{b_{\mathfrak{e}_1}+m+1}\otimes R^*_{b_{\mathfrak{e}_2}-m})\sum_{i=0}^{r-1}\eta^{i,\mathrm{Inv}(i)}v_{i}\otimes v_{\mathrm{Inv}(i)}\\
&=(-1)^{b_{\mathfrak{e}_1}+b_{\mathfrak{e}_2}}\sum_{m=0}^{b_{\mathfrak{e}_2}}(-1)^m\sum_{i=0}^{r-1}\eta^{i,\mathrm{Inv}(i)}(R^*_{b_{\mathfrak{e}_1}+m+1}v_{i}\otimes R^*_{b_{\mathfrak{e}_2}-m}v_{\mathrm{Inv}(i)}) 
\end{aligned}
\end{equation*}
Let
\begin{equation*}
E_{b_{\mathfrak{e}_1},b_{\mathfrak{e}_2}}=\sum_{\alpha,\beta}E_{b_{\mathfrak{e}_1},b_{\mathfrak{e}_2}}^{\alpha,\beta}\widetilde{e}_{\alpha}\otimes \widetilde{e}_{\beta}.
\end{equation*}
For a fixed flag $A$ and a fixed edge $\mathfrak{e}$ of a decorated stable graph, the exponents $b_{\mathfrak{e}_1}$,  $b_{\mathfrak{e}_2}$ of psi-classes are fixed and only $E_{b_{\mathfrak{e}_1},b_{\mathfrak{e}_2}}^{c(\mathfrak{v}_{\mathfrak{e}_1}),c(\mathfrak{v}_{\mathfrak{e}_2})}$ contributes as $\widetilde{e}_{c(\mathfrak{v}_{\mathfrak{e}_1})}\otimes \widetilde{e}_{c(\mathfrak{v}_{\mathfrak{e}_2})}$ is the only bi-vector that is compatible with the coloring of vertices to which the edge $\mathfrak{e}$ is attached. Hence, the edge contribution corresponds to
\begin{equation*}
\begin{aligned}
E_{a,b}^{c(\mathfrak{v}_{\mathfrak{e}_1}),c(\mathfrak{v}_{\mathfrak{e}_2})}
&=\eta(-,\widetilde{e}_{c(\mathfrak{v}_{\mathfrak{e}_1})})\otimes \eta(-,\widetilde{e}_{c(\mathfrak{v}_{\mathfrak{e}_2})})(E_{a,b})\\
&=(-1)^{b_{\mathfrak{e}_1}+b_{\mathfrak{e}_2}}\sum_{m=0}^{b_{\mathfrak{e}_2}}(-1)^m \sum_{i = 0}^{r-1} \eta(R^*_{b_{\mathfrak{e}_1}+m+1}v_i,\widetilde{e}_{c(\mathfrak{v}_{\mathfrak{e}_1})}) \cdot\eta^{i,\mathrm{Inv}(i)}\cdot\eta(R^*_{b_{\mathfrak{e}_2}-m}v_{\mathrm{Inv}(i)},\widetilde{e}_{c(\mathfrak{v}_{\mathfrak{e}_2})})
\end{aligned}
\end{equation*}
completing the proof.
\end{proof}

\begin{remark}\label{rem:alternate_version_of_edge_contribution}
Let $\mathfrak e$ be an edge of $\Gamma$ with ordered half-edges $\mathfrak e_1,\mathfrak e_2$ incident to
vertices $\mathfrak v_{\mathfrak e_1},\mathfrak v_{\mathfrak e_2}$.
The edge factor in Proposition \ref{prop:Graph_Sum_Formula_Shifted_Chiodo} is obtained by extracting the coefficient of $\psi_{\mathfrak e_1}^{b_{\mathfrak e_1}}\psi_{\mathfrak e_2}^{b_{\mathfrak e_2}}$ in the expression \eqref{eqn:R_matrix_action_edge_contribution_formula} of edge contribution of $R$-matrix action. Expanding $(\psi_{\mathfrak e_1}+\psi_{\mathfrak e_2})^{-1}$ either as a series in
$\psi_{\mathfrak e_2}/\psi_{\mathfrak e_1}$ or in $\psi_{\mathfrak e_1}/\psi_{\mathfrak e_2}$ yields two
a priori different coefficient formulas. These formulas  coincide because of the symplectic condition
$R(z)R^*(-z)=\mathrm{Id}$. Concretely, one may equivalently write
\begin{equation*}
\mathrm{Cont}^{\mathrm{A},\Ch^{r, \epsilon}}_{\widetilde{\Gamma}_c,\mathfrak{e}}
=(-1)^{b_{\mathfrak{e}_1}+b_{\mathfrak{e}_2}}\sum_{m=0}^{b_{\mathfrak{e}_1}}(-1)^m \sum_{i = 0}^{r-1} \eta(R^*_{b_{\mathfrak{e}_1}-m}v_i,\widetilde{e}_{c(\mathfrak{v}_{\mathfrak{e}_1})}) \cdot\eta^{i,\mathrm{Inv}(i)} \cdot \eta(R^*_{b_{\mathfrak{e}_2}+m+1}v_{\mathrm{Inv}(i)},\widetilde{e}_{c(\mathfrak{v}_{\mathfrak{e}_2})})\psi_{\mathfrak{e}_1}^{b_{\mathfrak{e}_1}}\psi_{\mathfrak{e}_2}^{b_{\mathfrak{e}_2}}.
\end{equation*}
\end{remark}

\subsubsection{Reconstruction formula for the $\epsilon v_0$-deformed Theta CohFT}

The CohFT $\Th^{r,\epsilon}$ is semisimple of rank $r-1$ and is supported on
\begin{equation*}
\underline V=\langle v_1,\ldots,v_{r-1}\rangle \subset V.
\end{equation*}
Recall that we denote its normalized idempotents, $R$-matrix, $T$-vector, bilinear form, and idempotent normalization constants by $\underline{\widetilde{e}}_i$, $\underline{R}$, $\underline{T}$, $\underline{\eta}$, $\underline{\xi}_i$, $\underline{\zeta}_i$ respectively, for $i\in\{1,\ldots,r-1\}$. As in the Chiodo case, for a stable graph $\Gamma$ we refine its Givental--Teleman contribution by summing over all vertex-colorings\footnote{We emphasize again that the sum runs over all colorings of the fixed representative $\Gamma$.}:
\begin{equation*}
\mathrm{Cont}^{\Th^{r, \epsilon}}_{{\Gamma}}(v_{s_1} \otimes \cdots \otimes v_{s_n})
=\sum_{c\, \colon  \, \mathrm{V}_{{\Gamma}}\to\{1,\dots,r-1\}}
\mathrm{Cont}^{\Th^{r, \epsilon}}_{\widetilde{\Gamma}_c}(v_{s_1} \otimes \cdots \otimes v_{s_n}).
\end{equation*}

\begin{proposition}\label{prop:Graph_Sum_Formula_Shifted_Theta}
Let $2g-2+n>0$, $n>0$, and $s_1,\ldots, s_n \in \{1, \ldots, r-1\}$.
For a decorated stable graph $\widetilde{\Gamma}_c$, its contribution  to the Givental--Teleman reconstruction of $\Th^{r,\epsilon}(v_{s_1}\otimes \cdots \otimes v_{s_n})$ is given by
\begin{equation*}
\mathrm{Cont}^{\Th^{r, \epsilon}}_{\widetilde{\Gamma}_c}(v_{s_1} \otimes \cdots \otimes v_{s_n})=\frac{1}{\vert \mathrm{Aut}({\Gamma})\vert} \left({\mathpzc{gl}_{{\Gamma}}}\right)_{\star}\left(\sum_{\mathrm{A}\in \mathbb{Z}_{\geq 0}^{\vert \mathrm{H}_{\Gamma}\vert}}\prod_{\mathfrak{v}\in \mathrm{V}_{\Gamma}}\mathrm{Cont}^{\mathrm{A},\Th^{r, \epsilon}}_{\widetilde{\Gamma}_c,\mathfrak{v}}\prod_{\mathfrak{l}\in \mathrm{L}_{\Gamma}}\mathrm{Cont}^{\mathrm{A},\Th^{r, \epsilon}}_{\widetilde{\Gamma}_c,\mathfrak{l}}\prod_{\mathfrak{e}\in \mathrm{E}_{\Gamma}} \mathrm{Cont}^{\mathrm{A},\Th^{r, \epsilon}}_{\widetilde{\Gamma}_c,\mathfrak{e}} \right)
\end{equation*}
where
\begin{equation*}
\begin{aligned}
\mathrm{Cont}^{\mathrm{A},\Th^{r, \epsilon}}_{\widetilde{\Gamma}_c,\mathfrak{v}}
=&\sum_{l_{\mathfrak{v}}\geq 0}\frac{\left(\underline{\zeta}_{\mathrm{c}(\mathfrak{v})}\underline{\xi}_{\mathrm{c}(\mathfrak{v})}\right)^{2\mathrm{g}(\mathfrak{v})-2+\mathrm{n}(\mathfrak{v})+l_{\mathfrak{v}}}}{l_{\mathfrak{v}}!}\sum_{k_1,\ldots,k_{l_{\mathfrak{v}}}\geq 2}\prod_{j=1}^{l_{\mathfrak{v}}}\underline{\eta}(\underline{T}_{k_j},\underline{\widetilde{e}}_{c(\mathfrak{v})}){\mathpzc{p}_{l_{\mathfrak{v}}}}_{*}\left(\prod_{j=1}^{l_{\mathfrak{v}}}\psi_{\mathrm{n}(\mathfrak{v})+j}^{k_j}\right),\\
\mathrm{Cont}^{\mathrm{A},\Th^{r, \epsilon}}_{\widetilde{\Gamma}_c,\mathfrak{l}}
=&(-1)^{a_{\ell(\mathfrak{l})}}\underline{\eta}(\underline{R}^*_{a_{\ell(\mathfrak{l})}}v_{s_{\ell(\mathfrak{l})}},\underline{\widetilde{e}}_{c(\nu (\mathfrak{l}))})\psi_{\ell(\mathfrak{l})}^{a_{\ell(\mathfrak{l})}},\\
\mathrm{Cont}^{\mathrm{A},\Th^{r, \epsilon}}_{\widetilde{\Gamma}_c,\mathfrak{e}}
=&(-1)^{b_{\mathfrak{e}_1}+b_{\mathfrak{e}_2}}\sum_{m=0}^{b_{\mathfrak{e}_2}}(-1)^m \sum_{i = 1}^{r-1} \underline{\eta}(\underline{R}^*_{b_{\mathfrak{e}_1}+m+1}v_i,\underline{\widetilde{e}}_{c(\mathfrak{v}_{\mathfrak{e}_1})}) \cdot\underline{\eta}^{i,\mathrm{Inv}(i)}\cdot\underline{\eta}(\underline{R}^*_{b_{\mathfrak{e}_2}-m}v_{\mathrm{Inv}(i)},\underline{\widetilde{e}}_{c(\mathfrak{v}_{\mathfrak{e}_2})})\psi_{\mathfrak{e}_1}^{b_{\mathfrak{e}_1}}\psi_{\mathfrak{e}_2}^{b_{\mathfrak{e}_2}}.\end{aligned}
\end{equation*}
\end{proposition}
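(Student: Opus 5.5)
The statement is the graph-sum formula for $\Th^{r,\epsilon}$, which is the exact analogue of Proposition~\ref{prop:Graph_Sum_Formula_Shifted_Chiodo}. I plan to prove it by repeating that argument with the underlined data. The input is the Givental--Teleman reconstruction for $n>0$ of \cite[Theorem 3.20]{CGG25}, which is also the case of Theorem~\ref{thm:Givental--Teleman_Classification} with $A=\QQ[\epsilon]$ and $\Disc$ from Corollary~\ref{cor:theta_discriminant}. It gives
\begin{equation*}
\Th^{r,\epsilon}_{g,n}=(\underline R\,\underline T\,\underline\omega^{r,\epsilon})_{g,n}, \quad \underline T(z)=z\bigl(\underline{\mathbf 1}-\underline R^{-1}(z)\underline{\mathbf v}(z)\bigr).
\end{equation*}
I would check that the conventions of \cite{CGG25} match ours: in particular, the sign convention for the $R$-action and the use of the symplectic condition $\underline R^{-1}(z)=\underline R^*(-z)$, which is adjoint with respect to $\underline\eta$.

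First, the trivial graph. Apply $\underline R^{-1}(\psi_j)=\sum_a(-1)^a\underline R_a^*\psi_j^a$ on each leg and unfold the translation action. Then replace $\underline\omega^{r,\epsilon}_{g,n+l}$ by the formula of Lemma~\ref{lem:theta-tft-formula}. This yields the analogue of Lemma~\ref{lem:trivial_graph_contribution}, with the sum over $i\in\{1,\dots,r-1\}$ and weights $(\underline\zeta_i\underline\xi_i)^{2g-2+n+l}$. Splitting that sum by the color $i$ of the vertex gives the vertex and leg factors for decorated trivial graphs. Here the inner products $\underline\eta(\underline R^*_{a}v_{s},\widetilde{\underline e}_i)$ are just the coordinates of $\underline R^*_a v_s$ in the orthonormal basis $\widetilde{\underline e}_i$ of Lemma~\ref{lem:theta_normalized_idempotents}.

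Next, a general stable graph $\Gamma$. Following Remark~\ref{rmk:decorated_stable_graph_refinement}, place the translated TFT at each vertex, put the leg factors on the legs, and expand the edge bivector
\begin{equation*}
\frac{\underline\eta^{-1}-(\underline R^{-1}(\psi')\otimes\underline R^{-1}(\psi''))\underline\eta^{-1}}{\psi'+\psi''}.
\end{equation*}
Use $\underline\eta^{-1}=\sum_{i=1}^{r-1}\underline\eta^{i,\mathrm{Inv}(i)}v_i\otimes v_{r-i}$, which holds because $\underline\eta$ pairs $v_i$ with $v_{r-i}$ only. Then the symplectic identity $\underline R^*(-z)\underline R(z)=\mathrm{Id}$ makes the numerator divisible by $\psi'+\psi''$, and extracting the coefficient of $\psi'^{b_1}\psi''^{b_2}$ gives $(-1)^{b_1+b_2}\sum_{m=0}^{b_2}(-1)^m(\underline R^*_{b_1+m+1}\otimes\underline R^*_{b_2-m})\underline\eta^{-1}$. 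This is exactly the computation in the proof of Proposition~\ref{prop:Graph_Sum_Formula_Shifted_Chiodo}. Pairing with $\widetilde{\underline e}_{c(\mathfrak v_{\mathfrak e_1})}\otimes\widetilde{\underline e}_{c(\mathfrak v_{\mathfrak e_2})}$ selects the component compatible with the coloring, because the vertex TFT in color $i$ only sees the $\widetilde{\underline e}_i$ component. Collecting factors, summing over flag weightings $\mathrm A$, and pushing forward by $\mathpzc{gl}_\Gamma$ with the factor $1/|\Aut(\Gamma)|$ gives the stated formula.

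The main obstacle is not the algebra, which is identical to the Chiodo case, but making sure the reconstruction statement really applies with a non-flat unit and $n>0$, with the precise $\underline T$ and the same normalizations as ours. In particular, \cite{CGG25} may present the $R$-action in terms of $R$ rather than $R^*$, and may normalize idempotents differently (see the footnote to Proposition~\ref{prop:CGG_idempotent_basis}). I would handle this by deriving the statement directly from Theorem~\ref{thm:Givental--Teleman_Classification} applied to $\Th^{r,\epsilon}$ over $\QQ[\epsilon]$, and using \cite{CGG25} only to identify $\underline R$ and $\underline{\mathbf v}$. The order of the two half-edges of an edge is then harmless, by the same argument as Remark~\ref{rem:alternate_version_of_edge_contribution}.
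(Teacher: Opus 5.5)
Your proposal is correct and follows essentially the same route as the paper, which also reruns the argument of Proposition~\ref{prop:Graph_Sum_Formula_Shifted_Chiodo} with the underlined data. The steps match: the trivial graph via the translation action and Lemma~\ref{lem:theta-tft-formula}, splitting by vertex color, and expanding the edge bivector with $\underline\eta^{-1}=\sum_{i=1}^{r-1}\underline\eta^{i,\mathrm{Inv}(i)}v_i\otimes v_{\mathrm{Inv}(i)}$, then pairing with the normalized idempotents compatible with the coloring. Taking the reconstruction from Theorem~\ref{thm:Givental--Teleman_Classification} over $\QQ[\epsilon]$, rather than relying on the conventions of \cite[Theorem 3.20]{CGG25} as the paper does, is a harmless extra check.
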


\begin{proof}
The proof proceeds in exact parallel with the proof of Proposition \ref{prop:Graph_Sum_Formula_Shifted_Chiodo}. The trivial graph contribution $\mathrm{Cont}^{\Th^{r, \epsilon}}_{{\Gamma}_{\mathrm{tr}}}(v_{s_1} \otimes \cdots \otimes v_{s_n})$ for $\Th^{r,\epsilon}$ is obtained by the same argument as in Lemma \ref{lem:trivial_graph_contribution}, applied to $\Th^{r,\epsilon}=\underline R\,\underline T\,\underline\omega^{r,\epsilon}$, and is given by
\begin{equation*}
\sum_{i = 1}^{r-1}\sum_{a_1,\ldots,a_n\geq 0}\prod_{j = 1}^n (-1)^{a_j} \underline{\eta}(\underline{R}_{a_j}^*v_{s_j}, \underline{\widetilde{e}}_i)\psi_j^{a_j}\sum_{l=0}^{\infty}\frac{(\underline{\xi}_i\underline{\zeta}_i)^{2g - 2 + n+l}}{l!} \sum_{ k_1,\ldots,k_l\geq 2}    \prod_{j = {1}}^{l} \underline{\eta}(\underline{T}_{k_j}, \underline{\widetilde{e}}_i) (\mathpzc{p}_l)_{\star} \left( \prod_{j=1}^{l}\psi_{n+j}^{k_j} \right).
\end{equation*}
The sum over $i$ now ranges over $\{1,\ldots,r-1\}$ because $\underline\omega^{r,\epsilon}$ is diagonal in the basis $\{\underline{\widetilde e}_i\}_{i=1}^{r-1}$ of normalized idempotents of $\Th^{r,\epsilon}$. This expression can be broken down to a sum over decorated trivial stable graphs where the unique vertex has color $c(\mathfrak v)=i\in\{1,\ldots,r-1\}$:
\begin{equation*}
\sum_{a_1,\ldots,a_n\geq 0}\prod_{j = 1}^n \underbrace{(-1)^{a_j} \underline{\eta}(\underline{R}_{a_j}^*v_{s_j}, \underline{\widetilde{e}}_i)\psi_j^{a_j}}_{=\mathrm{Cont}^{\mathrm{A},\Th^{r, \epsilon}}_{\widetilde{\Gamma}_c,\mathfrak{l}}\,\text{with}\,\ell(\mathfrak{l})=j}\underbrace{\sum_{l=0}^{\infty}\frac{(\underline{\xi}_i\underline{\zeta}_i)^{2g - 2 + n+l}}{l!} \sum_{ k_1,\ldots,k_l\geq 2}    \prod_{j = {1}}^{l} \underline{\eta}(\underline{T}_{k_j}, \underline{\widetilde{e}}_i) (\mathpzc{p}_l)_{\star} \left( \prod_{j=1}^{l}\psi_{n+j}^{k_j} \right)}_{=\mathrm{Cont}^{\mathrm{A},\Th^{r, \epsilon}}_{\widetilde{\Gamma}_c,\mathfrak{v}}}.
\end{equation*}
As the automorphism group of the stable graph is trivial, the map $\mathpzc{gl}_{\Gamma}$ is identity and we have $\ocM_{\mathrm{g}(\mathfrak{v}),\mathrm{n}(\mathfrak{v})}=\ocM_{g,n}$, we prove the proposition for decorated trivial stable graphs.

As in the proof of Proposition \ref{prop:Graph_Sum_Formula_Shifted_Chiodo}, it only remains to prove the part of the statement for the edge contributions of a decorated stable graph. The expression for the edge contribution in the $\underline R$-matrix action corresponds to the bi-vector
\begin{equation*}
\frac{\underline\eta^{-1}-\left(\underline R^{-1}(\psi_{\mathfrak{e}_1}) \otimes \underline R^{-1}(\psi_{\mathfrak{e}_2})\right)\underline\eta^{-1}}{\psi_{\mathfrak{e}_1}+\psi_{\mathfrak{e}_2}}=\sum_{b_{\mathfrak{e}_1},b_{\mathfrak{e}_2}\geq 0}\underline E_{b_{\mathfrak{e}_1},b_{\mathfrak{e}_2}}\psi_{\mathfrak{e}_1}^{b_{\mathfrak{e}_1}}\psi_{\mathfrak{e}_2}^{b_{\mathfrak{e}_2}}
\end{equation*}
where
\begin{equation*}
\begin{aligned}
\underline E_{b_{\mathfrak{e}_1},b_{\mathfrak{e}_2}}
&=(-1)^{b_{\mathfrak{e}_1}+b_{\mathfrak{e}_2}} \sum_{m=0}^{b_{\mathfrak{e}_2}}(-1)^m(\underline R^*_{b_{\mathfrak{e}_1}+m+1}\otimes \underline R^*_{b_{\mathfrak{e}_2}-m})\underline\eta^{-1}\\
&=(-1)^{b_{\mathfrak{e}_1}+b_{\mathfrak{e}_2}}\sum_{m=0}^{b_{\mathfrak{e}_2}}(-1)^m(\underline R^*_{b_{\mathfrak{e}_1}+m+1}\otimes \underline R^*_{b_{\mathfrak{e}_2}-m})\sum_{i=1}^{r-1}\underline\eta^{i,\mathrm{Inv}(i)}v_{i}\otimes v_{\mathrm{Inv}(i)}\\
&=(-1)^{b_{\mathfrak{e}_1}+b_{\mathfrak{e}_2}}\sum_{m=0}^{b_{\mathfrak{e}_2}}(-1)^m\sum_{i=1}^{r-1}\underline\eta^{i,\mathrm{Inv}(i)}(\underline R^*_{b_{\mathfrak{e}_1}+m+1}v_{i}\otimes \underline R^*_{b_{\mathfrak{e}_2}-m}v_{\mathrm{Inv}(i)}).
\end{aligned}
\end{equation*}
Let
\begin{equation*}
\underline E_{b_{\mathfrak{e}_1},b_{\mathfrak{e}_2}}=\sum_{\alpha,\beta}\underline E_{b_{\mathfrak{e}_1},b_{\mathfrak{e}_2}}^{\alpha,\beta}\underline{\widetilde{e}}_{\alpha}\otimes \underline{\widetilde{e}}_{\beta}.
\end{equation*}
For a fixed flag $A$ and a fixed edge $\mathfrak{e}$ of a decorated stable graph, the exponents $b_{\mathfrak{e}_1}$, $b_{\mathfrak{e}_2}$ of psi-classes are fixed and only $\underline E_{b_{\mathfrak{e}_1},b_{\mathfrak{e}_2}}^{c(\mathfrak{v}_{\mathfrak{e}_1}),c(\mathfrak{v}_{\mathfrak{e}_2})}$ contributes as $\underline{\widetilde e}_{c(\mathfrak v_{\mathfrak e_1})}\otimes\underline{\widetilde e}_{c(\mathfrak v_{\mathfrak e_2})}$ is the only bi-vector that is compatible with the coloring of vertices to which the edge $\mathfrak{e}$ is attached. Hence, the edge contribution corresponds to
\begin{equation*}
\begin{aligned}
\underline E_{b_{\mathfrak e_1},b_{\mathfrak e_2}}^{c(\mathfrak{v}_{\mathfrak{e}_1}),c(\mathfrak{v}_{\mathfrak{e}_2})}
&=\underline\eta(-,\underline{\widetilde{e}}_{c(\mathfrak{v}_{\mathfrak{e}_1})})\otimes \underline\eta(-,\underline{\widetilde{e}}_{c(\mathfrak{v}_{\mathfrak{e}_2})})(\underline E_{b_{\mathfrak e_1},b_{\mathfrak e_2}})\\
&=(-1)^{b_{\mathfrak{e}_1}+b_{\mathfrak{e}_2}}\sum_{m=0}^{b_{\mathfrak{e}_2}}(-1)^m \sum_{i = 1}^{r-1} \underline\eta(\underline R^*_{b_{\mathfrak{e}_1}+m+1}v_i,\underline{\widetilde{e}}_{c(\mathfrak{v}_{\mathfrak{e}_1})}) \cdot\underline\eta^{i,\mathrm{Inv}(i)}\cdot\underline\eta(\underline R^*_{b_{\mathfrak{e}_2}-m}v_{\mathrm{Inv}(i)},\underline{\widetilde{e}}_{c(\mathfrak{v}_{\mathfrak{e}_2})})
\end{aligned}
\end{equation*}
completing the proof.
\end{proof}

\begin{remark}\label{rem:colored-contributions-strata-valued}
The formulas above are coefficientwise identities in the strata algebra: $(\mathpzc{gl}_\Gamma)_\star$ records the stratum indexed by $\Gamma$, and no relation among different boundary strata is used. They separate each summand into vertex, leg, and edge factors. In the next section, the factors attached to colors $1,\ldots,r-1$ are compared with their deformed-Theta counterparts as $t\to0$, while summands containing the distinguished color $0$ are shown to vanish.
\end{remark}
Hence, we have the following result.
\begin{corollary}\label{cor:full-colored-graph-reconstruction}
For $2g-2+n>0$ and $n>0$, the strata-valued reconstructions are
\begin{align*}
\Ch^{r,\epsilon,\strata}_{g,n}(v_{s_1}\otimes\cdots\otimes v_{s_n})
&=
\sum_{\Gamma\in\mathsf G_{g,n}}
\sum_{c\colon\mathrm V_\Gamma\to\{0,\ldots,r-1\}}
\mathrm{Cont}^{\Ch^{r,\epsilon}}_{\widetilde\Gamma_c}
(v_{s_1}\otimes\cdots\otimes v_{s_n}),\\
\Th^{r,\epsilon,\strata}_{g,n}(v_{s_1}\otimes\cdots\otimes v_{s_n})
&=
\sum_{\Gamma\in\mathsf G_{g,n}}
\sum_{c\colon\mathrm V_\Gamma\to\{1,\ldots,r-1\}}
\mathrm{Cont}^{\Th^{r,\epsilon}}_{\widetilde\Gamma_c}
(v_{s_1}\otimes\cdots\otimes v_{s_n}),
\end{align*}
where the second formula is stated for $1\le s_i\le r-1$. Applying $q_H$ gives the corresponding CohFT classes.
\end{corollary}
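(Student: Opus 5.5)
The corollary follows by assembling the graph-sum definition of the $R$-matrix action with the colored expansions of Propositions~\ref{prop:Graph_Sum_Formula_Shifted_Chiodo} and~\ref{prop:Graph_Sum_Formula_Shifted_Theta}. By definition (Section~\ref{sss:strata_valued_Givental_Teleman}), $\Ch^{r,\epsilon,\strata}=RT\omega^{r,\epsilon}$. The $R$-matrix action of Section~\ref{ss:cohft-actions} applied to the CohFT $T\omega^{r,\epsilon}$ gives
\begin{equation*}
(RT\omega^{r,\epsilon})_{g,n}=\sum_{\Gamma\in\mathsf G_{g,n}}\mathrm{Cont}^{RT\omega^{r,\epsilon}}_\Gamma .
\end{equation*}
Here each $\mathrm{Cont}_\Gamma$ is $\frac{1}{|\Aut(\Gamma)|}(\mathpzc{gl}_\Gamma)_\star$ of a product with three kinds of factors:
\begin{itemize}
\item vertex factors $(T\omega^{r,\epsilon})_{\mathrm g(\mathfrak v),\mathrm n(\mathfrak v)}$;
\item leg factors $R^{-1}(\psi_{\mathfrak l})v_{s_{\ell(\mathfrak l)}}$;
\item edge bivectors as in \eqref{eqn:R_matrix_action_edge_contribution_formula}.
\end{itemize}
Everything is read formally in $\strata_{g,n}\otimes A_{\Ch}[\Disc_{\Ch^{r,\epsilon}}^{-1}]$. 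So the first step is only to recall this definition and fix the representative $\Gamma$ of each isomorphism class.

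The second step is to expand each vertex factor in colors. Unfolding the translation and using Lemma~\ref{lem:tft-formula}, as in the proof of Lemma~\ref{lem:trivial_graph_contribution}, the vertex factor is a sum over $i\in\{0,\dots,r-1\}$ of a scalar in $\psi$- and $\kappa$-type classes. That scalar is multiplied by $\prod_{\mathfrak h\mapsto\mathfrak v}\eta(-,\widetilde e_i)$ applied to the half-edge inputs. Choosing the index $i$ independently at every vertex amounts to a coloring $c\colon\mathrm V_\Gamma\to\{0,\dots,r-1\}$.

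By multilinearity, the following three substitutions yield exactly the vertex, leg, and edge factors of Proposition~\ref{prop:Graph_Sum_Formula_Shifted_Chiodo}:
\begin{itemize}
\item the leg inputs, after expanding $R^{-1}(z)=R^*(-z)$;
\item the edge bivector, after expanding it in $\psi_{\mathfrak e_1},\psi_{\mathfrak e_2}$ with coefficients $E_{b_{\mathfrak e_1},b_{\mathfrak e_2}}$ and contracting its two tensor factors with $\eta(-,\widetilde e_{c(\mathfrak v_{\mathfrak e_1})})\otimes\eta(-,\widetilde e_{c(\mathfrak v_{\mathfrak e_2})})$;
\item the flag weightings $\mathrm A$.
\end{itemize}
Hence $\mathrm{Cont}^{\Ch^{r,\epsilon}}_\Gamma$ is the sum over colorings of $\mathrm{Cont}^{\Ch^{r,\epsilon}}_{\widetilde\Gamma_c}$. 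The factor $1/|\Aut(\Gamma)|$ needs no adjustment, because we sum over all colorings of the fixed representative rather than over isomorphism classes of colored graphs. Summing over $\Gamma\in\mathsf G_{g,n}$ gives the first formula.

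The theta formula is proved identically. One uses $\Th^{r,\epsilon,\strata}=\underline R\,\underline T\,\underline\omega^{r,\epsilon}$, Lemma~\ref{lem:theta-tft-formula} in place of Lemma~\ref{lem:tft-formula}, the basis $\{\widetilde{\underline e}_i\}_{i=1}^{r-1}$, and Proposition~\ref{prop:Graph_Sum_Formula_Shifted_Theta}; the colors now range over $\{1,\dots,r-1\}$. The final sentence of the corollary follows from Theorem~\ref{thm:Givental--Teleman_Classification}: for $n>0$, $q_H$ of $RT\omega$ equals $\Ch^{r,\epsilon}_{g,n}$, and likewise for $\Th^{r,\epsilon}$ by \cite[Theorem 3.20]{CGG25}.

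The only point needing care is that rearranging the infinite sums over $\mathrm A$, $l_{\mathfrak v}$ and $k_j$ is legitimate. In the strata algebra this holds because on each $\ocM_{\mathrm g(\mathfrak v),\mathrm n(\mathfrak v)}$ only finitely many decorations of bounded degree are nonzero. In particular, each marked point added by $T$ raises the degree by at least one (since $k_j\ge2$), while the dimension of $\ocM_{\mathrm g(\mathfrak v),\mathrm n(\mathfrak v)+l_{\mathfrak v}}$ grows by only one per point, so all sums are finite in each fixed degree.
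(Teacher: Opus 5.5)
Your proposal is correct and follows the paper's route: the corollary is the graph-sum definition $(RT\omega)_{g,n}=\sum_{\Gamma}\mathrm{Cont}_\Gamma$ combined with the colored refinement $\mathrm{Cont}_\Gamma=\sum_c\mathrm{Cont}_{\widetilde\Gamma_c}$ from Propositions~\ref{prop:Graph_Sum_Formula_Shifted_Chiodo} and~\ref{prop:Graph_Sum_Formula_Shifted_Theta}. Because the sum runs over colorings of a fixed representative, the factor $1/|\Aut(\Gamma)|$ is unchanged, and the $q_H$ statement for $n>0$ comes from Theorem~\ref{thm:Givental--Teleman_Classification} and \cite[Theorem 3.20]{CGG25}. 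Your finiteness remark is an extra check the paper leaves implicit; it is cleanest to note that $(\mathpzc{p}_l)_\star\prod_j\psi_{n+j}^{k_j}$ has degree $\sum_j k_j-l\ge l$, which bounds $l_{\mathfrak v}$ by the dimension of $\ocM_{\mathrm g(\mathfrak v),\mathrm n(\mathfrak v)}$.
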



\section{Asymptotic Behavior of Shifted Chiodo CohFT as $t\to 0$}\label{sec:asymptotic-analysis}

In this section, we study the $t \to 0$ asymptotics of the $R$-matrix and translation vector of the $\epsilon v_0$-shifted Chiodo CohFT $\Ch^{r,\epsilon}$.  We first control the nonzero-color idempotent sector by restricting the Givental--Teleman reconstruction to the smooth locus and applying Harer stability.  We then combine homogeneity with the flatness equation to obtain the weaker, but sufficient, lower bounds for the zero-color sector that will be used in Section~\ref{sec:Limit_of_Givental_Teleman_Formula}.

Throughout this section, we fix $\epsilon\neq0$, choose the branches
$\zeta_i(t)$ and $\xi_i(t)$ as in Remark~\ref{rem:t-limit-convention}, and work over the ring
\begin{equation*}
    \CC[\epsilon^{\pm\frac{1}{2(r-1)}}]\bigl(\!\bigl(t^{\frac12}\bigr)\!\bigr).
\end{equation*}
All limits and $t$-orders are taken coefficientwise in this ring.  We use the convention
$\operatorname{ord}_t(0)=+\infty$.

\subsection{Quantities associated with the roots of $p_t(x)$}

We begin by recording the $t$-orders of the root-dependent quantities entering the flatness equation.  Recall that
\begin{equation*}
p_t(x)=x^r+(-1)^r\epsilon x-(-1)^rt,
\end{equation*}
and that, with our labeling, $\zeta_0=t/\epsilon+O(t^2)$ while
$\zeta_i=\underline\zeta_i+O(t)$ for $1\le i\le r-1$.

\begin{lemma}\label{lem:t_orders_of_Aij}
We have $Y_{i,j}\in \mathbb{C}[\epsilon^{\pm\frac{1}{2(r-1)}}](\!(t^{\frac{1}{2}})\!)$. Moreover, we have
\begin{equation*}
\operatorname{ord}_t\left(Y_{i,j}\right)
=\frac{1}{2}\delta_{i,0}+\frac{1}{2}\delta_{j,0}
\end{equation*}
for $0\leq i\neq j \leq r-1$.
\end{lemma}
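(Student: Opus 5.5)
The plan is to compute directly from the closed form \eqref{eqn:The_Matrix_A_entries_explicit} of Corollary~\ref{cor:flatness_equation_in_ij_form}. For $i\neq j$ that corollary gives
\begin{equation*}
Y_{i,j}=\zeta_i\,\zeta_j\,\xi_i^{-1}\,\xi_j^{-1}\,(\zeta_i-\zeta_j)^{-1}.
\end{equation*}
Each of the five factors lies in $\mathbb{C}[\epsilon^{\pm\frac{1}{2(r-1)}}](\!(t^{\frac{1}{2}})\!)$ by Lemma~\ref{lem:xi_i_and_zeta_i_and_inverses_ring_containment}. Since this set is a ring, the product $Y_{i,j}$ lies in it as well, which settles the ring membership.

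For the order statement, I would first note that $\operatorname{ord}_t$ is additive on products of nonzero elements. The reason is that the coefficient ring $\mathbb{C}[\epsilon^{\pm\frac{1}{2(r-1)}}]$ is an integral domain, so the product of two nonzero leading coefficients is nonzero. Lemma~\ref{lem:xi_i_and_zeta_i_and_inverses_ring_containment} supplies the individual orders:
\begin{equation*}
\operatorname{ord}_t(\zeta_k)=\delta_{k,0},\qquad \operatorname{ord}_t(\xi_k^{-1})=-\tfrac12\delta_{k,0},\qquad \operatorname{ord}_t\bigl((\zeta_i-\zeta_j)^{-1}\bigr)=0.
\end{equation*}
For the inverses I use $\operatorname{ord}_t(x^{-1})=-\operatorname{ord}_t(x)$, which is again additivity. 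Adding these up gives
\begin{equation*}
\operatorname{ord}_t(Y_{i,j})=\delta_{i,0}+\delta_{j,0}-\tfrac12\delta_{i,0}-\tfrac12\delta_{j,0}=\tfrac12\delta_{i,0}+\tfrac12\delta_{j,0}.
\end{equation*}

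As a sanity check, when $i=0\neq j$ the leading term is
\begin{equation*}
Y_{0,j}\approx \frac{(t/\epsilon)\,\underline\zeta_j}{t^{1/2}\,\underline\xi_j\,(-\underline\zeta_j)}=-\frac{t^{1/2}}{\epsilon\,\underline\xi_j},
\end{equation*}
whose coefficient is nonzero, so the order is exactly $\tfrac12$. Since $i\neq j$, at most one index is $0$, and the case $i,j\geq1$ gives order $0$ with leading term $\underline\zeta_i\underline\zeta_j/(\underline\xi_i\underline\xi_j(\underline\zeta_i-\underline\zeta_j))\neq0$.

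The only point needing care is additivity of the order, i.e.\ that the leading coefficients do not cancel. I expect no real obstacle: all five factors have nonzero leading terms in a domain, and these terms were computed explicitly in the proof of Lemma~\ref{lem:xi_i_and_zeta_i_and_inverses_ring_containment}. The one thing to check is that $r=2$ does not degenerate. In that case $\underline\zeta_1=-\epsilon\neq0$ and $\underline\xi_1^2=\epsilon^2\neq0$, so the same formulas apply.
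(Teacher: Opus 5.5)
Your proposal is correct and follows essentially the same route as the paper. Both read off $Y_{i,j}=\zeta_i\zeta_j/(\xi_i\xi_j(\zeta_i-\zeta_j))$ from Corollary~\ref{cor:flatness_equation_in_ij_form}, take ring membership and the individual $t$-orders (with $\zeta_i-\zeta_j$ a unit of order $0$) from Lemma~\ref{lem:xi_i_and_zeta_i_and_inverses_ring_containment}, and add the orders. Your explicit justification that $\operatorname{ord}_t$ is additive because the coefficient ring is a domain, and your leading-term check for $Y_{0,j}$, are details the paper leaves implicit.
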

\begin{proof}
Recall that we have $Y_{i,i}=0$ and 
\begin{equation}\label{eqn:A_ij_ring_containment}
Y_{i,j}=\frac{\zeta_i\zeta_j}{\xi_i\xi_j(\zeta_i-\zeta_j)}\quad \text{for}\quad i\neq j.
\end{equation}
As the constant term of $\zeta_0$ with respect to $t$ is zero and constant term of $\zeta_i$ with respect to $t$ is $\underline{\zeta}_i$, we have
\begin{equation}\label{eqn:zeta_i_minu_zeta_j_constant_term}
\zeta_i-\zeta_0=\underbrace{\underline{\zeta}_i}_{\neq 0}+O(t)\quad \text{and} \quad \zeta_i-\zeta_j=\underbrace{\underline{\zeta}_i-\underline{\zeta}_j}_{\neq 0}+O(t)
\end{equation}
for $i,j=1,\ldots,r-1$ and $i\neq j$. In other words, the difference $\zeta_i-\zeta_j$ is a unit of $ \mathbb{C}[\epsilon^{\pm\frac{1}{2(r-1)}}](\!(t^{\frac{1}{2}})\!)$ for $i\neq j$. Hence, by equation \eqref{eqn:A_ij_ring_containment} and Lemma \ref{lem:xi_i_and_zeta_i_and_inverses_ring_containment}, we conclude that $$Y_{i,j}\in \mathbb{C}[\epsilon^{\pm\frac{1}{2(r-1)}}](\!(t^{\frac{1}{2}})\!),$$
and
\begin{equation*}
\begin{aligned}
\operatorname{ord}_t\left(Y_{i,j}\right)
&=\operatorname{ord}_t\left(\zeta_i\zeta_j\right)-\operatorname{ord}_t\left(\xi_i\xi_j\right)-\operatorname{ord}_t\left(\zeta_i-\zeta_j\right)\\
&=\operatorname{ord}_t\left(\zeta_i\right)+\operatorname{ord}_t\left(\zeta_j\right)-\operatorname{ord}_t\left(\xi_i\right)-\operatorname{ord}_t\left(\xi_j\right)-\operatorname{ord}_t\left(\zeta_i-\zeta_j\right)\\
&=\delta_{i,0}+\delta_{j,0}-\frac{1}{2}\delta_{i,0}-\frac{1}{2}\delta_{j,0}-0\\
&=\frac{1}{2}\delta_{i,0}+\frac{1}{2}\delta_{j,0}
\end{aligned}
\end{equation*}
completing the proof.
\end{proof}

\begin{lemma}
For $0\leq i,j \leq r-1$, we have
\begin{equation}\label{eqn:Psi_orders_in_t}
\operatorname{ord}_t(\Psi_{i,j})=j\delta_{i,0}-\frac{1}{2}\delta_{i,0}
\end{equation}
and
\begin{equation}\label{eqn:Psi_Inverse_orders_in_t}
\operatorname{ord}_t(\Psi^{-1}_{i,j})=\begin{cases}
1-\frac{1}{2}\delta_{0,j}\quad &\text{if}\quad i=0,\\
\left(r-i-\frac{1}{2}\right)\delta_{0,j}\quad &\text{if}\quad 1\leq i\leq r-1
\end{cases}
\geq \frac{1}{2}\delta_{0,j}.
\end{equation}
\end{lemma}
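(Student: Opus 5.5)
This lemma is a direct bookkeeping consequence of the explicit transition matrices \eqref{eqn:Psi-formula} and \eqref{eqn:Psi_Inverse} together with the order computations of Lemma~\ref{lem:xi_i_and_zeta_i_and_inverses_ring_containment}. The plan is to use that $\operatorname{ord}_t$ is additive on products in the ring $\CC[\epsilon^{\pm\frac{1}{2(r-1)}}](\!(t^{\frac12})\!)$. This ring is a field of Laurent series over an integral domain, so leading coefficients multiply without cancellation. Each entry is then a monomial in $\zeta_i$, $\xi_i^{-1}$ and $t$, whose orders are already known: $\operatorname{ord}_t(\zeta_i)=\delta_{i,0}$ and $\operatorname{ord}_t(\xi_i)=\frac12\delta_{i,0}$.

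For $\Psi$, write $\Psi_{i,j}=(-1)^j\zeta_i^j\xi_i^{-1}$. Additivity gives
\begin{equation*}
\operatorname{ord}_t(\Psi_{i,j})=j\operatorname{ord}_t(\zeta_i)-\operatorname{ord}_t(\xi_i)=j\delta_{i,0}-\tfrac12\delta_{i,0}.
\end{equation*}
For $j=0$ the factor $\zeta_i^0=1$ needs no care. For $i=0$ one should note that $\zeta_0^j$ has nonzero leading coefficient $\epsilon^{-j}t^j$, so no cancellation occurs.

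For $\Psi^{-1}$, split into the two cases of \eqref{eqn:Psi_Inverse}. When $i=0$, the entry is $t\,\xi_j^{-1}$, so its order is $1-\frac12\delta_{0,j}$. When $1\le i\le r-1$, the entry is $(-1)^{r-i}\zeta_j^{r-i}\xi_j^{-1}$, whose order is $(r-i)\delta_{j,0}-\frac12\delta_{j,0}=(r-i-\frac12)\delta_{0,j}$.

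The final inequality follows by inspection. In the case $i=0$, the order $1-\frac12\delta_{0,j}$ is $\frac12$ when $j=0$ and $1$ otherwise, hence at least $\frac12\delta_{0,j}$. In the case $1\le i\le r-1$, we have $r-i\ge1$, so $r-i-\frac12\ge\frac12$ when $j=0$, and the order is $0$ otherwise.

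There is no real obstacle here. The only point needing a word of justification is that the leading coefficients $\underline{\zeta}_i$ and $\underline{\xi}_i$ (for $i\ge1$) are nonzero because $\epsilon\ne0$, and that the leading coefficients of $\zeta_0$ and $\xi_0$ are $\epsilon^{-1}$ and $1$ respectively. Both facts are already recorded in Lemma~\ref{lem:series_expansion_for_roots} and in the proof of Lemma~\ref{lem:xi_i_and_zeta_i_and_inverses_ring_containment}.
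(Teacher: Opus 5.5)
Your proposal is correct and follows the paper's proof: the paper likewise reads the orders directly off the explicit formulas \eqref{eqn:Psi-formula} and \eqref{eqn:Psi_Inverse}, using $\operatorname{ord}_t(\zeta_i)=\delta_{i,0}$ and $\operatorname{ord}_t(\xi_i)=\frac12\delta_{i,0}$ from Lemma~\ref{lem:xi_i_and_zeta_i_and_inverses_ring_containment}. One small quibble: $\CC[\epsilon^{\pm\frac{1}{2(r-1)}}](\!(t^{\frac12})\!)$ is an integral domain but not a field, since its coefficient ring is not a field. Additivity of $\operatorname{ord}_t$ only needs the domain property, and $\xi_i^{-1}$ exists because the leading coefficient of $\xi_i$ is a unit.
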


\begin{proof}
The assertions follow directly from equations~\eqref{eqn:Psi-formula} and
\eqref{eqn:Psi_Inverse}, together with
\begin{equation*}
\operatorname{ord}_t(\zeta_i)=\delta_{i,0},
\qquad
\operatorname{ord}_t(\xi_i)=\frac12\delta_{i,0},
\end{equation*}
proved in Lemma~\ref{lem:xi_i_and_zeta_i_and_inverses_ring_containment}.
\end{proof}

\subsection{Polynomiality and bounds on the $t$ orders from Harer stability}

In this section, we use Harer stability to obtain certain polynomiality and limit results involving the $R$-matrix and $T$-vector of the $\epsilon v_0$-shifted Chiodo CohFT $\Ch^{r,\epsilon}$. The following form of Harer stability will be useful for our purposes.

\begin{proposition}[Harer stability]
  \label{prop:stability}
  If $3k < g$, then
  \begin{equation*}
    \left\{\mathpzc{p}_{l*}\left(\psi_1^{k_1} \dotsb \psi_{n + l}^{k_{n+l}}\right)\right\}_{l \ge 0, k_1, \dotsc, k_n \ge 0, k_{n+1}, \dotsc, k_{n+l} \ge 2, \sum_{i = 1}^{n+l} k_i = k + l}
  \end{equation*}
  forms a basis of $H^{2k}(\cM_{g, n})$.
\end{proposition}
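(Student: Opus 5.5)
The plan is to reduce the statement to the known description of the stable cohomology of $\cM_{g,n}$, and then to compare the proposed spanning set with the standard monomial basis by a unitriangular change of basis.

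The input I would invoke is Harer's stability theorem (in an improved range, e.g.\ Ivanov, Boldsen or Randal-Williams) together with the Madsen--Weiss theorem and Looijenga's extension to marked points. Together these say that in the stable range the restriction map identifies $H^*(\cM_{g,n};\QQ)$ with the free polynomial algebra $\QQ[\kappa_1,\kappa_2,\dotsc][\psi_1,\dotsc,\psi_n]$. The condition $3k<g$ (equivalently, cohomological degree $2k<2g/3$) should lie inside that range. Consequently the monomials
\begin{equation*}
  \psi_1^{k_1}\dotsb\psi_n^{k_n}\,\kappa_{b_1}\dotsb\kappa_{b_l}, \qquad b_j\ge1,\quad \sum_i k_i+\sum_j b_j=k,
\end{equation*}
with $\{b_j\}$ an unordered multiset, form a basis of $H^{2k}(\cM_{g,n})$. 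Pinning down a reference that covers exactly this range, with marked points included, is the step I expect to be the main obstacle; everything afterwards is formal.

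The next step is to compute $\mathpzc{p}_{l*}(\psi_1^{k_1}\dotsb\psi_{n+l}^{k_{n+l}})$ on the open part $\cM_{g,n}$. I would first peel off the first $n$ psi classes. On $\ocM_{g,n+l}$ one has $\psi_i=\mathpzc{p}^*\psi_i+(\text{divisors } D_{i,J})$, where $D_{i,J}$ is the locus of genus-zero bubbles carrying $i$ and some forgotten points. On such a divisor, any forgotten point $n+j$ on the bubble satisfies $\psi_{n+j}\cdot D_{i,J}$ being supported on deeper strata where $\psi_{n+j}$ eventually restricts to zero on a three-pointed rational component. Since every forgotten point carries $\psi_{n+j}^{k_{n+j}}$ with $k_{n+j}\ge2\ge1$, these corrections die, and the projection formula gives
\begin{equation*}
  \mathpzc{p}_{l*}\Bigl(\prod_{i=1}^{n+l}\psi_i^{k_i}\Bigr)=\prod_{i=1}^n\psi_i^{k_i}\cdot \mathpzc{p}_{l*}\Bigl(\prod_{j=1}^{l}\psi_{n+j}^{k_{n+j}}\Bigr).
\end{equation*}
For the remaining factor I would apply the standard formula for pushforwards of psi monomials, which expresses it as a sum over set partitions $\sigma$ of $\{1,\dotsc,l\}$ of $\prod_{B\in\sigma}\kappa_{\sum_{j\in B}(k_{n+j}-1)}$. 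The partition into singletons gives the leading term $\prod_j\kappa_{k_{n+j}-1}$. Every other partition produces a kappa monomial with strictly fewer factors but the same total degree $\sum_j (k_{n+j}-1)$. A degree count confirms consistency: $\sum_i k_i=k+l$ makes the class lie in $H^{2k}$.

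Finally I would index both families by the data $(k_1,\dotsc,k_n;\{b_j=k_{n+j}-1\})$, which gives a bijection between the two index sets. Ordering the indices by the number of kappa factors, the transition matrix is unitriangular, with diagonal coefficient $1$ after accounting for the multiset symmetry. Hence the family $\{\mathpzc{p}_{l*}(\psi_1^{k_1}\dotsb\psi_{n+l}^{k_{n+l}})\}$ is a basis of $H^{2k}(\cM_{g,n})$ whenever $3k<g$.
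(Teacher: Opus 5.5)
Your proposal is correct and is essentially the argument behind the reference the paper relies on: the paper's proof is only a pointer to the proof of Lemma~2.2 in MOPPZ17, which likewise combines free generation of $H^*(\cM_{g,n})$ by $\kappa$ and $\psi$ classes in the stable range (Harer, Madsen--Weiss, Looijenga) with the invertible change of basis between $\kappa$-monomials and pushforwards of $\psi$-monomials. One small correction that does not affect your triangularity argument: the pushforward formula is a sum over permutations $\sigma\in S_l$, with one factor $\kappa_{\sum_{j\in c}(k_{n+j}-1)}$ for each cycle $c$ of $\sigma$, not a sum over set partitions. A set partition with blocks $B$ therefore occurs with multiplicity $\prod_B(|B|-1)!$; since only the identity has $l$ cycles, the diagonal coefficient is still $1$.
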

\begin{proof}
  See the proof of \cite[Lemma 2.2]{MOPPZ17}.
\end{proof}

Using Harer stability, we will prove the following polynomiality and limit results for the $R$-matrix of $\epsilon v_0$-shifted Chiodo CohFT $\Ch^{r,\epsilon}$.
\begin{lemma} \label{lem:R-limit-nonzero}
  For $k\geq 0$,
  \begin{itemize}
      \item we have $$\eta(R_k^*(\phi), \widetilde{e}_i) \in \QQ[\zeta_i^{\pm 1}, \xi_i^{\pm 1}, \epsilon^{\pm 1}, t]$$ if $\phi \in V=\langle v_0, \dotsc, v_{r-1}\rangle_{\QQ}$ and $i = 0, \dotsc, r-1$,
      \item the limit $$\lim_{t \to 0} \eta(R_k^*(\phi), \widetilde{e}_i)$$ exists if $\phi \in V=\langle v_0, \dotsc, v_{r-1}\rangle_{\QQ}$ and $i = 1, \dotsc, r-1$,
      \item  and the limit is   
    $$\lim_{t \to 0} \eta(R_k^*(\phi), \widetilde{e}_i)
    = \underline{\eta}( \underline{R}_k^*(\phi), \widetilde{\underline{e}}_i)$$ if $\phi \in \underline{V}=\langle v_1, \dotsc, v_{r-1}\rangle_{\QQ}$ and $i = 1, \dotsc, r-1$.
  \end{itemize}

\end{lemma}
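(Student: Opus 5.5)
Since $\eta(R_k^*\phi,\widetilde e_i)$ is linear in $\phi$, it suffices to treat $\phi=v_s$ for $0\le s\le r-1$. The idea is to restrict the Givental--Teleman reconstruction of $\Ch^{r,\epsilon}$ to the interior $\cM_{g,n}$. There only the trivial graph survives, and its contribution is given by Lemma~\ref{lem:trivial_graph_contribution}. Take $n=1$ and $g$ large with $3k<g$. The restriction of $\Ch^{r,\epsilon}_{g,1}(v_s)$ to $\cM_{g,1}$ in degree $k$ is then
\begin{equation*}
\sum_{i=0}^{r-1}\sum_{a+\sum_j(k_j-1)=k}(-1)^a\eta(R_a^*v_s,\widetilde e_i)\,\frac{(\xi_i\zeta_i)^{2g-1+l}}{l!}\prod_j\eta(T_{k_j},\widetilde e_i)\;\psi_1^a\,\mathpzc p_{l*}\Bigl(\prod_j\psi_{1+j}^{k_j}\Bigr).
\end{equation*}
By Proposition~\ref{prop:stability}, the classes $\psi_1^a\mathpzc p_{l*}(\cdots)$ form a basis of $H^{2k}(\cM_{g,1})$. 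So each coefficient can be read off from the geometric class. The left side lies in $H^*(\ocM_{g,1},\QQ)[\epsilon,t]$, so each coefficient is a polynomial in $\epsilon,t$. In particular, the coefficient of $\psi_1^k$ (where $l=0$ and $a=k$) gives
\begin{equation*}
c_{g,k}(v_s)\coloneqq(-1)^k\sum_{i=0}^{r-1}(\xi_i\zeta_i)^{2g-1}\eta(R_k^*v_s,\widetilde e_i)\in\QQ[\epsilon,t]\quad\text{for all }g>3k.
\end{equation*}

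\textbf{Separating the colors.} Consider the $r$ consecutive values $g=g_0,g_0+1,\dots,g_0+r-1$. This gives a Vandermonde system in the unknowns $x_i=(\xi_i\zeta_i)^{2g_0-1}\eta(R_k^*v_s,\widetilde e_i)$ with nodes $\Disc_i=(\xi_i\zeta_i)^2$. For $t\neq0$ away from the discriminant, the nodes $\Disc_i$ are pairwise distinct: $\Disc_0$ has order $3$ in $t$, while for $i\ge1$ the $\Disc_i$ tend to the distinct $\underline\Disc_i$. Inverting the Vandermonde matrix expresses each $\eta(R_k^*v_s,\widetilde e_i)$ as $(\xi_i\zeta_i)^{1-2g_0}$ times a combination of the polynomials $c_{g,k}$, with coefficients symmetric-function expressions in the $\Disc_j$ divided by $\prod_{j\neq i}(\Disc_i-\Disc_j)$. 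This already lands in a ring generated by $\zeta_j^{\pm1},\xi_j^{\pm1},\epsilon^{\pm1},t$ and inverses of differences. The first bullet asks for the cleaner ring $\QQ[\zeta_i^{\pm1},\xi_i^{\pm1},\epsilon^{\pm1},t]$ involving only index $i$. I would obtain this by a Lagrange-interpolation argument on the polynomial $p_t$. The coefficients $\prod_{j\ne i}(x-\Disc_j)/(\Disc_i-\Disc_j)$ can be rewritten as polynomials in $\zeta_i$ by symmetric-function reduction, using $\Disc_j=(-1)^r\zeta_j^3q_j(\zeta_j)$ and the relations~\eqref{eqn:properties_of_q_i(x)}. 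Alternatively, I would argue by induction on $k$ from the flatness equation (Corollary~\ref{cor:flatness_equation_in_ij_form}), using Harer stability only to fix the diagonal integration constants. The second route seems more robust.

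\textbf{Limit for $i\ge1$.} Take $t\to0$. The left sides $c_{g,k}(v_s)$ converge, since they are polynomials in $t$, and the limit equals the corresponding coefficient for the theta classes by~\eqref{eqn:theta_as_chiodo_specialization}. The color-$0$ term carries $(\xi_0\zeta_0)^{2g-1}$, which has $t$-order $\tfrac32(2g-1)$. For $g$ large this beats any fixed polar order of $\eta(R_k^*v_s,\widetilde e_0)$, which is bounded independently of $g$ by the Vandermonde formula at fixed $g_0$. Hence the color-$0$ term drops out. The remaining Vandermonde system for colors $1,\dots,r-1$ has limit nodes $\underline\Disc_i$, which are distinct and nonzero, so it is invertible at $t=0$. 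Therefore $\eta(R_k^*v_s,\widetilde e_i)$ has a limit for $i\ge1$, and this holds for all $s$, including $s=0$.

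\textbf{Identifying the limit.} For $s\ge1$, the same Harer-stability extraction applied to $\Th^{r,\epsilon}=\underline R\,\underline T\,\underline\omega$ gives
\begin{equation*}
\lim_{t\to0}c_{g,k}(v_s)=(-1)^k\sum_{i=1}^{r-1}(\underline\xi_i\underline\zeta_i)^{2g-1}\underline\eta(\underline R_k^*v_s,\widetilde{\underline e}_i),
\end{equation*}
valid because $n=1>0$ (via \cite[Theorem 3.20]{CGG25}). Comparing the two sides for $r-1$ consecutive large $g$ and using the distinctness of the $\underline\Disc_i$ yields the third bullet.

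\textbf{Main obstacle.} I expect the hardest step to be the uniform control of the color-$0$ term. One must make sure that the polar order of $\eta(R_k^*v_s,\widetilde e_0)$ does not grow with $g$, since it is intrinsic and independent of $g$. One must also handle the factor $\xi_0^{2g-1}$, which is a half-integer power of $t$. The other delicate point is obtaining the precise ring statement of the first bullet rather than a localization at discriminant differences.
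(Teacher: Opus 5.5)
\textbf{There is a genuine gap.} Your extraction of the $\psi_1^k$-coefficient on $\cM_{g,1}$ is fine. The trouble is that separating the colors by varying the genus breaks down.

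The nodes of your Vandermonde system are $\Disc_i=(\zeta_i\xi_i)^2$. For $i\ge1$ their limits are $\underline\Disc_i=(1-r)\epsilon\,\underline\zeta_i^{3}$ with $\underline\zeta_i=-\mu^i\epsilon^{1/(r-1)}$. These are pairwise distinct only when $\gcd(3,r-1)=1$, so your claim that they are distinct is false in general.

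For $r=4$ one has $\underline\zeta_i^3=-\epsilon$ for every $i$, so $\underline\Disc_1=\underline\Disc_2=\underline\Disc_3=3\epsilon^2$. This is consistent with Corollary~\ref{cor:theta_discriminant}, since $27\epsilon^6=(3\epsilon^2)^3$. A direct expansion gives $\Disc_i-\Disc_j=7t(\underline\zeta_i^2-\underline\zeta_j^2)+O(t^2)$. The same collision occurs for every $r\equiv1\pmod 3$.

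This has three consequences:
\begin{enumerate}
\item The inverse of your Vandermonde matrix has poles at $t=0$. Regularity of $\sum_{i\ge1}\Disc_i^{m}y_i$ for all $m$ therefore no longer implies regularity of the individual $y_i$.
\item The genus dependence of the theta TFT is then the same for all colors, so comparing genera cannot identify the limit either.
\item Your arguments for the second and third bullets therefore fail in these cases.
\end{enumerate}

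The first bullet is also not established. Lagrange interpolation in the $\Disc_j$ leaves denominators $\prod_{j\neq i}(\Disc_i-\Disc_j)$ that involve the other roots. The flatness route brings in $(\zeta_i-\zeta_j)^{-1}$ and $\xi_j$ for $j\neq i$. Neither route, as sketched, lands in $\QQ[\zeta_i^{\pm1},\xi_i^{\pm1},\epsilon^{\pm1},t]$ without a cancellation you have not exhibited.

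\textbf{The missing idea} is to separate colors by varying the insertions at a fixed genus $g>3k$, which is what the paper does.
\begin{itemize}
\item Take $\Ch^{r,\epsilon}_{g,1+n}(\phi\otimes v_{a_1}\otimes\cdots\otimes v_{a_n})$ with $a_j\in\{1,\dots,r-1\}$. The $i$-th summand of the $\psi_1^k$-coefficient acquires the factor $\prod_j\Psi_{i,a_j}=(-1)^{\sum_j a_j}\zeta_i^{\sum_j a_j}\xi_i^{-n}$.
\item With $l=n-2+\sum_j a_j$, that coefficient equals $(-1)^{k+n}\sum_i\Psi_{i,l}\,\zeta_i^{2g+1}\xi_i^{2g}\,\eta(R_k^*\phi,\widetilde e_i)$, which lies in $\QQ[\epsilon,t]$.
\item Let $l$ run over $0,\dots,r-1$. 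For $r=2$, where $l$ is even, combine $l=0$ and $l=2$ using $\zeta_i^2=t-\epsilon\zeta_i$; this costs a factor $\epsilon^{-1}$.
\item The coefficient matrix is then $\Psi$ itself, a rescaled Vandermonde in the $\zeta_i$. Its explicit inverse \eqref{eqn:Psi_Inverse} has $i$-th column in $\QQ[t,\zeta_i,\xi_i^{-1}]$, which gives the first bullet at once.
\item The limits $0,\underline\zeta_1,\dots,\underline\zeta_{r-1}$ of the $\zeta_i$ are always distinct, and $\Psi^{-1}$ has a limit as $t\to0$. This gives the second bullet with no separate estimate on the color-$0$ term.
\item Since $\Psi^{-1}_{0,i}\to0$, the $l=0$ equation drops out in the limit. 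Comparing with the same inversion for $\Th^{r,\epsilon}$ through $\underline\Psi^{-1}$ then gives the third bullet.
\end{itemize}

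Your genus-variation argument does go through when $\gcd(3,r-1)=1$, but not for all $r\ge2$.
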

\begin{proof}
Let $n\geq 0$, $a_1, \dotsc, a_n \in \{1, \dotsc, r - 1\}$ and set
  \begin{equation}
    \label{eqn:ell-via-a}
    l \coloneqq n - 2 + \sum_{i=1}^n a_i.
  \end{equation}
For $\phi \in V$, the class $\Ch_{g, 1+n}^{r,\epsilon}( \phi \otimes v_{a_1} \otimes \cdots \otimes v_{a_n} )$ is a polynomial in $\epsilon$ and $t$ with cohomology coefficients in $H^*(\overline{\mathcal{M}}_{g, 1+n})$. Therefore, its restriction to the open locus of smooth curves is also polynomial:
  \begin{equation}
    \label{eqn:omega-alpha-smooth}
    \Ch_{g, 1+n}^{r,\epsilon}(\phi \otimes v_{a_1} \otimes \cdots \otimes v_{a_n})\vert_{\cM_{g, 1+n}} \in H^*(\cM_{g, 1+n})[\epsilon, t].
  \end{equation}

  To prove all the statements of the lemma, we work at the intersection of the Harer stability range $g > 3k$ and the stability range $2g-2+n>0$.
  
    By Proposition \ref{prop:stability} (Harer stability), we may consider the coefficient $B_{k, l}(r, \phi)$ of $\psi_1^k$ of \eqref{eqn:omega-alpha-smooth}.
  Since the trivial graph contribution given by Lemma \ref{lem:trivial_graph_contribution} corresponds to the locus of smooth curves, we see that
  \begin{equation*}
    B_{k, l}(r, \phi) = \sum_{i = 0}^{r-1} (\zeta_i \xi_i)^{2g - 2 + n+1} \eta((-1)^k R_k^*(\phi), \widetilde{e}_i) \prod_{j = 1}^{n} \eta(R_0^*(v_{a_j}), \widetilde{e}_i) \in \QQ[\epsilon, t].
  \end{equation*}
  Since $R_0^*$ is the identity transformation, the coefficient $B_{k, l}(r, \phi)$ further simplifies to
  \begin{equation}
        B_{k, l}(r, \phi)
        = \sum_{i = 0}^{r-1} (\zeta_i \xi_i)^{2g + n -1} \eta((-1)^k R_k^*(\phi), \widetilde{e}_i) \prod_{j = 1}^{n } \eta(v_{a_j}, \widetilde{e}_i)
  \end{equation}
and using equations \eqref{eqn:Psi-formula} and \eqref{eqn:ell-via-a}, we see that
  \begin{equation}
    \label{eqn:stability-system-R}
      \begin{aligned}
        B_{k, l}(r, \phi)
        &= \sum_{i = 0}^{r-1} (\zeta_i \xi_i)^{2g + n - 1} (-1)^k \eta(R_k^*(\phi), \widetilde{e}_i) (-1)^{l-n+2}\zeta_i^{l-n+2}\xi_i^{-n}\\
        &= (-1)^{l +n+ k} \sum_{i = 0}^{r-1} \zeta_i^{2g + 1+ l} \xi_i^{2g - 1} \eta(R_k^*(\phi), \widetilde{e}_i).
      \end{aligned}
  \end{equation}

\emph{$\bullet$ Polynomiality case I ($r\geq 3$):}  In this case, we can choose an integer $n=n_l$ and $a_1, \ldots,a_n$, so that $l$ can take any value in $\{1,\ldots, r-1\}$.  For $l \in \{0, \dotsc, r-1\}$, we view \eqref{eqn:stability-system-R} as a system of $r$ linear equations to solve for $\eta(R_k^*(\phi), \widetilde{e}_i)$.
  Using \eqref{eqn:Psi-formula}, we note that
  \begin{equation*}
    B_{k, l}(r, \phi) = (-1)^{k+n_l} \sum_{i = 0}^{r-1} \Psi_{i,l}\zeta_i^{2g + 1} \xi_i^{2g} \eta(R_k^*(\phi), \widetilde{e}_i).
  \end{equation*}
  Thus, we have
  \begin{equation*}
    \sum_{l = 0}^{r - 1} \Psi_{l, i}^{-1} (-1)^{k+n_l}B_{k, l}(r, \phi)
    = \sum_{l = 0}^{r - 1} \Psi_{l, i}^{-1} \sum_{j = 0}^{r-1} \Psi_{j,l}\zeta_j^{2g + 1} \xi_j^{2g} \eta(R_k^*(\phi), \widetilde{e}_j)
    = \zeta_i^{2g + 1} \xi_i^{2g} \eta(R_k^*(\phi), \widetilde{e}_i).
  \end{equation*}
  Therefore,
  \begin{equation}\label{eqn:eta_R_k_alpha_as_a_sum}
    \eta(R_k^*(\phi), \widetilde{e}_i)
    =  \zeta_i^{-2g - 1} \xi_i^{-2g} \sum_{l = 0}^{r - 1} \Psi_{l, i}^{-1} (-1)^{k+n_l}B_{k, l}(r, \phi).
  \end{equation}
As we have $B_{k, l}(r, \phi) \in \QQ[\epsilon, t]$ by Harer stability and $\Psi_{l, i}^{-1}  \in \QQ[t, \zeta_i, \xi_i^{-1}] $ by equation \eqref{eqn:Psi_Inverse}, we conclude the polynomiality statement in the lemma for $r\geq 3$:
  \begin{equation*}
    \eta(R_k^*(\phi), \widetilde{e}_i) \in \QQ[\epsilon, t, \zeta_i^{{\pm 1}}, \xi_i^{{\pm 1}}]\subseteq \QQ[\epsilon^{\pm 1}, t, \zeta_i^{{\pm 1}}, \xi_i^{{\pm 1}}]
  \end{equation*}
by equation \eqref{eqn:eta_R_k_alpha_as_a_sum}.

 \emph{$\bullet$ Polynomiality case II ($r=2$):} In this case, we have $a_1=\cdots =a_n=1$ and $l=2n-2$. Hence, $l$ can only take even values. Note that we have
 \begin{equation}\label{eqn:zeta_i_square_r_2}
    \zeta_i^2=t-\epsilon \zeta_i
 \end{equation}
as $p_t(x)=x^2+\epsilon x -t$ for $r=2$. Then, for $l=0$ ($n=n_0=1$) and $l=2$ ($n=n_2=2$), using equation \eqref{eqn:zeta_i_square_r_2}, we see that the equation \eqref{eqn:stability-system-R} takes the form
\begin{equation*}
\begin{aligned}
B_{k,0}(2, \phi)
&=(-1)^{k+1} \sum_{i = 0}^{1} \zeta_i^{2g + 1} \xi_i^{2g - 1} \eta(R_k^*(\phi), \widetilde{e}_i)
=(-1)^{k+1} \sum_{i = 0}^{1} \Psi_{i,0}\zeta_i^{2g + 1} \xi_i^{2g} \eta(R_k^*(\phi), \widetilde{e}_i),\\
B_{k,2}(2, \phi)
&=(-1)^{k+2} \sum_{i = 0}^{1} (t-\epsilon \zeta_i)\zeta_i^{2g + 1} \xi_i^{2g - 1} \eta(R_k^*(\phi), \widetilde{e}_i)
=(-1)^{k+2} \sum_{i = 0}^{1} (t\Psi_{i,0}+\epsilon \Psi_{i,1})\zeta_i^{2g + 1} \xi_i^{2g} \eta(R_k^*(\phi), \widetilde{e}_i).
\end{aligned}
\end{equation*}
Set $n_1\coloneqq 2$ and define
\begin{equation*}
{B}_{k,1}(2, \phi) \coloneqq \underbrace{\epsilon^{-1}(B_{k,2}(2, \phi)+tB_{k,0}(2, \phi))}_{\in \QQ [\epsilon^{\pm 1}, t]}= (-1)^{k+2}\sum_{i = 0}^{1} \Psi_{i,1}\zeta_i^{2g + 1} \xi_i^{2g} \eta(R_k^*(\phi), \widetilde{e}_i).
\end{equation*}
Then, we get $r=2$ analogous version of equation \eqref{eqn:eta_R_k_alpha_as_a_sum}
\begin{equation}\label{eqn:eta_R_k_alpha_as_a_sum_r_2}
   \eta(R_k^*(\phi), \widetilde{e}_i)
    =  \zeta_i^{-2g - 1} \xi_i^{-2g} \sum_{l = 0}^{1} \Psi_{l, i}^{-1} (-1)^{k+n_l}{B}_{k,l}(2, \phi)\in \QQ[\epsilon^{\pm 1}, t, \zeta_i^{{\pm 1}}, \xi_i^{{\pm 1}}]
\end{equation}
where the ring containment follows from $B_{k, l}(r, \phi) \in \QQ[\epsilon^{\pm 1}, t]$ by Harer stability and $\Psi_{l, i}^{-1}  \in \QQ[t, \zeta_i, \xi_i^{-1}] $ by equation \eqref{eqn:Psi_Inverse}. Hence, we complete the polynomiality claim for $r=2$.

\emph{ $\bullet$ Limit claims:} Next, we will prove the limit statements of the lemma. But, let us first recall some simple facts we have. For $1\leq i\leq r-1$,  by Lemma \ref{lem:series_expansion_for_roots} and equation \eqref{eqn:xi_limit_comparison}, we know
\begin{equation}\label{eqn:zeta_xi_i_limits_nonzero}
\lim_{t \to 0} \zeta_i=\underline{\zeta}_i\neq 0 \quad \text{and}\quad \lim_{t \to 0} \xi_i=\sqrt{(1-r)\epsilon \underline{\zeta}_i}\neq 0.
\end{equation}
Also, by Lemma \ref{lem:limit_of_frobenius_structures}, we know that 
\begin{equation}\label{eqn:Psi_Inverse_limits_t_to_0}
\lim_{t \to 0} \Psi^{-1}_{l, i} \text{ exists and }\lim_{t \to 0} \Psi^{-1}_{0, i}=0
\end{equation}
for any $0\leq i, l \leq r-1$. Hence, by equations \eqref{eqn:eta_R_k_alpha_as_a_sum}, \eqref{eqn:zeta_xi_i_limits_nonzero} and  \eqref{eqn:Psi_Inverse_limits_t_to_0}, we see  that $\lim_{t \to 0}\eta(R_k^*(\phi), \widetilde{e}_i)$ exists for any $\phi \in V$ and for any $1\leq i\leq r-1$. This completes the existence of limit statement of the lemma.

Now, we are ready to complete the proof of the remaining limit claims of the lemma. Firstly, assume $r\geq 3$. The class \eqref{eqn:omega-alpha-smooth} admits a limit
  \begin{equation}\label{eqn:omega-alpha-smooth-limit1}
    \lim_{t \to 0} \Ch_{g, 1+n}^{r,\epsilon}(\phi \otimes v_{a_1} \otimes \cdots \otimes  v_{a_n})\vert_{\cM_{g, 1+n}}\in H^*(\cM_{g, 1+n})[\epsilon].
  \end{equation}
Moreover, if $\phi \in \underline{V}=\langle v_1, \dotsc, v_{r-1}\rangle_{\mathbb{C}}$ this limit is
  \begin{equation}
    \label{eqn:omega-alpha-smooth-limit2}
    \lim_{t \to 0} \Ch_{g, 1+n}^{r,\epsilon}(\phi \otimes v_{a_1} \otimes \cdots \otimes v_{a_n})\vert_{\cM_{g, 1+n}}
    = \Th^{r,\epsilon}_{g, 1+n}(\phi \otimes v_{a_1} \otimes \cdots \otimes v_{a_n})\vert_{\cM_{g, 1+n}}
  \end{equation}
  by definition of the $\epsilon v_0$-deformed theta class $\Th^{r,\epsilon}_{g,1+n}(-)$.
    Thus, taking the coefficients of $\psi_1^k$ in equation \eqref{eqn:omega-alpha-smooth-limit2} for the both sides, we see that 
    \begin{equation*}
    \lim_{t \to 0}B_{k, l}(r,\phi)=\underline{B}_{k, l}(r,\phi)
    \end{equation*}
    where $\underline{B}_{k, l}(r,\phi)$ is the coefficient of $\psi_1^k$ on the right-hand-side of equation \eqref{eqn:omega-alpha-smooth-limit2}. Performing the same analysis in the first part of the proof for $\Th^{r,\epsilon}$, we obtain
    \begin{equation}\label{eqn:Ckb_alpha_limit_t_to_zero}
              \underline{B}_{k, l}(r,\phi)
                 = (-1)^{l + n + k} \sum_{i = 1}^{r - 1}  \underline{\zeta}_i^{2g + 1+ l} \underline{\xi}_i^{2g - 1 } \underline{\eta}( \underline{R}_k^*(\phi), \widetilde{\underline{e}}_i)
        \end{equation}
    for $\phi \in \underline{V}=\langle v_1, \dotsc, v_{r-1}\rangle_{\mathbb{C}}$ and $0\leq l \leq r-1$ and we have
    \begin{equation}
      \underline{\eta}( \underline{R}_k^*(\phi), \widetilde{\underline{e}}_i)
      =\underline{\zeta}_i^{-2g - 1} \underline{\xi}_i^{-2g} \sum_{l = 1}^{r - 1} \underline{\Psi}_{l, i}^{-1} (-1)^{k+n_l}\underline{B}_{k, l}(r, \phi)
    \end{equation}
    for $1\leq i \leq r-1$. Then,  by equations \eqref{eqn:eta_R_k_alpha_as_a_sum}, \eqref{eqn:Psi_Inverse_limits_t_to_0} and \eqref{eqn:Ckb_alpha_limit_t_to_zero}, we observe the following
  \begin{align*}
    \lim_{t \to 0} \eta(R_k^*(\phi), \widetilde{e}_i)
    &= \lim_{t \to 0} (\zeta_i^{-2g - 1} \xi_i^{-2g}) \sum_{l = 0}^{r - 1} \lim_{t \to 0} \Psi_{l, i}^{-1} (-1)^{k+n_l}\lim_{t \to 0} B_{k, l}(r,\phi) \\
    &= \lim_{t \to 0} (\zeta_i^{-2g - 1} \xi_i^{-2g}) \sum_{l = 1}^{r - 1} \lim_{t \to 0} \Psi_{l, i}^{-1} (-1)^{k+n_l}\lim_{t \to 0} B_{k, l}(r,\phi) \\
    &=\underline{\zeta}_i^{-2g - 1} \underline{\xi}_i^{-2g} \sum_{l = 1}^{r - 1} \underline{\Psi}_{l, i}^{-1} (-1)^{k+n_l}\underline{B}_{k, l}(r, \phi)\\
    &= \underline{\eta}(\underline{R}_k^*(\phi), \underline{\widetilde{e}}_i)
  \end{align*}
for any $\phi\in \underline{V}$ and $1\leq i \leq r-1$.

Finally, assume $r=2$ and $\phi\in\underline{V}$. The limits
\eqref{eqn:omega-alpha-smooth-limit1} and
\eqref{eqn:omega-alpha-smooth-limit2} also hold in this case.
Thus, taking the coefficients of $\psi_1^k$ in equation
\eqref{eqn:omega-alpha-smooth-limit2} for $n=2$ and $a_1=a_2=1$,
we see that
\begin{equation*}
\begin{aligned}
\lim_{t\to0}B_{k,2}(2,\phi)
&=\underline{B}_{k,2}(2,\phi)\\
&=(-1)^{k+2}\underline{\zeta}_1^{2g+3}
\underline{\xi}_1^{2g-1}
\underline{\eta}(
\underline{R}_k^*(\phi),\widetilde{\underline{e}}_1).
\end{aligned}
\end{equation*}
Since $B_{k,0}(2,\phi)\in\QQ[\epsilon,t]$, the definition of
$B_{k,1}(2,\phi)$ implies
\begin{equation*}
\begin{aligned}
\lim_{t\to0}B_{k,1}(2,\phi)
&=\epsilon^{-1}\lim_{t\to0}
\bigl(B_{k,2}(2,\phi)+tB_{k,0}(2,\phi)\bigr)\\
&=\epsilon^{-1}\underline{B}_{k,2}(2,\phi).
\end{aligned}
\end{equation*}
Then, by equations \eqref{eqn:eta_R_k_alpha_as_a_sum_r_2}
and \eqref{eqn:Psi_Inverse_limits_t_to_0}, we observe the following
\begin{align*}
\lim_{t\to0}\eta(R_k^*(\phi),\widetilde{e}_1)
&=\underline{\zeta}_1^{-2g-1}\underline{\xi}_1^{-2g}
\underline{\Psi}_{1,1}^{-1}(-1)^{k+2}
\epsilon^{-1}\underline{B}_{k,2}(2,\phi)\\
&=\underline{\Psi}_{1,1}^{-1}
\frac{\underline{\zeta}_1^2}{\epsilon\underline{\xi}_1}
\underline{\eta}(
\underline{R}_k^*(\phi),\widetilde{\underline{e}}_1)\\
&=\underline{\Psi}_{1,1}^{-1}\underline{\Psi}_{1,1}
\underline{\eta}(
\underline{R}_k^*(\phi),\widetilde{\underline{e}}_1)\\
&=\underline{\eta}(
\underline{R}_k^*(\phi),\widetilde{\underline{e}}_1),
\end{align*}
where we used
$\underline{\zeta}_1=-\epsilon$ and
$\underline{\Psi}_{1,1}
=-\underline{\zeta}_1/\underline{\xi}_1$.
This completes the proof.
\end{proof}

The Harer stability technique also yields the following polynomiality and limit result for the $T$-vector.

\begin{lemma}\label{lem:T-limit-nonzero}
For $k\geq 1$,
  \begin{itemize}
      \item we have
  \begin{equation*}
    \eta(T_{k+1}, \widetilde{e}_i) \in \QQ[\zeta_i^{\pm 1}, \xi_i^{{\pm 1}}, \epsilon, t].
  \end{equation*}
  if $i = 0, \dotsc, r-1$,
      \item we have the limiting behavior
  \begin{equation*}
    \lim_{t \to 0} \eta(T_{k+1}, \widetilde{e}_i)
    = \underline{\eta}( \underline{T}_{k+1}, \underline{\widetilde{e}}_i).
  \end{equation*}
  if $i = 1, \dotsc, r-1$.
  \end{itemize}
\end{lemma}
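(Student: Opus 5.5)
The plan is to rerun the Harer stability argument from the proof of Lemma~\ref{lem:R-limit-nonzero}. This time we extract the coefficient of a $\kappa$-type basis element instead of a $\psi_1^k$-coefficient. I would deliberately not start from the closed formula of Lemma~\ref{lem:translation_vector_of_chiodo}. There $T_{k+1}$ is a combination of $t^{-k+m}R_m^*\mathbf 1$, and $\mathbf 1$ itself has a pole in $t$. A direct limit would therefore require cancellations among the unknown integration constants in $R_m$, which we do not control. The geometric route avoids this entirely.

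Fix $k\ge1$. Take $g$ with $3k<g$, $n\ge1$ and $a_1,\dots,a_n\in\{1,\dots,r-1\}$, and set $l=n-2+\sum_j a_j$ as in \eqref{eqn:ell-via-a}. By the polynomiality of $\Ch^{r,\epsilon}$, the restriction $\Ch^{r,\epsilon}_{g,n}(v_{a_1}\otimes\cdots\otimes v_{a_n})\vert_{\cM_{g,n}}$ lies in $H^{2k}(\cM_{g,n})[\epsilon,t]$ in degree $k$. By Proposition~\ref{prop:stability}, we may extract its coefficient $C_{k,l}(r)\in\QQ[\epsilon,t]$ on the basis element $\mathpzc p_{1*}(\psi_{n+1}^{k+1})$, which has all leg exponents zero and a single extra point with exponent $k+1\ge2$.

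Only the trivial graph survives on $\cM_{g,n}$. In Lemma~\ref{lem:trivial_graph_contribution}, this basis element arises only from the summand with $a_1=\cdots=a_n=0$ (so $R_0^*=\mathrm{Id}$ on the legs), $l_{\mathfrak v}=1$ and $k_1=k+1$. All other summands produce different Harer basis elements, since $\mathpzc p_{l*}$ of products of $\psi$'s are linearly independent in the stable range. Using \eqref{eqn:Psi-formula}, this gives
\begin{equation*}
C_{k,l}(r)=\sum_{i=0}^{r-1}(\zeta_i\xi_i)^{2g-1+n}\,\eta(T_{k+1},\widetilde e_i)\prod_{j=1}^n\eta(v_{a_j},\widetilde e_i)
=\pm\sum_{i=0}^{r-1}\Psi_{i,l}\,\zeta_i^{2g}\xi_i^{2g}\,\eta(T_{k+1},\widetilde e_i).
\end{equation*}
This has exactly the shape of \eqref{eqn:stability-system-R}.

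For $r\ge3$, $l$ ranges over $\{0,\dots,r-1\}$, and inverting with $\Psi^{-1}$ gives
\begin{equation*}
\eta(T_{k+1},\widetilde e_i)=\zeta_i^{-2g}\xi_i^{-2g}\sum_l\Psi^{-1}_{l,i}(\pm C_{k,l}(r)).
\end{equation*}
Since $\Psi^{-1}_{l,i}\in\QQ[t,\zeta_i,\xi_i^{-1}]$, this yields the polynomiality claim. For $r=2$ only even $l$ occur, so I would reuse the trick $\zeta_i^2=t-\epsilon\zeta_i$ and form $C_{k,1}:=\epsilon^{-1}(C_{k,2}+tC_{k,0})$. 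This introduces an $\epsilon^{-1}$. I expect this to be the main subtle point: either the stated ring should really be $\QQ[\zeta_i^{\pm1},\xi_i^{\pm1},\epsilon^{\pm1},t]$ as in Lemma~\ref{lem:R-limit-nonzero}, or one absorbs $\epsilon^{-1}$ using $\epsilon=-(\zeta_0+\zeta_1)$ and $\zeta_0\zeta_1=-t$, which I would check.

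For the limit, let $1\le i\le r-1$. By \eqref{eqn:theta_as_chiodo_specialization}, the inputs $v_{a_j}$ lie in $\underline V$, so $\lim_{t\to0}C_{k,l}=\underline C_{k,l}$. Here $\underline C_{k,l}$ is the corresponding coefficient for $\Th^{r,\epsilon}$, whose own trivial-graph analysis gives $\underline C_{k,l}=\pm\sum_{i\ge1}\underline\Psi_{i,l}\underline\zeta_i^{2g}\underline\xi_i^{2g}\underline\eta(\underline T_{k+1},\widetilde{\underline e}_i)$. By Lemma~\ref{lem:limit_of_frobenius_structures}, we have $\Psi^{-1}_{0,i}\to0$, $\Psi^{-1}_{l,i}\to\underline\Psi^{-1}_{l,i}$, $\zeta_i\to\underline\zeta_i\neq0$ and $\xi_i\to\underline\xi_i\neq0$. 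Hence the $l=0$ term drops out, and the remaining sum is exactly the inversion formula for $\underline\eta(\underline T_{k+1},\widetilde{\underline e}_i)$. For $r=2$, I would repeat the final computation of Lemma~\ref{lem:R-limit-nonzero}, using $\underline\zeta_1=-\epsilon$ and $\underline\Psi_{1,1}=-\underline\zeta_1/\underline\xi_1$.
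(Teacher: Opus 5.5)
Your strategy is the paper's: restrict to $\cM_{g,n}$, isolate the coefficient of $\mathpzc p_{1*}\psi^{k+1}$ in the trivial-graph term via Harer stability, invert with $\Psi^{-1}$, and let $t\to0$ using $\Psi^{-1}_{0,i}\to0$. For $r\ge3$ this works, and your limit argument also works for $r=2$. One harmless slip: with your indexing $l=n-2+\sum_j a_j$ the weight is $\zeta_i^{2g+1}\xi_i^{2g}$, not $\zeta_i^{2g}\xi_i^{2g}$.

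The gap is the first bullet for $r=2$, and your insistence on insertions in $\underline V$ causes it. With only $v_1$ available you reach only even $l$. The substitute $C_{k,1}=\epsilon^{-1}(C_{k,2}+tC_{k,0})$ then proves only $\eta(T_{k+1},\widetilde e_i)\in\QQ[\zeta_i^{\pm1},\xi_i^{\pm1},\epsilon^{\pm1},t]$. Neither of your exits closes this. The statement explicitly claims the ring without $\epsilon^{-1}$; the weaker ring would suffice for the later Laurent-series lemma, but it is not what is stated. The absorption also fails. With $\epsilon,t$ indeterminates and $r=2$, one has $t=\xi_i^2-\zeta_i^2$ and $\epsilon=(t-\zeta_i^2)/\zeta_i$. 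So $\QQ[\zeta_i^{\pm1},\xi_i^{\pm1},\epsilon,t]$ is the Laurent polynomial ring in the algebraically independent $\zeta_i,\xi_i$. There $\epsilon\zeta_i=\xi_i^2-2\zeta_i^2$ is not a monomial, hence not a unit. Your equations give no reason why the numerator should be divisible by $\epsilon$.

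The repair is the paper's choice of test classes. The polynomiality part does not need insertions in $\underline V$, since $\Ch^{r,\epsilon}$ is polynomial in $\epsilon,t$ on all of $V$. Take $n=1$ with the single insertion $v_l$, $0\le l\le r-1$. The coefficient of $\mathpzc p_{1*}\psi_2^{k+1}$ in $\Ch^{r,\epsilon}_{g,1}(v_l)\vert_{\cM_{g,1}}$ is $B_{k,l}=\sum_j(\zeta_j\xi_j)^{2g}\Psi_{j,l}\,\eta(T_{k+1},\widetilde e_j)\in\QQ[\epsilon,t]$. This gives a full $r\times r$ system for every $r\ge2$ with no parity issue. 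Inverting with $\Psi^{-1}_{l,i}\in\QQ[t,\zeta_i,\xi_i^{-1}]$ introduces no $\epsilon^{-1}$.

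In the limit step the equation with the $v_0$ insertion is harmless. We have $B_{k,0}\in\QQ[\epsilon,t]$ while $\operatorname{ord}_t\Psi^{-1}_{0,i}=1$ for $i\ge1$, so it drops out just as your $l=0$ term does. The remaining equations converge to the $\Th^{r,\epsilon}$ ones. This also makes your separate $r=2$ limit computation unnecessary.
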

\begin{proof}
We follow a similar strategy as in the proof of Lemma \ref{lem:R-limit-nonzero}.
  For any genus in the Harer stability (Proposition \ref{prop:stability}) range $g > 3k$ and any $l \in \{0,1, \dotsc, r-1\}$, consider the class
  \begin{equation}
    \label{eqn:omega-v1-smooth}
    \Ch_{g, 1}^{r,\epsilon}(v_l)\vert_{\cM_{g, 1}} \in H^*(\cM_{g, 1})[\epsilon, t].
  \end{equation}
By Proposition \ref{prop:stability}, we may consider the coefficient $B_{k, l}(r)$ of $\mathpzc{p}_{1*} \psi_{2}^{k+1}$ of equation \eqref{eqn:omega-v1-smooth}.
  By Lemma \ref{lem:trivial_graph_contribution}, we see that
  \begin{equation*}
    B_{k, l}(r) = \sum_{j = 0}^{r-1} (\zeta_j \xi_j)^{2g} \eta(T_{k+1}, \widetilde{e}_j)  \eta(R^*_0 v_l, \widetilde{e}_j) \in \QQ[\epsilon, t].
  \end{equation*}
  Using that $R_0^*$ is the identity transformation and equation \eqref{eqn:Psi-formula}, this further simplifies to
  \begin{equation}
    \label{eqn:stability-system-T}
    B_{k, l}(r) =  (\zeta_j \xi_j)^{2g} \sum_{j = 0}^{r-1} \Psi_{j,l} \eta(T_{k+1}, \widetilde{e}_j)
  \end{equation}
  We view equation \eqref{eqn:stability-system-T} for ${l} \in \{0, \dotsc, r-1\}$ as a system of $r$ linear equation to solve for $\eta(T_{k+1}, \widetilde{e}_i)$. Thus, for any $0\leq i \leq r-1$, we have
  \begin{equation*}
    \sum_{{l} = 0}^{r-1} \Psi_{{l}, i}^{-1} B_{k, l}(r)
    = (\zeta_j \xi_j)^{2g} \sum_{{l} = 0}^{r-1} \Psi_{{l}, j}^{-1} \sum_{j = 0}^{r-1} \Psi_{j,{l}} \eta(T_{k+1}, \widetilde{e}_j)
    = \zeta_i^{2g} \xi_i^{2g} \eta(T_{k+1}, \widetilde{e}_i).
  \end{equation*}
  Therefore,
  \begin{equation}\label{eqn:T_k_plus_1_norm}
    \eta(T_{k+1}, \widetilde{e}_i)
    = \zeta_i^{-2g} \xi_i^{-2g} \sum_{{l} = 0}^{r-1} \Psi_{{l}, i}^{-1} B_{k, l}(r).
  \end{equation}
  Combining this with equation \eqref{eqn:Psi_Inverse}, we see that
  \begin{equation*}
    \eta(T_{k+1}, \widetilde{e}_i) \in \QQ[\epsilon, t, \zeta_i^{{\pm 1}}, \xi_i^{{\pm 1}}]
  \end{equation*}
which is our first claim.

Now, by definition of $\Th^{r,\epsilon}$, the class $\Ch_{g, {1}}^{r,\epsilon}(v_l)$ admits the limit 
  \begin{equation}
    \label{eqn:omega-v1-smooth-limit}
    \lim_{t \to 0} \Ch_{g, {1}}^{r,\epsilon}(v_l)\vert_{\cM_{g, {1}}}
    = \Th_{g, {1}}^{r,\epsilon}(v_l)\vert_{\cM_{g, {1}}}
  \end{equation}
  after restriction to the smooth locus $\cM_{g, {1}}$ for $1\leq l \leq r-1$. Hence, we have
    \begin{equation}\label{eqn:lim_of_B_k_l_in_T_k_plus_1}
    \lim_{t \to 0}B_{k,l}(r)=\underline{B}_{k,l}(r)\quad \text{for}\quad 1\leq l \leq r-1,
    \end{equation}
    where $\underline{B}_{k,l}(r)$ is the corresponding
coefficient for $\Th^{r,\epsilon}$. Repeating the computation above for
$\Th^{r,\epsilon}$, whose normalized idempotents are indexed by
$1\leq i\leq r-1$, gives
\begin{equation}\label{eqn:underline_B_k_l_T_k_plus_one}
    \underline{B}_{k,l}(r)
    =\sum_{i = 1}^{r-1}\underline{\Psi}_{i,l}\,
     \underline{\zeta}_i^{2g}\underline{\xi}_i^{2g}\,
     \underline{\eta}(\underline{T}_{k+1}, \underline{\widetilde{e}}_i)
    \quad\text{and}\quad
    \underline{\eta}(\underline{T}_{k+1}, \underline{\widetilde{e}}_i)
    = \underline{\zeta}_i^{-2g} \underline{\xi}_i^{-2g}
      \sum_{l = 1}^{r-1} \underline{\Psi}_{l, i}^{-1}\underline{B}_{k, l}(r).
\end{equation}
Recall that by
equation \eqref{eqn:Psi_Inverse_orders_in_t}, we have
$\operatorname{ord}_t(\Psi^{-1}_{0,i})>0$. Hence, we see that
\begin{equation}\label{eqn:ord_psi_inv_B_k_0}
\operatorname{ord}_t (\Psi^{-1}_{0,i}B_{k, 0}(r))>0
\end{equation}
as $B_{k, 0}(r))\in \QQ [\epsilon,t]$. Then, if let $t\to0$ in equation \eqref{eqn:T_k_plus_1_norm} for a fixed $1\leq i\leq r-1$, we get
\begin{equation*}
\lim_{t\to 0} \eta(T_{k+1}, \widetilde{e}_i)
= \lim_{t\to 0}\left(\zeta_i^{-2g} \xi_i^{-2g}\right) \sum_{{l} = 0}^{r-1} \lim_{t\to 0}\left(\Psi_{{l}, i}^{-1} B_{k, l}(r)\right)=\underline{\zeta}_i^{-2g} \underline{\xi}_i^{-2g}
      \sum_{l = 1}^{r-1} \underline{\Psi}_{l, i}^{-1}\underline{B}_{k, l}(r)= \underline{\eta}(\underline{T}_{k+1}, \underline{\widetilde{e}}_i)
\end{equation*}
by Lemma
\ref{lem:limit_of_frobenius_structures} and equation \eqref{eqn:lim_of_B_k_l_in_T_k_plus_1}, completing our second claim.
\end{proof}


\subsection{Bounds on the orders of $t$ for zero colors in $R$-matrix and $T$-vector}

\begin{lemma}
For $0\leq i,j\leq r-1$, $k\geq 0$ and $l\geq 1$, we have
\begin{equation*}
R_{i,j}^k,\,\eta(R_k^*(v_j), \widetilde{e}_i),\, \eta(T_{l+1}, \widetilde{e}_i)  \in\CC[\epsilon^{\pm\frac{1}{2(r-1)}}](\!(t^{\frac{1}{2}})\!).
\end{equation*}
\end{lemma}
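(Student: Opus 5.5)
The plan is to reduce everything to the polynomiality statements already proved via Harer stability, combined with the ring containments of Lemma~\ref{lem:xi_i_and_zeta_i_and_inverses_ring_containment}.

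\textbf{Pairings with $R^*$.} By the first bullet of Lemma~\ref{lem:R-limit-nonzero}, for every $\phi\in V$ and every $0\le i\le r-1$,
\begin{equation*}
\eta(R_k^*(\phi),\widetilde e_i)\in\QQ[\zeta_i^{\pm1},\xi_i^{\pm1},\epsilon^{\pm1},t].
\end{equation*}
Lemma~\ref{lem:xi_i_and_zeta_i_and_inverses_ring_containment} says that $\zeta_i^{\pm1}$ and $\xi_i^{\pm1}$ lie in
\begin{equation*}
\mathcal K\coloneqq\CC[\epsilon^{\pm\frac{1}{2(r-1)}}](\!(t^{\frac12})\!).
\end{equation*}
The elements $\epsilon^{\pm1}$ and $t$ lie in $\mathcal K$ trivially. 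Since $\mathcal K$ is a ring, any polynomial expression in these generators stays in $\mathcal K$, so $\eta(R_k^*(v_j),\widetilde e_i)\in\mathcal K$.

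\textbf{Translation vector.} In the same way, the first bullet of Lemma~\ref{lem:T-limit-nonzero} gives
\begin{equation*}
\eta(T_{l+1},\widetilde e_i)\in\QQ[\zeta_i^{\pm1},\xi_i^{\pm1},\epsilon,t]\subset\mathcal K
\end{equation*}
for $l\ge1$ and all $i$.

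As a consistency check, one can also argue directly from Lemma~\ref{lem:translation_vector_of_chiodo}. It writes
\begin{equation*}
T_{l+1}=\sum_{m=0}^{l}(-1)^{m+1}t^{-l+m}R_m^*\mathbf 1,
\end{equation*}
with $\mathbf 1=(\epsilon v_0-v_{r-1})/t$. Pairing with $\widetilde e_i$ therefore gives a finite $\QQ[\epsilon,t^{\pm1}]$-linear combination of the $\eta(R_m^*(v_j),\widetilde e_i)$, which again lies in $\mathcal K$.

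\textbf{Matrix entries $R^k_{i,j}$.} By \eqref{eqn:R_matrix_in_normalized_idempotent_coordinates}, $R^k_{i,j}=\eta(R_k^*\widetilde e_i,\widetilde e_j)$. Expand $\widetilde e_i$ in the flat basis using \eqref{eqn:normalized_idempotents_in_vk}; equivalently, use the entries $\Psi^{-1}_{l,i}$ from \eqref{eqn:Psi_Inverse}. This gives
\begin{equation*}
R^k_{i,j}=\sum_{l}\Psi^{-1}_{l,i}\,\eta(R_k^*(v_l),\widetilde e_j).
\end{equation*}
The coefficients $\Psi^{-1}_{l,i}$ are $t/\xi_i$ or $\pm\zeta_i^{r-l}/\xi_i$, which lie in $\mathcal K$ by Lemma~\ref{lem:xi_i_and_zeta_i_and_inverses_ring_containment}. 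The factors $\eta(R_k^*(v_l),\widetilde e_j)$ lie in $\mathcal K$ by the first step. Hence $R^k_{i,j}\in\mathcal K$.

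\textbf{Main point of care.} The only real subtlety is that the polynomiality lemmas are proved by solving Harer-stability systems in a high-genus range. One must confirm that the resulting identities, in particular \eqref{eqn:eta_R_k_alpha_as_a_sum} and \eqref{eqn:T_k_plus_1_norm}, express the genus-independent quantities $\eta(R_k^*\phi,\widetilde e_i)$ and $\eta(T_{k+1},\widetilde e_i)$. This holds because $R$ and $T$ do not depend on $g$, so choosing any single $g>3k$ suffices.

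The remaining check is that the chosen branches $\zeta_i(t)$ and $\xi_i(t)$ are the ones used throughout. With that fixed, the membership in $\mathcal K$ is unambiguous and the proof is complete.
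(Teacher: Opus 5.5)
Your proposal is correct and is essentially the paper's own proof: the pairings $\eta(R_k^*(v_j),\widetilde e_i)$ and $\eta(T_{l+1},\widetilde e_i)$ follow from the Harer-stability polynomiality statements of Lemmas~\ref{lem:R-limit-nonzero} and~\ref{lem:T-limit-nonzero} combined with Lemma~\ref{lem:xi_i_and_zeta_i_and_inverses_ring_containment}, and $R^k_{i,j}$ follows by expanding $\widetilde e_i=\sum_m\Psi^{-1}_{m,i}v_m$. Your extra cross-check of the $T$-case via Lemma~\ref{lem:translation_vector_of_chiodo} is also valid, but it is not needed.
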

\begin{proof}
  For $\eta(R_k^*(v_j), \widetilde{e}_i)$ and $\eta(T_{l+1}, \widetilde{e}_i)$ this
  follows from the polynomiality statements of Lemma \ref{lem:R-limit-nonzero} and
  Lemma \ref{lem:T-limit-nonzero} together with
  $\zeta_i^{\pm 1},\xi_i^{\pm 1}\in \CC[\epsilon^{\pm\frac{1}{2(r-1)}}](\!(t^{\frac{1}{2}})\!)$
  from Lemma \ref{lem:xi_i_and_zeta_i_and_inverses_ring_containment}. For
  $R_{i,j}^k$, expand the first argument in the flat basis using equation \eqref{eqn:normalized_idempotents_in_vk} and noting that
  $\widetilde{e}_i=\sum_{m=0}^{r-1}\Psi^{-1}_{m,i}v_m$, we have
  \begin{equation*}
R_{i,j}^k=\eta(R_k^*\widetilde{e}_i,\widetilde{e}_j)
    =\sum_{m=0}^{r-1}\Psi^{-1}_{m,i}\,\eta(R_k^*(v_m),\widetilde{e}_j),
    \end{equation*}
  and both factors lie in $\CC[\epsilon^{\pm\frac{1}{2(r-1)}}](\!(t^{\frac{1}{2}})\!)$
  by the previous case and equation \eqref{eqn:Psi_Inverse_orders_in_t}.
\end{proof}

The consequences of Harer stability obtained in Lemma \ref{lem:R-limit-nonzero} and Lemma \ref{lem:T-limit-nonzero} yield certain lower bounds on the orders of $t$ for these expressions.

\begin{corollary}\label{cor:t_orders_non_zero_colors}
For $1\leq i\leq r-1$, $0\leq j\leq r-1$, $k\geq 0$ and $l\geq 1$, we have
\begin{equation*}
\operatorname{ord}_t(\eta(R_k^*(v_j), \widetilde{e}_i))\geq 0\quad \text{and}\quad \operatorname{ord}_t(\eta(T_{l+1}, \widetilde{e}_i))\geq 0.
\end{equation*}
\end{corollary}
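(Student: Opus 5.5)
The plan is to derive both bounds directly from the existence of limits in Lemma~\ref{lem:R-limit-nonzero} and Lemma~\ref{lem:T-limit-nonzero}, combined with the fact that all quantities live in the Laurent series ring $\CC[\epsilon^{\pm\frac{1}{2(r-1)}}](\!(t^{\frac{1}{2}})\!)$ by the preceding lemma. In that ring, a series has a finite constant-term limit as $t\to0$, in the sense of Remark~\ref{rem:t-limit-convention}, precisely when it has no terms of strictly negative $t$-order. So $\lim_{t\to0}\alpha$ existing is equivalent to $\operatorname{ord}_t(\alpha)\ge 0$, with the convention $\operatorname{ord}_t(0)=+\infty$.

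For the $R$-matrix statement, fix $1\le i\le r-1$ and $0\le j\le r-1$, and take $\phi=v_j\in V$. The second bullet of Lemma~\ref{lem:R-limit-nonzero} asserts that $\lim_{t\to0}\eta(R_k^*(v_j),\widetilde e_i)$ exists for every $\phi\in V$, not only for $\phi\in\underline V$. This is why $j=0$ is also allowed. One point deserves a check: that existence of the limit there really means absence of polar terms, and not merely cancellation at specific values of $t$. I would justify this from the explicit formula \eqref{eqn:eta_R_k_alpha_as_a_sum} (or \eqref{eqn:eta_R_k_alpha_as_a_sum_r_2} when $r=2$). In it, $\zeta_i^{-2g-1}\xi_i^{-2g}$ has order $0$ for $i\ne0$ by Lemma~\ref{lem:xi_i_and_zeta_i_and_inverses_ring_containment}. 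Each $\Psi^{-1}_{l,i}$ has order $\ge0$ by \eqref{eqn:Psi_Inverse_orders_in_t}, since $\delta_{0,i}=0$. Finally, $B_{k,l}(r,\phi)\in\QQ[\epsilon^{\pm1},t]$ has nonnegative order. Since $\operatorname{ord}_t$ is superadditive on products and is at least the minimum on sums, we get $\operatorname{ord}_t\ge0$ directly. This termwise valuation argument is what I would actually write down, because it avoids any ambiguity in the meaning of ``limit''.

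For the $T$-vector statement, I proceed in the same way from \eqref{eqn:T_k_plus_1_norm}. For $1\le i\le r-1$, the prefactor $\zeta_i^{-2g}\xi_i^{-2g}$ has order $0$, each $\Psi^{-1}_{l,i}$ has order $\ge0$, and each $B_{k,l}(r)\in\QQ[\epsilon,t]$ has order $\ge0$. Hence $\operatorname{ord}_t(\eta(T_{l+1},\widetilde e_i))\ge0$ for all $l\ge1$, after renaming the index $k$ of that lemma to $l$.

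The only mildly delicate step is ensuring that the genus $g$ chosen in the Harer stable range $g>3k$ does not introduce negative orders. It does not, because for nonzero colors the factors $\zeta_i$ and $\xi_i$ are units of order zero. So I expect no real obstacle; the whole corollary is a bookkeeping consequence of the order computations in Lemma~\ref{lem:xi_i_and_zeta_i_and_inverses_ring_containment} and \eqref{eqn:Psi_Inverse_orders_in_t}.
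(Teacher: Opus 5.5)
Your proposal is correct and follows the paper's proof, which reads off $\operatorname{ord}_t\ge 0$ directly from the existence of the $t\to0$ limits in Lemma~\ref{lem:R-limit-nonzero} (second bullet, valid for all $\phi\in V$, so $j=0$ is covered) and Lemma~\ref{lem:T-limit-nonzero}. Your termwise valuation check on \eqref{eqn:eta_R_k_alpha_as_a_sum} and \eqref{eqn:T_k_plus_1_norm} just unpacks how those limits were established there.
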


\begin{proof}
The proof follows from the fact that these expressions have limits as $t\to 0$ by Lemma \ref{lem:R-limit-nonzero} and Lemma \ref{lem:T-limit-nonzero}. Hence, they have nonnegative orders in $t$ in $\mathbb{C}[\epsilon^{\pm\frac{1}{2(r-1)}}](\!(t^{\frac{1}{2}})\!)$.
\end{proof}

However, Harer stability approach falls short for obtaining bounds for the orders of $t$ for 
\begin{equation*}
\operatorname{ord}_t(\eta(R_k^*(v_j), \widetilde{e}_0)) \quad \text{and}\quad \operatorname{ord}_t(\eta(T_{l+1},\widetilde{e}_0))
\end{equation*}
as limiting procedure for $\Ch^{r,\epsilon}$ cannot extract information for zero colorings in the graph contributions. To get information for these, we will use a homogeneity property of $\Ch^{r,\epsilon}$ together with the flatness equation governing its $R$-matrix.

The $\epsilon v_0$-shifted Chiodo CohFT is homogeneous 
in the sense of \cite{paper1}:
\begin{equation*}
  \Ch_{g, n}^{r,\epsilon}(v_{a_1}\otimes \cdots \otimes v_{a_n})
\end{equation*}
is a cohomology class homogeneous of (complex) degree
\begin{equation*}
  \frac{(r + 2)(g - 1) + n + \sum_{i = 1}^n a_i}r,
\end{equation*}
where we set\footnote{These degrees are uniquely determined by the requirement that the class 
$\Ch_{g,n}^{r,\epsilon}(v_{a_1},\ldots,v_{a_n})$ be homogeneous of complex 
cohomological degree $$\frac{(r+2)(g-1)+n+\sum_{i=1}^n a_i}{r}.$$}
\begin{equation*}
  \deg(t) = 1,  \quad \deg(\epsilon) = \frac{r-1}r,  \quad \deg (v_i)=\frac{i+1}{r}
\end{equation*}
for any $i=0,\ldots,r-1$. 

\begin{lemma}\label{lem:homogeneity_of_R_matrix_entries}
For $0\leq i,j\leq r-1$, $k\geq 0$ and $l\geq 1$, the $R$-matrix entry $R_{i,j}^k$ is homogeneous of degree $-k$.
\end{lemma}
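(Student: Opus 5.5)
We prove homogeneity of $R^k_{i,j}$ of degree $-k$ by giving every ingredient of the flatness equation a weight and then inducting on $k$. The weights are $\deg(t)=1$, $\deg(\epsilon)=\frac{r-1}{r}$, $\deg(v_a)=\frac{a+1}{r}$. The key observation is that $p_t(x)=x^r+(-1)^r\epsilon x-(-1)^r t$ becomes weighted homogeneous once $x$ is given degree $\frac1r$: the three terms have degrees $1$, $\frac{r-1}r+\frac1r=1$ and $1$. Hence every root $\zeta_i$ is homogeneous of degree $\frac1r$, in the sense that each Puiseux series in Lemma~\ref{lem:series_expansion_for_roots} is a sum of monomials $\epsilon^\alpha t^\beta$ with $\frac{r-1}{r}\alpha+\beta=\frac1r$. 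One can check this directly, for instance $\zeta_i=-\epsilon^{1/(r-1)}(\mu^i+\dots)$ is a function of $t\epsilon^{-r/(r-1)}$, which has degree $0$. It then follows that $\zeta_i-\zeta_j$ is homogeneous of degree $\frac1r$. Also $\xi_i^2=(1-r)\epsilon\zeta_i+rt$ is homogeneous of degree $1$, so $\xi_i$ is homogeneous of degree $\frac12$.

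Using these, we read off from \eqref{eqn:The_Matrix_A_entries_explicit} that
\[
\deg Y_{i,j}=\tfrac2r-1-\tfrac1r=\tfrac1r-1=-\deg(\epsilon),
\]
which is exactly the degree shift produced by $\partial/\partial\epsilon$. Likewise $\partial U/\partial\epsilon=\diag(\zeta_i)$ has degree $\frac1r$.

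With these degrees, we induct on $k$ using Corollary~\ref{cor:flatness_equation_in_ij_form}. For $k=0$ we have $R^0=\mathrm{Id}$, which has degree $0$. Suppose $R^{k-1}$ is homogeneous of degree $-(k-1)$. Then the left-hand side of \eqref{eqn:flatness_in_normalized_idempotent_explicit} is homogeneous of degree $-(k-1)+\frac1r-1$. Dividing by $\zeta_i-\zeta_j$, which has degree $\frac1r$, shows that the off-diagonal entries $R^k_{i,j}$ are homogeneous of degree $-k$.

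\textbf{Main obstacle: the diagonal entries.} As Remark~\ref{rem:flatness-does-not-fix-diagonal-calibration} notes, the flatness equation fixes $R^k_{i,i}$ only up to an integration constant. Here we will argue via the homogeneity of the CohFT itself, rather than through the flatness equation.

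The first approach is to use uniqueness in Theorem~\ref{thm:Givental--Teleman_Classification}. The rescaling $\epsilon\mapsto\lambda^{(r-1)/r}\epsilon$, $t\mapsto\lambda t$, combined with the action $v_a\mapsto\lambda^{(a+1)/r}v_a$ on $V$ and the scaling $\psi\mapsto\lambda\psi$ of cohomological degree, transforms $\Ch^{r,\epsilon}$ into a CohFT of the same kind. This follows from the homogeneity statement above together with the behaviour of the pairing, since $\eta(v_0,v_0)=t^{-1}$ and $\eta(v_a,v_{r-a})=1$ are compatible with these weights up to an overall factor. The $R$-matrix of this Euler-type symmetry is unique, so $R(z)$ must be invariant with $z$ of degree $1$. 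That invariance is exactly the statement that $R_k$ is homogeneous of degree $-k$.

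Expressing this in the normalized idempotent basis needs one additional check: each $\widetilde e_i=\zeta_i\xi_i e_i$ must be homogeneous. By \eqref{eqn:normalized_idempotents_in_vk} it has degree $\frac{1}{2}$ relative to the grading of $V$, i.e. it pairs with $v_a$ to give $\Psi_{i,a}$ of degree $\frac ar-\frac12$. The factors coming from $\widetilde e_i$ and from the pairing cancel in the formula $R^k_{i,j}=\eta(\widetilde e_i,R_k\widetilde e_j)$.

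If the uniqueness argument proves awkward to state cleanly, we will fall back on the explicit formula \eqref{eqn:eta_R_k_alpha_as_a_sum}. There every $B_{k,l}(r,\phi)$ is homogeneous by the degree formula for $\Ch^{r,\epsilon}$, and $\zeta_i,\xi_i,\Psi^{-1}$ are homogeneous by the computation above. Hence $\eta(R^*_k v_m,\widetilde e_i)$ is homogeneous, and so is $R^k_{i,j}$ through the expansion in the preceding lemma. Bookkeeping the exponents of $g$ and $l$ then gives total degree $-k$.
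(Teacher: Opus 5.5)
Your proposal is correct and is essentially the paper's argument. The paper imports $\deg(R_k^*\phi)=\deg\phi-k$ from \cite{paper1}, which is exactly what your Euler-rescaling plus uniqueness argument establishes, and then does the same bookkeeping ($\deg\zeta_i=\frac1r$, $\deg\xi_i=\frac12$, $\deg\widetilde e_i=\frac12+\frac1r$, pairing shift $-\frac{r+2}{r}$) to conclude $\deg R^k_{i,j}=-k$. Your flatness-equation induction for the off-diagonal entries is therefore redundant but harmless, and your Harer-stability fallback via \eqref{eqn:eta_R_k_alpha_as_a_sum} also works, since the $g$- and $l$-dependence cancels to give $\deg\eta(R_k^*\phi,\widetilde e_i)=\deg\phi-k-\frac12-\frac1r$.
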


\begin{proof}
Let $\phi \in V$ have homogeneous degree $\deg (\phi)$ with the above degree assignments of $t$, $\epsilon$ and $v_i$. Then, by \cite
{paper1}, the vector  $R_k^*\phi$ is homogeneous of degree
  \begin{equation*}
    \deg(R_k^*\phi) = \deg(\phi) - k.
  \end{equation*}
Recall that we have $R_{i,j}^k=\eta (R_k^* \widetilde{e}_i, \widetilde{e}_j)$. Also, from the expression \eqref{eqn:Chiodo_pairing_t}, it is clear that
\begin{equation*}
\deg (\eta (\phi_1,\phi_2))=\deg (\phi_1)+\deg (\phi_2)-\frac{r+2}{r}
\end{equation*}
for any two homogeneous vectors $\phi_1,\phi_2\in V$, once they are expanded in the flat basis $\{v_i\}_{i=0}^{r-1}$. Hence, we have
\begin{equation}\label{eqn:degree_of_Rijk_intermediate}
\deg (R_{ij}^k)=\deg (R_k^* \widetilde{e}_i)+\deg (\widetilde{e}_j)-\frac{r+2}{r}
=\deg (\widetilde{e}_i)+\deg (\widetilde{e}_j)-k-1-\frac{2}{r}
\end{equation}
From the proof of Lemma \ref{lem:series_expansion_for_roots}, the definition of $\zeta_i$, and the expression \eqref{eqn:normalized_idempotents_in_vk} for the normalized idempotent vectors we see that
\begin{equation*}
\deg (\zeta_i)=\frac{1}{r},\quad \deg (\xi_i)=\frac{1}{2}\quad \text{and}\quad
\deg (\widetilde{e}_i)=\frac{1}{2}+\frac{1}{r}
\end{equation*}
for all $0 \leq i \leq r-1$.
As a result, by equation \eqref{eqn:degree_of_Rijk_intermediate}, we conclude that $\deg (R_{ij}^k)=-k$ as claimed.
\end{proof}

\begin{lemma}\label{lem:order_of_df_implies_order_of_f}
  Let
  \begin{equation*}
    f \in \CC[\epsilon^{\pm\frac{1}{2(r-1)}}](\!(t^{\frac{1}{2}})\!)
  \end{equation*}
  be homogeneous of degree $-k$, and assume that
  \begin{equation*}
    t^k \frac{df}{d\epsilon} \in \CC[\epsilon^{\pm\frac{1}{2(r-1)}}][\![t^\frac{1}{2}]\!].
  \end{equation*}
  Then, we have
  \begin{equation*}
    t^k f \in \CC[\epsilon^{\pm\frac{1}{2(r-1)}}][\![t^{\frac{1}{2}}]\!].
  \end{equation*}
\end{lemma}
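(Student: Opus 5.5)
We expand $f$ in powers of $t^{1/2}$ and use homogeneity to turn each coefficient into a pure monomial in $\epsilon$. Write $f=\sum_{m\ge m_0} f_m(\epsilon)\,t^{m/2}$ with $f_m\in\CC[\epsilon^{\pm\frac{1}{2(r-1)}}]$. Homogeneity of degree $-k$, with $\deg t=1$ and $\deg\epsilon=\frac{r-1}{r}$, forces each coefficient to be a single monomial:
\begin{equation*}
f_m(\epsilon)=c_m\,\epsilon^{\alpha_m},\qquad \alpha_m=\frac{r}{r-1}\Bigl(-k-\frac m2\Bigr),\qquad c_m\in\CC.
\end{equation*}
This is a monomial because the homogeneity grading on the coefficient ring is the $\epsilon$-exponent scaled by $\frac{r-1}{r}$. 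Coefficients whose exponent $\alpha_m$ falls outside $\frac{1}{2(r-1)}\mathbb Z$ must vanish, but that plays no role below.

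Next we differentiate term by term in $\epsilon$, which commutes with the $t$-expansion:
\begin{equation*}
t^k\frac{df}{d\epsilon}=\sum_m c_m\alpha_m\,\epsilon^{\alpha_m-1}\,t^{k+m/2}.
\end{equation*}
By hypothesis this has no negative powers of $t^{1/2}$. Hence $c_m\alpha_m=0$ whenever $k+\frac m2<0$.

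The key observation is that $\alpha_m=\frac{r}{r-1}(-k-\frac m2)$ vanishes exactly when $k+\frac m2=0$. So in the range $k+\frac m2<0$ we have $\alpha_m\neq0$, which forces $c_m=0$. Therefore every term of $t^kf=\sum_m c_m\epsilon^{\alpha_m}t^{k+m/2}$ with a negative $t$-exponent vanishes. This gives $t^kf\in\CC[\epsilon^{\pm\frac{1}{2(r-1)}}][\![t^{\frac12}]\!]$.

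The main point requiring care is the first step: justifying that homogeneity applies coefficientwise in the Laurent expansion, so that each $f_m$ is a monomial of the stated exponent. This holds because the ring is graded with $t$ and $\epsilon$ both of positive degree, and homogeneity of a formal series means every monomial $\epsilon^{a}t^{b}$ appearing in it has degree $\frac{r-1}{r}a+b=-k$. The remaining steps are bookkeeping.

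Note that the $\epsilon$-constant term $t^{-k}$ (the case $m=-2k$) is exactly the one term undetected by the derivative. It is harmless because multiplying by $t^k$ turns it into $t^0$.
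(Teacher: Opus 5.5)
Your proof is correct and is essentially the paper's argument: the $\epsilon$-derivative forces every coefficient at a negative power of $t$ to be $\epsilon$-independent, and homogeneity of degree $-k$ rules out an $\epsilon$-independent coefficient at any power of $t$ other than $t^{-k}$. The paper first reduces to $k=0$ and obtains constancy before invoking homogeneity, whereas you use homogeneity first to make each coefficient a monomial $c_m\epsilon^{\alpha_m}$. These are the same two ingredients in the opposite order; your version also tracks the half-integer powers of $t$ explicitly.
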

\begin{proof}
  We may assume without loss of generality that $k = 0$.
  Note that by the assumptions, we know that the $t^{-l}$ coefficient of
  \begin{equation*}
    \frac{df}{d\epsilon}
  \end{equation*}
  vanishes for any $l > 0$.
  Therefore, the coefficient of $t^{-l}$ in $f$ is a constant.
  By assumption, $f$ is homogeneous of degree $0$.
  Hence, the coefficient of $t^{-l}$ in $f$ vanishes for any $l > 0$.
\end{proof}

\begin{lemma} \label{lem:R_ij_k_Ring_containment_orders_in_t}
For $0\leq i\leq r-1$ and $k\geq 0$, the order of $R_{i,0}^k$ in $t$ has the following lower bound
\begin{equation*}
\operatorname{ord}_t\left(R_{i,0}^k\right)\geq -k+1-\delta_{i,0}.
\end{equation*}
\end{lemma}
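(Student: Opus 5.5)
We argue by induction on $k$, treating the column $j=0$ of the flatness equation of Corollary~\ref{cor:flatness_equation_in_ij_form}. At each stage $k$ we first settle the off-diagonal entries $R^k_{i,0}$ with $i\neq 0$, and then the diagonal entry $R^k_{0,0}$. The off-diagonal step uses only the flatness equation together with the $t$-orders of $Y$ from Lemma~\ref{lem:t_orders_of_Aij}. The diagonal entry is not fixed by flatness at stage $k$, since it carries an integration constant (Remark~\ref{rem:flatness-does-not-fix-diagonal-calibration}). We control it instead by the flatness equation at stage $k+1$, combined with homogeneity (Lemma~\ref{lem:homogeneity_of_R_matrix_entries}) and Lemma~\ref{lem:order_of_df_implies_order_of_f}.

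For the base case $k=0$ we have $R_0=\mathrm{Id}$, so $R^0_{i,0}=\delta_{i,0}$. This gives $\operatorname{ord}_t(R^0_{0,0})=0\geq 1-1$ and $\operatorname{ord}_t(R^0_{i,0})=+\infty$ for $i\neq0$, as required.

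For the inductive step, let $k\geq1$ and assume $\operatorname{ord}_t(R^{k-1}_{l,0})\geq -k+2-\delta_{l,0}$ for all $l$. Fix $i\neq0$. Equation~\eqref{eqn:flatness_in_normalized_idempotent_explicit} with $j=0$ reads
\begin{equation*}
(\zeta_i-\zeta_0)R^k_{i,0}=\frac{\partial R^{k-1}_{i,0}}{\partial\epsilon}+\sum_{l\neq i}Y_{i,l}R^{k-1}_{l,0}.
\end{equation*}
By Lemma~\ref{lem:xi_i_and_zeta_i_and_inverses_ring_containment}, $\zeta_i-\zeta_0$ is a unit of order $0$. Differentiation in $\epsilon$ acts coefficientwise on $\CC[\epsilon^{\pm\frac{1}{2(r-1)}}](\!(t^{\frac12})\!)$, so it does not lower $t$-orders. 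Hence the first term has order at least $-k+2$. For the sum, Lemma~\ref{lem:t_orders_of_Aij} gives $\operatorname{ord}_t(Y_{i,0})=\tfrac12$ and $\operatorname{ord}_t(Y_{i,l})=0$ for $l\neq0$. The term with $l=0$ therefore has order at least $\tfrac12+(-k+1)$, and the terms with $l\neq 0$ have order at least $-k+2$. Altogether $\operatorname{ord}_t(R^k_{i,0})\geq -k+1$, which is the claimed bound for $i\neq0$.

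For the diagonal entry, apply the flatness equation at stage $k+1$ with $i=j=0$. The right-hand side vanishes because $\zeta_0-\zeta_0=0$, and $Y_{0,0}=0$, so
\begin{equation*}
\frac{\partial R^{k}_{0,0}}{\partial\epsilon}=-\sum_{l\neq0}Y_{0,l}R^{k}_{l,0}.
\end{equation*}
Here $\operatorname{ord}_t(Y_{0,l})=\tfrac12$, and the previous step gives $\operatorname{ord}_t(R^k_{l,0})\geq -k+1$. So $\partial_\epsilon R^k_{0,0}$ has order at least $-k+\tfrac32\geq -k$, i.e.\ $t^k\,\partial_\epsilon R^k_{0,0}$ lies in $\CC[\epsilon^{\pm\frac{1}{2(r-1)}}][\![t^{\frac12}]\!]$. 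By Lemma~\ref{lem:homogeneity_of_R_matrix_entries}, $R^k_{0,0}$ is homogeneous of degree $-k$, and by the lemma preceding the homogeneity discussion it lies in the Laurent series ring. Lemma~\ref{lem:order_of_df_implies_order_of_f} then gives $t^kR^k_{0,0}\in\CC[\epsilon^{\pm\frac{1}{2(r-1)}}][\![t^{\frac12}]\!]$, that is, $\operatorname{ord}_t(R^k_{0,0})\geq -k=-k+1-\delta_{0,0}$. This closes the induction.

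I expect the diagonal case to be the main obstacle. It is the one place where flatness alone gives no information, because of the integration constant. The argument rests on homogeneity excluding $\epsilon$-independent polar terms in $t$ through Lemma~\ref{lem:order_of_df_implies_order_of_f}. The step that needs checking is that the polar coefficients of $R^k_{0,0}$ are genuinely constant in $\epsilon$ with respect to the fixed branches $\epsilon^{\frac{1}{2(r-1)}}$, so that the homogeneity weight $\deg(\epsilon)=\frac{r-1}{r}$ forces them to vanish. One must also keep the order of the argument straight: the off-diagonal entries at stage $k$ are needed before the diagonal entry at stage $k$ can be treated.
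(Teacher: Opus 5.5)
Your proof is correct and matches the paper's argument: induction on $k$, with the off-diagonal entries $R^k_{i,0}$ bounded from the flatness equation using $\operatorname{ord}_t(\zeta_i-\zeta_0)=0$ and the $t$-orders of $Y$ (your sharper use of $\operatorname{ord}_t Y_{i,0}=\tfrac12$ is harmless; the paper only uses $\operatorname{ord}_t Y\ge 0$), and then the diagonal entry $R^k_{0,0}$ handled by the $(0,0)$ flatness equation at the next stage together with Lemma~\ref{lem:homogeneity_of_R_matrix_entries} and Lemma~\ref{lem:order_of_df_implies_order_of_f}. The concern you flag about the fixed branches is not a gap: under the rescaling in Lemma~\ref{lem:series_expansion_for_roots} the roots are homogeneous of degree $\frac{1}{r}$, so the homogeneity hypothesis of Lemma~\ref{lem:order_of_df_implies_order_of_f} holds, and that lemma then rules out $\epsilon$-constant polar terms.
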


\begin{proof}
  We will prove the statement of the lemma by induction on $k$. The base case is trivial since $R_{i,0}^0=\delta_{i,0}$. Now we pass to the inductive step.

  Firstly, we consider the case $i\neq 0$. By Lemma \ref{lem:xi_i_and_zeta_i_and_inverses_ring_containment} and  Lemma \ref{lem:t_orders_of_Aij}, we know that
\begin{equation}\label{eqn:zetai_minus_zetazero_and_Aij_orders}
\operatorname{ord}_t\left(\frac{1}{\zeta_i-\zeta_j}\right)=0\quad \text{and}\quad \operatorname{ord}_t(Y_{i,j})\geq 0.
\end{equation}
Recall that by the flatness equations \eqref{eqn:flatness_in_normalized_idempotent_explicit} in Corollary \ref{cor:flatness_equation_in_ij_form},  we have
\begin{equation*}
R_{i,0}^k=\frac{1}{\zeta_i-\zeta_0}\sum_{l\neq i}Y_{i,l}R_{l,0}^{k-1}+\frac{1}{\zeta_i-\zeta_0} \frac{\partial R_{i,0}^{k-1}}{\partial \epsilon}\quad \text{for $i\neq 0$}.
\end{equation*}
Then, equation \eqref{eqn:zetai_minus_zetazero_and_Aij_orders} implies the following:
\begin{equation}\label{eqn:orders_of_t_R_ij_different2}
\begin{aligned}
\operatorname{ord}_t\left(R_{i,0}^k\right)
\geq &
\min \left\{\operatorname{ord}_t\left(\frac{1}{\zeta_i-\zeta_0}\sum_{l\neq i}Y_{i,l}R_{l,0}^{k-1}\right),\operatorname{ord}_t\left(\frac{1}{\zeta_i-\zeta_0} \frac{\partial R_{i,0}^{k-1}}{\partial \epsilon}\right)\right\}\\
\geq &
\min \left\{\operatorname{ord}_t\left(\sum_{l\neq i}R_{l,0}^{k-1}\right),\operatorname{ord}_t\left( \frac{\partial R_{i,0}^{k-1}}{\partial \epsilon}\right)\right\}\\
\geq &
\min_{0\leq l\leq r-1} \{\operatorname{ord}_t\left(R_{l,0}^{k-1}\right)\}   
\end{aligned}
\end{equation}
as taking partial derivative with respect to $\epsilon$ does not lower the orders in $t$. Then, induction hypothesis and equation \eqref{eqn:orders_of_t_R_ij_different2} give us
\begin{equation}\label{eqn:orders_of_t_R_ij_different}
\operatorname{ord}_t\left(R_{i,0}^k\right)  \geq \min_{0\leq l\leq r-1}\{\operatorname{ord}_t\left(R_{l,0}^{k-1}\right)\}\geq  -(k-1)=-k+1. 
\end{equation}
Hence, we complete the inductive step for $i\neq 0$ case.

In order to complete the inductive step, now consider the case $i=0$. By the diagonal terms of the flatness equation \eqref{eqn:flatness_in_normalized_idempotent_explicit} in Corollary \ref{cor:flatness_equation_in_ij_form}, we have
\begin{equation*}
\frac{ \partial R_{0,0}^k}{\partial \epsilon}=-\sum_{l=1}^{r-1}Y_{0,l}R_{l,0}^{k}.
\end{equation*}
Then, by Lemma \ref{lem:t_orders_of_Aij} and equation \eqref{eqn:orders_of_t_R_ij_different}, we observe that
\begin{equation*}
 \operatorname{ord}_t \left(t^k\frac{ \partial R_{0,0}^k}{\partial \epsilon}\right)=\operatorname{ord}_t\left(-t^k\sum_{l=1}^{r-1}Y_{0,l}R_{l,0}^{k}\right)\geq k-k+1=1\geq 0.
\end{equation*}
This implies that
\begin{equation*}
t^k \frac{ \partial R_{0,0}^k}{\partial \epsilon} \in \CC[\epsilon^{\pm\frac{1}{2(r-1)}}][\![t^\frac{1}{2}]\!].
\end{equation*}
Then, by Lemma \ref{lem:homogeneity_of_R_matrix_entries}  and Lemma \ref{lem:order_of_df_implies_order_of_f}, we see that
\begin{equation*}
t^kR_{0,0}^k\in \CC[\epsilon^{\pm\frac{1}{2(r-1)}}][\![t^\frac{1}{2}]\!]
\end{equation*}
and we conclude
\begin{equation*}
\operatorname{ord}_t\left(R_{0,0}^k\right)\geq -k
\end{equation*}
completing the proof.
\end{proof}

Now we will obtain some lower bounds on the orders of $t$ in $\eta(R^*_k v_j, \widetilde{e}_i)$ and $\eta(T_k, \widetilde{e}_i)$.

\begin{corollary}
  \label{cor:R-matrix-bound}
  For all $0\leq j \leq r-1$, we have
\begin{equation*}
 \operatorname{ord}_t \eta(R^*_k v_j, \widetilde{e}_0)\geq -\frac{1}{2}-k+\min \left\{j,\frac{3}{2}\right\}.
\end{equation*}
\end{corollary}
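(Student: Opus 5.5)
Expand $\eta(R^*_k v_j,\widetilde e_0)$ in the normalized idempotent basis and reduce the bound to Lemma~\ref{lem:R_ij_k_Ring_containment_orders_in_t} together with the order formulas for $\Psi$. By equation \eqref{eqn:vk_in_normalized_idempotents} we have $v_j=\sum_{m=0}^{r-1}\Psi_{m,j}\widetilde e_m$, with $\Psi_{m,j}=(-1)^j\zeta_m^j/\xi_m$. Hence, by \eqref{eqn:R_matrix_in_normalized_idempotent_coordinates},
\begin{equation*}
\eta(R_k^*v_j,\widetilde e_0)=\sum_{m=0}^{r-1}\Psi_{m,j}\,\eta(R_k^*\widetilde e_m,\widetilde e_0)=\sum_{m=0}^{r-1}\Psi_{m,j}\,R^k_{m,0}.
\end{equation*}
The ultrametric property of $\operatorname{ord}_t$ then gives
\begin{equation*}
\operatorname{ord}_t\eta(R_k^*v_j,\widetilde e_0)\ \ge\ \min_{m}\bigl(\operatorname{ord}_t\Psi_{m,j}+\operatorname{ord}_t R^k_{m,0}\bigr).
\end{equation*}

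Bound the two kinds of terms separately. For $m=0$, equation \eqref{eqn:Psi_orders_in_t} gives $\operatorname{ord}_t\Psi_{0,j}=j-\tfrac12$, and Lemma~\ref{lem:R_ij_k_Ring_containment_orders_in_t} gives $\operatorname{ord}_t R^k_{0,0}\ge -k$. This term therefore has order at least $-\tfrac12-k+j$. For $1\le m\le r-1$, we have $\operatorname{ord}_t\Psi_{m,j}=0$ and $\operatorname{ord}_t R^k_{m,0}\ge -k+1$, so these terms have order at least $-k+1=-\tfrac12-k+\tfrac32$. Taking the minimum of the two bounds yields exactly $-\tfrac12-k+\min\{j,\tfrac32\}$, which is the claim.

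All ingredients are already established, so there is no real obstacle. The only point to check is that the symmetric-pairing convention really gives $\eta(R_k^*\widetilde e_m,\widetilde e_0)=R^k_{m,0}$ rather than $R^k_{0,m}$. Equation \eqref{eqn:R_matrix_in_normalized_idempotent_coordinates} reads $R^k_{i,j}=\eta(R_k^*\widetilde e_i,\widetilde e_j)$, so with $i=m$ and $j=0$ the index order is the one Lemma~\ref{lem:R_ij_k_Ring_containment_orders_in_t} controls. The degenerate case $k=0$ is consistent with the formula: there $R^0=\mathrm{Id}$, so the expression reduces to $\Psi_{0,j}$, whose order $j-\tfrac12$ satisfies the stated bound.
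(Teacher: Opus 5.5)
Your proposal is correct and is essentially the paper's proof. The paper reaches the same expansion $\eta(R_k^*v_j,\widetilde e_0)=\sum_{l}\Psi_{l,j}R^k_{l,0}$ by moving $R_k^*$ across the pairing and expanding $R_k\widetilde e_0$ in the normalized idempotent basis, where you expand $v_j$ instead; it then bounds each term with \eqref{eqn:Psi_orders_in_t} and Lemma~\ref{lem:R_ij_k_Ring_containment_orders_in_t} exactly as you do.
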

\begin{proof}
We obtain
\begin{equation*}
    \eta(R^*_k v_j, \widetilde{e}_0)
    =\eta(v_j, R_k \widetilde{e}_0)
    =\eta\left(v_j,  \sum_{l=0}^{r-1}R_{l,0}^k\widetilde{e}_l\right)
    =\sum_{l=0}^{r-1} R_{l,0}^k \eta(v_j,\widetilde{e}_l)
    =\sum_{l=0}^{r-1}\Psi_{l, j}R_{l,0}^k
\end{equation*}
By equation \eqref{eqn:Psi_orders_in_t} and Lemma \ref{lem:R_ij_k_Ring_containment_orders_in_t}, we have
\begin{equation*}
\operatorname{ord}_t(\Psi_{l, j}R_{l,0}^k)=j\delta_{l,0}-\frac{1}{2}\delta_{l,0}+\operatorname{ord}_t(R_{l,0}^k)\geq j\cdot \delta_{l,0}-\frac{1}{2}\delta_{l,0} -k+1-\delta_{l,0}.
\end{equation*}
So, we have
\begin{equation*}
\operatorname{ord}_t(\Psi_{l, j}R_{l,0}^k)\geq \left(j-\frac{3}{2}\right)\delta_{l,0}-k+1.
\end{equation*}
Hence, we conclude
\begin{equation*}
 \operatorname{ord}_t \eta(R^*_k v_j, \widetilde{e}_0)\geq \min_{l}\{\operatorname{ord}_t(\Psi_{l, j}R_{l,0}^k)\}\geq
-\frac{1}{2}-k+\min \left\{j,\frac{3}{2}\right\}.
\end{equation*}
\end{proof}

\begin{corollary} For all $k\geq 2$, we have
\label{cor:T-bound}
\begin{equation*}
\operatorname{ord}_t \eta(T_k, \widetilde{e}_0) \geq -k-\frac{1}{2}.
\end{equation*}
\end{corollary}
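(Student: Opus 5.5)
We start from the explicit formula of Lemma~\ref{lem:translation_vector_of_chiodo}: for $k\ge 2$, writing $T_k=T_{(k-1)+1}$,
\begin{equation*}
T_k=\sum_{m=0}^{k-1}(-1)^{m+1}t^{-(k-1)+m}R_m^*\mathbf{1},
\end{equation*}
so that
\begin{equation*}
\eta(T_k,\widetilde e_0)=\sum_{m=0}^{k-1}(-1)^{m+1}t^{-k+1+m}\,\eta(R_m^*\mathbf 1,\widetilde e_0).
\end{equation*}
It therefore suffices to bound $\operatorname{ord}_t\eta(R_m^*\mathbf 1,\widetilde e_0)$ from below by $-m-\tfrac32$. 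Then each summand has order at least $-k+1+m-m-\tfrac32=-k-\tfrac12$, and the claim follows from the ultrametric property of $\operatorname{ord}_t$.

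To bound the inner term, we expand the unit using \eqref{eqn:unit_of_shifted_Chiodo}:
\begin{equation*}
\mathbf 1=t^{-1}(\epsilon v_0-v_{r-1}),
\end{equation*}
which gives
\begin{equation*}
\eta(R_m^*\mathbf 1,\widetilde e_0)=t^{-1}\bigl(\epsilon\,\eta(R_m^*v_0,\widetilde e_0)-\eta(R_m^*v_{r-1},\widetilde e_0)\bigr).
\end{equation*}
We apply Corollary~\ref{cor:R-matrix-bound} to each term.

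For $j=0$, it gives
\begin{equation*}
\operatorname{ord}_t\eta(R_m^*v_0,\widetilde e_0)\ge -\tfrac12-m.
\end{equation*}

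For $j=r-1\ge1$, it gives
\begin{equation*}
\operatorname{ord}_t\eta(R_m^*v_{r-1},\widetilde e_0)\ge -\tfrac12-m+\min\{r-1,\tfrac32\}\ge \tfrac12-m.
\end{equation*}

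The $v_0$ term is the worse of the two, so $\operatorname{ord}_t\eta(R_m^*\mathbf 1,\widetilde e_0)\ge -1-\tfrac12-m=-m-\tfrac32$, which is exactly the bound needed. (Since $\epsilon$ is a coefficient, it does not affect $t$-orders.)

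Combining the two steps yields $\operatorname{ord}_t\eta(T_k,\widetilde e_0)\ge -k-\tfrac12$. The only point requiring care is the index bookkeeping between $T_{k+1}$ in Lemma~\ref{lem:translation_vector_of_chiodo} and $T_k$ here, together with confirming that the $t^{-1}$ pole of $\mathbf 1$ and the $v_0$-term bound of Corollary~\ref{cor:R-matrix-bound} combine to exactly $-m-\tfrac32$. Everything else is a direct substitution, so I do not expect a genuine obstacle.
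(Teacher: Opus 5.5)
Your proof is correct and follows the same route as the paper: both start from Lemma~\ref{lem:translation_vector_of_chiodo} and reduce the claim to the bound $\operatorname{ord}_t\eta(R_m^*\mathbf 1,\widetilde e_0)\ge -m-\tfrac32$. The only difference is in how the unit is expanded. You write $\mathbf 1=t^{-1}(\epsilon v_0-v_{r-1})$ in the flat basis and invoke Corollary~\ref{cor:R-matrix-bound}, whereas the paper writes $\mathbf 1=\sum_j(\zeta_j\xi_j)^{-1}\widetilde e_j$ and applies Lemma~\ref{lem:R_ij_k_Ring_containment_orders_in_t} directly to $R^m_{j,0}$; that lemma is the same estimate on which Corollary~\ref{cor:R-matrix-bound} rests.
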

\begin{proof}
Using Lemma \ref{lem:translation_vector_of_chiodo}, we compute
\begin{equation*}
\begin{aligned}
\eta(T_{k+1},\widetilde{e}_0)
&=\sum_{m=0}^{k} (-1)^{m+1}t^{-k+m}\eta(R^*_m\mathbf{1},\widetilde{e}_0)\\
&=\sum_{m=0}^{k}\sum_{j=0}^{r-1} (-1)^{m+1}t^{-k+m}\zeta_j^{-1}\xi_j^{-1}\eta(\widetilde{e}_j,R_m\widetilde{e}_0)\\
&=\sum_{m=0}^{k}\sum_{j=0}^{r-1} (-1)^{m+1}t^{-k+m}\zeta_j^{-1}\xi_j^{-1}R_{j,0}^m
\end{aligned}
\end{equation*}
for $k\geq 1$. Looking at the orders of $t$ in the summand, we get
\begin{equation*}
\begin{aligned}
\operatorname{ord}_t(t^{-k+m}\zeta_j^{-1}\xi_j^{-1}R_{j,0}^m)
&=-k+m-\operatorname{ord}_t(\zeta_j)-\operatorname{ord}_t(\xi_j)+\operatorname{ord}_t(R_{j,0}^m)\\
&\geq -k+m-\delta_{j,0}-\frac{1}{2}\delta_{j,0}-m+1-\delta_{j,0}\\
&=-k+1-\frac{5}{2}\delta_{j,0}
\end{aligned}
\end{equation*}
by Lemma \ref{lem:xi_i_and_zeta_i_and_inverses_ring_containment} and Lemma \ref{lem:R_ij_k_Ring_containment_orders_in_t}. As a result, we obtain
\begin{equation*}
\operatorname{ord}_t \eta(T_k, \widetilde{e}_0)
\geq \min_{0\leq j\leq r-1}
\left\{-(k-1)+1-\frac{5}{2}\delta_{j,0}\right\}
=-k-\frac{1}{2}
\end{equation*}
completing the proof.
\end{proof}
\begin{remark}
Using homogeneity and flatness equations, we can similarly obtain lower bounds for 
\begin{equation*}
\operatorname{ord}_t \eta(R^*_k v_j, \widetilde{e}_i)\quad \text{and} \quad \operatorname{ord}_t \eta(T_k, \widetilde{e}_i)
\end{equation*}
for $1\leq i \leq r-1$. However, these bounds are worse estimates compared to those of in Lemma \ref{lem:R-limit-nonzero} and Lemma \ref{lem:T-limit-nonzero}.
\end{remark}

If we interpret the results we obtained for the orders of $t$ in $\operatorname{ord}_{t}(\eta(T_{k}, \widetilde{e}_i))$ and $\operatorname{ord}_t \eta(R^*_k v_j, \widetilde{e}_i)$ so far, we can summarize them as the following corollary.

\begin{corollary}\label{cor:R_and_T_orders_in_t}
For any $0\leq i\leq r-1$ and $k\geq 0$, we have
\begin{equation*}
   \operatorname{ord}_t \eta(R^*_k v_j, \widetilde{e}_i) \geq 
  \left(-\frac{1}{2}-k+\min \left\{j,\frac{3}{2}\right\}\right)\delta_{i,0}\quad \text{and}\quad \operatorname{ord}_{t}\eta(T_{k}, \widetilde{e}_i)\geq \left(-k-\frac{1}{2}\right)\delta_{i,0}.
\end{equation*}
\end{corollary}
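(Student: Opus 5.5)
The statement to prove is Corollary~\ref{cor:R_and_T_orders_in_t}. It only collects the bounds already established, so the plan is to split on the color $i$.

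\emph{Case $1\le i\le r-1$.} Here $\delta_{i,0}=0$, and both claimed bounds reduce to nonnegativity of the $t$-order. This is exactly Corollary~\ref{cor:t_orders_non_zero_colors}. That corollary rests on the polynomiality and existence of limits from Lemma~\ref{lem:R-limit-nonzero} (for every $\phi\in V$, in particular $\phi=v_j$ with $0\le j\le r-1$) and Lemma~\ref{lem:T-limit-nonzero}. For $\eta(T_k,\widetilde e_i)$ we only need $k\ge 2$, since $T$ starts at $z^2$; this matches the index shift $T_{l+1}$ with $l\ge1$ used there.

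\emph{Case $i=0$.} Here $\delta_{i,0}=1$.
\begin{itemize}
\item The bound on $\eta(R_k^*v_j,\widetilde e_0)$ is verbatim Corollary~\ref{cor:R-matrix-bound}. That corollary comes from expanding $\eta(R^*_kv_j,\widetilde e_0)=\sum_l\Psi_{l,j}R^k_{l,0}$ and combining \eqref{eqn:Psi_orders_in_t} with the inductive bound of Lemma~\ref{lem:R_ij_k_Ring_containment_orders_in_t}.
\item The bound on $\eta(T_k,\widetilde e_0)$ is Corollary~\ref{cor:T-bound}.
\item For $k=0$ in the $R$ bound, $R_0^*=\mathrm{Id}$, so the quantity is $\Psi_{0,j}$. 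Its order is $j-\tfrac12\ge -\tfrac12+\min\{j,\tfrac32\}$, so this edge case is consistent with the general formula.
\end{itemize}

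No real obstacle remains. The only care needed is bookkeeping: the indices $k\ge0$ for $R$ versus $k\ge2$ for $T$, and the convention $\operatorname{ord}_t(0)=+\infty$ when some entries vanish. The substantive work, namely the homogeneity argument via Lemma~\ref{lem:order_of_df_implies_order_of_f} controlling the integration constants in $R^k_{0,0}$, was already done in Lemma~\ref{lem:R_ij_k_Ring_containment_orders_in_t}.
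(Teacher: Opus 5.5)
Your proposal is correct and matches the paper's proof, which likewise obtains the statement directly from Corollary~\ref{cor:t_orders_non_zero_colors} for colors $i\neq 0$ and from Corollaries~\ref{cor:R-matrix-bound} and~\ref{cor:T-bound} for $i=0$. Your additional checks, namely the $k=0$ case via $\Psi_{0,j}$ and the vanishing $T_0=T_1=0$ so that the $T$-bound is trivial for $k<2$, are consistent with this and fill in bookkeeping the paper leaves implicit.
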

\begin{proof}
It immediately follows from Corollary \ref{cor:t_orders_non_zero_colors}, Corollary \ref{cor:R-matrix-bound} and Corollary \ref{cor:T-bound}.
\end{proof}

\section{Limit of Givental--Teleman Reconstruction for $\epsilon v_0$-shifted Chiodo CohFT}\label{sec:Limit_of_Givental_Teleman_Formula}

\subsection{Limits of the graph contributions} The aim of this section is to prove that for a stable graph ${\Gamma}$ and insertions in the range $1\leq s_j \leq r-1$, i.e. $v_{s_j} \in \underline{V}$, we have
\begin{equation}\label{eqn:limiting_aim_of_the_subsection}
\lim_{t\to 0}\mathrm{Cont}^{\Ch^{r, \epsilon}}_{{\Gamma}}(v_{s_1} \otimes \cdots \otimes v_{s_n})=\mathrm{Cont}^{\Th^{r, \epsilon}}_{{\Gamma}}(v_{s_1} \otimes \cdots \otimes v_{s_n})
\end{equation}
for the graph contributions in Proposition \ref{prop:Graph_Sum_Formula_Shifted_Chiodo} and Proposition \ref{prop:Graph_Sum_Formula_Shifted_Theta}. In other words, the limit of the contribution of ${\Gamma}$ for the $\epsilon v_0$-shifted Chiodo CohFT is the same as the contribution of ${\Gamma}$ of the $\epsilon v_0$-deformed theta CohFT when the insertions are elements of $\underline{V}$.

\begin{remark}
  Since our aim is to prove the limiting behavior \eqref{eqn:limiting_aim_of_the_subsection}, we will work with insertions in $\underline{V}$ throughout this section.
  Hence, every statement involving leg contributions in this section will be proven under the assumption of $1\leq s_j \leq r-1$; whereas, other statements still hold over $V$.
\end{remark}

Let $\widetilde{\Gamma}_c$ be a decorated stable graph. Then, we will split its vertex, leg and edge sets according to the coloring as follows:
\begin{itemize}
    \item  The vertex set is split  as $\mathrm{V}_{\Gamma}=\mathrm{V}_{\widetilde{\Gamma}_c}^{0} \sqcup \mathrm{V}_{\widetilde{\Gamma}_c}^{+}$ where
    \begin{equation*}
    \begin{aligned}
    \mathrm{V}_{\widetilde{\Gamma}_c}^{0}& \coloneqq \text{the set of vertices of $\widetilde{\Gamma}_c$ with zero coloring},\\
    \mathrm{V}_{\widetilde{\Gamma}_c}^{+}& \coloneqq \text{the set of vertices of $\widetilde{\Gamma}_c$ with non-zero coloring}.
    \end{aligned}
    \end{equation*}
    \item The edge set is split as $\mathrm{E}_{\Gamma}=\mathrm{E}_{\widetilde{\Gamma}_c}^{\{0,0\}}
    \sqcup \mathrm{E}_{\widetilde{\Gamma}_c}^{\{0,+\}} \sqcup \mathrm{E}_{\widetilde{\Gamma}_c}^{\{+,+\}}$ where
    \begin{equation*}
    \begin{aligned}
    \mathrm{E}_{\widetilde{\Gamma}_c}^{\{0,0\}} &\coloneqq \text{the set of edges of $\widetilde{\Gamma}_c$ with both half-edges attached to vertices of zero coloring,}\\
    \mathrm{E}_{\widetilde{\Gamma}_c}^{\{0,+\}} &\coloneqq \text{the set of edges of $\widetilde{\Gamma}_c$ with only one half-edge attached to a vertex of zero coloring,}\\ \mathrm{E}_{\widetilde{\Gamma}_c}^{\{+,+\}} &\coloneqq \text{the set of edges of $\widetilde{\Gamma}_c$ with both half-edges attached to vertices of non-zero coloring.}
    \end{aligned}
    \end{equation*}
    \item The leg set is split as $\mathrm{L}_{\Gamma} = \mathrm{L}_{\widetilde{\Gamma}_c}^{0} \sqcup \mathrm{L}_{\widetilde{\Gamma}_c}^{+}$ where
    \begin{equation*}
    \begin{aligned}
    \mathrm{L}_{\widetilde{\Gamma}_c}^{0} & \coloneqq \text{the set of legs of $\widetilde{\Gamma}_c$ attached to a vertex of zero coloring,}\\
    \mathrm{L}_{\widetilde{\Gamma}_c}^{+} & \coloneqq \text{the set of legs of $\widetilde{\Gamma}_c$ attached to a vertex of non-zero coloring.} 
    \end{aligned}
    \end{equation*}
\end{itemize}

\begin{lemma}\label{lem:limit_of_non_zero_colorings}
Let $\widetilde{\Gamma}_c$ be a decorated stable graph and fix a flag assignment $A\in \mathbb{Z}_{\geq 0}^{\vert \mathrm{H}_{\Gamma}\vert}$. Then, we have the following limiting identifications:
\begin{itemize}
    \item[(i)] For a vertex $\mathfrak{v}\in \mathrm{V}_{\widetilde{\Gamma}_c}^{+}$ with non-zero coloring ($c(\mathfrak{v})\neq 0$), we have $$\lim_{t\to 0}\mathrm{Cont}_{\widetilde{\Gamma}_c,\mathfrak{v}}^{A,\Ch^{r, \epsilon}}=\mathrm{Cont}_{\widetilde{\Gamma}_c,\mathfrak{v}}^{A,\Th^{r, \epsilon}}$$.
    \item[(ii)] For a leg $\mathfrak{l}\in \mathrm{L}_{\widetilde{\Gamma}_c}^{+}$ attached to a vertex with non-zero coloring ($c(\nu (\mathfrak{l}))\neq 0$), we have $$\lim_{t\to 0}\mathrm{Cont}_{\widetilde{\Gamma}_c,\mathfrak{l}}^{A,\Ch^{r, \epsilon}}=\mathrm{Cont}_{\widetilde{\Gamma}_c,\mathfrak{l}}^{A,\Th^{r, \epsilon}}.$$
    \item[(iii)] For an edge $\mathfrak{e}\in \mathrm{E}_{\widetilde{\Gamma}_c}^{\{+,+\}}$ that connects vertices with non-zero coloring ($c(\mathfrak{v}_{\mathfrak{e}_1})\neq 0$ and  $c(\mathfrak{v}_{\mathfrak{e}_2})\neq 0$), we have $$\lim_{t\to 0}\mathrm{Cont}_{\widetilde{\Gamma}_c,\mathfrak{e}}^{A,\Ch^{r, \epsilon}}=\mathrm{Cont}_{\widetilde{\Gamma}_c,\mathfrak{e}}^{A,\Th^{r, \epsilon}}.$$
\end{itemize}
\end{lemma}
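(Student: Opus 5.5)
This is a factor-by-factor comparison. Each of the three Chiodo factors in Proposition~\ref{prop:Graph_Sum_Formula_Shifted_Chiodo}, for fixed flag weighting $\mathrm{A}$, is a finite expression in the quantities $\zeta_{c}\xi_{c}$, $\eta(T_{k},\widetilde e_c)$, $\eta(R^*_a v_j,\widetilde e_c)$ and the inverse pairing $\eta^{i,\mathrm{Inv}(i)}$, where $c\neq 0$ throughout. The vertex factor involves an infinite sum over $l_{\mathfrak v}$ and over $k_j\ge 2$. In a fixed cohomological degree on $\ocM_{\mathrm g(\mathfrak v),\mathrm n(\mathfrak v)}$, however, only finitely many terms are nonzero, since $(\mathpzc p_{l})_\star\prod\psi^{k_j}$ has degree $\sum k_j - l\ge l$. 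So the limit may be taken term by term, degree by degree, with each coefficient a polynomial expression in the listed quantities. Since all of these lie in $\CC[\epsilon^{\pm\frac{1}{2(r-1)}}](\!(t^{\frac12})\!)$ and $\lim_{t\to0}$ is the constant-term specialization, the limit is multiplicative on series of nonnegative order. It therefore suffices to show that each ingredient has a limit equal to its theta counterpart.

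For (i), I would use Lemma~\ref{lem:limit_of_frobenius_structures}, which gives $\zeta_c\xi_c\to\underline\zeta_c\underline\xi_c$. Lemma~\ref{lem:T-limit-nonzero} gives $\eta(T_k,\widetilde e_c)\to\underline\eta(\underline T_k,\underline{\widetilde e}_c)$. Substituting these into the degree-truncated vertex sum yields $\mathrm{Cont}^{\mathrm A,\Th^{r,\epsilon}}_{\widetilde\Gamma_c,\mathfrak v}$.

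For (ii), $s_{\ell(\mathfrak l)}\in\{1,\dots,r-1\}$, so $v_{s}\in\underline V$. The third bullet of Lemma~\ref{lem:R-limit-nonzero} then gives $\eta(R^*_a v_s,\widetilde e_c)\to\underline\eta(\underline R^*_a v_s,\underline{\widetilde e}_c)$ directly.

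For (iii), the edge factor is a finite sum over $m$ and over $i\in\{0,\dots,r-1\}$. I would split off the $i=0$ term from the terms with $1\le i\le r-1$. For $i\ge1$, we have $\eta^{i,r-i}=1=\underline\eta^{i,r-i}$, and both $v_i$ and $v_{r-i}$ lie in $\underline V$. Lemma~\ref{lem:R-limit-nonzero} applied to both factors then gives exactly the theta summand. For $i=0$, the inverse pairing contributes $\eta^{0,0}=t$. The two factors $\eta(R^*_a v_0,\widetilde e_{c})$ and $\eta(R^*_b v_0,\widetilde e_{c'})$ with $c,c'\neq0$ have limits by the second bullet of Lemma~\ref{lem:R-limit-nonzero}, so they have order $\ge0$ by Corollary~\ref{cor:t_orders_non_zero_colors}. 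The $i=0$ summand thus has order $\ge1$ and tends to $0$. This matches the theta side, where the index $i$ only ranges over $1,\dots,r-1$.

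The only point needing care is the $i=0$ edge term, and it is resolved by the factor $t$ in $\eta^{-1}$ together with the existence, not merely the value, of the limit for the input $\phi=v_0$ in Lemma~\ref{lem:R-limit-nonzero}. A secondary point is justifying the interchange of limit and infinite sum in (i), which the degree truncation handles.
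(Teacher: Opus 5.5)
Your proof is correct and follows the paper's argument: the vertex factor converges by Lemma~\ref{lem:T-limit-nonzero} together with $\zeta_c\to\underline\zeta_c$ and $\xi_c\to\underline\xi_c$, and the leg factor by Lemma~\ref{lem:R-limit-nonzero}. For the edge factor you split off the $i=0$ summand, which tends to zero because of $\eta^{0,0}=t$, while the $i\ge1$ terms converge to the theta edge factor. Your additional remarks make explicit what the paper leaves implicit: the finiteness of the vertex sum by degree, and the use of the existence of the limit for the input $\phi=v_0$.
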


\begin{proof} We will compare these expressions which are given in Proposition \ref{prop:Graph_Sum_Formula_Shifted_Chiodo} and Proposition \ref{prop:Graph_Sum_Formula_Shifted_Theta}. Part (i) follows from Lemma \ref{lem:T-limit-nonzero} and the fact that 
\begin{equation*}
\lim_{t\to 0}\zeta_i=\underline{\zeta}_i\quad \text{and}\quad \lim_{t\to 0}\xi_i=\underline{\xi}_i
\end{equation*}
for $1\leq i \leq r-1$. Part (ii) follows from Lemma \ref{lem:R-limit-nonzero}.
The proof of Part (iii) follows from Lemma \ref{lem:R-limit-nonzero} and $\eta^{i,\mathrm{Inv}(i)}=t^{\delta_{i,0}}$.
To be more precise, the $i=0$ summand vanishes due to the factor $\eta^{i,\mathrm{Inv}(i)}$ and the remaining parts of the expression converge to $\mathrm{Cont}_{\widetilde{\Gamma}_c,\mathfrak{e}}^{A,\Th^{r, \epsilon}}$.
\end{proof}

\begin{lemma}\label{lem:One_zero_coloring_half_edge}
Let $\widetilde{\Gamma}_c$ be a decorated stable graph and fix a flag assignment $A\in \mathbb{Z}_{\geq 0}^{\vert \mathrm{H}_{\Gamma}\vert}$. For an edge $\mathfrak{e}\in \mathrm{E}_{\widetilde{\Gamma}_c}^{\{0,+\}}$ connecting two vertices $\mathfrak{v}_{\mathfrak{e}_1}$  and $\mathfrak{v}_{\mathfrak{e}_2}$ where one vertex has zero coloring ($c(\mathfrak{v}_{\mathfrak{e}_1})=0$) and the other has non-zero coloring ($c(\mathfrak{v}_{\mathfrak{e}_2})\neq 0$), we have
\begin{equation*}
\begin{aligned}
\operatorname{ord}_t\left(\mathrm{Cont}_{\widetilde{\Gamma}_c,\mathfrak{e}}^{A,\Ch^{r, \epsilon}}\right)\geq -\frac{1}{2}-b_{\mathfrak{e}_1}+1.
\end{aligned}
\end{equation*}
\end{lemma}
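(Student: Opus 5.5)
Start from the explicit edge factor of Proposition~\ref{prop:Graph_Sum_Formula_Shifted_Chiodo}, oriented so that $\mathfrak e_1$ sits at the zero-colored vertex $\mathfrak v_{\mathfrak e_1}$ and $\mathfrak e_2$ at the nonzero-colored vertex. Write $b_1=b_{\mathfrak e_1}$, $b_2=b_{\mathfrak e_2}$. The edge contribution is then
\begin{equation*}
\pm\sum_{m=0}^{b_2}(-1)^m\sum_{i=0}^{r-1}\eta(R^*_{b_1+m+1}v_i,\widetilde e_0)\,\eta^{i,\mathrm{Inv}(i)}\,\eta(R^*_{b_2-m}v_{\mathrm{Inv}(i)},\widetilde e_{c(\mathfrak v_{\mathfrak e_2})}),
\end{equation*}
and the $t$-order of this finite sum is at least the minimum over its summands. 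Each summand is a product of three factors, and Corollary~\ref{cor:R_and_T_orders_in_t} bounds each of them.

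\textbf{Bounding the three factors.} The factor at the nonzero-colored end has order $\geq 0$ by Corollary~\ref{cor:t_orders_non_zero_colors}; this holds for every $v_j\in V$, including $j=0$. The inverse pairing satisfies $\eta^{i,\mathrm{Inv}(i)}=t^{\delta_{i,0}}$. The factor at the zero-colored end is bounded by Corollary~\ref{cor:R-matrix-bound} with $k=b_1+m+1$:
\begin{equation*}
\operatorname{ord}_t\eta(R^*_{b_1+m+1}v_i,\widetilde e_0)\geq -\tfrac12-(b_1+m+1)+\min\{i,\tfrac32\}.
\end{equation*}

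\textbf{Splitting on $i$.} For $i=0$ the summand has order at least $-\tfrac12-b_1-m-1+1=-\tfrac12-b_1-m$. For $i\geq1$ it has order at least $-\tfrac12-b_1-m-1+\min\{i,\tfrac32\}\geq -\tfrac12-b_1-m$.

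\textbf{Main obstacle.} Both cases give a bound worse than the claimed $-\tfrac12-b_1+1$, and they are worst at $m=b_2$, which makes the estimate $-\tfrac12-b_1-b_2$. The difficulty is that the $R$-index at the zero end exceeds $b_1$. I would handle this in two steps.

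First, use the alternate form of the edge factor in Remark~\ref{rem:alternate_version_of_edge_contribution}. There the zero-colored end carries $R^*_{b_1-m}$ with $0\leq m\leq b_1$, and the large index $b_2+m+1$ moves to the nonzero-colored end, where its order is still $\geq0$. Corollary~\ref{cor:R-matrix-bound} with $k=b_1-m$ then gives order at least $-\tfrac12-b_1+m+\min\{i,\tfrac32\}$, and the factor $t^{\delta_{i,0}}$ adds $1$ when $i=0$. So the $i=0$ and $i\geq2$ terms have order at least $-\tfrac12-b_1+1+m\geq -\tfrac12-b_1+1$.

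Second, the remaining $i=1$ terms have order at least $-\tfrac12-b_1+m+1$. This is already $\geq -\tfrac12-b_1+1$ for every $m\geq0$, including $m=0$. So the bound holds, provided the normalization of Remark~\ref{rem:alternate_version_of_edge_contribution} indeed puts $R^*_{b_1-m}$ on $\mathfrak e_1$ with $m$ starting at $0$.

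The step I would check most carefully is exactly this index bookkeeping: which $R^*$ index sits at which end of the edge. If the convention is reversed, I would swap the roles of $\mathfrak e_1$ and $\mathfrak e_2$, which is allowed because the edge factor is independent of the ordering by the symplectic condition, and redo the same minimization.
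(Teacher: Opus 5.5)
Your argument is correct and, once you switch to the alternate edge formula of Remark~\ref{rem:alternate_version_of_edge_contribution}, it is exactly the paper's proof. That Remark does place $R^*_{b_{\mathfrak e_1}-m}$, $0\le m\le b_{\mathfrak e_1}$, at the zero-colored end, so your bookkeeping worry is unfounded; the paper likewise bounds the nonzero-colored factor below by $0$, bounds the zero-colored factor by Corollary~\ref{cor:R_and_T_orders_in_t}, adds $\delta_{i,0}$ from $\eta^{i,\mathrm{Inv}(i)}$, and splits into $i=0$ and $i\ge 1$, so your first attempt with the original edge formula is simply an unneeded detour.
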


\begin{proof}
  By Proposition \ref{prop:Graph_Sum_Formula_Shifted_Chiodo} and Remark \ref{rem:alternate_version_of_edge_contribution}, we have the liberty to choose the expression for the edge-contribution.
  Without loss of generality, the vertex with zero coloring is $\mathfrak{v}_{\mathfrak{e}_1}$ and we choose the expression for edge contribution $\mathrm{Cont}_{\widetilde{\Gamma}_c,\mathfrak{e}}^{A,\Ch^{r, \epsilon}}$ given in Remark \ref{rem:alternate_version_of_edge_contribution}.
  Then, by Corollary \ref{cor:R_and_T_orders_in_t} and Propostion \ref{prop:Graph_Sum_Formula_Shifted_Chiodo}, we have
\begin{equation*}
\begin{aligned}
&\operatorname{ord}_t\left(\mathrm{Cont}_{\widetilde{\Gamma}_c,\mathfrak{e}}^{A,\Ch^{r, \epsilon}}\right)\\
&\geq \min \left\{ \operatorname{ord}_t\eta(R^*_{b_{\mathfrak{e}_1}-m}v_i,\widetilde{e}_{c(\mathfrak{v}_{\mathfrak{e}_1})}) +\operatorname{ord}_t\eta^{i,\mathrm{Inv}(i)}+\operatorname{ord}_t \eta(R^*_{b_{\mathfrak{e}_2}+m+1}v_{\mathrm{Inv}(i)},\widetilde{e}_{c(\mathfrak{v}_{\mathfrak{e}_2})})\right\}_{0\leq m \leq b_{\mathfrak{e}_1}, 0\leq i\leq r-1}\\
&\geq \min \left\{ \operatorname{ord}_t\eta(R^*_{b_{\mathfrak{e}_1}-m}v_i,\widetilde{e}_{c(\mathfrak{v}_{\mathfrak{e}_1})}) +\operatorname{ord}_t\eta^{i,\mathrm{Inv}(i)}\right\}_{0\leq m \leq b_{\mathfrak{e}_1}, 0\leq i\leq r-1}
\end{aligned}
\end{equation*}
since the third factor has non-negative $t$-order. By Corollary \ref{cor:R_and_T_orders_in_t} and $\eta^{i,\mathrm{Inv}(i)}=t^{\delta_{i,0}}$, we have
\begin{equation*}
\begin{aligned}
\operatorname{ord}_t\left(\mathrm{Cont}_{\widetilde{\Gamma}_c,\mathfrak{e}}^{A,\Ch^{r, \epsilon}}\right)
&\geq \min \left\{ \left(-\frac{1}{2}-(b_{\mathfrak{e}_1}-m)+\min \left\{i,\frac{3}{2}\right\}\right)\delta_{c(\mathfrak{v}_{\mathfrak{e}_1}),0}+\delta_{i,0}\right\}_{0\leq m \leq b_{\mathfrak{e}_1}, 0\leq i\leq r-1}.
\end{aligned}
\end{equation*}

Analyzing the cases $i=0$ and $1\leq i \leq r-1$ separately, we obtain
\begin{equation*}
\operatorname{ord}_t\left(\mathrm{Cont}_{\widetilde{\Gamma}_c,\mathfrak{e}}^{A,\Ch^{r, \epsilon}}\right) \geq -\frac{1}{2}-b_{\mathfrak{e}_1}+1
\end{equation*}
as we have  $c(\mathfrak{v}_{\mathfrak{e}_1})=0$.
\end{proof}

\begin{lemma} \label{lem:both_zero_coloring_order_of_t} Let $\widetilde{\Gamma}_c$ be a decorated stable graph and fix a flag assignment $A\in \mathbb{Z}_{\geq 0}^{\vert \mathrm{H}_{\Gamma}\vert}$. For an edge $\mathfrak{e}\in \mathrm{E}_{\widetilde{\Gamma}_c}^{\{0,0\}}$ connecting  two vertices of zero coloring, we have
\begin{equation*}
\begin{aligned}
\operatorname{ord}_t\left(\mathrm{Cont}_{\widetilde{\Gamma}_c,\mathfrak{e}}^{A,\Ch^{r, \epsilon}}\right)\geq -\frac{1}{2}-b_{\mathfrak{e}_1}-\frac{1}{2}-b_{\mathfrak{e}_2}.
\end{aligned}
\end{equation*}
\end{lemma}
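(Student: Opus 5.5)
The plan is to bound the edge factor term by term, exactly as in Lemma~\ref{lem:One_zero_coloring_half_edge}, using the explicit formula for $\mathrm{Cont}^{\mathrm{A},\Ch^{r,\epsilon}}_{\widetilde{\Gamma}_c,\mathfrak{e}}$ from Proposition~\ref{prop:Graph_Sum_Formula_Shifted_Chiodo}. Both endpoints now have colour $0$, so $c(\mathfrak{v}_{\mathfrak{e}_1})=c(\mathfrak{v}_{\mathfrak{e}_2})=0$. The edge contribution is a finite sum over $0\le m\le b_{\mathfrak{e}_2}$ and $0\le i\le r-1$ of products
\begin{equation*}
\eta(R^*_{b_{\mathfrak{e}_1}+m+1}v_i,\widetilde{e}_0)\cdot\eta^{i,\mathrm{Inv}(i)}\cdot\eta(R^*_{b_{\mathfrak{e}_2}-m}v_{\mathrm{Inv}(i)},\widetilde{e}_0).
\end{equation*}
Hence $\operatorname{ord}_t$ of the edge factor is at least the minimum over $(m,i)$ of the sum of the three orders. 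For each of these I will use Corollary~\ref{cor:R_and_T_orders_in_t} with colour $0$, together with $\eta^{i,\mathrm{Inv}(i)}=t^{\delta_{i,0}}$.

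For the summand indexed by $(m,i)$, Corollary~\ref{cor:R_and_T_orders_in_t} gives the lower bound
\begin{equation*}
\Bigl(-\tfrac12-(b_{\mathfrak{e}_1}+m+1)+\min\{i,\tfrac32\}\Bigr)+\delta_{i,0}+\Bigl(-\tfrac12-(b_{\mathfrak{e}_2}-m)+\min\{\mathrm{Inv}(i),\tfrac32\}\Bigr).
\end{equation*}
The $m$-dependence cancels, leaving $-1-b_{\mathfrak{e}_1}-b_{\mathfrak{e}_2}+\bigl(\min\{i,\tfrac32\}+\min\{\mathrm{Inv}(i),\tfrac32\}+\delta_{i,0}-1\bigr)$. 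I will then split into cases on $i$:
\begin{itemize}
\item If $i=0$, then $\mathrm{Inv}(0)=0$, and the bracket equals $0+0+1-1=0$.
\item If $1\le i\le r-1$, then $\mathrm{Inv}(i)=r-i\ge1$, so both minima are at least $1$, and the bracket is at least $1+1+0-1=1\ge0$.
\end{itemize}
In all cases the summand has order at least $-1-b_{\mathfrak{e}_1}-b_{\mathfrak{e}_2}$, which is the claimed bound $-\tfrac12-b_{\mathfrak{e}_1}-\tfrac12-b_{\mathfrak{e}_2}$.

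The only point needing care is the bookkeeping of the shifted index $b_{\mathfrak{e}_1}+m+1$, which costs one extra power of $t$. I expect this to be the main (mild) obstacle. It is absorbed because $\min\{i,\tfrac32\}+\min\{\mathrm{Inv}(i),\tfrac32\}+\delta_{i,0}\ge1$ for every colour $i$. Since Remark~\ref{rem:alternate_version_of_edge_contribution} lets us pick either form of the edge factor and the bound is symmetric in $\mathfrak{e}_1,\mathfrak{e}_2$, no choice of ordering is needed. Self-edges at a colour-$0$ vertex are covered by the same computation.
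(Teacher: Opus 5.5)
Your proposal is correct and follows essentially the same argument as the paper: a termwise bound using Corollary~\ref{cor:R_and_T_orders_in_t} together with $\eta^{i,\mathrm{Inv}(i)}=t^{\delta_{i,0}}$, cancellation of the $m$-dependence, and the case split $i=0$ versus $1\le i\le r-1$. The only difference is cosmetic: you use the edge formula of Proposition~\ref{prop:Graph_Sum_Formula_Shifted_Chiodo}, whereas the paper uses the alternate form of Remark~\ref{rem:alternate_version_of_edge_contribution}, and this does not matter since both endpoints have colour $0$ and the bound is symmetric.
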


\begin{proof}
Without loss of generality, we choose the expression given in Remark \ref{rem:alternate_version_of_edge_contribution} for the edge contribution $\mathrm{Cont}_{\widetilde{\Gamma}_c,\mathfrak{e}}^{A,\Ch^{r, \epsilon}}$. Then, by Corollary \ref{cor:R_and_T_orders_in_t} and $\eta^{i,\mathrm{Inv}(i)}=t^{\delta_{i,0}}$, we have the following 
\begin{equation*}
\begin{aligned}
&\operatorname{ord}_t\left(\mathrm{Cont}_{\widetilde{\Gamma}_c,\mathfrak{e}}^{A,\Ch^{r, \epsilon}}\right)\\
&\geq \min \left\{ \operatorname{ord}_t\eta(R^*_{b_{\mathfrak{e}_1}-m}v_i,\widetilde{e}_{c(\mathfrak{v}_{\mathfrak{e}_1})}) +\operatorname{ord}_t\eta^{i,\mathrm{Inv}(i)}+\operatorname{ord}_t \eta(R^*_{b_{\mathfrak{e}_2}+m+1}v_{\mathrm{Inv}(i)},\widetilde{e}_{c(\mathfrak{v}_{\mathfrak{e}_2})})\right\}_{0\leq m \leq b_{\mathfrak{e}_1}, 0\leq i\leq r-1}\\
&\geq \min \left\{ \left(-\frac{1}{2}-(b_{\mathfrak{e}_1}-m)+\min \left\{i,\frac{3}{2}\right\}\right)\delta_{c(\mathfrak{v}_{\mathfrak{e}_1}),0}+\delta_{i,0}+\left(-\frac{1}{2}-(b_{\mathfrak{e}_2}+m+1)+\min \left\{\mathrm{Inv}(i),\frac{3}{2}\right\}\right)\delta_{c(\mathfrak{v}_{\mathfrak{e}_2}),0}\right\}_{0\leq m \leq b_{\mathfrak{e}_1}, 0\leq i\leq r-1}.
\end{aligned}
\end{equation*}
Noticing that $c(\mathfrak{v}_{\mathfrak{e}_1})=c(\mathfrak{v}_{\mathfrak{e}_2})=0$ and canceling $m$ terms, we obtain
\begin{equation*}
\operatorname{ord}_t\left(\mathrm{Cont}_{\widetilde{\Gamma}_c,\mathfrak{e}}^{A,\Ch^{r, \epsilon}}\right)\geq \min \left\{ -\frac{1}{2}-b_{\mathfrak{e}_1}+\min \left\{i,\frac{3}{2}\right\}+\delta_{i,0}-\frac{1}{2}-b_{\mathfrak{e}_2}-1+\min \left\{\mathrm{Inv}(i),\frac{3}{2}\right\}\right\}_{0\leq i\leq r-1}.
\end{equation*}
Analyzing the cases $i=0$ and $1\leq i \leq r-1$ separately, we obtain
\begin{equation*}
\operatorname{ord}_t\left(\mathrm{Cont}_{\widetilde{\Gamma}_c,\mathfrak{e}}^{A,\Ch^{r, \epsilon}}\right) \geq -\frac{1}{2}-b_{\mathfrak{e}_1}-\frac{1}{2}-b_{\mathfrak{e}_2}.
\end{equation*}
\end{proof}

\begin{lemma}\label{lem:order_t_bound_zero_leg}
Let $\widetilde{\Gamma}_c$ be a decorated stable graph and fix a flag assignment $A\in \mathbb{Z}_{\geq 0}^{\vert \mathrm{H}_{\Gamma}\vert}$. For a leg $\mathfrak{l}\in \mathrm{L}_{\widetilde{\Gamma}_c}^{0}$ attached to a vertex with zero coloring, we have 
\begin{equation}
\operatorname{ord}_t\left(\mathrm{Cont}_{\widetilde{\Gamma}_c,\mathfrak{l}}^{A,\Ch^{r, \epsilon}}\right)\geq -\frac{1}{2}-{a_{\ell(\mathfrak{l})}}+\min \left\{{s_{\ell(\mathfrak{l})}},\frac{3}{2}\right\}.
\end{equation}
\end{lemma}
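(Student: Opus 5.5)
This bound should follow directly from the explicit leg factor in Proposition~\ref{prop:Graph_Sum_Formula_Shifted_Chiodo} combined with the zero-color estimate of Corollary~\ref{cor:R-matrix-bound}. By that proposition, for a leg $\mathfrak{l}$ with $c(\nu(\mathfrak{l}))=0$ we have
\begin{equation*}
\mathrm{Cont}^{\mathrm{A},\Ch^{r,\epsilon}}_{\widetilde{\Gamma}_c,\mathfrak{l}}
=(-1)^{a_{\ell(\mathfrak{l})}}\,\eta\bigl(R^*_{a_{\ell(\mathfrak{l})}}v_{s_{\ell(\mathfrak{l})}},\widetilde{e}_0\bigr)\,\psi_{\ell(\mathfrak{l})}^{a_{\ell(\mathfrak{l})}}.
\end{equation*}
The sign and the psi-class monomial are independent of $t$; the $\psi$ factor is only a formal decoration in the strata algebra, and $t$-orders are taken coefficientwise in $\CC[\epsilon^{\pm\frac{1}{2(r-1)}}](\!(t^{\frac12})\!)$. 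Hence the $t$-order of the leg contribution equals $\operatorname{ord}_t \eta(R^*_{a}v_{s},\widetilde{e}_0)$ with $a=a_{\ell(\mathfrak{l})}$ and $s=s_{\ell(\mathfrak{l})}$.

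Next I will invoke Corollary~\ref{cor:R-matrix-bound}, or equivalently the $i=0$ case of Corollary~\ref{cor:R_and_T_orders_in_t}, with $k=a$ and $j=s$. This gives
\begin{equation*}
\operatorname{ord}_t\eta(R^*_a v_s,\widetilde{e}_0)\ \ge\ -\tfrac12-a+\min\{s,\tfrac32\},
\end{equation*}
which is exactly the claimed inequality. The corollary is stated for all $0\le j\le r-1$, so the standing assumption $1\le s\le r-1$ is not needed here, although it is the case of interest.

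There is no real obstacle once those corollaries are in hand. The substantive input, namely the induction bound $\operatorname{ord}_t R^k_{l,0}\ge -k+1-\delta_{l,0}$ from Lemma~\ref{lem:R_ij_k_Ring_containment_orders_in_t} together with $\operatorname{ord}_t\Psi_{l,j}=(j-\tfrac12)\delta_{l,0}$, has already been combined in Corollary~\ref{cor:R-matrix-bound}. The only point requiring care is bookkeeping of conventions. First, the leg factor pairs $R^*_a v_s$ against $\widetilde{e}_0$, matching the form $\eta(R^*_k v_j,\widetilde e_0)$ of the corollary. Second, the symplectic rewriting $R^{-1}(z)=R^*(-z)$ has already been absorbed into the sign $(-1)^{a}$, which does not affect orders.
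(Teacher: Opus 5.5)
Your proposal is correct and follows the paper's proof essentially verbatim. The paper likewise reads off the leg factor from Proposition~\ref{prop:Graph_Sum_Formula_Shifted_Chiodo}, observes that since $c(\nu(\mathfrak{l}))=0$ its $t$-order is that of $\eta(R^*_{a_{\ell(\mathfrak{l})}}v_{s_{\ell(\mathfrak{l})}},\widetilde{e}_0)$, and then applies the $i=0$ case of Corollary~\ref{cor:R_and_T_orders_in_t}; your remark that the restriction $1\le s\le r-1$ is not needed for this particular bound is also accurate.
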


\begin{proof}
By Corollary \ref{cor:R_and_T_orders_in_t} and Proposition \ref{prop:Graph_Sum_Formula_Shifted_Chiodo}, we have the following
\begin{equation*}
\operatorname{ord}_t\left(\mathrm{Cont}^{\mathrm{A},\Ch^{r, \epsilon}}_{\widetilde{\Gamma}_c,\mathfrak{l}}\right)=\operatorname{ord}_t\left(\eta(R^*_{a_{\ell(\mathfrak{l})}}v_{s_{\ell(\mathfrak{l})}},\widetilde{e}_{c(\nu (\mathfrak{l}))})\right)\geq -\frac{1}{2}-{a_{\ell(\mathfrak{l})}}+\min \left\{{s_{\ell(\mathfrak{l})}},\frac{3}{2}\right\}
\end{equation*}
as  $c(\nu (\mathfrak{l}))=0$.
\end{proof}

By Proposition \ref{prop:Graph_Sum_Formula_Shifted_Chiodo}, we have
\begin{equation*}
\mathrm{Cont}^{\Ch^{r, \epsilon}}_{\widetilde{\Gamma}_c}(v_{s_1} \otimes \cdots \otimes v_{s_n})=\frac{1}{\vert \mathrm{Aut}({\Gamma})\vert}{\mathpzc{gl}_{{\Gamma}}}_{\star}\left(\sum_{\mathrm{A}\in \mathbb{Z}_{\geq 0}^{\vert \mathrm{H}_{\Gamma}\vert}}\prod_{\mathfrak{v}\in \mathrm{V}_{\Gamma}}\mathrm{Cont}^{\mathrm{A},\Ch^{r, \epsilon}}_{\widetilde{\Gamma}_c,\mathfrak{v}}\prod_{\mathfrak{l}\in \mathrm{L}_{\Gamma}}\mathrm{Cont}^{\mathrm{A},\Ch^{r, \epsilon}}_{\widetilde{\Gamma}_c,\mathfrak{l}}\prod_{\mathfrak{e}\in \mathrm{E}_{\Gamma}} \mathrm{Cont}^{\mathrm{A},\Ch^{r, \epsilon}}_{\widetilde{\Gamma}_c,\mathfrak{e}} \right).
\end{equation*}
We set
\begin{equation*}
\Product_{\widetilde{\Gamma}_c}^{A,\Ch^{r, \epsilon}} \coloneqq \prod_{\mathfrak{v}\in \mathrm{V}_{\Gamma}}\mathrm{Cont}^{\mathrm{A},\Ch^{r, \epsilon}}_{\widetilde{\Gamma}_c,\mathfrak{v}}\prod_{\mathfrak{l}\in \mathrm{L}_{\Gamma}}\mathrm{Cont}^{\mathrm{A},\Ch^{r, \epsilon}}_{\widetilde{\Gamma}_c,\mathfrak{l}}\prod_{\mathfrak{e}\in \mathrm{E}_{\Gamma}} \mathrm{Cont}^{\mathrm{A},\Ch^{r, \epsilon}}_{\widetilde{\Gamma}_c,\mathfrak{e}}
\end{equation*}
to be the product expression in the graph contribution $\mathrm{Cont}^{\Ch^{r, \epsilon}}_{\widetilde{\Gamma}_c}(v_{s_1} \otimes \cdots \otimes v_{s_n})$. We split  the product $\Product_{\widetilde{\Gamma}_c}^{A,\Ch^{r, \epsilon}}$ as
\begin{equation}\label{eqn:prod_decomp}
\Product_{\widetilde{\Gamma}_c}^{A,\Ch^{r, \epsilon}}=\Product_{\widetilde{\Gamma}_c}^{A,\Ch^{r, \epsilon}}\{0,0\} \cdot \Product_{\widetilde{\Gamma}_c}^{A,\Ch^{r, \epsilon}}\{0,+\} \cdot \Product_{\widetilde{\Gamma}_c}^{A,\Ch^{r, \epsilon}}\{+,+\}
\end{equation}
where
\begin{align}
\Product_{\widetilde{\Gamma}_c}^{A,\Ch^{r, \epsilon}}\{0,0\} &\coloneqq \prod_{\mathfrak{v}\in \mathrm{V}_{\widetilde{\Gamma}_c}^{0}}\mathrm{Cont}^{\mathrm{A},\Ch^{r, \epsilon}}_{\widetilde{\Gamma}_c,\mathfrak{v}}\prod_{\mathfrak{l}\in \mathrm{L}_{\widetilde{\Gamma}_c}^{0}}\mathrm{Cont}^{\mathrm{A},\Ch^{r, \epsilon}}_{\widetilde{\Gamma}_c,\mathfrak{l}}\prod_{\mathfrak{e}\in \mathrm{E}_{\widetilde{\Gamma}_c}^{\{0,0\}}} \mathrm{Cont}^{\mathrm{A},\Ch^{r, \epsilon}}_{\widetilde{\Gamma}_c,\mathfrak{e}},\label{eqn:zero_zero_coloring_product}\\
\Product_{\widetilde{\Gamma}_c}^{A,\Ch^{r, \epsilon}}\{0,+\} &\coloneqq \prod_{\mathfrak{e}\in \mathrm{E}_{\widetilde{\Gamma}_c}^{\{0,+\}}} \mathrm{Cont}^{\mathrm{A},\Ch^{r, \epsilon}}_{\widetilde{\Gamma}_c,\mathfrak{e}},\label{eqn:zero_plus_coloring_product}\\
\Product_{\widetilde{\Gamma}_c}^{A,\Ch^{r, \epsilon}}\{+,+\} &\coloneqq \prod_{\mathfrak{v}\in \mathrm{V}_{\widetilde{\Gamma}_c}^{+}}\mathrm{Cont}^{\mathrm{A},\Ch^{r, \epsilon}}_{\widetilde{\Gamma}_c,\mathfrak{v}}\prod_{\mathfrak{l}\in \mathrm{L}_{\widetilde{\Gamma}_c}^{+}}\mathrm{Cont}^{\mathrm{A},\Ch^{r, \epsilon}}_{\widetilde{\Gamma}_c,\mathfrak{l}}\prod_{\mathfrak{e}\in \mathrm{E}_{\widetilde{\Gamma}_c}^{\{+,+\}}} \mathrm{Cont}^{\mathrm{A},\Ch^{r, \epsilon}}_{\widetilde{\Gamma}_c,\mathfrak{e}}.\label{eqn:plus_plus_coloring_product}
\end{align}

\begin{lemma}\label{lem:ord_t_P_plus_plus_geq_zero}
For any decorated stable graph $\widetilde{\Gamma}_c$ and any flag assignment $A\in \mathbb{Z}_{\geq 0}^{\vert \mathrm{H}_{\Gamma}\vert}$, we have
\begin{equation*}
\operatorname{ord}_t(\Product_{\widetilde{\Gamma}_c}^{A,\Ch^{r, \epsilon}}\{+,+\})\geq 0.
\end{equation*}
\end{lemma}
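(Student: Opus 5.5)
The plan is to bound each factor of $\Product_{\widetilde{\Gamma}_c}^{A,\Ch^{r, \epsilon}}\{+,+\}$ separately and then use that $\operatorname{ord}_t$ is additive on products in $\CC[\epsilon^{\pm\frac{1}{2(r-1)}}](\!(t^{\frac{1}{2}})\!)$. Every factor lies in this ring: $\zeta_i^{\pm1}$ and $\xi_i^{\pm1}$ do by Lemma~\ref{lem:xi_i_and_zeta_i_and_inverses_ring_containment}, and the $R$- and $T$-pairings do by the lemma preceding Corollary~\ref{cor:t_orders_non_zero_colors}. It therefore suffices to show that each vertex factor with $c(\mathfrak v)\neq0$, each leg factor with $c(\nu(\mathfrak l))\neq0$, and each edge factor in $\mathrm{E}_{\widetilde{\Gamma}_c}^{\{+,+\}}$ has nonnegative $t$-order. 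These are finitely many factors for a fixed flag assignment $A$. The $\psi$- and $\kappa$-type classes are $t$-independent formal strata coefficients.

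\textbf{Legs.} For a leg with $i=c(\nu(\mathfrak l))\in\{1,\dots,r-1\}$, the factor is $\pm\eta(R^*_{a}v_{s},\widetilde e_i)\psi^{a}$. Corollary~\ref{cor:t_orders_non_zero_colors} gives order $\ge 0$ directly, and the bound holds for every $s\in\{0,\dots,r-1\}$.

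\textbf{Edges in $\mathrm{E}^{\{+,+\}}$.} Each summand in the edge formula of Proposition~\ref{prop:Graph_Sum_Formula_Shifted_Chiodo} is a product of three factors:
\begin{itemize}
\item $\eta(R^*_{\bullet}v_i,\widetilde e_{c_1})$ with $c_1\neq0$, which has order $\ge0$ by Corollary~\ref{cor:t_orders_non_zero_colors};
\item $\eta^{i,\mathrm{Inv}(i)}=t^{\delta_{i,0}}$, which has order $\ge0$;
\item $\eta(R^*_{\bullet}v_{\mathrm{Inv}(i)},\widetilde e_{c_2})$ with $c_2\neq0$, which has order $\ge0$ by the same corollary.
\end{itemize}
Since the sums over $m$ and $i$ are finite, the minimum of the summand orders bounds the order of the edge factor.

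\textbf{Vertices with $c(\mathfrak v)=i\neq0$.} The factor is a sum over $l_{\mathfrak v}\ge0$ and $k_j\ge2$. Each summand has the form
\[
(\zeta_i\xi_i)^{2\mathrm g(\mathfrak v)-2+\mathrm n(\mathfrak v)+l_{\mathfrak v}}\prod_j\eta(T_{k_j},\widetilde e_i)
\]
times a pushforward of $\psi$-classes. By Lemma~\ref{lem:xi_i_and_zeta_i_and_inverses_ring_containment}, $\operatorname{ord}_t(\zeta_i)=\operatorname{ord}_t(\xi_i)=0$ for $i\neq0$. This holds for negative exponents too, because $\zeta_i$ and $\xi_i$ are units with nonzero constant terms $\underline\zeta_i$ and $\underline\xi_i$. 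By Corollary~\ref{cor:t_orders_non_zero_colors}, $\operatorname{ord}_t\eta(T_{k_j},\widetilde e_i)\ge0$.

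The one point needing care is that the vertex factor is an infinite sum. I will handle it by degree truncation in the strata algebra. Since $\sum_j k_j-l_{\mathfrak v}$ is the degree of the pushforward and each $k_j\ge2$, the class $(\mathpzc p_{l_{\mathfrak v}})_\star\prod\psi^{k_j}$ vanishes once its degree exceeds $\dim\ocM_{\mathrm g(\mathfrak v),\mathrm n(\mathfrak v)}$. Only finitely many $(l_{\mathfrak v},k_1,\dots)$ therefore contribute to each strata coefficient, and every surviving coefficient is a finite sum of terms of order $\ge0$. Because the factors for fixed $A$ have nonnegative order in every strata-coefficient, the product does as well, which gives the lemma.

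I expect no genuine obstacle. The only subtle points are the unit property of $\zeta_i$ and $\xi_i$ and the finiteness of the vertex sum. This is also precisely the input needed for Lemma~\ref{lem:limit_of_non_zero_colorings}.
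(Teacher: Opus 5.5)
Your proof is correct and takes essentially the paper's route. The paper gets nonnegative $t$-order of every factor in $\Product^{A,\Ch^{r,\epsilon}}_{\widetilde{\Gamma}_c}\{+,+\}$ from the existence of their $t\to0$ limits (Lemma~\ref{lem:limit_of_non_zero_colorings}), which rests on the same Lemmas~\ref{lem:R-limit-nonzero} and~\ref{lem:T-limit-nonzero} behind Corollary~\ref{cor:t_orders_non_zero_colors}; your handling of the $i=0$ edge summand via $\eta^{0,0}=t$ and of the finiteness of the vertex sum only spells out what the paper leaves implicit.
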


\begin{proof}
This follows from Lemma \ref{lem:limit_of_non_zero_colorings} since the limits of the factors in equation \eqref{eqn:plus_plus_coloring_product} exist as $t\to 0$.
\end{proof}

\begin{proposition}\label{prop:limit_of_nonzero_graphs}
Assume $n>0$, then for a decorated stable graph $\widetilde{\Gamma}_c$ and $s_i\in \{1,\ldots,r-1\}$, we have 
\begin{equation*}
\lim_{t\to 0}\mathrm{Cont}^{\Ch^{r, \epsilon}}_{\widetilde{\Gamma}_c}(v_{s_1} \otimes \cdots \otimes v_{s_n})=\mathrm{Cont}^{\Th^{r, \epsilon}}_{\widetilde{\Gamma}_c}(v_{s_1} \otimes \cdots \otimes v_{s_n})
\end{equation*}
if each vertex of $\widetilde{\Gamma}_c$ has a non-zero coloring.
\end{proposition}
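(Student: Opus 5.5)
When every vertex of $\widetilde{\Gamma}_c$ has a non-zero color, the sets $\mathrm{V}^{0}_{\widetilde{\Gamma}_c}$, $\mathrm{L}^{0}_{\widetilde{\Gamma}_c}$, $\mathrm{E}^{\{0,0\}}_{\widetilde{\Gamma}_c}$ and $\mathrm{E}^{\{0,+\}}_{\widetilde{\Gamma}_c}$ are all empty. So in the decomposition \eqref{eqn:prod_decomp} the factors $\Product^{A,\Ch^{r,\epsilon}}_{\widetilde{\Gamma}_c}\{0,0\}$ and $\Product^{A,\Ch^{r,\epsilon}}_{\widetilde{\Gamma}_c}\{0,+\}$ are empty products, equal to $1$. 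For each fixed flag weighting $A$ we then have $\Product^{A,\Ch^{r,\epsilon}}_{\widetilde{\Gamma}_c}=\Product^{A,\Ch^{r,\epsilon}}_{\widetilde{\Gamma}_c}\{+,+\}$. This is a finite product of vertex, leg and edge factors, and Lemma~\ref{lem:limit_of_non_zero_colorings} (i)--(iii) applies to each of them. Since each factor has a limit as $t\to0$ in $\CC[\epsilon^{\pm\frac{1}{2(r-1)}}](\!(t^{\frac12})\!)$, the limit of the product is the product of the limits. Hence
\begin{equation*}
\lim_{t\to0}\Product^{A,\Ch^{r,\epsilon}}_{\widetilde{\Gamma}_c}=\prod_{\mathfrak v}\mathrm{Cont}^{A,\Th^{r,\epsilon}}_{\widetilde{\Gamma}_c,\mathfrak v}\prod_{\mathfrak l}\mathrm{Cont}^{A,\Th^{r,\epsilon}}_{\widetilde{\Gamma}_c,\mathfrak l}\prod_{\mathfrak e}\mathrm{Cont}^{A,\Th^{r,\epsilon}}_{\widetilde{\Gamma}_c,\mathfrak e}.
\end{equation*}
The leg factors use the hypothesis $s_i\in\{1,\ldots,r-1\}$, which is needed for the identification in Lemma~\ref{lem:R-limit-nonzero}.

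It remains to pass from each fixed $A$ to the full contribution. I would argue that the sum over $A\in\mathbb{Z}_{\geq0}^{\vert\mathrm{H}_\Gamma\vert}$ in Propositions~\ref{prop:Graph_Sum_Formula_Shifted_Chiodo} and~\ref{prop:Graph_Sum_Formula_Shifted_Theta} is effectively finite, uniformly in $t$. The reason is that we work in the strata algebra, where each decoration is bounded by the dimension constraint $\sum_i i\,x_i[\mathfrak v]+\sum y[\mathfrak h]\le 3\mathrm g(\mathfrak v)-3+\mathrm n(\mathfrak v)$. So only flag weightings with $a_{\mathfrak h}\le \dim\ocM_{\mathrm g(\mathfrak v),\mathrm n(\mathfrak v)}$ give nonzero terms.

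The same degree bound also makes the vertex factor a finite sum. In the vertex factor, $(\mathpzc p_{l_{\mathfrak v}})_\star\prod_j\psi^{k_j}$ with $k_j\ge2$ has degree $\sum_j(k_j-1)\ge l_{\mathfrak v}$, so only finitely many $l_{\mathfrak v}$ and $k_j$ survive. Consequently, for each basic class $(\Gamma,\gamma)$ the coefficient of $\mathrm{Cont}^{\Ch^{r,\epsilon}}_{\widetilde{\Gamma}_c}$ is a finite sum of terms, each of which has the limit computed above. Taking the limit termwise and reassembling, with the same prefactor $\frac{1}{\vert\mathrm{Aut}(\Gamma)\vert}(\mathpzc{gl}_\Gamma)_\star$ on both sides, gives exactly $\mathrm{Cont}^{\Th^{r,\epsilon}}_{\widetilde{\Gamma}_c}(v_{s_1}\otimes\cdots\otimes v_{s_n})$.

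The main point that needs care is this finiteness and interchange of limit and sum, particularly inside the vertex factor with its sum over $l_{\mathfrak v}$. Degree truncation at each vertex handles both, and it holds independently of $t$. One also has to check that the normalization of the vertex factor matches in the limit, which Lemma~\ref{lem:limit_of_non_zero_colorings}(i) already covers via $\zeta_i\xi_i\to\underline\zeta_i\underline\xi_i$ and Lemma~\ref{lem:T-limit-nonzero}. Beyond that, the argument is bookkeeping.
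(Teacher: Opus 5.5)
Your proposal is correct and follows the paper's proof. When no vertex has color $0$, the product reduces to $\Product^{A,\Ch^{r,\epsilon}}_{\widetilde{\Gamma}_c}\{+,+\}$, and Lemma~\ref{lem:limit_of_non_zero_colorings} gives the limit of each factor. You also argue that, in the strata algebra, the dimension bound at each vertex makes the sums over $A$, $l_{\mathfrak v}$ and the $k_j$ finite independently of $t$, so the limit commutes with these sums; this is correct, and it is a detail the paper leaves implicit (cf.\ Remark~\ref{rem:strata_level}).
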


\begin{proof}
If  $\widetilde{\Gamma}_c$ is  a decorated stable graph with only non-zero colorings, then by definition we have
\begin{equation*}
\Product_{\widetilde{\Gamma}_c}^{A,\Ch^{r, \epsilon}}=\Product_{\widetilde{\Gamma}_c}^{A,\Ch^{r, \epsilon}}\{+,+\}.
\end{equation*}
Hence, the proof follows from Proposition \ref{prop:Graph_Sum_Formula_Shifted_Chiodo} and Lemma \ref{lem:limit_of_non_zero_colorings}.
\end{proof}

\begin{proposition}
Assume $n>0$, then for a decorated stable graph $\widetilde{\Gamma}_c$ and $s_i\in \{1,\ldots,r-1\}$, we have 
\begin{equation*}
\lim_{t\to 0}\mathrm{Cont}^{\Ch^{r, \epsilon}}_{\widetilde{\Gamma}_c}(v_{s_1} \otimes \cdots \otimes v_{s_n})=0
\end{equation*}
if $\widetilde{\Gamma}_c$ has a vertex with a zero coloring.
\end{proposition}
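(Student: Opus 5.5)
The plan is to show that, for every flag weighting $\mathrm{A}$ contributing a nonzero class, the product $\Product_{\widetilde{\Gamma}_c}^{A,\Ch^{r,\epsilon}}$ has strictly positive $t$-order. We use the factorization \eqref{eqn:prod_decomp}. By Lemma~\ref{lem:ord_t_P_plus_plus_geq_zero}, the factor $\Product\{+,+\}$ has order $\geq 0$, so it suffices to bound $\Product\{0,0\}\cdot\Product\{0,+\}$ from below.

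Only finitely many $\mathrm{A}$ contribute. At each vertex $\mathfrak v$, the classes are formal basic classes, whose total degree is at most $3\mathrm g(\mathfrak v)-3+\mathrm n(\mathfrak v)$; terms of larger degree vanish. The limit can therefore be taken summand by summand.

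\textbf{Order of a zero-colored vertex factor.} Let $\mathfrak v\in\mathrm V^0_{\widetilde\Gamma_c}$. Consider a summand of $\mathrm{Cont}^{\mathrm A}_{\widetilde\Gamma_c,\mathfrak v}$ with $l$ translation points carrying exponents $k_1,\dots,k_l\ge 2$.
\begin{itemize}
\item By Lemma~\ref{lem:xi_i_and_zeta_i_and_inverses_ring_containment}, $\operatorname{ord}_t(\zeta_0\xi_0)=\tfrac32$.
\item By Corollary~\ref{cor:R_and_T_orders_in_t}, $\operatorname{ord}_t\eta(T_{k_j},\widetilde e_0)\ge -k_j-\tfrac12$.
\end{itemize}
Write $D_T\coloneqq\sum_j(k_j-1)$, which is the cohomological degree of $(\mathpzc p_l)_\star\prod_j\psi^{k_j}$. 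Each translation point then contributes order $\ge \tfrac32-k_j-\tfrac12=-(k_j-1)$. Hence this summand of the vertex factor has order
\begin{equation*}
\geq \tfrac32\bigl(2\mathrm g(\mathfrak v)-2+\mathrm n(\mathfrak v)\bigr)-D_T .
\end{equation*}

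\textbf{Orders of the flags at zero-colored vertices.} For each half-edge $\mathfrak h$ at $\mathfrak v$ with weight $a_{\mathfrak h}$:
\begin{itemize}
\item If $\mathfrak h$ is a leg, Lemma~\ref{lem:order_t_bound_zero_leg} together with $s\ge1$ gives order $\ge \tfrac12-a_{\mathfrak h}$.
\item If $\mathfrak h$ is the zero-colored side of an edge in $\mathrm E^{\{0,+\}}$, Lemma~\ref{lem:One_zero_coloring_half_edge} gives order $\ge\tfrac12-a_{\mathfrak h}$.
\item An edge in $\mathrm E^{\{0,0\}}$ has order $\ge -1-b_{\mathfrak e_1}-b_{\mathfrak e_2}$ by Lemma~\ref{lem:both_zero_coloring_order_of_t}. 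We distribute this as $-\tfrac12-a_{\mathfrak h}$ to each of its two half-edges.
\end{itemize}

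\textbf{Summing at a zero-colored vertex.} Let $L_{\mathfrak v}$ be the number of legs at $\mathfrak v$, $E^{0+}_{\mathfrak v}$ the number of half-edges at $\mathfrak v$ belonging to $\{0,+\}$-edges, and $H^{00}_{\mathfrak v}$ the number of half-edges at $\mathfrak v$ belonging to $\{0,0\}$-edges. Then $\mathrm n(\mathfrak v)=L_{\mathfrak v}+E^{0+}_{\mathfrak v}+H^{00}_{\mathfrak v}$. Adding everything attached to $\mathfrak v$ gives order
\begin{equation*}
\ge \tfrac32\bigl(2\mathrm g(\mathfrak v)-2+\mathrm n(\mathfrak v)\bigr)-\Bigl(D_T+\sum_{\mathfrak h\mapsto\mathfrak v}a_{\mathfrak h}\Bigr)+\tfrac12\bigl(L_{\mathfrak v}+E^{0+}_{\mathfrak v}\bigr)-\tfrac12H^{00}_{\mathfrak v}.
\end{equation*}
The bracketed term is the total degree at $\mathfrak v$, so it is at most $3\mathrm g(\mathfrak v)-3+\mathrm n(\mathfrak v)$; otherwise the term vanishes. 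Substituting this bound and the expression for $\mathrm n(\mathfrak v)$, the lower bound simplifies to exactly
\begin{equation*}
L_{\mathfrak v}+E^{0+}_{\mathfrak v}\;\ge\;0.
\end{equation*}
Summing over all zero-colored vertices, $\operatorname{ord}_t\Product\ge\sum_{\mathfrak v\in\mathrm V^0}(L_{\mathfrak v}+E^{0+}_{\mathfrak v})$.

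\textbf{Combinatorial step.} We show this sum is at least $1$. If $\mathrm V^0\ne\mathrm V_\Gamma$, connectedness of $\Gamma$ forces some edge in $\mathrm E^{\{0,+\}}$, so some $E^{0+}_{\mathfrak v}\ge1$. If $\mathrm V^0=\mathrm V_\Gamma$, then $n>0$ gives a leg at some zero-colored vertex, so some $L_{\mathfrak v}\ge1$.

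So every contributing summand has $t$-order at least $1$ and tends to $0$, which proves the statement. The main obstacle is the bookkeeping that makes the vertex factor's $\tfrac32$-per-valence gain exactly absorb the $-1$-per-degree losses coming from $T$, $R$ and the edges. The $\{0,0\}$ edges are the tight case: they are only just absorbed, so the bound of Lemma~\ref{lem:both_zero_coloring_order_of_t} must be split evenly between the two half-edges as above.
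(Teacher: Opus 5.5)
Your proposal is correct and follows essentially the same argument as the paper: the factorization with $\Product\{+,+\}$ bounded below by $0$, the same order bounds for zero-colored vertices, legs, $\{0,+\}$-edges and $\{0,0\}$-edges (your even split over half-edges is exactly the paper's symmetrization step), the same dimension constraint yielding $\sum_{\mathfrak v\in\mathrm V^0}(L_{\mathfrak v}+E^{0+}_{\mathfrak v})$, and the same connectedness versus $n>0$ case split. Your per-vertex bookkeeping is simply a tidier presentation of the paper's computation.
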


\begin{proof}
We will prove the statement by showing that the order $\operatorname{ord}_t(\Product_{\widetilde{\Gamma}_c}^{A,\Ch^{r, \epsilon}})$ of $t$ in $\Product_{\widetilde{\Gamma}_c}^{A,\Ch^{r, \epsilon}}$ is positive for any decorated stable graph $\widetilde{\Gamma}_c$ with at least one vertex with zero coloring and arbitrary flag assignment $A\in \mathbb{Z}_{\geq 0}^{\vert \mathrm{H}_{\Gamma}\vert}$.

First, observe that we have
\begin{equation}\label{eqn:t_order_prod_geq_00_0plus}
\operatorname{ord}_t(\Product_{\widetilde{\Gamma}_c}^{A,\Ch^{r, \epsilon}}) \geq \operatorname{ord}_t(\Product_{\widetilde{\Gamma}_c}^{A,\Ch^{r, \epsilon}}\{0,0\}) + \operatorname{ord}_t(\Product_{\widetilde{\Gamma}_c}^{A,\Ch^{r, \epsilon}}\{0,+\})
\end{equation}
by Lemma \ref{lem:ord_t_P_plus_plus_geq_zero} and equation \eqref{eqn:prod_decomp}.

For vertices $\mathfrak{v}\in \mathrm{V}_{\widetilde{\Gamma}_c}^{0}$ with zero coloring, we define the following
\begin{equation*}
\begin{aligned}
\mathrm{L}_{\Gamma}(\mathfrak{v})
&\coloneqq \text{The set of legs attached to $\mathfrak{v}$},\\
\mathrm{E}_{\widetilde{\Gamma}_c}^{0}(\mathfrak{v})
&\coloneqq \text{The set of edges attached to $\mathfrak{v}$ with the other end also attached to a vertex with zero coloring},\\
\mathrm{E}_{\widetilde{\Gamma}_c}^{+}(\mathfrak{v})
&\coloneqq \text{The set of edges attached to $\mathfrak{v}$ with the other end attached to a non-zero coloring},\\
\mathrm{H}_{\Gamma}(\mathfrak{v}) &\coloneqq \text{The set of half-edges attached to $\mathfrak{v}$},
\end{aligned}
\end{equation*}
and we set
\begin{equation*}
\widetilde{\mathrm{H}}_{\Gamma}(\mathfrak{v})
\coloneqq \mathrm{H}_{\Gamma}(\mathfrak{v}) \setminus \mathrm{L}_{\Gamma}(\mathfrak{v}),
\end{equation*}
the set of half-edges of $\mathfrak v$ that belong to edges, and decompose it as
\begin{equation*}
\widetilde{\mathrm{H}}_{\Gamma}(\mathfrak{v})
=
\widetilde{\mathrm{H}}^{0}_{\widetilde{\Gamma}_c}(\mathfrak{v}) \sqcup \widetilde{\mathrm{H}}^{+}_{\widetilde{\Gamma}_c}(\mathfrak{v}),
\end{equation*}
where
\begin{equation*}
\widetilde{\mathrm{H}}^{0}_{\widetilde{\Gamma}_c}(\mathfrak{v})
\coloneqq \bigl\{\mathfrak{h}\in \widetilde{\mathrm{H}}_{\Gamma}(\mathfrak{v}) \,\big\vert\,
c\bigl(\nu(\iota(\mathfrak{h}))\bigr)=0\bigr\},
 \quad
\widetilde{\mathrm{H}}^{+}_{\widetilde{\Gamma}_c}(\mathfrak{v})
\coloneqq \bigl\{\mathfrak{h}\in \widetilde{\mathrm{H}}_{\Gamma}(\mathfrak{v}) \,\big\vert\,
c\bigl(\nu(\iota(\mathfrak{h}))\bigr)\neq 0\bigr\}.
\end{equation*}
In words, $\widetilde{\mathrm{H}}^{0}_{\widetilde{\Gamma}_c}(\mathfrak{v})$ (resp.\ $\widetilde{\mathrm{H}}^{+}_{\widetilde{\Gamma}_c}(\mathfrak{v})$)
consists of those half-edges at $\mathfrak v$ whose opposite endpoint is a vertex that has zero (resp.\ non-zero) coloring.

Furthermore, for vertices $\mathfrak{v}\in \mathrm{V}_{\widetilde{\Gamma}_c}^{0}$ with zero coloring, we define
\begin{equation*}
\begin{aligned}
\mathsf{a}(\mathfrak{v})
&\coloneqq \sum_{\mathfrak{l}\in \mathrm{L}_{\Gamma}(\mathfrak{v})} a_{\ell(\mathfrak{l})} \quad \text{(the sum of all of exponents $a_{\ell{(\mathfrak{l})}}$ of legs $\mathfrak{l}$ attached to the vertex $\mathfrak{v}$)},\\
\mathsf{b}(\mathfrak{v})
&\coloneqq \sum_{\mathfrak{h}\in \widetilde{\mathrm{H}}_{\Gamma}(\mathfrak{v})} b_{\mathfrak{h}} \quad \text{(the sum of all of exponents $b_{\mathfrak{h}}$ of half-edges $\mathfrak{h}$ attached to the vertex $\mathfrak{v}$).}
\end{aligned}
\end{equation*}

Note that we can rewrite $\operatorname{ord}_t(\Product_{\widetilde{\Gamma}_c}^{A,\Ch^{r, \epsilon}}\{0,0\})$ as sum over the vertices $\mathfrak{v}\in \mathrm{V}_{\Gamma}$ with zero coloring\footnote{The factor of $\frac{1}{2}$ appears in the expression of $\operatorname{ord}_t(\Product_{\widetilde{\Gamma}_c}^{A,\Ch^{r, \epsilon}}\{0,0\})$ due to the double counting of edges with both end vertices having zero color.}:
\begin{equation}\label{eqn:prod_00_expansion}
\begin{aligned}
&\operatorname{ord}_t(\Product_{\widetilde{\Gamma}_c}^{A,\Ch^{r, \epsilon}}\{0,0\})\\
&\hspace{4em} =\sum_{\mathfrak{v}\in \mathrm{V}_{\widetilde{\Gamma}_c}^{0}} \left( \operatorname{ord}_t(\mathrm{Cont}^{\mathrm{A},\Ch^{r, \epsilon}}_{\widetilde{\Gamma}_c,\mathfrak{v}})
+ \sum_{\mathfrak{l}\in \mathrm{L}_{\Gamma}(\mathfrak{v})}\operatorname{ord}_t(\mathrm{Cont}^{\mathrm{A},\Ch^{r, \epsilon}}_{\widetilde{\Gamma}_c,\mathfrak{l}})  + \frac{1}{2} \sum_{\mathfrak{e}\in \mathrm{E}_{\widetilde{\Gamma}_c}^{0}(\mathfrak{v})}\operatorname{ord}_t(\mathrm{Cont}^{\mathrm{A},\Ch^{r, \epsilon}}_{\widetilde{\Gamma}_c,\mathfrak{e}})\right)
\end{aligned}
\end{equation}
by equation \eqref{eqn:zero_zero_coloring_product}. Similarly, we can rewrite $\operatorname{ord}_t(\Product_{\widetilde{\Gamma}_c}^{A,\Ch^{r, \epsilon}}\{0,+\})$  as sum over the vertices $\mathfrak{v}\in \mathrm{V}_{\Gamma}$ with zero coloring
\begin{equation}\label{eqn:prod_0plus_expansion}
\operatorname{ord}_t(\Product_{\widetilde{\Gamma}_c}^{A,\Ch^{r, \epsilon}}\{0,+\})
=\sum_{\mathfrak{v}\in \mathrm{V}_{\widetilde{\Gamma}_c}^{0}}\left( \sum_{\mathfrak{e}\in \mathrm{E}_{\widetilde{\Gamma}_c}^{+}(\mathfrak{v})} \operatorname{ord}_t(\mathrm{Cont}^{\mathrm{A},\Ch^{r, \epsilon}}_{\widetilde{\Gamma}_c,\mathfrak{e}}) \right) 
\end{equation}
by equation \eqref{eqn:zero_plus_coloring_product}. Next, by Lemma \ref{lem:One_zero_coloring_half_edge} and Lemma \ref{lem:both_zero_coloring_order_of_t}, we  observe that
\begin{equation*}
\begin{aligned}
&\sum_{\mathfrak{v}\in \mathrm{V}_{\widetilde{\Gamma}_c}^{0}} \left( \frac{1}{2} \sum_{\mathfrak{e}\in \mathrm{E}_{\widetilde{\Gamma}_c}^{0}(\mathfrak{v})}\operatorname{ord}_t(\mathrm{Cont}^{\mathrm{A},\Ch^{r, \epsilon}}_{\widetilde{\Gamma}_c,\mathfrak{e}}) +  \sum_{\mathfrak{e}\in \mathrm{E}_{\widetilde{\Gamma}_c}^{+}(\mathfrak{v})} \operatorname{ord}_t(\mathrm{Cont}^{\mathrm{A},\Ch^{r, \epsilon}}_{\widetilde{\Gamma}_c,\mathfrak{e}}) \right)\\
&\geq
\sum_{\mathfrak{v}\in \mathrm{V}_{\widetilde{\Gamma}_c}^{0}} \left(\frac{1}{2} \sum_{\mathfrak{h}\in  \widetilde{\mathrm{H}}^{0}_{\widetilde{\Gamma}_c}(\mathfrak{v})}\left(-\frac{1}{2}-b_{\mathfrak{h}}-\frac{1}{2}-b_{\iota(\mathfrak{h})}\right) +  \sum_{\mathfrak{h}\in \widetilde{\mathrm{H}}^{+}_{\widetilde{\Gamma}_c}(\mathfrak{v})} \left(-\frac{1}{2}-b_{\mathfrak{h}}+1\right)\right)\\
&=
\sum_{\mathfrak{v}\in \mathrm{V}_{\widetilde{\Gamma}_c}^{0}} \left( \sum_{\mathfrak{h}\in  \widetilde{\mathrm{H}}^{0}_{\widetilde{\Gamma}_c}(\mathfrak{v})}\left(-\frac{1}{2}-b_{\mathfrak{h}}\right) +  \sum_{\mathfrak{h}\in \widetilde{\mathrm{H}}^{+}_{\widetilde{\Gamma}_c}(\mathfrak{v})} \left(-\frac{1}{2}-b_{\mathfrak{h}}+1\right)\right)
\end{aligned}
\end{equation*}
where the last line follows from
\begin{equation*}
\sum_{\mathfrak{v}\in \mathrm{V}_{\widetilde{\Gamma}_c}^{0}} \sum_{\mathfrak{h}\in  \widetilde{\mathrm{H}}^{0}_{\widetilde{\Gamma}_c}(\mathfrak{v})}\left(b_{\mathfrak{h}}+b_{\iota(\mathfrak{h})}\right)=2\sum_{\mathfrak{v}\in \mathrm{V}_{\widetilde{\Gamma}_c}^{0}} \sum_{\mathfrak{h}\in  \widetilde{\mathrm{H}}^{0}_{\widetilde{\Gamma}_c}(\mathfrak{v})} b_{\mathfrak{h}}.
\end{equation*}
Hence, we can re-organize the inequality above as
\begin{equation}\label{eqn:zero_color_vanishing_in_proof_eqn1}
\sum_{\mathfrak{v}\in \mathrm{V}_{\widetilde{\Gamma}_c}^{0}} \left( \frac{1}{2} \sum_{\mathfrak{e}\in \mathrm{E}_{\widetilde{\Gamma}_c}^{0}(\mathfrak{v})}\operatorname{ord}_t(\mathrm{Cont}^{\mathrm{A},\Ch^{r, \epsilon}}_{\widetilde{\Gamma}_c,\mathfrak{e}}) +  \sum_{\mathfrak{e}\in \mathrm{E}_{\widetilde{\Gamma}_c}^{+}(\mathfrak{v})} \operatorname{ord}_t(\mathrm{Cont}^{\mathrm{A},\Ch^{r, \epsilon}}_{\widetilde{\Gamma}_c,\mathfrak{e}}) \right)
\geq \sum_{\mathfrak{v}\in \mathrm{V}_{\widetilde{\Gamma}_c}^{0}} \left( \sum_{\mathfrak{h}\in  \widetilde{\mathrm{H}}_{\Gamma}(\mathfrak{v})}\left(-\frac{1}{2}-b_{\mathfrak{h}}\right) +  \sum_{\mathfrak{h}\in \widetilde{\mathrm{H}}^{+}_{\widetilde{\Gamma}_c}(\mathfrak{v})} 1\right).
\end{equation}
We also have
\begin{equation}\label{eqn:vertex_leg_order_bound_zero_color}
\begin{aligned}
&\sum_{\mathfrak{v}\in \mathrm{V}_{\widetilde{\Gamma}_c}^{0}} \left( \operatorname{ord}_t(\mathrm{Cont}^{\mathrm{A},\Ch^{r, \epsilon}}_{\widetilde{\Gamma}_c,\mathfrak{v}})
+ \sum_{\mathfrak{l}\in \mathrm{L}_{\Gamma}(\mathfrak{v})}\operatorname{ord}_t(\mathrm{Cont}^{\mathrm{A},\Ch^{r, \epsilon}}_{\widetilde{\Gamma}_c,\mathfrak{l}})\right)\\
&\geq
\sum_{\mathfrak{v}\in \mathrm{V}_{\widetilde{\Gamma}_c}^{0}} \left(\frac{3}{2}(2\mathrm{g}(\mathfrak{v})-2+\mathrm{n}(\mathfrak{v})+l_{\mathfrak{v}})+\sum_{j=1}^{l_{\mathfrak{v}}} \eta(T_{k_j},\widetilde{e}_{c(\mathfrak{v})})+ \sum_{\mathfrak{l}\in\mathrm{L}_{\Gamma}(\mathfrak{v})}\left( -\frac{1}{2}-{a_{\ell(\mathfrak{l})}}+{1}\right)\right)
\end{aligned}
\end{equation}
by Proposition \ref{prop:Graph_Sum_Formula_Shifted_Chiodo}, Lemma \ref{lem:xi_i_and_zeta_i_and_inverses_ring_containment} and Lemma \ref{lem:order_t_bound_zero_leg}.

Putting equations \eqref{eqn:t_order_prod_geq_00_0plus}, \eqref{eqn:prod_00_expansion}, \eqref{eqn:prod_0plus_expansion}, \eqref{eqn:vertex_leg_order_bound_zero_color} and Corollary \ref{cor:R_and_T_orders_in_t}, we see that
\begin{equation*}
\begin{aligned}
&\operatorname{ord}_t(\Product_{\widetilde{\Gamma}_c}^{A,\Ch^{r, \epsilon}})\\
&\geq \operatorname{ord}_t(\Product_{\widetilde{\Gamma}_c}^{A,\Ch^{r, \epsilon}}\{0,0\}) + \operatorname{ord}_t(\Product_{\widetilde{\Gamma}_c}^{A,\Ch^{r, \epsilon}}\{0,+\})\\
&\geq 
\sum_{\mathfrak{v}\in \mathrm{V}_{\widetilde{\Gamma}_c}^{0}} \left(\frac{3}{2}(2\mathrm{g}(\mathfrak{v})-2+\mathrm{n}(\mathfrak{v})+l_{\mathfrak{v}})+\sum_{j=1}^{l_{\mathfrak{v}}}\left( -k_j-\frac{1}{2}\right)+ \sum_{\mathfrak{l}\in\mathrm{L}_{\Gamma}(\mathfrak{v})}\left( -\frac{1}{2}-{a_{\ell(\mathfrak{l})}}+{1}\right)+\sum_{\mathfrak{h}\in  \widetilde{\mathrm{H}}_{\Gamma}(\mathfrak{v})}\left(-\frac{1}{2}-b_{\mathfrak{h}}\right) +  \sum_{\mathfrak{h}\in \widetilde{\mathrm{H}}^{+}_{\widetilde{\Gamma}_c}(\mathfrak{v})} 1 \right)\\
&\geq
\sum_{\mathfrak{v}\in \mathrm{V}_{\widetilde{\Gamma}_c}^{0}} \left(3\mathrm{g}(\mathfrak{v})-3+\frac{3}{2}\mathrm{n}(\mathfrak{v})+l_{\mathfrak{v}}-\sum_{j=1}^{l_{\mathfrak{v}}}k_j-\frac{1}{2}\vert \mathrm{L}_{\Gamma}(\mathfrak{v}) \vert-\mathsf{a}(\mathfrak{v}) -\frac{1}{2}\vert \widetilde{\mathrm{H}}_{\Gamma}(\mathfrak{v}) \vert-\mathsf{b}(\mathfrak{v}) +\vert \mathrm{L}_{\Gamma}(\mathfrak{v})\vert
+\vert \widetilde{\mathrm{H}}^{+}_{\widetilde{\Gamma}_c}(\mathfrak{v})\vert \right)
\end{aligned}
\end{equation*}

Using the fact that
\begin{equation*}
\mathrm{n}(\mathfrak{v})=\vert \mathrm{H}_{\Gamma}(\mathfrak{v})\vert = \vert \mathrm{L}_{\Gamma}(\mathfrak{v}) \vert + \vert \widetilde{\mathrm{H}}_{\Gamma}(\mathfrak{v}) \vert
\end{equation*}
we obtain
\begin{equation}\label{eqn:order_t_in_product_proof2}
\operatorname{ord}_t(\Product_{\widetilde{\Gamma}_c}^{A,\Ch^{r, \epsilon}})\geq \sum_{\mathfrak{v}\in \mathrm{V}_{\widetilde{\Gamma}_c}^{0}} \left(3\mathrm{g}(\mathfrak{v})-3+\mathrm{n}(\mathfrak{v})+l_{\mathfrak{v}}-\sum_{j=1}^{l_{\mathfrak{v}}}k_j-\mathsf{a}(\mathfrak{v}) -\mathsf{b}(\mathfrak{v}) +\vert \mathrm{L}_{\Gamma}(\mathfrak{v})\vert
+\vert \widetilde{\mathrm{H}}^{+}_{\widetilde{\Gamma}_c}(\mathfrak{v})\vert \right).
\end{equation}

By Proposition \ref{prop:Graph_Sum_Formula_Shifted_Chiodo}, we see that in order not to have a trivial vanishing,  the total $\psi$-degree at each vertex $\mathfrak v$
must not exceed $\dim \overline{\mathcal{M}}_{\mathrm g(\mathfrak v),\mathrm n(\mathfrak v)+l_{\mathfrak v}}$, i.e. 
\begin{equation*}
\mathsf{a}(\mathfrak{v})+\mathsf{b}(\mathfrak{v})+\sum_{j=1}^{l_{\mathfrak{v}}} k_j \leq 3 \mathrm{g}(\mathfrak{v})-3+\mathrm{n}(\mathfrak{v})+l_{\mathfrak{v}}.
\end{equation*}
Taking this condition into account together with inequality \eqref{eqn:orders_of_t_R_ij_different}, we see that
\begin{equation*}
\operatorname{ord}_t(\Product_{\widetilde{\Gamma}_c}^{A,\Ch^{r, \epsilon}})\geq \sum_{\mathfrak{v}\in \mathrm{V}_{\widetilde{\Gamma}_c}^{0}} \left(\vert \mathrm{L}_{\Gamma}(\mathfrak{v})\vert
+\vert \widetilde{\mathrm{H}}^{+}_{\widetilde{\Gamma}_c}(\mathfrak{v})\vert \right).
\end{equation*}
for a vertex $\mathfrak{v} \in \mathrm{V}_{\widetilde{\Gamma}_c}^0$. So, we need to show there is a vertex $\mathfrak{v}\in \mathrm{V}_{\widetilde{\Gamma}_c}^0$ with $\operatorname{ord}_t(\Product_{\widetilde{\Gamma}_c}^{A,\Ch^{r, \epsilon}})>0$.

Since $\widetilde{\Gamma}_c$ is connected as a graph and by our assumption it has at least one vertex with zero coloring, we have two possibilities:
\begin{itemize}
    \item[(i)] there is an edge $\mathfrak{e}\in \mathrm{E}_{\widetilde{\Gamma}_c}^{\{0,+\}}$ with one half-edge attached to a vertex of zero coloring and the other half-edge  attached to a vertex of non-zero coloring,
    \item[(ii)] all vertices have zero coloring.
\end{itemize}
In the case (i), we have a vertex $\mathfrak{v}\in \mathrm{V}_{\widetilde{\Gamma}_c}^0$ with zero coloring and the edge $\mathfrak{e}$ is attached to $\mathfrak{v}$ and the other end of $\mathfrak{e}$ has non-zero coloring. This means that for this vertex, we have
\begin{equation*}
\vert \widetilde{\mathrm{H}}^{+}_{\widetilde{\Gamma}_c}(\mathfrak{v})\vert >0.
\end{equation*}
In the case (ii), all vertices have zero coloring. By our assumption in the statement we have $n>0$. As a result, there is a leg attached to a vertex $v\in \mathfrak{v}\in \mathrm{V}_{\widetilde{\Gamma}_c}^0=\mathrm{V}_{\Gamma}$ with a leg attached to itself. This means that for this vertex, we have
\begin{equation*}
\vert \mathrm{L}_{\Gamma}(\mathfrak{v})\vert >0.
\end{equation*}
As a result, in each case, we conclude that
\begin{equation*}
\operatorname{ord}_t(\Product_{\widetilde{\Gamma}_c}^{A,\Ch^{r, \epsilon}})>0.
\end{equation*}
Therefore, summing over flag assignments which have non-zero contributions, we conclude that  has strictly positive $t$ order\footnote{The cancellations can not decrease the order of $t$.}. Hence, we see that
\begin{equation*}
\lim_{t\to 0}\mathrm{Cont}^{\Ch^{r, \epsilon}}_{\widetilde{\Gamma}_c}(v_{s_1} \otimes \cdots \otimes v_{s_n})=0.
\end{equation*}
This completes the proof.
\end{proof}

\begin{proposition}\label{prop:limit_of_graphs_as_t_to_zero}
Let $n>0$ and $s_i\in \{1,\ldots,r-1\}$. For a stable graph ${\Gamma}$, we have
\begin{equation*}
\lim_{t\to 0}\mathrm{Cont}^{\Ch^{r, \epsilon}}_{{\Gamma}}(v_{s_1} \otimes \cdots \otimes v_{s_n})=\mathrm{Cont}^{\Th^{r, \epsilon}}_{{\Gamma}}(v_{s_1} \otimes \cdots \otimes v_{s_n}).
\end{equation*}
\end{proposition}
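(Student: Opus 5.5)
The plan is to assemble the statement from the two preceding propositions by splitting the colored graph sum. By the definition preceding Proposition~\ref{prop:Graph_Sum_Formula_Shifted_Chiodo}, for the fixed representative $\Gamma$ we have
\begin{equation*}
\mathrm{Cont}^{\Ch^{r,\epsilon}}_{\Gamma}(v_{s_1}\otimes\cdots\otimes v_{s_n})
=\sum_{c\colon \mathrm V_\Gamma\to\{1,\ldots,r-1\}}\mathrm{Cont}^{\Ch^{r,\epsilon}}_{\widetilde\Gamma_c}(\cdots)
+\sum_{\substack{c\colon \mathrm V_\Gamma\to\{0,\ldots,r-1\}\\ c^{-1}(0)\neq\emptyset}}\mathrm{Cont}^{\Ch^{r,\epsilon}}_{\widetilde\Gamma_c}(\cdots).
\end{equation*}
Both sums are finite, since $\mathrm V_\Gamma$ is finite. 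Limits in $t$ are taken coefficientwise in $\CC[\epsilon^{\pm\frac{1}{2(r-1)}}](\!(t^{\frac12})\!)$ and in the strata algebra, following Remark~\ref{rem:t-limit-convention} and Remark~\ref{rem:colored-contributions-strata-valued}. Because the limit of a finite sum is the sum of the limits, it is enough to identify the limit of each colored summand.

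First, for colorings with no zero color, Proposition~\ref{prop:limit_of_nonzero_graphs} gives that the limit of $\mathrm{Cont}^{\Ch^{r,\epsilon}}_{\widetilde\Gamma_c}$ equals $\mathrm{Cont}^{\Th^{r,\epsilon}}_{\widetilde\Gamma_c}$. These colorings are exactly the maps $c\colon\mathrm V_\Gamma\to\{1,\ldots,r-1\}$, which are precisely the colorings that index $\mathrm{Cont}^{\Th^{r,\epsilon}}_{\Gamma}$ in its definition before Proposition~\ref{prop:Graph_Sum_Formula_Shifted_Theta}. The automorphism factors $1/|\Aut(\Gamma)|$ and the gluing pushforwards agree on both sides, so summing over these colorings gives exactly $\mathrm{Cont}^{\Th^{r,\epsilon}}_\Gamma(v_{s_1}\otimes\cdots\otimes v_{s_n})$.

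Second, for colorings with at least one zero-colored vertex, the immediately preceding proposition shows that each such summand has strictly positive $t$-order and therefore tends to $0$; this uses $n>0$ and $s_i\ge1$. Adding the two pieces proves the claim.

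The only point needing care is that the flag sum over $A\in\mathbb Z_{\geq0}^{|\mathrm H_\Gamma|}$ inside each colored contribution is effectively finite. Only flag assignments whose $\psi$-degree at each vertex is bounded by the dimension contribute, and the vertex translation sums $\sum_{l_{\mathfrak v}}$ are likewise truncated by dimension. Hence each strata-algebra coefficient is a finite sum, and the coefficientwise limit commutes with it. This finiteness was already used in the two cited propositions, so I do not expect it to be a real obstacle. The main risk is bookkeeping: checking that the colored refinements of the Chiodo and theta contributions use the same representative $\Gamma$ and the same normalization, which they do by construction.
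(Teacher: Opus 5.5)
Your proposal is correct and is essentially the paper's own proof. You split the coloring sum for the fixed representative $\Gamma$ into colorings avoiding $0$, whose limits are the $\Th^{r,\epsilon}$ colored contributions by Proposition~\ref{prop:limit_of_nonzero_graphs}, and colorings with a zero-colored vertex, which vanish in the limit by the preceding proposition. Your extra remark that the flag and translation sums are truncated by the vertex dimension bound is the same constraint the paper invokes (cf.\ Remark~\ref{rem:strata_level}); it just makes explicit why the limit commutes with those sums.
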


\begin{proof}
Recall that $\mathrm{Cont}^{\Ch^{r, \epsilon}}_{{\Gamma}}(v_{s_1} \otimes \cdots \otimes v_{s_n})$ is defined by fixing a representative of the isomorphism class $\Gamma\in\mathsf G_{g,n}$ and summing over all its colorings. Hence, we can split $\mathrm{Cont}^{\Ch^{r, \epsilon}}_{{\Gamma}}(v_{s_1} \otimes \cdots \otimes v_{s_n})$  into two parts as follows:
\begin{equation*}
\mathrm{Cont}^{\Ch^{r, \epsilon}}_{{\Gamma}}(v_{s_1} \otimes \cdots \otimes v_{s_n})
=\sum_{c(\mathrm{V}_{\Gamma})\subset \{1,...,r-1\}} \mathrm{Cont}^{\Ch^{r, \epsilon}}_{\widetilde{\Gamma}_c}(v_{s_1} \otimes \cdots \otimes v_{s_n})+\sum_{c(\mathrm{V}_{\Gamma})\cap \{0\}\neq \varnothing} \mathrm{Cont}^{\Ch^{r, \epsilon}}_{\widetilde{\Gamma}_c}(v_{s_1} \otimes \cdots \otimes v_{s_n})
\end{equation*}
where the first sum is over colorings which has no zero colorings and  the second sum is over colorings that has at least coloring one vertex with zero coloring. Taking limit the second sum vanishes and we obtain
\begin{equation*}
\begin{aligned}
\lim_{t\to 0}\mathrm{Cont}^{\Ch^{r, \epsilon}}_{{\Gamma}}(v_{s_1} \otimes \cdots \otimes v_{s_n})
&=\sum_{c(\mathrm{V}_{\Gamma})\subset \{1,...,r-1\}} \lim_{t\to 0} \mathrm{Cont}^{\Ch^{r, \epsilon}}_{\widetilde{\Gamma}_c}(v_{s_1} \otimes \cdots \otimes v_{s_n})\\
&=\sum_{c \colon \mathrm{V}_{\Gamma}\rightarrow\{1,...,r-1\}} \mathrm{Cont}^{\Th^{r, \epsilon}}_{\widetilde{\Gamma}_c}(v_{s_1} \otimes \cdots \otimes v_{s_n})\\
&=\mathrm{Cont}^{\Th^{r, \epsilon}}_{{\Gamma}}(v_{s_1} \otimes \cdots \otimes v_{s_n})
\end{aligned}
\end{equation*}
which completes the proof.
\end{proof}

\begin{remark}\label{rem:strata_level}
  For the estimates above, we did not use any cohomological relations among the pairs $(\Gamma,\gamma)$:
  the graph sums of Section \ref{sec:reconstruction} are formal expressions in
  \begin{equation*}
    \strata_{g,n}\otimes\CC[\epsilon^{\pm\frac{1}{2(r-1)}}](\!(t^{\frac{1}{2}})\!),
  \end{equation*}
  the gluing maps $\left({\mathpzc{gl}_{\Gamma}}\right)_{\star}$ act formally, and the only
  geometric input is the dimension constraint
  $$\mathsf{a}(\mathfrak v)+\mathsf{b}(\mathfrak v)+\sum_j k_j\leq
  3\mathrm{g}(\mathfrak v)-3+\mathrm{n}(\mathfrak v)+l_{\mathfrak v}$$
  which is exactly
  the condition defining a formal basic class in Section \ref{subsec:strata_algebra}.
  Consequently, all the bounds of this section hold coefficientwise with respect to
  the basis of $\strata_{g,n}$, and Theorem~\ref{ThmD} (equivalently Proposition \ref{prop:limit_of_graphs_as_t_to_zero}) holds verbatim for the strata-valued
  reconstructions $\Ch^{r,\epsilon,\strata}$ and $\Th^{r,\epsilon,\strata}$ of
  Section \ref{ss:taut-cohft}.
\end{remark}


\section{Proof of the main theorem}\label{sec:main-proof}

We restate the main Theorem~\ref{ThmA}:
\begin{theorem}
  \label{thm:main}
  For any $r \ge 2$, the negative $r$-spin relations are a consequence of the $3$-spin relations.
  More precisely if $\cI_{g,n}^{\Om^\spin}$ and $\cI_{g,n}^{\Th^{r, \epsilon}}$ be the ideal of $3$-spin and negative $r$-spin relations as in Section \ref{ss:taut-cohft}, then we have
  \begin{equation*}
    \cI_{g,n}^{\Th^{r, \epsilon}} \subseteq \cI_{g,n}^{\Om^\spin}.
  \end{equation*}
\end{theorem}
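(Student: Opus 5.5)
The plan is to prove Theorem~\ref{thm:main} by establishing the three inclusions of the chain \eqref{eqn:inclusion_breakdown_of_the_proof} and composing them.

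\textbf{Step 1: theta relations are Chiodo relations.} The first inclusion is $\cI^{\Th^{r,\epsilon}}\subseteq\cI^{\Ch^{r,\epsilon},t\neq0}$. By Corollary~\ref{cor:theta_discriminant}, the ideal $\cI^{\Th^{r,\epsilon}}$ is generated by the coefficients of $\epsilon^{-m}$, $m>0$, in $\Th^{r,\epsilon,\strata}_{g,n}(v_{s_1}\otimes\cdots\otimes v_{s_n})$ with $1\le s_i\le r-1$ and $n>0$. By Proposition~\ref{prop:limit_of_graphs_as_t_to_zero}, read at the strata level as in Remark~\ref{rem:strata_level} and summed over $\Gamma$, each such coefficient equals the $t^0$-coefficient of the Laurent expansion of $\Ch^{r,\epsilon,\strata}_{g,n}(v_{s_1}\otimes\cdots\otimes v_{s_n})$, further extracted at $\epsilon^{-m}$. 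It remains to see that this composite extraction is a functional $\lambda$ vanishing on $A_{\Ch}=\QQ[\epsilon,t^{\pm1}]$. This holds because elements of $A_{\Ch}$ have no negative powers of $\epsilon$ in their $t$-expansion coefficients.

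One subtlety must be addressed. The strata-valued reconstruction lies in $A_{\Ch}[\Disc^{-1}]$, and the expansion of $\Disc^{-1}$ at $t=0$ (with $\epsilon\neq0$) produces $\epsilon$-Laurent coefficients. I would therefore check that "coefficient of $t^0\epsilon^{-m}$ after expanding around $t=0$" is a well-defined $\QQ$-linear map on $A_{\Ch}[\Disc^{-1}]$ that kills $A_{\Ch}$. The fractional powers of $\epsilon$ cancel since the total is rational, so this extraction is legitimate. I would also note that the limit statement of Proposition~\ref{prop:limit_of_graphs_as_t_to_zero} gives vanishing of negative $t$-orders in the total, which is needed for the $t^0$ coefficient to equal the limit. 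This yields the proof of Theorem~\ref{ThmE}.

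\textbf{Step 2: specialization.} The middle inclusion is $\cI^{\Ch^{r,\epsilon},t\neq0}\subseteq\cI^{\Ch^{r,\vec\epsilon},t}$, so that relations for the one-parameter family come from the multi-parameter family. I would argue via $\epsilon_0=\epsilon$, with the other $\epsilon_i=0$, and $t$ viewed as a variable. The reconstruction data are compatible with base change by the uniqueness in Theorem~\ref{thm:Givental--Teleman_Classification}, so polar parts along the specialized discriminant come from polar parts of the general family. This uses Lemma~\ref{lem:general_shift_polynomiality_lemma} to keep the base ring polynomial. I would also treat $t$ as an extra shift direction by homogeneity, which allows fixing $t\neq0$: a rescaling using the grading $\deg t=1$ identifies different nonzero values.

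\textbf{Step 3: Chiodo relations are $3$-spin relations.} The final inclusion is $\cI^{\Ch^{r,\vec\epsilon},t}\subseteq\cI^{\Om^{\spin}}$, which is Theorem~\ref{ThmF}. I would apply the comparison theorem of \cite{paper1} for generically semisimple CohFTs with vacuum. Its hypotheses are convergence of the general shift, a nonempty discriminant locus, and generic semisimplicity; these are supplied by Corollary~\ref{cor:chiodo-semisimple-locus} and the appendix.

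\textbf{Main obstacle.} The crux is Step 1: making the $t\to0$ extraction an admissible functional. In particular, the polar part in $\epsilon$ must survive specialization to $t^0$, and the algebraic Laurent extraction must match the analytic limit used in Section~\ref{sec:Limit_of_Givental_Teleman_Formula}.
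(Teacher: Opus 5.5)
Your chain of inclusions is the paper's chain, and Steps 1 and 3 are fine. In Step 1 you define the polar functional directly as the coefficient of $t^0\epsilon^{-m}$ in the expansion of $\QQ[\epsilon,t^{\pm1},\Disc^{-1}]\subset\QQ[\epsilon^{\pm1}](\!(t)\!)$. This functional kills $\QQ[\epsilon,t^{\pm1}]$, and by Proposition~\ref{prop:limit_of_graphs_as_t_to_zero} at the strata level it returns $[\epsilon^{-m}]\Th^{r,\epsilon,\strata}_{g,n}$. That is a legitimate, slightly more direct replacement for the paper's map $\rho$ combined with its extension lemma. Step 3 is Proposition~\ref{prop:3spin-to-chiodo-rels}.

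\textbf{The gap is in Step 2:} passing from $t$ as a variable to a fixed value $t_0\neq0$.

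\emph{Specialization only works at fixed $t$.} The map $\epsilon_0\mapsto\epsilon$, $\epsilon_{i>0}\mapsto0$ makes sense at fixed $t$, which is where $\Ch^{r,\vec\epsilon}$ and \cite{paper1} live. It gives $\cI^{\Ch^{r,\epsilon},t_0}\subseteq\cI^{\Ch^{r,\vec\epsilon},t_0}$. It says nothing directly about $\cI^{\Ch^{r,\epsilon},t\neq0}$, whose generators come from arbitrary polar functionals on the two-variable ring $\QQ[\epsilon,t^{\pm1},\Disc^{-1}]$ modulo $\QQ[\epsilon,t^{\pm1}]$.

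\emph{Rescaling does not bridge this.} Homogeneity identifies the theory at $t_0$ with the theory at $\lambda^r t_0$ only up to the grading automorphism $x_d\mapsto c^d x_d$ of $\strata$ (together with rescaling $\epsilon$ and the insertions). So it only shows that the fixed-$t$ ideals are images of one another under that automorphism. Moreover, after setting $t=t_0$, coefficients of different weighted degree (for instance $f$ and $tf$) can coincide modulo $\QQ[\epsilon]$. A single fixed-$t_0$ functional therefore generally yields only mixed-degree combinations $\sum_d x_d$, not the homogeneous relation you need. Separately, $t$ is an external parameter, not a coordinate on the Frobenius manifold, so ``treating $t$ as a shift direction'' to feed into \cite{paper1} is not justified.

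\emph{How the paper fills this.} Proposition~\ref{prop:chiodo-t-nonzero} realizes a polar functional on the two-variable ring through evaluations at finitely many values $t_0,\dots,t_{r-1}$. Hence $\cI^{\Ch^{r,\epsilon},t\neq0}$ is generated by the ideals $\cI^{\Ch^{r,\epsilon},t_0}$.

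\emph{How your homogeneity idea could be completed instead.}
\begin{enumerate}
\item Split the functional $\lambda$ by weighted degree.
\item Check that on a fixed weighted-degree piece, evaluation at $t=t_0$ is injective modulo polynomials. Then each homogeneous $t$-variable relation is the degree-$d$ component of some $t_0$-relation.
\item Separate degree components. Either use several values $t_0$ related by your rescaling together with a Vandermonde argument, or use that $\cI^{\Om^{\spin}}$ (Pixton's ideal) is homogeneous.
\end{enumerate}
Some such argument has to be supplied; as written, Step 2 asserts the inclusion without it.
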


The proof of Theorem~\ref{thm:main} will occupy the rest of this section.

We start by applying the results from \cite{paper1} to connect the $3$-spin relations to a set of relations from the Chiodo class.
Given $\Vec{\epsilon}=\sum_{i=0}^{r-1}\epsilon_i v_i$,  the \emph{$\Vec{\epsilon}$-shifted Chiodo CohFT} $\Ch_{g,n}^{r,\Vec{\epsilon}}$ is defined  by
\begin{equation*}
\Ch_{g,n}^{r,\Vec{\epsilon}}(v_{a_1}\otimes\cdots\otimes v_{a_n}) \coloneqq \sum_{m\geq 0}\frac{1}{m!}{\mathpzc{p}_{m}}_\star\Ch_{g,n+m}^{r}(v_{a_1}\otimes\cdots\otimes v_{a_n}\otimes v_{\Vec{\epsilon}}^{\otimes m}).
\end{equation*}
A priori, this CohFT is defined over the power series ring $\QQ[t^{\pm 1}][\![\epsilon_0, \dotsc, \epsilon_{r-1}]\!]$.
Applying the results from \cite{paper1} requires convergence in the $\epsilon$ parameters.
This is established in the following:
\begin{lemma}\label{lem:general_shift_polynomiality_lemma}
  For any $g, n$ such that $2g - 2 + n > 0$ and
  $v_{s_1}, \dotsc, v_{s_n} \in V$, we have
  \begin{equation*}
    \Ch^{r, \Vec\epsilon}_{g, n}(v_{s_1} \otimes \cdots \otimes v_{s_n} ) \in H^*(\ocM_{g, n})[\epsilon_0, \dotsc, \epsilon_{r - 2}, (1-\epsilon_{r-1})^{-1}, t].
  \end{equation*}
  In particular, if we fix a nonzero value for $t$, then $\Ch^{r}$ is a convergent CohFT in the sense of \cite
{paper1}.
\end{lemma}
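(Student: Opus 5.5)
The plan is to exploit the fact that $\Ch^{r}$ has a flat-type structure in which insertions of $v_{r-1}$ act in a controlled way, while insertions of $v_0,\dotsc,v_{r-2}$ lower the cohomological degree. I start with homogeneity. With $\deg t=1$ and $\deg v_i=\frac{i+1}{r}$, the class $\Ch^r_{g,n+m}(v_{s_1}\otimes\cdots\otimes v_{s_n}\otimes v_{b_1}\otimes\cdots\otimes v_{b_m})$ is homogeneous of degree $\frac{(r+2)(g-1)+n+m+\sum s_j+\sum b_j}{r}$. After pushing forward along $\mathpzc p_m$, its cohomological degree drops by $m$. Writing $d_{\mathrm{coh}}+d_t$ for the total degree, where $d_t\ge 0$ is the power of $t$, each inserted $v_{b}$ contributes $\frac{b+1}{r}-1=\frac{b+1-r}{r}$ to the total. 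This is strictly negative for $b\le r-2$ and zero for $b=r-1$. Since $d_{\mathrm{coh}}$ and $d_t$ are both nonnegative, and $d_{\mathrm{coh}}\le 3g-3+n$, only finitely many insertions with $b\le r-2$ can give a nonzero result. Hence the dependence on $\epsilon_0,\dotsc,\epsilon_{r-2}$ is polynomial, and also polynomial in $t$.

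Next I handle $v_{r-1}$-insertions, which are degree-neutral and so need a separate argument. By Lemma~\ref{lem:quantum_product_of_shifted_chiodo} at $\epsilon=0$, or more directly from the geometry, $v_{r-1}$ equals $-t\mathbf 1_{\mathrm{unshifted}}$ up to the $\epsilon v_0$ term. Indeed, for the unshifted Chiodo theory the unit is $\mathbf 1=-v_{r-1}/t$. I would verify the pullback property of $v_{r-1}$ for the unshifted theory using the standard behaviour of Chiodo classes under forgetting a point with twist $a=r-1$ (equivalently $-1$ mod $r$ with $s=-1$). Concretely, $L$ with $a_{n+1}=r-1$ is a pullback root, the bundle $\mathcal V$ changes in a controlled way, and one gets an identity of the form
\[
\Ch^r_{g,n+1}(\cdots\otimes v_{r-1})=-\mathpzc p^*\Ch^r_{g,n}(\cdots)\cdot(\text{factor in }\psi_{n+1},t),
\]
that is, a vacuum-type relation $\mathpzc p^*\Ch^r_{g,n}=\Ch^r_{g,n+1}(\cdots\otimes \mathbf v^{0}(\psi_{n+1}))$ with $\mathbf v^0(z)=-v_{r-1}/(t-z)$. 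This is the $\epsilon=0$ case of Proposition~\ref{prop:vacuum_vector_of_shifted_chiodo}. Inverting it expresses $v_{r-1}$-insertions as $-(t-\psi)\cdot$ pullbacks.

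Then pushing forward $m$ copies of $v_{r-1}$ reduces to pushforwards of pulled-back classes times polynomials in $\psi$, which produce $\kappa$-type terms times combinatorial factors. Summing over $m$ with weight $\epsilon_{r-1}^m/m!$ is the main obstacle. I expect the structure to be that of a shift along the unit direction, where the dilaton equation ($\mathpzc p_*\psi_{n+1}=2g-2+n$) and string equation produce a geometric series. Rather than compute this explicitly, I would use the known effect of shifting a CohFT with vacuum along its unit: shifting by $\epsilon_{r-1}v_{r-1}=-\epsilon_{r-1}t\mathbf 1$ acts by rescaling via a translation. The degree-neutrality together with string/dilaton should give a factor $(1-\epsilon_{r-1})^{-(2g-2+n)-\text{(deg)}}$ times a finite expression. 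I would try to prove this by checking the fixed-$\epsilon_{r-1}$ derivative: $\partial_{\epsilon_{r-1}}\Ch^{r,\vec\epsilon}=\mathpzc p_*\Ch^{r,\vec\epsilon}(\cdots\otimes v_{r-1})$. Using the vacuum relation, the right side equals $(1-\epsilon_{r-1})^{-1}$ times a linear operator (dilaton-degree counting plus $\epsilon_i\partial_{\epsilon_i}$ terms) applied to $\Ch^{r,\vec\epsilon}$. This ODE in $\epsilon_{r-1}$ then integrates to polynomial dependence on $(1-\epsilon_{r-1})^{-1}$.

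Combining with the polynomiality in the other variables from the first paragraph yields membership in $H^*(\ocM_{g,n})[\epsilon_0,\dotsc,\epsilon_{r-2},(1-\epsilon_{r-1})^{-1},t]$. For fixed nonzero $t$, these are convergent on the polydisc $|\epsilon_{r-1}|<1$, which gives the convergence claim.
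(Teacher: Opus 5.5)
Your proposal is correct and is essentially the paper's argument. Your homogeneity bound is the rank bound \eqref{eqn:vanishing_inequality_for_general_shifts_with_s} for $s=-1$, and your $v_{r-1}$ step rests on the same identity as Lemma~\ref{lem:v_r_minus_1_pullback_propety} together with $\mathpzc p_\star\psi=\kappa_0$. The only difference is that, instead of iterating to the Pochhammer factor $(2g-2+n+\overline m)_{m_{r-1}}$ and summing the binomial series, you integrate the equivalent ODE $(1-\epsilon_{r-1})\partial_{\epsilon_{r-1}}F=\bigl(2g-2+n+\sum_{i\le r-2}\epsilon_i\partial_{\epsilon_i}\bigr)F$ monomial by monomial. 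This is fine provided you derive the ODE by applying the pullback identity to the unshifted $\Ch^r_{g,n+1+m}$ inside the sum over $m$, forgetting the $v_{r-1}$ point first, so that the dilaton factor is $2g-2+n+m$ with $m$ counting every shift insertion including the $v_{r-1}$ ones. You cannot apply it to $\Ch^{r,\vec\epsilon}$ itself, since $v_{r-1}$ no longer satisfies that identity there (correction terms $\sum_i\epsilon_i\Ch^{r,\vec\epsilon}(\cdots\otimes v_i)$ appear).
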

\begin{proof}
A more detailed version of Lemma \ref{lem:general_shift_polynomiality_lemma} is stated as Proposition \ref{prop:general_shift_ring_containment} and proved  in Appendix \ref{appendix:general_shifts_for_r_s_bounds_polynomiality}.
\end{proof}

\begin{lemma}\label{lem:general-shift-chiodo-semisimple}
  Fix an arbitrary nonzero value for $t$ and $r \ge 2$.
  View $\Ch^{r, \Vec\epsilon}$ as a CohFT with parameters in $\QQ[\epsilon_0, \dotsc, \epsilon_{r - 2}, (1-\epsilon_{r-1})^{-1}]$.
  Then, the CohFT $\Ch^{r, \Vec\epsilon}$ is generically semisimple.
\end{lemma}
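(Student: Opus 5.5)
Generic semisimplicity over a reduced base means (by the criterion recalled from \cite{paper1} in Section~\ref{subsec:frobenius-algebras}) that the discriminant $\Disc$ of the Frobenius algebra of $\Ch^{r,\Vec\epsilon}$ is not a zero divisor in $A\coloneqq\QQ[\epsilon_0,\dotsc,\epsilon_{r-2},(1-\epsilon_{r-1})^{-1}]$. Since $A$ is an integral domain (a localization of a polynomial ring), it suffices to show that $\Disc$ is not identically zero as an element of $A$. The plan is therefore to exhibit a single specialization of the parameters $\Vec\epsilon$ at which the Frobenius algebra is semisimple.

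The natural specialization is $\epsilon_1=\cdots=\epsilon_{r-1}=0$ and $\epsilon_0=\epsilon$, which recovers exactly the one-parameter family $\Ch^{r,\epsilon}$ with $t$ fixed to our chosen nonzero value. First I check that this specialization is compatible with the base ring: setting $\epsilon_{r-1}=0$ makes $(1-\epsilon_{r-1})^{-1}=1$, so we get a ring homomorphism $A\to\QQ[\epsilon]$. Next, the quantum product (and pairing) of $\Ch^{r,\Vec\epsilon}$ is given by genus-zero three-point invariants, which are elements of $A$ by Lemma~\ref{lem:general_shift_polynomiality_lemma}. Their specializations are the corresponding genus-zero invariants of $\Ch^{r,\epsilon}$, because shifting commutes with specializing the shift vector. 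The same then holds for $\Disc$, which is a polynomial expression in the structure constants of the product and the pairing, with the pairing determinant a unit (a power of $t$). Hence $\Disc$ specializes to $\Disc_{\Ch^{r,\epsilon}}$ at our fixed $t$.

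By Lemma~\ref{lem:chiodo-frobenius-discriminant}, $\Disc_{\Ch^{r,\epsilon}}=\pm t^3\Delta(p_t(x))$. By \eqref{eqn:disc_of_p_t_of_x}, this equals, up to sign, $t^3\bigl((-1)^{r-1}r^rt^{r-1}-(r-1)^{r-1}\epsilon^r\bigr)$. For fixed $t\neq0$ this is a nonzero polynomial in $\epsilon$, since its $\epsilon^r$ coefficient is $\mp t^3(r-1)^{r-1}\neq0$. Hence $\Disc$ has a nonzero image, so $\Disc\neq0$ in the domain $A$, and the theory is generically semisimple.

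The main obstacle is justifying rigorously that specialization of $\Vec\epsilon$ commutes with forming the Frobenius algebra and its discriminant. This requires two things: the convergence and polynomiality statement of Lemma~\ref{lem:general_shift_polynomiality_lemma} (Proposition~\ref{prop:general_shift_ring_containment}), which ensures the structure constants are honest elements of $A$ rather than formal power series; and the observation that the discriminant formula $\det(\Tr(w_iw_j))/\det(\eta(w_i,w_j))$ is functorial under base change, with the flat basis $v_i$ and the $\Vec\epsilon$-independent pairing. Once these are in place, the remaining steps are formal.
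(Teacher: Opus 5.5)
Your proposal is correct and follows the same route as the paper. The paper likewise specializes along $\varphi\colon \epsilon_0\mapsto\epsilon$, $\epsilon_1,\dotsc,\epsilon_{r-1}\mapsto 0$ to $\Ch^{r,\epsilon}$, which is generically semisimple for fixed $t\neq0$. The only difference is that the paper cites a base-change statement from \cite{paper1} (noting that $v_0$ generates the Frobenius algebra), whereas you prove that step directly: you use the discriminant criterion, the functoriality of $\Disc$ under base change, and the fact that the base ring is an integral domain.
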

\begin{proof}
  We consider the homomorphism
  \begin{equation*}
    \varphi\colon \QQ[\epsilon_0, \dotsc, \epsilon_{r - 2}, (1-\epsilon_{r-1})^{-1}] \to \QQ[\epsilon],  \quad \epsilon_0 \mapsto \epsilon, \ \epsilon_1, \dotsc, \epsilon_{r-1} \mapsto 0.
  \end{equation*}
  Note that $v_0$ generates the Frobenius algebra associated to $\Ch^{r, \epsilon}$ in the sense of \cite
{paper1}.
  Furthermore, we may check that $\Ch^{r, \epsilon}$ as a CohFT over $\QQ[\epsilon]$ is generically semisimple as long as $t \neq 0$.
  Therefore, by \cite
{paper1} the CohFT $\Ch^{r, \Vec\epsilon}$ is also generically semisimple.
\end{proof}

\begin{proposition}
  \label{prop:3spin-to-chiodo-rels}
  Fix an arbitrary nonzero value for $t$ and $r \ge 2$.
  View $\Ch^{r, \Vec\epsilon}$ as a CohFT with parameters in $\QQ[\epsilon_0, \dotsc, \epsilon_{r - 2}, (1-\epsilon_{r-1})^{-1}]$, and let $\cI_{g,n}^{\Ch^{r, \Vec\epsilon}, t}$ be the corresponding ideal of tautological relations as in Section \ref{ss:taut-cohft}.
  We then have
  \begin{equation*}
    \cI^{\Ch^{r, \Vec\epsilon}, t} = \cI^{\Om^\spin}.
  \end{equation*}
\end{proposition}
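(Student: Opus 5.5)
The plan is to deduce Proposition~\ref{prop:3spin-to-chiodo-rels} directly from the comparison theorem of \cite{paper1}. That theorem extends \cite{Ja18} to generically semisimple, convergent CohFTs with vacuum and non-flat unit whose base is a full shift family. Its conclusion is that the ideal of tautological relations attached to such a family equals $\cI^{\Om^\spin}$, provided the discriminant locus is nonempty.

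So the work is to verify the hypotheses of that theorem for $\Ch^{r,\Vec\epsilon}$ over $B\coloneqq\QQ[\epsilon_0,\dotsc,\epsilon_{r-2},(1-\epsilon_{r-1})^{-1}]$ with $t\neq0$ fixed. I would check them in the following order.

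\begin{enumerate}
\item \emph{Convergence.} Lemma~\ref{lem:general_shift_polynomiality_lemma} shows that every class $\Ch^{r,\Vec\epsilon}_{g,n}(v_{s_1}\otimes\cdots\otimes v_{s_n})$ lies in $H^*(\ocM_{g,n})\otimes B$. Hence the shifted family is convergent in the sense of \cite{paper1}. It is parametrized by the full state space $V$, since all coordinates $\epsilon_0,\dotsc,\epsilon_{r-1}$ are free.
\item \emph{Generic semisimplicity.} This is Lemma~\ref{lem:general-shift-chiodo-semisimple}.
\item \emph{Vacuum.} A vacuum vector exists by Theorem~\ref{thm:Givental--Teleman_Classification}, and the Frobenius algebra has a unit because it is generically semisimple.
\item \emph{Nonempty discriminant locus.} Restrict along the specialization $\varphi$ to the line $\epsilon_1=\cdots=\epsilon_{r-1}=0$. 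By Corollary~\ref{cor:chiodo-semisimple-locus}, the discriminant there is a unit times $(-1)^{r-1}r^rt^{r-1}-(r-1)^{r-1}\epsilon^r$. For fixed $t\neq0$ this is a nonconstant polynomial in $\epsilon$, so it has a zero over $\overline{\QQ}$. Consequently the discriminant of $\Ch^{r,\Vec\epsilon}$ is not a unit of $B$.
\end{enumerate}

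One point needs care. The theorem in \cite{paper1} is stated relative to the ring over which the family is convergent. The localization at $(1-\epsilon_{r-1})$ in $B$ is therefore harmless: it only discards the hyperplane $\epsilon_{r-1}=1$, where the shift diverges, and the discriminant locus still meets the remaining domain, as shown by the point with $\epsilon_{r-1}=0$ found in the last step above.

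With the hypotheses in place, the comparison theorem gives $\cI^{\Ch^{r,\Vec\epsilon},t}=\cI^{(\Om^\spin)^{\tau}}$. Section~\ref{sss:3-spin-ideal} defines $\cI^{\Om^\spin}$ as exactly this ideal, which proves the proposition.

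The main obstacle I expect is matching definitions. The ideal in Section~\ref{ss:taut-cohft} is built from polar parts with respect to $\Disc^{-1}$ inside $B[\Disc^{-1}]$, using only $n>0$. I need this to coincide with the ideal to which \cite{paper1} applies, for a CohFT with non-flat unit, again restricted to $n>0$. If \cite{paper1} phrases its conclusion over the local ring at a point of the discriminant, I would first pick such a point $p$ with $\epsilon_{r-1}=0$. I would then argue that polar parts along all components of $\{\Disc=0\}$ generate the same ideal. For this I would use the main result of \cite{paper1}, which says that each component already yields all of $\cI^{\Om^\spin}$, and that no additional relations arise.
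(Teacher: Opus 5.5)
Your proposal is correct and takes the same route as the paper, whose proof is a single line: it cites the comparison theorem of \cite{paper1}, with convergence coming from Lemma~\ref{lem:general_shift_polynomiality_lemma} and generic semisimplicity from Lemma~\ref{lem:general-shift-chiodo-semisimple}. Your extra check that the discriminant locus is nonempty (specializing along $\varphi$ and using Corollary~\ref{cor:chiodo-semisimple-locus}) correctly makes explicit a hypothesis the paper leaves implicit. One small correction: the paper's $B$ is the polynomial ring in $\epsilon_0,\dots,\epsilon_{r-2}$ and $(1-\epsilon_{r-1})^{-1}$, not a localization containing $\epsilon_{r-1}$, but this does not affect your argument, since your test point $\epsilon_{r-1}=0$ is just $(1-\epsilon_{r-1})^{-1}=1$.
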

\begin{proof}
  This follows from \cite
{paper1} whose hypothesis were established in Lemma \ref{lem:general_shift_polynomiality_lemma}.
\end{proof}

To make the connection to the negative $r$-spin relations, we consider intermediate ideals of tautological relations.
First, note that the CohFT $\Ch^{r, \Vec\epsilon}$ depends on many parameters.
Taking the limit $\epsilon_1, \dotsc, \epsilon_{r-1} \to 0$ recovers the shifted Chiodo CohFT $\Ch^{r, \epsilon_0}$ as studied in the previous sections.
For fixed nonzero $t$, we may view $\Ch^{r, \epsilon}$ as a CohFT with parameters in $\QQ[\epsilon]$, and we let $\cI_{g,n}^{\Ch^{r, \epsilon}, t}$ be the corresponding ideal of tautological relations as in Section \ref{ss:taut-cohft}.
Alternatively, if we do not specialize $t$, we may view $\Ch^{r, \epsilon}$ as a CohFT with parameters in $\QQ[\epsilon, t^{\pm 1}]$.
We denote the corresponding ideal of tautological relations by $\cI_{g,n}^{\Ch^{r, \epsilon}, t \neq 0}$.

\begin{proposition}
  \label{prop:chiodo-vec-to-chiodo-rels}
  Fix an arbitrary nonzero value for $t$ and $r \ge 2$.
  Then, we have
  \begin{equation*}
    \cI^{\Ch^{r, \epsilon}, t} \subseteq \cI^{\Ch^{r, \Vec\epsilon}, t}.
  \end{equation*}
\end{proposition}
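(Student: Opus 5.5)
The idea is to realize $\Ch^{r,\epsilon}$ as a base change of $\Ch^{r,\Vec\epsilon}$ along the specialization homomorphism
\begin{equation*}
\varphi\colon \QQ[\epsilon_0,\dotsc,\epsilon_{r-2},(1-\epsilon_{r-1})^{-1}]\to\QQ[\epsilon],\quad \epsilon_0\mapsto\epsilon,\ \epsilon_1,\dotsc,\epsilon_{r-1}\mapsto 0,
\end{equation*}
introduced in the proof of Lemma~\ref{lem:general-shift-chiodo-semisimple}. The goal is then to show that every generator of $\cI^{\Ch^{r,\epsilon},t}$ already lies in $\cI^{\Ch^{r,\Vec\epsilon},t}$. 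The plan has three steps.

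\textbf{Step 1: the specialization is a base change.} By Lemma~\ref{lem:general_shift_polynomiality_lemma} the classes of $\Ch^{r,\Vec\epsilon}$ are polynomial in the stated ring, and applying $\varphi$ coefficientwise recovers $\Ch^{r,\epsilon}$. The pairing is independent of the shift, so the TFT and the Frobenius algebra also specialize along $\varphi$. The discriminant $\Disc_{\Vec\epsilon}$ maps to $\Disc_{\epsilon}$, which is nonzero for fixed $t\neq0$ by Corollary~\ref{cor:chiodo-semisimple-locus}. Hence $\varphi$ extends to
\begin{equation*}
\widetilde\varphi\colon A[\Disc_{\Vec\epsilon}^{-1}]\to\QQ[\epsilon][\Disc_\epsilon^{-1}].
\end{equation*}
By the uniqueness part of Theorem~\ref{thm:Givental--Teleman_Classification}, the data $(R,\mathbf v,T)$ of $\Ch^{r,\Vec\epsilon}$ map under $\widetilde\varphi$ to those of $\Ch^{r,\epsilon}$: the images still reconstruct $\Ch^{r,\epsilon}$ for $n>0$ and still satisfy the symplectic and vacuum conditions. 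Therefore
\begin{equation*}
\Ch^{r,\epsilon,\strata}=(\mathrm{id}\otimes\widetilde\varphi)\,\Ch^{r,\Vec\epsilon,\strata}.
\end{equation*}

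\textbf{Step 2: polar parts pull back.} Take a generator $(\mathrm{id}\otimes\lambda)\Ch^{r,\epsilon,\strata}_{g,n}(\phi_1\otimes\cdots\otimes\phi_n)$ with $\lambda\colon\QQ[\epsilon][\Disc_\epsilon^{-1}]\to\QQ$ vanishing on $\QQ[\epsilon]$. Then $\lambda\circ\widetilde\varphi$ is a $\QQ$-linear map on $A[\Disc_{\Vec\epsilon}^{-1}]$ that vanishes on $A$, since $\widetilde\varphi(A)\subseteq\QQ[\epsilon]$. Consequently
\begin{equation*}
(\mathrm{id}\otimes\lambda)\Ch^{r,\epsilon,\strata}_{g,n}=(\mathrm{id}\otimes\lambda\circ\widetilde\varphi)\Ch^{r,\Vec\epsilon,\strata}_{g,n},
\end{equation*}
which is a defining generator of $\cI^{\Ch^{r,\Vec\epsilon},t}$. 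Since both ideals are generated by such elements under the same closure operations, the inclusion $\cI^{\Ch^{r,\epsilon},t}\subseteq\cI^{\Ch^{r,\Vec\epsilon},t}$ follows.

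\textbf{Main obstacle.} The delicate point is Step 1: compatibility of the reconstruction data with base change when the unit is non-flat. Unlike the flat-unit case, this data is characterized through the vacuum vector. I would first try to argue directly via uniqueness over the localized ring $\QQ[\epsilon][\Disc_\epsilon^{-1}]$. If that is too coarse, because uniqueness is only stated over the discriminant-localized base of the CohFT itself, I would instead quote the functoriality of the construction under base change from \cite{paper1}, which is already implicitly used in Lemma~\ref{lem:general-shift-chiodo-semisimple}.
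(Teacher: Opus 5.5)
Your proposal is correct and follows the paper's route. The paper proves this in one line, by invoking the base-change functoriality of \cite{paper1} for the specialization homomorphism $\varphi$ from the proof of Lemma~\ref{lem:general-shift-chiodo-semisimple}. Your Step~1 (specializing $R$, $\mathbf v$ and $T$ through the uniqueness clause of Theorem~\ref{thm:Givental--Teleman_Classification}, using $\varphi(\Disc_{\Vec\epsilon})=\Disc_\epsilon\neq 0$ for $t\neq 0$) and your Step~2 (pulling back the polar functional as $\lambda\circ\widetilde\varphi$, which vanishes on the unlocalized base) simply spell out that citation.
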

\begin{proof}
  This follows from \cite
{paper1} applied to the homomorphism $\varphi$ from the proof of Lemma \ref{lem:general-shift-chiodo-semisimple}.
\end{proof}

\begin{proposition}
  \label{prop:chiodo-t-nonzero}
  Let $r \ge 2$.
  Then $\cI^{\Ch^{r, \epsilon}, t \neq 0}$ equals the ideal generated by the $\cI^{\Ch^{r, \epsilon}, t}$ for all choices of $t \neq 0$.
\end{proposition}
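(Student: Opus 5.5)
We prove the two inclusions separately. The generators of $\cI^{\Ch^{r,\epsilon},t\neq0}$ are the classes $(\operatorname{id}\otimes\lambda)\Ch^{r,\epsilon,\strata}_{g,n}(\phi_1\otimes\cdots\otimes\phi_n)$, where $\lambda$ is a $\QQ$-linear functional on $\QQ[\epsilon,t^{\pm1}][\Disc_{\Ch^{r,\epsilon}}^{-1}]$ vanishing on $\QQ[\epsilon,t^{\pm1}]$. The generators of $\cI^{\Ch^{r,\epsilon},t}$ for a fixed $t=t_0\neq0$ are defined the same way over $\QQ[\epsilon][\Disc|_{t=t_0}^{-1}]$. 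The key tool is homogeneity: with $\deg t=1$ and $\deg\epsilon=\frac{r-1}{r}$, each strata coefficient of the reconstruction in a fixed degree $d$ is a homogeneous element of $\QQ[\epsilon,t^{\pm1}][\Disc^{-1}]$. By Corollary~\ref{cor:chiodo-semisimple-locus}, $\Disc$ is, up to a power of $t$ and a unit, the quasi-homogeneous binary form $D=(-1)^{r-1}r^rt^{r-1}-(r-1)^{r-1}\epsilon^r$. So every coefficient has the form $P(\epsilon,t)/(t^aD^b)$ with $P$ quasi-homogeneous.

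\textbf{Inclusion $\supseteq$.} Fix $t_0\neq0$. Specializing at $t=t_0$ gives a ring map $\QQ[\epsilon,t^{\pm1}][\Disc^{-1}]\to\QQ[\epsilon][\Disc|_{t_0}^{-1}]$. It sends $\QQ[\epsilon,t^{\pm1}]$ into $\QQ[\epsilon]$, and it commutes with forming the reconstruction, since $R$, $T$ and $\omega$ are determined uniquely by Theorem~\ref{thm:Givental--Teleman_Classification} and their formation is compatible with base change. Hence every polar functional for $t_0$, precomposed with this specialization, is a functional vanishing on $\QQ[\epsilon,t^{\pm1}]$. This shows $\cI^{\Ch^{r,\epsilon},t_0}\subseteq\cI^{\Ch^{r,\epsilon},t\neq0}$. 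The same reasoning is what \cite{paper1} packages as functoriality under base change of ideals of relations.

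\textbf{Inclusion $\subseteq$.} Take a generator $\lambda(c)$, where $c=P/(t^aD^b)$. Write $c=p+q$ with $p\in\QQ[\epsilon,t^{\pm1}]$, and reduce $q$ to partial fractions in $\epsilon$ over $\QQ(t)$. Because $D$ is irreducible up to scaling in $\epsilon^r$ and homogeneous, $q$ can be written as $\sum_{j\ge1}Q_j(\epsilon,t)/D^j$ with $\deg_\epsilon Q_j<r$, and each $Q_j$ is homogeneous with coefficients in $\QQ[t^{\pm1}]$. Homogeneity forces each coefficient of $\epsilon^m$ in $Q_j$ to be a single monomial $\alpha_{jm}t^{e_{jm}}$. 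Any functional vanishing on polynomials then factors through the finitely many numbers $\alpha_{jm}$. So the ideal $\cI^{\Ch^{r,\epsilon},t\neq0}$ is generated by the strata elements $S_{jm}$ that multiply $\alpha_{jm}$ in the class.

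Now specialize at $t=t_0$. The polar part of the class in $\QQ[\epsilon][D_{t_0}^{-1}]/\QQ[\epsilon]$ is $\sum_{j,m}S_{jm}\,t_0^{e_{jm}}\epsilon^m/D_{t_0}^j$. The elements $\epsilon^m/D_{t_0}^j$ with $m<r$ are linearly independent modulo $\QQ[\epsilon]$, because $D_{t_0}$ is squarefree in $\epsilon$ when $t_0\neq0$. Choosing dual functionals therefore extracts each $S_{jm}$, up to the nonzero scalar $t_0^{e_{jm}}$, as an element of $\cI^{\Ch^{r,\epsilon},t_0}$.

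\textbf{Main obstacle.} The hard step is making the homogeneity bookkeeping rigorous, because the degrees of the strata coefficients are fractional. The worry is that the monomial forced by homogeneity could fail to be a legitimate element of $\QQ[t^{\pm1}]$. I would first verify that $R$ and $T$ are genuinely homogeneous over the base ring; this is Lemma~\ref{lem:homogeneity_of_R_matrix_entries}, together with \cite{paper1}. I would then check that degree $d$ together with the $\epsilon$-exponent pins down an integer power of $t$, and handle the $r=2$ case separately if $D$ degenerates. If this partial-fraction argument turns out to be messy, the fallback is a scaling argument. The $\QQ^\times$-action $t\mapsto\lambda t$, $\epsilon\mapsto\lambda^{(r-1)/r}\epsilon$ (after adjoining roots) relates the specializations at different $t_0$. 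Since $\cI^{\Ch^{r,\epsilon},t_0}$ does not depend on $t_0$ by this action, any single specialization already detects every polar coefficient.
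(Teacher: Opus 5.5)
Your $\supseteq$ direction is fine and matches the paper's: specialization at $t_0$ commutes with the reconstruction and sends $\QQ[\epsilon,t^{\pm1}]$ into $\QQ[\epsilon]$.

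\textbf{The gap is in the $\subseteq$ direction, at the single-$t_0$ extraction.} The generators of $\cI^{\Ch^{r,\epsilon},t_0}$ are polar parts of the \emph{whole} class $\Ch^{r,\epsilon,\strata}_{g,n}(v_{a_1}\otimes\cdots\otimes v_{a_n})$, not of one degree piece. That class has components in every cohomological degree $d$. By your own homogeneity bookkeeping, the degree-$d$ component has weight $\delta-d$ for a constant $\delta$ depending on $(g,n,a)$. Its exponents are therefore $e^{(d)}_{jm}=\delta-d+j(r-1)-m\frac{r-1}{r}$, which change with $d$. Integrality forces $m$ to be the same for every $d$, since $m$ is fixed mod $r$ by $(g,n,a)$, so the $\epsilon$-power cannot separate the degrees.

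After setting $t=t_0$, the coefficient of $\epsilon^m/D_{t_0}^j$ is therefore the mixed-degree element $\sum_d t_0^{e^{(d)}_{jm}}S^{(d)}_{jm}$, not a scalar multiple of one $S^{(d)}_{jm}$. Dual functionals at a single $t_0$ cannot separate these summands, and nothing shows $\cI^{\Ch^{r,\epsilon},t_0}$ is a graded ideal.

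Your fallback does not repair this. The rescaling $t\mapsto\mu^r t$, $\epsilon\mapsto\mu^{r-1}\epsilon$ multiplies the degree-$d$ part by $\mu^{r(\delta-d)}$. It therefore only identifies $\cI^{\Ch^{r,\epsilon},\mu^r t_0}$ with the image of $\cI^{\Ch^{r,\epsilon},t_0}$ under the automorphism $\sigma\mapsto\mu^{-r\deg\sigma}\sigma$ of $\strata$. That image is not $\cI^{\Ch^{r,\epsilon},t_0}$ itself unless the ideal is graded. So independence of $t_0$ is not established.

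\textbf{The missing idea is to use several evaluation points.} The statement allows this, since it only asks for the ideal generated by all the $\cI^{\Ch^{r,\epsilon},t}$. For a fixed class, the polar parts of the coefficients lie in a finite-dimensional subspace of $\QQ[\epsilon,t^{\pm1},D^{-1}]/\QQ[\epsilon,t^{\pm1}]$, spanned by finitely many $t^{e}\epsilon^m/D^j$ with $m<r$. Take enough distinct nonzero values $t_0,\dots,t_N$ (e.g.\ positive ones). Then the product of the maps $\ev_{t_i}$ into $\prod_i\QQ[\epsilon,D_{t_i}^{-1}]/\QQ[\epsilon]$ is injective on this subspace, by a generalized Vandermonde argument in the distinct exponents $e$. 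Any polar functional then factors through this product. This is exactly the paper's argument, and it needs no homogeneity at all.

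Two smaller points. The obstacle you flagged is not one: $D$ has constant leading coefficient in $\epsilon$, so your $Q_j$ lie in $\QQ[t^{\pm1}][\epsilon]$ by construction, and monomials with non-integral exponent simply have zero coefficient. Likewise, linear independence of the $\epsilon^m/D_{t_0}^j$ modulo $\QQ[\epsilon]$ needs only that leading coefficient, not irreducibility or squarefreeness.
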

\begin{proof}
  For each choice of $t_0 \neq 0$, consider the evaluation homomorphism $\ev_{t_0}\colon \QQ[t^{\pm 1}, \epsilon] \to \QQ[\epsilon]$.
  Since $\Ch^{r, \epsilon}$ is semisimple for any choice of $t = t_0 \neq 0$, we may apply \cite
{paper1}  to this homomorphism, and find that $\cI_{g,n}^{\Ch^{r, \epsilon}, t \neq 0}$ contains the ideal generated by the $\cI_{g,n}^{\Ch^{r, \epsilon}, t_0}$ for all choices of $t_0 \neq 0$.

  In the other direction, consider any nonzero element in
  \begin{equation*}
    f \in \strata_{g,n} \otimes (\QQ[t^{\pm 1}, \epsilon, \Disc^{-1}]/ \QQ[t^{\pm 1}, \epsilon]),
  \end{equation*}
  and a linear form $\pi\colon \QQ[t^{\pm 1}, \epsilon, \Disc^{-1}]/ \QQ[t^{\pm 1}, \epsilon] \to \QQ$.
  We may choose some $k \ge 0$ such that
  \begin{equation*}
    f = \Disc^{-k} g
  \end{equation*}
  for a unique
  \begin{equation*}
    g \in (\strata_{g,n} \otimes \QQ[t^{\pm 1}])[\epsilon],
  \end{equation*}
  where the polynomial $g$ has degree less than $r$ in $\epsilon$.
  Choose $t_0, \dotsc, t_{r-1} \in \QQ$ such that
  \begin{equation*}
    \prod_{i = 0}^{r-1} \ev_{t_i}\colon \QQ[t^{\pm 1}, \epsilon] \to \prod_{i = 0}^{r-1} \QQ[\epsilon]
  \end{equation*}
  is injective on polynomials of degree less than $r$.
  We may thus choose a linear form
  \begin{equation*}
    \psi\colon \prod_{i = 0}^{r-1} \QQ[\epsilon]/(\Disc_t) \to \QQ
  \end{equation*}
  such that
  \begin{equation*}
    (\id_{\strata_{g,n}} \otimes \pi)(f) = \left(\id_{\strata_{g,n}} \otimes \left(\psi \circ \left(\prod_{i = 0}^{r-1} \ev_{t_i}\right)\right)\right)(f).
  \end{equation*}
  Therefore, the relation $(\id_{\strata_{g,n}} \otimes \pi)(f)$ lies in the ideal generated by the $\cI_{g,n}^{\Ch^{r, \epsilon}, t}$ for $t = t_0, \dotsc, t_{r-1}$.
  We conclude that $\cI_{g,n}^{\Ch^{r, \epsilon}, t \neq 0}$ is contained in the ideal generated by the $\cI_{g,n}^{\Ch^{r, \epsilon}, t}$ for all choices of $t \neq 0$.
\end{proof}

Combining propositions \ref{prop:3spin-to-chiodo-rels}, \ref{prop:chiodo-vec-to-chiodo-rels} and \ref{prop:chiodo-t-nonzero}, we obtain:
\begin{corollary}
  \label{cor:3spin-to-chiodo-rels}
  For any $r \ge 2$, we have
  \begin{equation*}
    \cI^{\Ch^{r, \epsilon}, t \neq 0} \subseteq \cI^{\Om^\spin}.
  \end{equation*}
\end{corollary}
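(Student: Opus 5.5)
This is a formal consequence of the three propositions just established, so the plan is to chain them and check that the ideal-theoretic statements compose. First, by Proposition~\ref{prop:chiodo-t-nonzero}, the ideal $\cI^{\Ch^{r,\epsilon},t\neq0}$ is the ideal generated by the family $\{\cI^{\Ch^{r,\epsilon},t_0}\}_{t_0\neq0}$. Since $\cI^{\Om^\spin}$ is itself an ideal of $\strata$, closed under multiplication and under push-forward along gluing and forgetful maps, it suffices to show that $\cI^{\Ch^{r,\epsilon},t_0}\subseteq\cI^{\Om^\spin}$ for each fixed nonzero $t_0\in\QQ$.

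For a fixed $t_0\neq0$, I would apply Proposition~\ref{prop:chiodo-vec-to-chiodo-rels}. This gives $\cI^{\Ch^{r,\epsilon},t_0}\subseteq\cI^{\Ch^{r,\Vec\epsilon},t_0}$, coming from the specialization $\varphi$ with $\epsilon_0\mapsto\epsilon$ and $\epsilon_i\mapsto0$ for $i\ge1$. Next, Proposition~\ref{prop:3spin-to-chiodo-rels} identifies $\cI^{\Ch^{r,\Vec\epsilon},t_0}$ with $\cI^{\Om^\spin}$; its hypotheses are convergence from Lemma~\ref{lem:general_shift_polynomiality_lemma} and generic semisimplicity from Lemma~\ref{lem:general-shift-chiodo-semisimple}, both valid for every nonzero $t_0$. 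Chaining these gives $\cI^{\Ch^{r,\epsilon},t_0}\subseteq\cI^{\Om^\spin}$. Taking the ideal generated over all $t_0$ then yields the corollary.

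The only point needing care is bookkeeping. The ideal $\cI^{\Om^\spin}$ does not depend on $t_0$, so the ideal generated by the union is still contained in it. Also, ``the ideal generated'' in Proposition~\ref{prop:chiodo-t-nonzero} uses the same closure operations as in Section~\ref{subsec:strata_algebra}. I do not expect a genuine obstacle here: all the substantive input is already contained in the cited propositions, in particular the comparison results of \cite{paper1} behind them.
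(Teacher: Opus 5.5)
Your proposal is correct and is essentially the paper's own argument, which simply combines Proposition~\ref{prop:chiodo-t-nonzero}, Proposition~\ref{prop:chiodo-vec-to-chiodo-rels}, and Proposition~\ref{prop:3spin-to-chiodo-rels}. Your bookkeeping remark just makes the implicit step explicit: $\cI^{\Om^\spin}$ is an ideal independent of $t_0$, so it contains the ideal generated by all the $\cI^{\Ch^{r,\epsilon},t_0}$ for $t_0\neq 0$.
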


First, observe that we have
\begin{equation*}
\Disc(p_t(x))^{-1}\in \QQ[\epsilon^{\pm 1}][\![t]\!]
\end{equation*}
from the expression \eqref{eqn:disc_of_p_t_of_x} of $\Disc(p_t(x))$.

We next consider the subring
\begin{equation*}
  \QQ[\epsilon, t, \Disc(p_t(x))^{-1}] \subset \QQ[\epsilon, t^{\pm 1}, \Disc(p_t(x))^{-1}],
\end{equation*}
which admits a homomorphism
\begin{equation*}
  \rho\colon \QQ[\epsilon, t, \Disc(p_t(x))^{-1}] \to \QQ[\epsilon^{\pm 1}]
\end{equation*}
defined by setting $t = 0$.

The following is the key technical result:
\begin{proposition}
  \label{lem:Chiodo-reconstruction-non-equivariant}
  For any $g$ and $n>0$ such that $2g - 2 + n > 0$, and for any
  $v_{s_1}, \dotsc, v_{s_n} \in \underline{V}$, the class
  \begin{equation*}
    \Ch^{r, \epsilon, \strata}_{g, n}(v_{s_1} \otimes \cdots \otimes v_{s_n}) \in \strata_{g, n}[\epsilon, t^{\pm 1}, \Disc(p_t(x))^{-1}]
  \end{equation*}
  lies in $\strata_{g, n}[\epsilon, t, \Disc(p_t(x))^{-1}]$, and we have
  \begin{equation*}
    \rho(\Ch^{r, \epsilon, \strata}_{g, n}(v_{s_1} \otimes \cdots \otimes v_{s_n}))
    = \Th^{r, \epsilon, \strata}_{g, n}(v_{s_1} \otimes \cdots \otimes v_{s_n}).
  \end{equation*}
\end{proposition}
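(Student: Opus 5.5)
The plan is to deduce both assertions from the graphwise limit, Proposition~\ref{prop:limit_of_graphs_as_t_to_zero}, together with its strata-level version in Remark~\ref{rem:strata_level}. There are two gaps between that limit and the statement here. First, Proposition~\ref{prop:limit_of_graphs_as_t_to_zero} is phrased in terms of Laurent series in $t^{1/2}$ with coefficients in $\CC[\epsilon^{\pm\frac{1}{2(r-1)}}]$, whereas the statement concerns the algebraic ring $\QQ[\epsilon, t^{\pm1}, \Disc(p_t(x))^{-1}]$. Second, we must pass from the existence of a limit to membership in the subring $\QQ[\epsilon,t,\Disc(p_t(x))^{-1}]$, on which $\rho$ is defined.

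\textbf{Step 1: the reconstruction lies in the localized ring.} By Corollary~\ref{cor:chiodo-semisimple-locus} and Theorem~\ref{thm:Givental--Teleman_Classification}, each coefficient of $\Ch^{r,\epsilon,\strata}_{g,n}(v_{s_1}\otimes\cdots\otimes v_{s_n})$ with respect to the basis $(\Gamma,\gamma)$ of $\strata_{g,n}$ is an element
\begin{equation*}
F\in \QQ[\epsilon,t^{\pm1},\Disc(p_t(x))^{-1}].
\end{equation*}
Only finitely many pairs $(\Gamma,\gamma)$ occur, by the dimension bound at each vertex. Hence we may write
\begin{equation*}
F = t^{-N}\,G\,\Disc(p_t(x))^{-k}
\end{equation*}
with $G\in\QQ[\epsilon,t]$ and $N\ge 0$ minimal.

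\textbf{Step 2: compare with the power-series expansion.} Fix $\epsilon\neq0$. Since $\Disc(p_t(x))$ has nonzero constant term $\pm(r-1)^{r-1}\epsilon^r$ in $t$, we have $\Disc(p_t(x))^{-1}\in\QQ[\epsilon^{\pm1}][\![t]\!]$, as observed just before the statement. Therefore the expansion of $F$ in $\CC[\epsilon^{\pm\frac{1}{2(r-1)}}](\!(t^{1/2})\!)$ is the ordinary $t$-adic Laurent expansion of $F$. By Remark~\ref{rem:strata_level}, this expansion agrees coefficientwise with the graph sum of Corollary~\ref{cor:full-colored-graph-reconstruction}. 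Proposition~\ref{prop:limit_of_graphs_as_t_to_zero} says that, for each $\Gamma$, this sum has nonnegative $t$-order, with constant term equal to the corresponding coefficient of $\mathrm{Cont}^{\Th^{r,\epsilon}}_\Gamma$. Consequently $\operatorname{ord}_t F\ge0$.

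If $N>0$, then $t$ does not divide $G$ in $\QQ[\epsilon,t]$ by minimality of $N$. So $G|_{t=0}\neq0$, and $F$ would have a genuine pole $t^{-N}$, a contradiction. Hence $N=0$, and $F\in\QQ[\epsilon,t,\Disc(p_t(x))^{-1}]$. This is the first claim.

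\textbf{Step 3: identify $\rho$.} The homomorphism $\rho$ sets $t=0$. On this subring it coincides with taking the constant term of the $t$-expansion, because $\Disc(p_t(x))^{-1}$ specializes to the inverse of its constant term. Summing the graphwise identity of Proposition~\ref{prop:limit_of_graphs_as_t_to_zero} over $\Gamma\in\mathsf G_{g,n}$ and applying Corollary~\ref{cor:full-colored-graph-reconstruction} then gives
\begin{equation*}
\rho(\Ch^{r,\epsilon,\strata}_{g,n})=\Th^{r,\epsilon,\strata}_{g,n}.
\end{equation*}

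The main obstacle is bookkeeping rather than new estimates. The graph sum involves branch choices for $\zeta_i$, $\xi_i$ and $t^{1/2}$, while the final class is rational. We must make sure that the fractional-power expansion is legitimately the expansion of the rational element $F$. This follows because the total reconstruction is defined over $A_{\Ch}[\Disc^{-1}]$ and is independent of branch choices, and because embedding $\QQ[\epsilon,t^{\pm1},\Disc^{-1}]$ into the Laurent series field is injective. One further check is needed: each coefficient of $\Ch^{r,\epsilon,\strata}$ is a finite sum over flag assignments $A$, which holds by the dimension bound. This ensures that the order estimates from Section~\ref{sec:Limit_of_Givental_Teleman_Formula} apply to $F$ itself and not merely termwise.
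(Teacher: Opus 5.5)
Your proposal is correct and follows the paper's proof. The graphwise limit (Proposition~\ref{prop:limit_of_graphs_as_t_to_zero} with Remark~\ref{rem:strata_level}) gives nonnegative $t$-order for every strata coefficient, which yields membership in $\QQ[\epsilon,t,\Disc(p_t(x))^{-1}]$, and $\rho$ then coincides with taking the constant term. Your minimal-$N$ pole argument is exactly a proof of the identity $\CC[\epsilon^{\pm\frac{1}{2(r-1)}}][\![t^{\frac12}]\!]\cap\QQ[\epsilon,t^{\pm1},\Disc(p_t(x))^{-1}]=\QQ[\epsilon,t,\Disc(p_t(x))^{-1}]$, which the paper only asserts. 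Run that argument with $\epsilon$ formal (coefficients in $\QQ[\epsilon^{\pm1}]$) rather than at a fixed numerical $\epsilon$, so that $G(\epsilon,0)\neq0$ genuinely gives a nonzero $t^{-N}$ coefficient.
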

\begin{proof}
  Let $g, n$ be such that $2g - 2 + n > 0$ and $v_{s_1}, \dotsc, v_{s_n} \in \underline{V}$.
  By Proposition~\ref{prop:limit_of_graphs_as_t_to_zero} and Remark~\ref{rem:strata_level}, we know that
  \begin{equation*}
    \Ch^{r, \epsilon, \strata}_{g, n}(v_{s_1} \otimes \cdots \otimes v_{s_n}) \in \strata_{g,n}\otimes\CC[\epsilon^{\pm\frac{1}{2(r-1)}}][\![t^{\frac{1}{2}}]\!].
  \end{equation*}
  The first part follows from the fact that
  \begin{equation*}
    \CC[\epsilon^{\pm\frac{1}{2(r-1)}}][\![t^{\frac{1}{2}}]\!] \cap \QQ[\epsilon, t^{\pm 1}, \Disc(p_t(x))^{-1}]
    = \QQ[\epsilon, t, \Disc(p_t(x))^{-1}].
  \end{equation*}
  The second part also follows from Proposition~\ref{prop:limit_of_graphs_as_t_to_zero}.
\end{proof}

\begin{proposition}
  \label{prop:chiodo-to-theta-rels}
  Let $r \ge 2$, and let $\cI^{\Th^{r, \epsilon}}$ be the ideal of negative $r$-spin relations as in \cite[Theorem B]{CGG25}.
  Then, we have
  \begin{equation*}
    \cI^{\Th^{r, \epsilon}} \subseteq \cI^{\Ch^{r, \epsilon}, t \neq 0}.
  \end{equation*}
\end{proposition}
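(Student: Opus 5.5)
The plan is to use Proposition~\ref{lem:Chiodo-reconstruction-non-equivariant}, which gives the integrality and specialization statement, and to show that every generator of $\cI^{\Th^{r,\epsilon}}$ arises as a polar-part functional of the Chiodo reconstruction. By Corollary~\ref{cor:theta_discriminant}, $\cI^{\Th^{r,\epsilon}}$ is generated by the elements $(\id\otimes\lambda)\bigl(\Th^{r,\epsilon,\strata}_{g,n}(v_{s_1}\otimes\cdots\otimes v_{s_n})\bigr)$. Here $n>0$, $1\le s_i\le r-1$, and $\lambda\colon\QQ[\epsilon^{\pm1}]\to\QQ$ is a linear form vanishing on $\QQ[\epsilon]$. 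By linearity it suffices to take $\lambda=\lambda_m$, the coefficient of $\epsilon^{-m}$ for some $m\ge1$.

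Set $F\coloneqq\Ch^{r,\epsilon,\strata}_{g,n}(v_{s_1}\otimes\cdots\otimes v_{s_n})$. By Proposition~\ref{lem:Chiodo-reconstruction-non-equivariant}, $F$ lies in $\strata_{g,n}\otimes\QQ[\epsilon,t,\Disc(p_t(x))^{-1}]$ and $\rho(F)=\Th^{r,\epsilon,\strata}_{g,n}(v_{s_1}\otimes\cdots\otimes v_{s_n})$. The generator is therefore $(\id\otimes\lambda_m\circ\rho)(F)$. The key step is to check that the composite $\Lambda\coloneqq\lambda_m\circ\rho$, viewed on the subring $B_0\coloneqq\QQ[\epsilon,t,\Disc(p_t(x))^{-1}]$ of $B\coloneqq\QQ[\epsilon,t^{\pm1},\Disc(p_t(x))^{-1}]$, vanishes on $B_0\cap A_{\Ch}=\QQ[\epsilon,t]$. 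This holds because $\rho$ sends $\QQ[\epsilon,t]$ into $\QQ[\epsilon]$, on which $\lambda_m$ vanishes.

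Next, I extend $\Lambda$ to a $\QQ$-linear form $\widetilde\Lambda$ on all of $B$ that vanishes on $A_{\Ch}=\QQ[\epsilon,t^{\pm1}]$. This is possible because $\Lambda$ kills $B_0\cap A_{\Ch}$. It therefore descends to $B_0/(B_0\cap A_{\Ch})$, which injects into $B/A_{\Ch}$. Any linear form on this subspace extends to $B/A_{\Ch}$ over $\QQ$ by choosing a complement. By Corollary~\ref{cor:chiodo-semisimple-locus}, localizing at $\Disc_{\Ch^{r,\epsilon}}$ agrees with localizing at $\Disc(p_t(x))$, so $B=A_{\Ch}[\Disc_{\Ch^{r,\epsilon}}^{-1}]$. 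Hence $(\id\otimes\widetilde\Lambda)(F)$ is by definition a generator of $\cI^{\Ch^{r,\epsilon},t\neq0}$. Since $F$ has coefficients in $B_0$, this element equals $(\id\otimes\Lambda)(F)$, which is the given negative $r$-spin generator.

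Finally, ideals are generated by closing under multiplication and push-forwards, so inclusion of generators gives inclusion of ideals. Lemma~\ref{lem:theta_relations_agree_with_cohft_ideal} identifies this ideal with the negative $r$-spin relations of \cite{CGG25}. The only delicate point is the linear-algebra extension step, specifically that $B_0\cap A_{\Ch}$ is exactly $\QQ[\epsilon,t]$. This requires $\Disc(p_t(x))^{-1}\notin\QQ[\epsilon,t^{\pm1}]$ in a way compatible with $t$-adic integrality. I would check it using the intersection identity already invoked in the proof of Proposition~\ref{lem:Chiodo-reconstruction-non-equivariant}. In fact only the inclusion $\QQ[\epsilon,t]\subseteq B_0\cap A_{\Ch}$ on which $\Lambda$ vanishes is needed, together with the observation that any element of $B_0\cap A_{\Ch}$ lies in $\QQ[\epsilon,t^{\pm1}]\cap\QQ[\epsilon^{\pm1}][\![t]\!]=\QQ[\epsilon,t]$, since $\Disc(p_t(x))^{-1}\in\QQ[\epsilon^{\pm1}][\![t]\!]$.
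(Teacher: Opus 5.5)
Your proof is correct and has the same architecture as the paper's. You compose the polar functional with $\rho$, extend it to a functional on $B=\QQ[\epsilon,t^{\pm1},\Disc(p_t(x))^{-1}]$ that kills $A_{\Ch}$, apply it to $\Ch^{r,\epsilon,\strata}_{g,n}(v_{s_1}\otimes\cdots\otimes v_{s_n})$, and identify the result with the theta generator via Proposition~\ref{lem:Chiodo-reconstruction-non-equivariant}.

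The one real difference is how the extension step is justified, and there your version is the sound one. The paper's auxiliary lemma claims a $\QQ$-vector space splitting $B=B_0\oplus t^{-1}\QQ[\epsilon,t^{-1},\Disc(p_t(x))^{-1}]$ and extends by zero on the second piece. As written, this sum is not direct. For $r=2$ one has $\Disc(p_t(x))=\epsilon^2+4t$, so $4\Disc(p_t(x))^{-1}=t^{-1}-t^{-1}\epsilon^2\Disc(p_t(x))^{-1}$ lies in both pieces. Meanwhile your $\lambda_2\circ\rho$ takes the value $1$ on $\Disc(p_t(x))^{-1}$, so ``extension by zero'' is not well defined.

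Your argument avoids this. Comparing expansions in $\QQ[\epsilon^{\pm1}](\!(t)\!)$ gives $B_0\cap A_{\Ch}=\QQ[\epsilon,t]$. Hence $\Lambda$ descends to $B_0/(B_0\cap A_{\Ch})\cong(B_0+A_{\Ch})/A_{\Ch}\subseteq B/A_{\Ch}$ and extends through a complement, which is exactly the extension needed.

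One small wording point: the inclusion that matters is $B_0\cap A_{\Ch}\subseteq\QQ[\epsilon,t]$, so that $\Lambda$ vanishes on all of $B_0\cap A_{\Ch}$. Your final $t$-adic observation supplies this; the reverse inclusion is trivial.
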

\begin{proof}
  We take one of the relations generating $\cI^{\Th^{r, \epsilon}}$.
  So take any $g, n$ such that $2g - 2 + n > 0$ and $v_{s_1}, \dotsc, v_{s_n}  \in \underline{V}$ and any $\mathrm{pr}\colon\QQ[\epsilon^{\pm 1}] \to \QQ$ vanishing on $\QQ[\epsilon]$, and consider
  \begin{equation*}
    \mathrm{pr}(\Th_{g, n}^{r, \epsilon, \strata}(v_{s_1} \otimes \cdots \otimes v_{s_n})) \in \cI^{\Th^{r, \epsilon}}.
  \end{equation*}

  By definition of $\rho$, the composition $\mathrm{pr} \circ \rho$ vanishes on $\QQ[\epsilon, t]$.
  By the following lemma, there exists $\mathrm{pr}'\colon \QQ[\epsilon, t^{\pm 1}, \Disc(p_t(x))^{-1}] \to \QQ$ vanishing on $\QQ[\epsilon, t^{\pm 1}]$ such that $\mathrm{pr}' \circ i = \mathrm{pr} \circ \rho$, where $i\colon \QQ[\epsilon, t, \Disc(p_t(x))^{-1}] \to \QQ[\epsilon, t^{\pm 1}, \Disc(p_t(x))^{-1}]$ denotes the inclusion.

  We note that
  \begin{equation*}
    \mathrm{pr}'(\Ch_{g, n}^{r, \epsilon, \strata}(v_{s_1} \otimes \cdots \otimes v_{s_n})) \in \cI^{\Ch^{r, \epsilon}, t \neq 0}.
  \end{equation*}
  By Lemma \ref{lem:Chiodo-reconstruction-non-equivariant}, the coefficients of $\Ch_{g, n}^{r, \epsilon, \strata}(v_{s_1} \otimes \cdots \otimes v_{s_n})$ lie in the image of $i$, and hence by slight abuse of notation, we have
  \begin{equation*}
    \mathrm{pr}(\rho(\Ch_{g, n}^{r, \epsilon, \strata}(v_{s_1} \otimes \cdots \otimes v_{s_n}))) \in \cI^{\Ch^{r, \epsilon}, t \neq 0}.
  \end{equation*}
  By Lemma \ref{lem:Chiodo-reconstruction-non-equivariant}, we have
  \begin{equation*}
    \rho(\Ch_{g, n}^{r, \epsilon, \strata}(v_{s_1} \otimes \cdots \otimes v_{s_n}))
    = \Th_{g, n}^{r, \epsilon, \strata}(v_{s_1} \otimes \cdots \otimes v_{s_n}),
  \end{equation*}
  and we conclude
  \begin{equation*}
    \mathrm{pr}(\Th_{g, n}^{r, \epsilon, \strata}(v_{s_1} \otimes \cdots \otimes v_{s_n})) \in \cI^{\Ch^{r, \epsilon}, t \neq 0}.
  \end{equation*}
\end{proof}

\begin{lemma}
  Any $\QQ$ linear map $\mathrm{pr}\colon \QQ[\epsilon, t, \Disc(p_t(x))^{-1}] \to \QQ$ vanishing on $\QQ[\epsilon, t]$ may be extended to a linear map $\overline{\mathrm{pr}}\colon \QQ[\epsilon, t^{\pm 1}, \Disc(p_t(x))^{-1}] \to \QQ$ vanishing on $\QQ[\epsilon, t^{\pm 1}]$.
\end{lemma}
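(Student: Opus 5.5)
The plan is pure linear algebra over $\QQ$. Write $B\coloneqq\QQ[\epsilon,t,\Disc(p_t(x))^{-1}]$, $C\coloneqq\QQ[\epsilon,t^{\pm1},\Disc(p_t(x))^{-1}]$, $P\coloneqq\QQ[\epsilon,t]$ and $P'\coloneqq\QQ[\epsilon,t^{\pm1}]$. All four are $\QQ$-subspaces of $C$, with $P\subseteq B$, $P\subseteq P'$, and $B,P'\subseteq C$.

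Given $\mathrm{pr}$ on $B$ vanishing on $P$, I want a linear map on $C$ that agrees with $\mathrm{pr}$ on $B$ and vanishes on $P'$. Define a linear functional on the subspace $B+P'\subseteq C$ by
\begin{equation*}
\lambda(b+p)\coloneqq\mathrm{pr}(b),\qquad b\in B,\ p\in P'.
\end{equation*}
Then extend $\lambda$ arbitrarily to all of $C$, by choosing a vector space complement (a basis extension). This is the standard fact that linear functionals on subspaces of a $\QQ$-vector space extend. The resulting $\overline{\mathrm{pr}}$ vanishes on $P'$ and restricts to $\mathrm{pr}$ on $B$, which is exactly what Proposition~\ref{prop:chiodo-to-theta-rels} uses, with $\mathrm{pr}'=\overline{\mathrm{pr}}$ applied to $\mathrm{pr}\circ\rho$.

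The only real step is checking that $\lambda$ is well defined. Suppose $b+p=b'+p'$. Then $b-b'=p'-p\in B\cap P'$, so it suffices to show $B\cap P'=P$. Then $b-b'\in P$ and $\mathrm{pr}(b)=\mathrm{pr}(b')$.

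For $B\cap P'=P$, let $f\in B\cap P'$. Membership in $P'$ means $t^Nf\in P$ for some $N$. Membership in $B$ means $\Disc(p_t(x))^Mf\in P$ for some $M$. Write $D\coloneqq\Disc(p_t(x))$. By \eqref{eqn:disc_of_p_t_of_x}, $D$ has constant term $\pm(r-1)^{r-1}\epsilon^r$ in $t$, so $t\nmid D$ in the UFD $\QQ[\epsilon,t]$. Hence $t^N$ and $D^M$ are coprime. From $t^Nf=g$ and $D^Mf=h$ with $g,h\in P$ we get $D^Mg=t^Nh$, so $t^N\mid g$ and therefore $f=g/t^N\in P$.

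There is no real obstacle. The one point needing care is the coprimality of $t$ and $D$, which reduces to the nonvanishing of $D|_{t=0}$; that nonvanishing is visible from the explicit discriminant because $r\geq2$.
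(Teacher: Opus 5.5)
Your proof is correct, and it takes a different route from the paper's. It also repairs a flaw in the paper's argument.

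The paper asserts the $\QQ$-vector space decomposition $\QQ[\epsilon,t^{\pm1},\Disc(p_t(x))^{-1}]=\QQ[\epsilon,t,\Disc(p_t(x))^{-1}]\oplus t^{-1}\QQ[\epsilon,t^{-1},\Disc(p_t(x))^{-1}]$ and extends $\mathrm{pr}$ by zero on the second summand. This sum is not direct. Write $D=\Disc(p_t(x))=c_1t^{r-1}+c_0\epsilon^r$ with $c_0,c_1\in\QQ^{\times}$. Then $t^{r-1}D^{-1}=c_1^{-1}(1-c_0\epsilon^rD^{-1})\in\QQ[\epsilon,D^{-1}]$, so $D^{-1}=t^{-1}\cdot t^{-(r-2)}\cdot t^{r-1}D^{-1}$ lies in both summands. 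Since $D^{-1}\notin\QQ[\epsilon,t]$, an admissible $\mathrm{pr}$ may have $\mathrm{pr}(D^{-1})\neq 0$, and then extension by zero is inconsistent.

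Your argument isolates exactly what is needed, namely $B\cap P'=P$. You prove this correctly from $t\nmid D$ in the UFD $\QQ[\epsilon,t]$. Defining $\lambda$ on $B+P'$ and extending through a complement is then sound.

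The only cost is that your extension is non-canonical. This is harmless, since Proposition~\ref{prop:chiodo-to-theta-rels} uses only existence. The final extension step is genuinely needed, because $B+P'$ is a proper subspace of $C$. For instance, $t^{-1}D^{-1}\notin B+P'$: its $t^{-1}$-coefficient in $\QQ[\epsilon^{\pm1}](\!(t)\!)$ is $(c_0\epsilon^r)^{-1}\notin\QQ[\epsilon]$, whereas every element of $B+P'$ has $t^{-1}$-coefficient in $\QQ[\epsilon]$.

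For the specific functional $\mathrm{pr}\circ\rho$ used in that proposition, there is also an explicit extension. Take the $t^{0}$-coefficient of the Laurent expansion in $\QQ[\epsilon^{\pm1}](\!(t)\!)$, then apply $\mathrm{pr}$.
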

\begin{proof}
  As $\QQ$-vector spaces, we have
  \begin{equation*}
    \QQ[\epsilon, t^{\pm 1}, \Disc(p_t(x))^{-1}]
    = \QQ[\epsilon, t, \Disc(p_t(x))^{-1}] \oplus t^{-1} \QQ[\epsilon, t^{-1}, \Disc(p_t(x))^{-1}],
  \end{equation*}
  and so we may define $\widetilde{\mathrm{pr}}$ by extending $\mathrm{pr}$ by $0$ on $t^{-1} \QQ[\epsilon, t^{-1}, \Disc(p_t(x))^{-1}]$.
\end{proof}

The proof of Theorem~\ref{ThmA} is completed by combining Corollary \ref{cor:3spin-to-chiodo-rels} and Proposition \ref{prop:chiodo-to-theta-rels}.


\appendix

\section{Pairing and quantum product of Shifted Chiodo CohFT} \label{appendix:Metric_and_Quantum_Product}
In this part of the appendix, we prove the formula for pairing $\eta$ and quantum product $\bullet$ of $\epsilon v_0$-shifted Chiodo CohFT $\Ch^{r,\epsilon}$
\subsection{Calculation of the pairing}
\begin{lemma}
The pairing of $\Ch^{r,\epsilon}$ is given by
\begin{equation*}
\eta(v_a,v_b)
=
\begin{cases}
t^{-1}\quad \text{if}\quad a=b=0,\\
1\quad \text{if} \quad 1\leq a=r-b\leq r-1,\\
0\quad \text{otherwise}.
\end{cases}
\end{equation*}
\end{lemma}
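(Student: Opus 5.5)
The plan is to read off the pairing of $\Ch^{r,\epsilon}$ from the gluing axiom, starting from Chiodo's CohFT $\widetilde{\Ch}^{r,-1,\tau}$ and its known pairing $\widetilde\eta(v_a,v_b)=\frac1r\delta_{a+b\equiv0\,(\mathrm{mod}\ r)}$. Equivalently, we track the inverse bivector $\widetilde\eta^{-1}=r\sum_{a}v_a\otimes v_{\mathrm{Inv}(a)}$. Two preliminary observations reduce the problem to bookkeeping.

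First, the shift by $\epsilon v_0$ does not change the pairing. Pushforward along forgetful maps commutes with pullback along gluing maps, so the shifted classes satisfy the gluing axiom with the same bivector as the unshifted theory. Hence it suffices to treat $\Ch^r$, the case $\epsilon=0$.

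Second, $\Ch^r_{g,n}$ is obtained from $\widetilde{\Ch}^{r,-1,r/t}_{g,n}$ by multiplying by the scalar
\begin{equation*}
(-1)^n\, r^{1-g}\, t^{D^r_{g;a_1,\ldots,a_n}}.
\end{equation*}
The gluing axiom for $\Ch^r$ therefore holds with a modified bivector. Its $v_a\otimes v_{\mathrm{Inv}(a)}$ coefficient equals the Chiodo coefficient times the ratio of normalization factors on the two sides of the gluing. I compute this ratio factor by factor, first for the separating map $\mathpzc r$ with $g=g_1+g_2$ and the node insertions $v_a, v_b$ placed on the two components:
\begin{enumerate}
\item The sign is $(-1)^{n_1+1}(-1)^{n_2+1}=(-1)^n$, so it matches.
\item The $r$-powers give $r^{1-g_1}r^{1-g_2}/r^{1-g}=r$. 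This exactly cancels the factor $r$ in $\widetilde\eta^{-1}$.
\item The $t$-powers give
\begin{equation*}
D^r_{g_1;\ldots,a}+D^r_{g_2;\ldots,b}-D^r_{g;\ldots}=\frac{-(r+2)+2+a+b}{r}=\frac{a+b-r}{r}.
\end{equation*}
\end{enumerate}
The last quantity is $0$ when $a+b=r$ and $-1$ when $a=b=0$. The non-separating map $\mathpzc q$ gives the same ratios, since $D^r_{g-1;\ldots,a,b}-D^r_{g;\ldots}$ is the same expression.

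The main obstacle is to justify that the node with $a=b=0$ really contributes with this rank discrepancy of $-1$. Concretely, one must check that the Chern-class pullback identity for Chiodo's class still holds there, even though the ranks do not add. For this I would use the normalization exact sequence
\begin{equation*}
0\to\mathcal L\to \nu_*\nu^*\mathcal L\to \mathcal L|_{\mathrm{node}}\to0.
\end{equation*}
When the monodromy at the node is trivial, $\mathcal L|_{\mathrm{node}}$ is a trivial line bundle on the boundary stratum. Hence $-R^\bullet\pi_*\mathcal L$ and $-R^\bullet\pi_*\nu^*\mathcal L$ differ in K-theory by a trivial class of rank one. This leaves the Chern polynomial $c_{r/t}$ unchanged but shifts the rank by one. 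When $a+b=r$ the node is a genuinely twisted point, and the K-theory classes agree on the nose. This is the standard input from \cite{Ch08} and \cite[Section 2]{GLN23}; the degree $r^{2g-1}$ of $\mathpzc f$ is already absorbed in Chiodo's pairing $\frac1r$.

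Combining the three ratios, the gluing bivector for $\Ch^r$ is
\begin{equation*}
\eta^{-1}=t\, v_0\otimes v_0+\sum_{i=1}^{r-1}v_i\otimes v_{r-i}.
\end{equation*}
Inverting it yields $\eta(v_0,v_0)=t^{-1}$, $\eta(v_a,v_{r-a})=1$ for $1\le a\le r-1$, and all other pairings $0$, as claimed.
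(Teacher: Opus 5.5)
Your proposal is correct and follows essentially the same route as the paper: you pull back $\Ch^r_{g,n}$ along a gluing map, use the gluing axiom of Chiodo's CohFT with $\widetilde\eta^{-1}=r\sum_a v_a\otimes v_{\mathrm{Inv}(a)}$, and compare the sign, $r$-power and $t^{\rank}$ normalizations; the rank defect $\frac{a+b-r}{r}\in\{-1,0\}$ then gives $\eta^{-1}=t\,v_0\otimes v_0+\sum_{i\ge1}v_i\otimes v_{r-i}$. Your extra remarks are fine but not needed, since the paper takes the CohFT property of $\widetilde{\Ch}^{r,-1,\tau}$ as given and obtains the defect purely from the rank formula and the modularity constraint. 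These remarks cover shift invariance, the non-separating map, and the normalization-sequence explanation of the defect; in the last one, $\mathcal L|_{\mathrm{node}}$ is really $r$-torsion rather than trivial, which still suffices rationally.
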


\begin{proof}
Let $g=g_1+g_2$, $n=n_1+n_2$, and $\{a_1,\ldots,a_n\}=\{b_1,\ldots, b_{n_1}\} \bigsqcup \{c_1,\ldots, c_{n_2}\}$. Then, we have

\begin{equation*}
\begin{aligned}
\rank\left({\mathcal{V}_{g_1;b_1,\ldots,b_{n_1},i}^{r,-1}}\right)+
\rank\left({\mathcal{V}_{g_2;c_1,\ldots,c_{n_2},j}^{r,-1}}\right)
&=\frac{(r+2)(g_1-1)+n_1+1+\vert b\vert +i}{r} + \frac{(r+2)(g_2-1)+n_2+1+\vert c\vert +j}{r}\\
&=\frac{(r+2)(g-1)+n+\vert a\vert}{r}+\frac{-(r+2)+2+i+j}{r}\\
&=\rank\left({\mathcal{V}_{g;a_1,\ldots,a_{n}}^{r,-1}}\right)-1+\frac{i+j}{r}.
\end{aligned}
\end{equation*}
Also, by the modularity constraints we get
\begin{equation*}
\begin{aligned}
\vert a \vert 
&\equiv (2-2g-n)\quad (\mathrm{mod\,}r),\\
\vert b \vert +i 
&\equiv (2-2g_1-n_1-1)\quad (\mathrm{mod\,}r),\\
\vert c \vert +j 
&\equiv (2-2g_2-n_2-1)\quad (\mathrm{mod\,}r).
\end{aligned}
\end{equation*}
Putting these together, we get
\begin{equation*}
i+j \equiv 0\quad (\mathrm{mod\,}r).
\end{equation*}
As a result, for $0\leq i, j \leq r-1$, we have\footnote{Here $\delta_{i,j,k}$ is the product $\delta_{i,j}\delta_{j,k}$.}
\begin{equation*}
\begin{aligned}
\rank\left({\mathcal{V}_{g_1;b_1,\ldots,b_{n_1},i}^{r,-1}}\right)+
\rank\left({\mathcal{V}_{g_2;c_1,\ldots,c_{n_2},j}^{r,-1}}\right)
&=
\begin{cases}
\rank\left({\mathcal{V}_{g;a_1,\ldots,a_{n}}^{r,-1}}\right)-1 \quad &\text{if}\quad i=j=0,\\
\rank\left({\mathcal{V}_{g;a_1,\ldots,a_{n}}^{r,-1}}\right)\quad &\text{if}\quad 1\leq i,j \leq r-1
\end{cases}\\
&=\rank\left({\mathcal{V}_{g;a_1,\ldots,a_{n}}^{r,-1}}\right)-\delta_{i,j,0}.
\end{aligned}
\end{equation*}

Let $\mathpzc{r} \colon \ocM_{g_1,n_1+1}\times \ocM_{g_2,n_2+1}\rightarrow \ocM_{g,n}$ be the canonical gluing map of two curves. Observe the following
\begin{equation*}
\begin{aligned}
&\mathpzc{r}^{\star}\left(\Ch^{r}_{g,n}(v_{a_1}\otimes\cdots\otimes v_{a_n})\right)\\
&=
(-1)^nr^{1-g}{t}^{\rank\left({\mathcal{V}_{g;a_1,\ldots,a_n}^{r}}\right)}\mathpzc{r}^{\star}\left(\widetilde{\Ch}^{r,-1,r/t}_{g,n}(v_{a_1}\otimes\cdots\otimes v_{a_n})\right)\\
&=(-1)^nr^{1-g}{t}^{\rank\left({\mathcal{V}_{g;a_1,\ldots,a_n}^{r}}\right)}\sum_{0\leq i,j\leq r-1}\widetilde{\Ch}^{r,-1,r/t}_{g_1,n_1+1}(v_{a_1}\otimes\cdots\otimes v_{a_{n_1}}\otimes v_i)\widetilde{\eta}^{i,j}\widetilde{\Ch}^{r,-1,r/t}_{g_2,n_2+1}(v_{a_{n_1+1}}\otimes\cdots\otimes v_{a_n}\otimes v_j)\\
&=(-1)^{n_1+1}(-1)^{n-n_1+1}r^{1-g_1}r^{1-g_2}r^{-1}
\\
&\quad \cdot\sum_{0\leq i,j\leq r-1}{t}^{\rank\left({\mathcal{V}_{g_1;b_1,\ldots,b_{n_1},i}^{r}}\right)+
\rank\left({\mathcal{V}_{g_2;c_1,\ldots,c_{n_2},j}^{r}}\right)+\delta_{i,j,0}}\widetilde{\Ch}^{r,-1,r/t}_{g_1,n_1+1}(v_{a_1}\otimes\cdots\otimes v_{a_{n_1}}\otimes v_i)\widetilde{\eta}^{i,j}\widetilde{\Ch}^{r,-1,r/t}_{g_2,n_2+1}(v_{a_{n_1+1}}\otimes\cdots\otimes v_{a_n}\otimes v_j)\\
&=\sum_{0\leq i,j\leq r-1}\Ch^{r}_{g_1,n_1+1}(v_{a_1}\otimes\cdots\otimes v_{a_{n_1}}\otimes v_i)\widetilde{\eta}^{i,j}t^{\delta_{i,j,0}}r^{-1}\Ch^{r}_{g_2,n_2+1}(v_{a_{n_1+1}}\otimes\cdots\otimes v_{a_n}\otimes v_j).
\end{aligned}
\end{equation*}

Thus, we see that
\begin{equation*}
\eta^{i,j}=\widetilde{\eta}^{i,j}t^{\delta_{i,j,0}}r^{-1}.
\end{equation*}
If we write more explicitly, we have
\begin{equation*}
\eta(v_a,v_b)
=t^{-\delta_{a,b,0}}r\widetilde{\eta}(v_a,v_b)
=t^{-\delta_{a,b,0}}\delta_{a+b\equiv 0 (\mathrm{mod}\ r)}
=
\begin{cases}
t^{-1}\quad \text{if}\quad a=b=0,\\
1\quad \text{if} \quad 1\leq a=r-b\leq r-1,\\
0\quad \text{otherwise}.
\end{cases}
\end{equation*}
    
\end{proof}

\subsection{Calculation of the quantum product}
Recall that the $\epsilon v_0$-shifted Chiodo CohFT is defined by
\begin{equation*}
\Ch_{g,n}^{r,\epsilon}(v_{a_1}\otimes\cdots\otimes v_{a_n})=\sum_{m\geq 0}\frac{\epsilon^m}{m!}{\mathpzc{p}_{m}}_\star\Ch_{g,n+m}^{r}(v_{a_1}\otimes\cdots\otimes v_{a_n}\otimes v_0^{\otimes m}),
\end{equation*}
and that its quantum product $\bullet$ on $(V,\eta)$ is determined by the three-point classes via Frobenius identity 
\begin{equation*}
\eta(v_a\bullet v_b,v_c)=\Ch_{0,3}^{r,\epsilon}(v_{a}\otimes v_b \otimes v_{c}).
\end{equation*}
Equivalently, in terms of the inverse of the pairing \eqref{eqn:Chiodo_pairing_t},
\begin{equation}\label{eqn:product_from_three_point}
v_a\bullet v_b
=\sum_{0\leq i,j \leq r-1}\Ch_{0,3}^{r,\epsilon}(v_{a}\otimes v_b \otimes v_{i})\,\eta^{i,j}\,v_j
=t\,\Ch_{0,3}^{r,\epsilon}(v_{a}\otimes v_b \otimes v_{0})\,v_0
+\sum_{i=1}^{r-1}\Ch_{0,3}^{r,\epsilon}(v_{a}\otimes v_b \otimes v_{i})\,v_{r-i}.
\end{equation}
The computation of $\bullet$ therefore reduces to the evaluation of the classes $\Ch_{0,3}^{r,\epsilon}(v_{a}\otimes v_b \otimes v_{c})$. We first record the genus-zero Chiodo integrals entering this evaluation.
\begin{lemma}[\cite{CGG25}] \label{lem:raw_chiodo_calculations_from_CGG25} Let $0\leq a,b,c \leq r-1$. Then, we have
\begin{equation*}
\int_{\ocM_{0,3}}\widetilde{\Ch}_{0,3}^{r,-1,1}(v_{a}\otimes v_b \otimes v_{c})=\frac{1}{r} \quad \text{if } a+b+c \in \{r-1, 2r-1\},
\end{equation*}
\begin{equation*}
\int_{\ocM_{0,4}}\widetilde{\Ch}_{0,4}^{r,-1,1}(v_{a}\otimes v_b \otimes v_{c}\otimes v_0)=-\frac{1}{r^2}\quad \text{if } a+b+c=2r-2,
\end{equation*}
and
\begin{equation*}
\int_{\ocM_{0,5}}\widetilde{\Ch}_{0,5}^{r,-1,1}(v_{a}\otimes v_b \otimes v_{c}\otimes v_0^{\otimes 2})=\frac{2}{r^3}\quad \text{if } a+b+c=3r-3.
\end{equation*}
\end{lemma}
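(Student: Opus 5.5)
The plan is to reduce each integral to a computation of Chern characters via Chiodo's Grothendieck--Riemann--Roch formula, using the fact that in genus zero the relevant moduli spaces $\ocM_{0,n}$ with $n\le 5$ have dimension at most $2$. Only low-degree terms of $c(-R^\bullet\pi_\star\mathcal L)$ are then needed.

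Recall that $\widetilde{\Ch}^{r,-1,1}$ is the total Chern class pushed forward along $\mathpzc f$. In genus zero, $\mathpzc f$ has degree $r^{-1}$ (from $r^{2g-1}$), which accounts for the overall powers of $r$.

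For the three-point integral, $\ocM_{0,3}$ is a point, so only the degree-zero part $c_0=1$ contributes. The answer is therefore the degree of $\mathpzc f$, which is $1/r$. The only condition to check is that the modularity constraint
\begin{equation*}
a+b+c\equiv -1 \pmod r
\end{equation*}
holds, i.e.\ $a+b+c\in\{r-1,2r-1\}$ for $0\le a,b,c\le r-1$.

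For the four- and five-point integrals, I would use Chiodo's formula
\begin{equation*}
\mathrm{ch}_m(R^\bullet\pi_\star\mathcal L)=\frac{B_{m+1}(s/r)}{(m+1)!}\kappa_m-\sum_i\frac{B_{m+1}(a_i/r)}{(m+1)!}\psi_i^m+\frac r2\sum_{0\le q<r}\frac{B_{m+1}(q/r)}{(m+1)!}\,j_{q\star}(\ldots),
\end{equation*}
with $s=-1$, so $B_{m+1}(s/r)$ is evaluated at the corresponding fractional class in the standard normalization. I would then convert to Chern classes via $c=\exp\bigl(\sum_m (-1)^{m}(m-1)!\,\mathrm{ch}_m(-R\pi_\star\mathcal L)\bigr)$. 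On $\ocM_{0,4}$ one needs $c_1=-\mathrm{ch}_1(R\pi_\star\mathcal L)$. Integrating $\kappa_1$, the $\psi_i$ (each integrating to $1$), and the three boundary points — each with its twist $q$ determined by the markings on either side — gives a sum of Bernoulli values $B_2(x)=x^2-x+\tfrac16$. Under the constraint $a+b+c=2r-2$ with the fourth marking $0$, I expect this to collapse to $-1/r$, giving $-1/r^2$ after the factor $1/r$.

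The five-point case works the same way but in degree two. There $c_2=\tfrac12 c_1^2-\mathrm{ch}_2(R\pi_\star\mathcal L)$, which involves $B_3$ terms and products of the degree-one classes on $\ocM_{0,5}$. The integrals needed are $\int\psi_i^2=1$, $\int\psi_i\psi_j=2$, $\int\kappa_1^2$, $\int\kappa_2$, and the intersections with boundary divisors.

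The main obstacle is the five-point computation: the boundary terms and $c_1^2$ produce many Bernoulli contributions whose cancellation down to $2/r^2$ is delicate. I would control it by an alternative route. Both integrals are genus-zero, so I would use WDVV for the CohFT $\widetilde{\Ch}^{r,-1,1}$ to express four- and five-point correlators through the three-point ones and the pairing $\widetilde\eta$. This is exactly how \cite{CGG25} obtain these values, and it serves as a consistency check (or replacement) for the GRR bookkeeping, given that the constraints $a+b+c=2r-2$ and $a+b+c=3r-3$ force the relevant Chiodo bundles to have ranks $1$ and $2$ matching the dimensions of $\ocM_{0,4}$ and $\ocM_{0,5}$.
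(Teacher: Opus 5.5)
The paper does not prove this lemma; it only cites \cite[Lemma 3.4]{CGG25}, so your Chiodo--GRR route is an independent argument. Your three-point argument is correct.

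The four-point computation also works, provided the $\kappa$-term is evaluated at the integer $s=-1$, that is at $B_2(-1/r)=\tfrac{1}{r^2}+\tfrac1r+\tfrac16$. It must not be evaluated at the residue $(r-1)/r$, as your phrase ``fractional class'' suggests. Set $x=1/r$. The boundary divisor separating the markings of weights $y\in\{a,b,c\}$ and $0$ from the other two has Bernoulli argument $(y+1)x$. Using $a+b+c=2r-2$, one gets
\[
\int_{\ocM_{0,4}}\mathpzc{f}_\star c_1
=\frac{1}{2r}\Bigl[B_2(0)-B_2(-x)+\sum_{y\in\{a,b,c\}}\bigl(B_2(yx)-B_2((y+1)x)\bigr)\Bigr]
=\frac{1}{2r}\bigl[(-x^2-x)-(x-x^2)\bigr]=-\frac{1}{r^2}.
\]
Replacing $B_2(-1/r)$ by $B_2((r-1)/r)$ would give $0$ instead.

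The five-point case has a genuine gap, for two reasons.

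\textbf{Sign error.} Your Chern-class conversion has the wrong sign. The correct formula is $c(-R\pi_\star\mathcal L)=\exp\bigl(\sum_m(-1)^m(m-1)!\,\mathrm{ch}_m(R\pi_\star\mathcal L)\bigr)$. Hence $c_2=\tfrac12c_1^2+\mathrm{ch}_2(R\pi_\star\mathcal L)$, not $\tfrac12c_1^2-\mathrm{ch}_2(R\pi_\star\mathcal L)$. Your $c_1$ is right but your $c_2$ is not, and following it would give a wrong number.

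\textbf{WDVV cannot replace GRR.} The classes $\widetilde{\Ch}^{r,-1,\tau}$ form a CohFT for every $\tau$, with the same pairing $\widetilde\eta$ and the same three-point values, since the degree-zero part of $c_\tau$ is $1$. However, their four-point integrals are proportional to $\tau$. So WDVV plus three-point data can never determine four-point integrals. At most it yields five-point values from four-point ones.

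\textbf{The missing observation.} The condition $a+b+c=3r-3$ with $0\le a,b,c\le r-1$ forces $a=b=c=r-1$. You can then use the $v_{r-1}$-insertion identity \eqref{eqn:Harer_Zagier_Paper_ii_iv} from \cite{GLN23} at $\tau=1$, together with $\mathpzc{p}_\star\mathpzc{p}^*=0$ and $\mathpzc{p}_\star\psi_{k+1}=\kappa_0=k-2$ on $\ocM_{0,k}$. Applying it twice gives
\[
\int_{\ocM_{0,5}}\widetilde{\Ch}^{r,-1,1}_{0,5}(v_{r-1}^{\otimes 3}\otimes v_0^{\otimes 2})
=-\frac{2}{r}\int_{\ocM_{0,4}}\widetilde{\Ch}^{r,-1,1}_{0,4}(v_{r-1}^{\otimes 2}\otimes v_0^{\otimes 2})
=\frac{2}{r^2}\int_{\ocM_{0,3}}\widetilde{\Ch}^{r,-1,1}_{0,3}(v_{r-1}\otimes v_0^{\otimes 2})
=\frac{2}{r^3}.
\]
This needs no degree-two GRR bookkeeping at all.

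The same identity settles the four-point cases where one of $a,b,c$ equals $r-1$, by reduction to a three-point value. For the remaining four-point cases, which occur once $r\ge 4$ (e.g.\ $r=4$, $a=b=c=2$), your degree-one GRR computation is genuinely needed.
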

\begin{proof}
See, proof of \cite[Lemma 3.4]{CGG25}.
\end{proof}

\begin{proposition}
The quantum product for $\epsilon v_0$-shifted Chiodo CohFT is given by
\begin{equation*}
v_a \bullet v_b=
\begin{cases}
-v_{a+b+1}\quad &\text{if}\quad 0\leq a+b<r-1,\\
-\epsilon v_{a+b+2-r}-tv_{a+b+1-r}\quad &\text{if}\quad r-1\leq a+b <2r-2\\
-\epsilon^2v_1-tv_{r-1}-t\epsilon v_0 \quad &\text{if}\quad a+b=2r-2.
\end{cases}
\end{equation*}
\end{proposition}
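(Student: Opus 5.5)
The plan is to compute $v_a\bullet v_b$ directly from \eqref{eqn:product_from_three_point}. The task then reduces to evaluating the three-point classes
\begin{equation*}
\Ch^{r,\epsilon}_{0,3}(v_a\otimes v_b\otimes v_c)=\sum_{m\ge0}\frac{\epsilon^m}{m!}\int_{\ocM_{0,3+m}}\Ch^r_{0,3+m}(v_a\otimes v_b\otimes v_c\otimes v_0^{\otimes m}).
\end{equation*}

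\textbf{Step 1: restrict $m$ and rewrite the integrand.} First I would note which $m$ can contribute. The integrand is $(-1)^{3+m}r\,t^{D}\widetilde{\Ch}^{r,-1,r/t}_{0,3+m}$, where $D=\frac{-(r+2)+3+m+a+b+c+m}{r}$ is the rank. The integral picks out the degree-$m$ part of $c_{r/t}$, namely $(r/t)^m c_m$. Modularity forces $a+b+c+m\equiv -1-m\pmod r$, i.e. $a+b+c+2m+1\equiv 0$.

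Since $0\le a+b+c\le 3r-3$ and $D\ge m$ is needed for $c_m$ to be nonzero, the bound $D\ge m$ reads $a+b+c+1\ge (r-2)m+r$. Hence only $m\in\{0,1,2\}$ contribute, exactly matching the three cases of Lemma~\ref{lem:raw_chiodo_calculations_from_CGG25}. The power of $t$ in each term is $t^{D-m}$.

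To use that lemma, whose values are computed at parameter $1$, I need the degree-$m$ part of $c_{r/t}$. This equals $(r/t)^m$ times the degree-$m$ part of $c_1$. The degree-$m$ part of $c_1$ is what the CGG integrals give, because on $\ocM_{0,3+m}$ only the top degree survives.

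\textbf{Step 2: tabulate the three cases.} Each term is $(-1)^{3+m}\,r\,t^{D-m}r^m\epsilon^m/m!$ times the CGG value $\pm m!/r^{m+1}$. The factors of $r$ cancel to $1$, the factorial cancels, and the signs combine to give $-\epsilon^m t^{D-m}$. This yields:
\begin{itemize}
\item for $m=0$: a contribution $-t^{D}$ with $D=(a+b+c+1-r)/r\in\{0,1\}$, according as $a+b+c=r-1$ or $2r-1$;
\item for $m=1$: a contribution $-\epsilon$ when $a+b+c=2r-2$, where $D-1=0$;
\item for $m=2$: a contribution $-\epsilon^2$ when $a+b+c=3r-3$, where $D-2=0$.
\end{itemize}

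\textbf{Step 3: assemble via the inverse pairing.} Fix $a,b$ and split by $a+b$.
\begin{itemize}
\item If $a+b<r-1$, only $c=r-1-a-b\ge1$ contributes (via $m=0$), which gives $-v_{a+b+1}$.
\item If $r-1\le a+b<2r-2$, two terms contribute. The $m=1$ term has $c=2r-2-a-b\ge1$, giving $-\epsilon v_{a+b+2-r}$. The $m=0$ term has $c=2r-1-a-b$, giving $-t\,v_{a+b+1-r}$. When $a+b=r-1$ this second term has $c=r$, which is out of range. It is then replaced by $c=0$ with $a+b+c=r-1$, giving $-1$ times $t\cdot v_0$, so the formula still reads $-tv_0$.
\item If $a+b=2r-2$, three terms contribute: $c=r-1$ with $m=2$ gives $-\epsilon^2v_1$; $c=1$ with $m=0$ gives $-tv_{r-1}$; and $c=0$ with $m=1$, weighted by the factor $t$ in $\eta^{0,0}$, gives $-t\epsilon v_0$.
\end{itemize}

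The main obstacle is the bookkeeping in Step 1. I need to confirm that the CGG normalization at parameter $1$ matches the top-degree Chern class, and that the $c=0$ slot correctly carries the extra $t$ from $\eta^{0,0}$. Sign and $r$-power mistakes are most likely there, and I would check everything against the $r=2$ case.
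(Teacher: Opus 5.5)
Your route is the paper's. You expand the shift, cut down by modularity and the rank bound to $(m,a+b+c)\in\{(0,r-1),(0,2r-1),(1,2r-2),(2,3r-3)\}$, and evaluate with Lemma~\ref{lem:raw_chiodo_calculations_from_CGG25}, giving $-\epsilon^m t^{D-m}$; your Step~2 sign and $r$-power bookkeeping is right. You then assemble with $\eta^{-1}$, including the factor $t$ from $\eta^{0,0}$. Your Step~3 case analysis, including the boundary case $a+b=r-1$, is correct.

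Step~1 as written is wrong, however, and needs fixing. The insertions $v_0$ carry weight $0$, so $\vert a\vert=a+b+c$ and $n=3+m$. This gives $D=\frac{m+1+a+b+c-r}{r}$ and the modularity condition $a+b+c+m+1\equiv 0 \pmod r$. You counted $m$ twice in both formulas.

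Taken literally, your formulas contradict the cases you later use. For $m=1$ your congruence gives $a+b+c\equiv -3$, which excludes $a+b+c=2r-2$, and your $D=\frac{r+1}{r}$ is not even an integer there. Your inequality $a+b+c+1\ge (r-2)m+r$ also puts no bound on $m$ when $r=2$.

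With the correct $D$, the condition $D\ge m$ reads $(r-1)(m+1)\le a+b+c\le 3(r-1)$. This forces $m\le 2$ for every $r\ge2$. Combined with the correct congruence, it yields exactly the four pairs above, which are the values you actually use in Step~2.
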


\begin{proof}
Expanding the definition of the shift, we have
\begin{equation*}
\Ch_{0,3}^{r,\epsilon}(v_{a}\otimes v_b \otimes v_{c})
=\sum_{m\geq 0}\frac{\epsilon^m}{m!}\int_{\ocM_{0,m+3}}\Ch_{0,m+3}^{r}(v_{a}\otimes v_b \otimes v_{c}\otimes v_0^{\otimes m}).
\end{equation*}
We first determine which summands are zero. The class $\Ch_{0,m+3}^{r}(v_{a}\otimes v_b \otimes v_{c}\otimes v_0^{\otimes m})$ vanishes unless the modularity constraint
\begin{equation*}
a+b+c+m+1\equiv 0 \quad (\mathrm{mod\,}r)
\end{equation*}
holds. Moreover, since $\mathcal{V}_{0;a,b,c,0,\ldots,0}^{r}$ is a genuine vector bundle (see the discussion preceding \eqref{eqn:rank_general_s}), its $d^{\text{th}}$ Chern class vanish for $d>\rank(\mathcal{V}_{0;a,b,c,0,\ldots,0}^{r})$, and ${\mathpzc{f}}_\star$ preserves the cohomological degree as the forgetful map $\mathpzc{f}$ is finite. Hence, by the definition of $\Ch^{r}$ and the rank formula \eqref{eqn:rank_general_minus_one},
\begin{equation*}
\deg_{\mathrm{top}}\Ch_{0,m+3}^{r}(v_{a}\otimes v_b \otimes v_{c}\otimes v_0^{\otimes m})
\leq \rank\left(\mathcal{V}_{0;a,b,c,0,\ldots,0}^{r}\right)=\frac{m+1+a+b+c-r}{r}.
\end{equation*}
Since $\dim\ocM_{0,m+3}=m$, the integral of the class $\Ch_{0,m+3}^{r}(v_{a}\otimes v_b \otimes v_{c}\otimes v_0^{\otimes m})$ over $\ocM_{0,m+3}$ vanishes unless $m\leq \frac{m+1+a+b+c-r}{r}$, i.e.
\begin{equation*}
(r-1)(m+1)\leq a+b+c.
\end{equation*}
As $a+b+c\leq 3(r-1)$, this forces $m\leq 2$, and together with the modularity constraint it leaves precisely the pairs
\begin{equation*}
(m,\,a+b+c)\in\bigl\{(0,\,r-1),\ (0,\,2r-1),\ (1,\,2r-2),\ (2,\,3r-3)\bigr\}.
\end{equation*}
Next, we evaluate the corresponding integrals. As only the degree $m$ part of the integrand contributes to the integral over $\ocM_{0,m+3}$, by the definition of $\Ch^{r}$, we obtain
\begin{equation*}
\begin{aligned}
\int_{\ocM_{0,m+3}}\Ch_{0,m+3}^{r}(v_{a}\otimes v_b \otimes v_{c}\otimes v_0^{\otimes m})
&=(-1)^{m+3}\,r^{1}\,t^{\,\rank\left(\mathcal{V}_{0;a,b,c,0,\ldots,0}^{r}\right)}
\int_{\ocM_{0,m+3}}\widetilde{\Ch}_{0,m+3}^{r,-1,r/t}(v_{a}\otimes v_b \otimes v_{c}\otimes v_0^{\otimes m})\\
&=(-1)^{m+3}\,r^{m+1}\,t^{\,\rank\left(\mathcal{V}_{0;a,b,c,0,\ldots,0}^{r}\right)-m}
\int_{\ocM_{0,m+3}}\widetilde{\Ch}_{0,m+3}^{r,-1,1}(v_{a}\otimes v_b \otimes v_{c}\otimes v_0^{\otimes m})\\
&=(-1)^{m+3}\,r^{m+1}\,t^{\frac{a+b+c-(r-1)(m+1)}{r}}
\int_{\ocM_{0,m+3}}\widetilde{\Ch}_{0,m+3}^{r,-1,1}(v_{a}\otimes v_b \otimes v_{c}\otimes v_0^{\otimes m})
\end{aligned}
\end{equation*}
which is equal to
\begin{equation*}
\begin{aligned}
-r\int_{\ocM_{0,3}}\widetilde{\Ch}_{0,3}^{r,-1,1}(v_{a}\otimes v_b \otimes v_{c})
=-r\cdot\frac{1}{r}
&=-1\quad \text{for } (m,a+b+c)=(0,r-1),\\
-rt\int_{\ocM_{0,3}}\widetilde{\Ch}_{0,3}^{r,-1,1}(v_{a}\otimes v_b \otimes v_{c})
=-rt\cdot\frac{1}{r}
&=-t\quad \text{for } (m,a+b+c)=(0,2r-1),\\
r^2\int_{\ocM_{0,4}}\widetilde{\Ch}_{0,4}^{r,-1,1}(v_{a}\otimes v_b \otimes v_{c}\otimes v_0)
=r^2\cdot\frac{-1}{r^2}
&=-1 \quad \text{for } (m,a+b+c)=(1,2r-2),\\
-r^3\int_{\ocM_{0,5}}\widetilde{\Ch}_{0,5}^{r,-1,1}(v_{a}\otimes v_b \otimes v_{c}\otimes v_0^{\otimes 2})
=-r^3\cdot\frac{2}{r^3}
&=-2 \quad \text{for } (m,a+b+c)=(2,3r-3).
\end{aligned}
\end{equation*}
Putting all three cases together, we get
\begin{equation}\label{eqn:3point_shifted_Chiodo_compact}
\begin{aligned}
\Ch_{0,3}^{r,\epsilon}(v_{a}\otimes v_b \otimes v_{c})
&=\sum_{m=0}^{2}\frac{\epsilon^m}{m!}\int_{\ocM_{0,m+3}}\Ch_{0,m+3}^{r}(v_{a}\otimes v_b \otimes v_{c}\otimes v_0^{\otimes m})\\
&=-(\delta_{a+b+c,r-1}+t\delta_{a+b+c,2r-1}+\epsilon \delta_{a+b+c,2r-2}+\epsilon^2\delta_{a+b+c,3r-3}).
\end{aligned}
\end{equation}
Inserting equation \eqref{eqn:3point_shifted_Chiodo_compact} into formula \eqref{eqn:product_from_three_point}, we see that
\begin{equation*}
\begin{aligned}
v_a\bullet v_b
&=t \Ch_{0,3}^{r,\epsilon}(v_{a}\otimes v_b \otimes v_{0})v_0+\sum_{1\leq i \leq r-1}\Ch_{0,3}^{r,\epsilon}(v_{a}\otimes v_b \otimes v_{i})v_{r-i}\\
&=-t(\delta_{a+b,r-1}+\epsilon\delta_{a+b,2r-2})v_0-\sum_{1\leq i \leq r-1}(\delta_{a+b+i,r-1}+\epsilon \delta_{a+b+i,2r-2}+t\delta_{a+b+i,2r-1}+\epsilon^2\delta_{a+b+i,3r-3})v_{r-i}\\
&=-t(\delta_{a+b,r-1}+\epsilon\delta_{a+b,2r-2})v_0-\sum_{1\leq i \leq r-1}(\delta_{a+b+i,r-1}+\epsilon \delta_{a+b+i,2r-2}+t\delta_{a+b+i,2r-1}+\epsilon^2\delta_{a+b+i,3r-3})v_{r-i}.
\end{aligned}
\end{equation*}
If $0\leq a+b <r-1$, then we have\footnote{Note that if $r\geq 3$, we have $r-1<2r-2<2r-1<3r-3$.}
\begin{equation*}
v_a\bullet v_b=-\sum_{1\leq i\leq r-1}\delta_{a+b+i,r-1}v_{r-i}=-v_{a+b+1}.
\end{equation*}
If $a+b=r-1$, then we have
\begin{equation*}
v_a\bullet v_b=-tv_0-\epsilon v_1.
\end{equation*}
If $r\leq a+b <2r-2$, then we have
\begin{equation*}
v_a\bullet v_b=-\sum_{1\leq i \leq r-1}(\epsilon \delta_{a+b+i,2r-2}+t\delta_{a+b+i,2r-1})v_{r-i}=-\epsilon v_{a+b+2-r}-t v_{a+b+1-r}.
\end{equation*}
If $a+b=2r-2$, then we have
\begin{equation*}
v_a\bullet v_b=-t\epsilon v_0-t v_{r-1}-\epsilon^2 v_1.
\end{equation*}
This completes the proof.
\end{proof}


\section{General shifts for $\Ch_{g,n}^{r,s}$ when $-r+1\leq s \leq -1$} \label{appendix:general_shifts_for_r_s_bounds_polynomiality}

Let $\Vec{\epsilon}=\sum_{i=0}^{r-1}\epsilon_i v_i$. The \emph{$\Vec{\epsilon}$-shifted Chiodo CohFT} $\Ch_{g,n}^{r,s,\Vec{\epsilon}}$ is defined  by
\begin{equation*}
\Ch_{g,n}^{r,s,\Vec{\epsilon}}(v_{a_1}\otimes\cdots\otimes v_{a_n}) \coloneqq \sum_{m\geq 0}\frac{1}{m!}{\mathpzc{p}_{m}}_\star\Ch_{g,n+m}^{r}(v_{a_1}\otimes\cdots\otimes v_{a_n}\otimes v_{\Vec{\epsilon}}^{\otimes m}).
\end{equation*}
Expanding $\Vec{\epsilon}^{\otimes m}$ via the multinomial theorem, we obtain
\begin{equation}\label{eqn:general_shift_multinomial_formula}
\Ch_{g,n}^{r,s,\Vec{\epsilon}}(v_{a_1}\otimes\cdots\otimes v_{a_n}) =\sum_{m_0,\ldots,m_{r-1}\geq 0}\frac{\epsilon_0^{m_0}\cdot \cdots \cdot \epsilon_{r-1}^{m_{r-1}}}{m_0!\cdot \cdots \cdot m_{r-1}!}\left({\mathpzc{p}_{m}}\right)_\star\Ch_{g,n+m}^{r,s}(v_{a_1}\otimes\cdots\otimes v_{a_n}\otimes v_0^{\otimes m_0}\otimes\cdots \otimes v_{r-1}^{\otimes m_{r-1}})
\end{equation}
where we set
\begin{equation*}
m\coloneqq m_0+\cdots+m_{r-1}.
\end{equation*}.
\begin{lemma}
The summand $\left({\mathpzc{p}_{m}}\right)_\star\Ch_{g,n+m}^{r,s}(v_{a_1}\otimes\cdots\otimes v_{a_n}\otimes v_0^{\otimes m_0}\otimes\cdots \otimes v_{r-1}^{\otimes m_{r-1}})$ vanishes unless
\begin{equation}\label{eqn:vanishing_inequality_for_general_shifts_with_s}
\sum_{i=0}^{r-1}(r+s-i)\cdot m_i\leq K_{g,n}^{r,s}(\vert a \vert)
\end{equation}
where
\begin{equation}\label{eqn:K_gnrsa_constant}
K_{g,n}^{r,s}(\vert a \vert)\coloneqq (2g-2+n)(-s)+r(g-1)+\vert a \vert.
\end{equation}
\end{lemma}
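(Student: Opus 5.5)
The plan is a degree count, the same one used in Appendix \ref{appendix:Metric_and_Quantum_Product} to restrict the summands contributing to the quantum product. Two facts about the class $\Ch_{g,n+m}^{r,s}(v_{a_1}\otimes\cdots\otimes v_{a_n}\otimes v_0^{\otimes m_0}\otimes\cdots\otimes v_{r-1}^{\otimes m_{r-1}})$ will be used. For $-r+1\le s\le -1$ it is, up to a scalar and a power of $t$, the pushforward $\mathpzc{f}_\star$ of the Chern polynomial of the genuine vector bundle $\mathcal{V}^{r,s}$. Since $\mathpzc{f}$ is finite, its nonzero cohomological degrees are bounded by the rank \eqref{eqn:rank_general_s}. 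On the other side, pushing forward along $\mathpzc{p}_m\colon\ocM_{g,n+m}\to\ocM_{g,n}$ lowers complex degree by $m$. The summand therefore vanishes unless some part of the class has degree $d$ with
\begin{equation*}
m\le d\le \rank\bigl(\mathcal{V}^{r,s}_{g;a_1,\ldots,a_n,0^{m_0},\ldots,(r-1)^{m_{r-1}}}\bigr).
\end{equation*}

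The main step is to write this rank for $n+m$ markings. Applying \eqref{eqn:rank_general_s} with $n$ replaced by $n+m$ and $|a|$ replaced by $|a|+\sum_i i\,m_i$ gives
\begin{equation*}
r\cdot\rank=(2g-2+n+m)(-s)+r(g-1)+|a|+\sum_{i=0}^{r-1} i\,m_i=K^{r,s}_{g,n}(|a|)+\sum_{i=0}^{r-1}(i-s)\,m_i,
\end{equation*}
using the definition \eqref{eqn:K_gnrsa_constant}. The nonvanishing condition $rm\le r\cdot\rank$ then reads
\begin{equation*}
\sum_i r\,m_i\le K^{r,s}_{g,n}(|a|)+\sum_i(i-s)m_i,
\end{equation*}
which rearranges to exactly \eqref{eqn:vanishing_inequality_for_general_shifts_with_s}.

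Some care is needed at the boundary cases. If the modularity constraint fails for the given markings, the moduli space of spin curves is empty and the summand vanishes trivially, so we may assume the constraint holds and the rank is a nonnegative integer. The identification of $-R^\bullet\pi_\star\mathcal{L}$ with a vector bundle in the range $-r+1\le s\le -1$ comes from \cite[Section 2.2]{GLN23}, as recalled before \eqref{eqn:rank_general_s}. I also have to make sure this identification applies to every marking, including markings with $a_i=0$. I expect that check to be the only delicate point, and I would simply cite that discussion for all $0\le a_i\le r-1$ rather than reprove it.
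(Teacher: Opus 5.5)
Your proposal is correct and is essentially the paper's proof. Both arguments compute the rank of $\mathcal{V}^{r,s}$ for the $n+m$ weighted markings via \eqref{eqn:rank_general_s}, observe that the $\mathpzc{p}_m$-pushforward vanishes unless $m\le\rank$, and rearrange using $m=\sum_i m_i$. Your remarks on the modularity constraint and on markings of weight $0$ are harmless details that the paper leaves implicit.
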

\begin{proof}
With $n+m$ markings carrying weights
\begin{equation*}
(b_1,\ldots,b_{n+m})\coloneqq (a_1,\ldots,a_n,\underbrace{0,\ldots,0}_{m_0-\text{times}},
\underbrace{1,\ldots,1}_{m_1-\text{times}},\ldots,
\underbrace{r{-}1,\ldots,r{-}1}_{m_{r-1}-\text{times}}),
\end{equation*}
the associated vector bundle $\mathcal{V}_{g;b_1,\ldots,b_{n+m}}^{r,s}$ has the rank
\begin{equation*}
\rank \left(\mathcal{V}_{g;b_1,\ldots,b_{n+m}}^{r,s}\right)=\frac{(2g-2+n+m)(-s)+r(g-1)+\vert a \vert +\sum_{i=0}^{r-1}i\cdot m_i}{r}
\end{equation*}
by the rank formula \eqref{eqn:rank_general_s}. As the pushforward class $\left({\mathpzc{p}_{m}}\right)_\star\Ch_{g,n+m}^{r,s}(v_{a_1}\otimes\cdots\otimes v_{a_n}\otimes v_0^{\otimes m_0}\otimes\cdots \otimes v_{r-1}^{\otimes m_{r-1}})$ has cohomological degree at most 
\begin{equation*}
2\left(\rank\left(\mathcal{V}_{g;b_1,\ldots,b_{n+m}}^{r,s}\right)-m\right),
\end{equation*}
it vanishes trivially unless
\begin{equation*}
m\leq \rank\left(\mathcal{V}_{g;b_1,\ldots,b_{n+m}}^{r,s}\right)=\frac{(2g-2+n+m)(-s)+r(g-1)+\vert a \vert +\sum_{i=0}^{r-1}i\cdot m_i}{r}.
\end{equation*}
Re-arranging the terms in this inequality using $m=\sum_{i=0}^{r-1}m_i$, we conclude the proof.
\end{proof}

\begin{remark}\label{rmk:coefficient_signs_in_Kgnrsa_inequality}
Since $-r+1\leq s\leq -1$, the coefficients $(r+s-i)$ in
\eqref{eqn:vanishing_inequality_for_general_shifts_with_s}
is
\begin{itemize}
    \item positive if $0\leq i \leq r+s-1$,
    \item zero if $i=r+s$,
    \item negative if $r+s+1\leq i \leq r-1$.
\end{itemize}
We see that the inequality \eqref{eqn:vanishing_inequality_for_general_shifts_with_s} gives us
\begin{equation}\label{eqn:vanishing_inequality_for_general_shifts_with_s_expanded}
\underbrace{(r+s)\cdot m_0+\cdots+1\cdot m_{r+s-1}}_{\text{each} \,m_i\, \text{has positive coefficient}}+0\cdot m_{r+s}+ \underbrace{(-1)\cdot m_{r+s+1}+\cdots+(s+1)\cdot m_{r-1}}_{\text{each} \,m_i\, \text{has negative coefficient}}\leq K_{g,n}^{r,s}(\vert a \vert).
\end{equation}
When $s\leq -2$, we will have both positive and negative coefficients on the left-hand side of the inequality \eqref{eqn:vanishing_inequality_for_general_shifts_with_s_expanded}. Hence, the inequality will not impose any bound on $m_i$ values unless all $m_{r+s+1},\ldots,m_{r-1}$ are fixed.
\end{remark}

\begin{remark}
For the rest of this part of the appendix, we will focus on the case $s=-1$.  
\end{remark}

In the case of $s=-1$, we see that inequality \eqref{eqn:vanishing_inequality_for_general_shifts_with_s_expanded}, gives us
\begin{equation*}
(r-1)\cdot m_0+(r-2)\cdot m_1+\cdots+1\cdot m_{r-2}\leq (r+2)(g-1)+n+\vert a \vert.
\end{equation*}
Hence, we conclude that by Remark \ref{rmk:coefficient_signs_in_Kgnrsa_inequality}, the sum
\eqref{eqn:general_shift_multinomial_formula} is a polynomial in
$\epsilon_0,\ldots,\epsilon_{r-2}$, and a formal power series in
$\epsilon_{r-1}$:
\begin{equation}\label{eqn:general_shift_minus_one_before_power_series}
\Ch_{g,n}^{r,\Vec{\epsilon}}(v_{a_1}\otimes\cdots\otimes v_{a_n})\in H^*(\ocM_{g, n})[t,\epsilon_0,\ldots,\epsilon_{r-2}][\![\epsilon_{r-1}]\!]
\end{equation}
In fact, we will show that the dependence on $\epsilon_{r-1}$ is analytic around $\epsilon_{r-1}=0$. The key tool is the following property of the (unshifted) Chiodo CohFT $\Ch^{r}$.

\begin{lemma}\label{lem:v_r_minus_1_pullback_propety}
Let $\mathpzc{p}\colon \ocM_{g,k+1}\to \ocM_{g,k}$ be the forgetful map forgetting the last marked point.
For all $v_{b_1},\ldots,v_{b_k}\in V$ with $2g-2+k>0$, we have
\begin{equation}\label{eqn:non_flat_unit_insertion}
\Ch_{g,k+1}^{r}(v_{b_1}\otimes\cdots\otimes v_{b_k}\otimes v_{r-1})
= -(t-\psi_{k+1})\cdot
\mathpzc{p}^*\Ch_{g,k}^{r}(v_{b_1}\otimes\cdots\otimes v_{b_k}).
\end{equation}
\end{lemma}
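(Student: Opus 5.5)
The plan is to prove the identity at the level of the Chiodo class $\widetilde{\Ch}$ by comparing the spin moduli space with the extra marking of weight $r-1$ to the one without it, and then translating the resulting Chern polynomial identity through the normalization $(-1)^n r^{1-g} t^{\rank}$.

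First, the marking of weight $b_{k+1}=r-1$ with $s=-1$. The defining relation is
\begin{equation*}
L^{\otimes r}\cong\omega_{\log}^{-1}\Bigl(-\sum b_i x_i\Bigr),
\end{equation*}
and $\omega_{\log}$ on the curve with $k+1$ markings is $\pi^*\omega_{\log}(x_{k+1})$ (with the usual correction along the bubble). So the term $-(r-1)x_{k+1}-x_{k+1}=-r\,x_{k+1}$ is an $r$-th power. Hence $L'=\pi^*L(-x_{k+1})$ is an $r$-th root on the new curve, where $\pi$ is the stabilization/forgetful map of the universal curves. This gives the standard identification
\begin{equation*}
\ocM^{r}_{g;b_1,\ldots,b_k,r-1}\cong \ocM^{r}_{g;b_1,\ldots,b_k}\times_{\ocM_{g,k}}\ocM_{g,k+1},
\end{equation*}
with no stacky twist at $x_{k+1}$ since the multiplicity is divisible by $r$. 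This is the same mechanism as the pullback/forgetful property of Chiodo classes in \cite[Section 2]{GLN23} and \cite{CGG25}.

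Second, compare derived pushforwards. From $0\to L'\to \pi^*L\to \pi^*L|_{x_{k+1}}\to0$ we get
\begin{equation*}
-R^\bullet\pi_*L' = -p^*R^\bullet\pi_*L + \sigma_{k+1}^*\pi^*L .
\end{equation*}
Here $\sigma_{k+1}^*\pi^*L$ is a line bundle whose $r$-th power is $\sigma_{k+1}^*\bigl(\omega_{\log}^{-1}(-(r-1)x_{k+1})\bigr)$ on the new curve. Its first Chern class is then $-\psi_{k+1}/r\cdot r$ up to sign. I expect it to be $c_1=-\psi_{k+1}$: indeed $L'|_{x_{k+1}}$ relates to $\mathcal{L}_{k+1}$, and the check is $r\,c_1 = \psi_{k+1}-(r-1)\cdot(\text{normal bundle contribution})$. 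I will pin down the sign by checking that the rank drops by exactly one, consistent with \eqref{eqn:rank_general_minus_one}: $\rank$ increases by $(1+(r-1))/r=1$. So $\mathcal V'=p^*\mathcal V\oplus \mathcal N$ with $\mathcal N$ a line bundle, and $c_\tau(\mathcal V')=p^*c_\tau(\mathcal V)\,(1+\tau c_1(\mathcal N))$. Pushforward along $\mathpzc f$ commutes with this by the fiber square, with degree factors matching.

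Third, normalize. With $\tau=r/t$ and the prefactors,
\begin{equation*}
(-1)^{k+1}r^{1-g}t^{\rank+1}\bigl(1+\tfrac{r}{t}c_1(\mathcal N)\bigr)=-(t+r\,c_1(\mathcal N))\cdot(-1)^k r^{1-g}t^{\rank}.
\end{equation*}
This yields \eqref{eqn:non_flat_unit_insertion} precisely when $r\,c_1(\mathcal N)=-\psi_{k+1}$, i.e.\ $c_1(\mathcal N)=-\psi_{k+1}/r$. This is consistent with $\mathcal N^{\otimes r}\cong\sigma^*\omega_{\log}^{-1}$-type behavior.

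The main obstacle is this local computation: identifying $c_1(\sigma_{k+1}^*\pi^*L)$, or equivalently of the $\mathcal N$ term, as exactly $-\psi_{k+1}/r$ on $\ocM_{g,k+1}$, including over the boundary where $x_{k+1}$ lies on a contracted rational bubble. I would handle it by computing $\sigma_{k+1}^*$ of $\omega_{\log}$ on the $(k+1)$-pointed universal curve, using $\sigma^*\omega_{\pi}(x_{k+1})$-type identities together with $\psi_{k+1}=-\sigma^*(x_{k+1})$. As a sanity check, I would verify the result against the genus-zero three-point values in Lemma \ref{lem:raw_chiodo_calculations_from_CGG25} together with the pairing, e.g. $\Ch_{0,3}^{r}(v_a\otimes v_{r-1-a}\otimes v_{r-1})$.
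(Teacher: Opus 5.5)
Your third step is exactly the paper's proof. The paper takes the Chiodo-level identity $\widetilde{\Ch}^{r,-1,\tau}_{g,k+1}(v_{b_1}\otimes\cdots\otimes v_{b_k}\otimes v_{r-1})=(1-\frac{\tau}{r}\psi_{k+1})\,\mathpzc{p}^*\widetilde{\Ch}^{r,-1,\tau}_{g,k}(v_{b_1}\otimes\cdots\otimes v_{b_k})$ from \cite[Theorem 4.1]{GLN23}, which is $c_1(\mathcal N)=-\psi_{k+1}/r$ in your notation, and then normalizes with $\tau=r/t$ as you do. Where you try to derive that identity yourself, there is a genuine gap at exactly the step you flag. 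You only read $c_1(\mathcal N)=-\psi_{k+1}/r$ off from the answer you want, and the intermediate claims are inconsistent. The $r$-th power you write down, $\sigma_{k+1}^*(\omega_{\log}^{-1}(-(r-1)x_{k+1}))$, is that of $L'$, not of $\pi^*L=L'(x_{k+1})$. Also, a rank count cannot fix the sign of a Chern class.

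More importantly, your set-up would give the wrong answer. Take the divisor $D_{j,k+1}\subset\ocM_{g,k+1}$ where $x_{k+1}$ bubbles off together with $x_j$, with $b_j\neq0$. Modularity on the bubble $E_j$ forces weight $-b_j\not\equiv 0$ at the new node, so this node is stacky. There the universal root is \emph{not} $\pi^*L(-x_{k+1})$: since $\pi^*\mathcal{O}(x_j)=\mathcal{O}(x_j+E_j)$, the $r$-th power of $\pi^*L(-x_{k+1})$ differs from the required bundle by $\mathcal{O}(-\sum_j b_jE_j)$. Your fiber-product description is therefore not an isomorphism there. If you compute $c_1(\sigma_{k+1}^*\pi^*L)$ as you propose, using $\sigma_{k+1}^*\pi^*\omega_{\log}\cong\mathbb{L}_{k+1}$ and $\sigma_{k+1}^*\pi^*\mathcal{O}(x_j)\cong\mathcal{O}(D_{j,k+1})$, you get $-\frac1r\bigl(\psi_{k+1}+\sum_j b_jD_{j,k+1}\bigr)$, which has spurious boundary terms.

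The repair is to stay on the new twisted curve. Use $0\to L'\to L'(x_{k+1})\to L'(x_{k+1})|_{x_{k+1}}\to 0$ and set $\mathcal N:=\sigma_{k+1}^*L'(x_{k+1})$. On the new curve $x_{k+1}$ is disjoint from the other markings, and the residue trivializes $\sigma_{k+1}^*\omega_{\log}$. Hence $\mathcal N^{\otimes r}\cong\sigma_{k+1}^*\mathcal{O}(x_{k+1})\cong\mathbb{L}_{k+1}^{-1}$, so $c_1(\mathcal N)=-\psi_{k+1}/r$ with no boundary correction. You then still need $R^\bullet\pi_*L'(x_{k+1})=\mathpzc{p}^*R^\bullet\pi_*L$. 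This holds because on a bubble with a stacky node $L'(x_{k+1})$ has coarse degree $-1$, and on a bubble with an untwisted node it is trivial. The push-forward along $\mathpzc{f}$ should be compared through the degree-one comparison map to the fiber product, not through an isomorphism. With these corrections your second and third steps go through.
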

\begin{proof}
By \cite[Theorem 4.1 (ii)-(iv)]{GLN23}, we have
\begin{equation}\label{eqn:Harer_Zagier_Paper_ii_iv}
\widetilde{\Ch}_{g,k+1}^{r,-1,\tau}(v_{b_1}\otimes\cdots\otimes v_{b_k}\otimes v_{r-1})=\left(1-\frac{\tau}{r}\psi_{k+1}\right)\mathpzc{p}^*\widetilde{\Ch}_{g,k}^{r,-1,\tau}(v_{b_1}\otimes\cdots\otimes v_{b_k}).
\end{equation}
By the definition of $\Ch_{g,k+1}^{r}$, the rank formula \eqref{eqn:rank_general_minus_one},  and equation \eqref{eqn:Harer_Zagier_Paper_ii_iv}, we have
\begin{equation*}
\begin{aligned}
\Ch_{g,k+1}^{r}(v_{b_1}\otimes\cdots\otimes v_{b_k}\otimes v_{r-1})
&=(-1)^{k+1}r^{1-g}t^{\rank \left(\mathcal{V}^{r,-1}_{g;b_1,\ldots,b_k,r-1}\right)}\widetilde{\Ch}_{g,k+1}^{r,-1,r/t}(v_{b_1}\otimes\cdots\otimes v_{b_k}\otimes v_{r-1})\\
&=(-1)^{k+1}r^{1-g}t^{\rank \left(\mathcal{V}^{r,-1}_{g;b_1,\ldots,b_k}\right)+1}\left(1-\frac{1}{t}\psi_{k+1}\right)\mathpzc{p}^*\widetilde{\Ch}_{g,k}^{r,-1,r/t}(v_{b_1}\otimes\cdots\otimes v_{b_k})\\
&=-(t-\psi_{k+1})(-1)^kr^{1-g}t^{\rank \left(\mathcal{V}^{r,-1}_{g;b_1,\ldots,b_k}\right)}\mathpzc{p}^*\widetilde{\Ch}_{g,k}^{r,-1,r/t}(v_{b_1}\otimes\cdots\otimes v_{b_k})\\
&=-(t-\psi_{k+1})\cdot
\mathpzc{p}^*\Ch_{g,k}^{r}(v_{b_1}\otimes\cdots\otimes v_{b_k}).
\end{aligned}
\end{equation*}
Hence, we complete the proof.
\end{proof}

\begin{proposition}
\label{prop:general_shift_ring_containment}
For any $v_{a_1},\ldots,v_{a_n}\in V$, we have
\begin{equation}\label{eqn:general_shift_closed_form}
\begin{aligned}
&\Ch_{g,n}^{r,\Vec{\epsilon}}(v_{a_1}\otimes\cdots\otimes v_{a_n})\\
&=\sum_{m_0,\ldots,m_{r-2}\geq 0}
\frac{\epsilon_0^{m_0}\cdots \epsilon_{r-2}^{m_{r-2}}}
     {m_0!\cdots m_{r-2}!}
(1-\epsilon_{r-1})^{-(2g-2+n+\overline{m})}\,
{\mathpzc{p}_{\overline{m}}}_\star\Ch_{g,n+\overline{m}}^{r}
(v_{a_1}\otimes\cdots\otimes v_{a_n}
 \otimes v_0^{\otimes m_0}\otimes\cdots \otimes v_{r-2}^{\otimes m_{r-2}}),
\end{aligned}
\end{equation}
where we set
\begin{equation*}
\overline{m}\coloneq m-m_{r-1}=m_0+\cdots+m_{r-2}.
\end{equation*}
In particular,
\begin{equation}\label{eqn:general_shift_ring_final}
\Ch_{g,n}^{r,\Vec{\epsilon}}(v_{a_1}\otimes\cdots\otimes v_{a_n})
\in H^*(\ocM_{g,n})[\epsilon_0,\ldots,\epsilon_{r-2},(1-\epsilon_{r-1})^{-1},t].
\end{equation}
\end{proposition}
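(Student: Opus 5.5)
The plan is to resum the $v_{r-1}$-insertions in \eqref{eqn:general_shift_multinomial_formula} using the pullback property of Lemma~\ref{lem:v_r_minus_1_pullback_propety}, and then count via the projection formula. Fix $m_0,\ldots,m_{r-2}$, write $\overline m=m_0+\cdots+m_{r-2}$, and set
\begin{equation*}
\beta\coloneqq \Ch^{r}_{g,n+\overline m}(v_{a_1}\otimes\cdots\otimes v_{a_n}\otimes v_0^{\otimes m_0}\otimes\cdots\otimes v_{r-2}^{\otimes m_{r-2}}).
\end{equation*}
By the symmetry axiom we may place all $m_{r-1}$ copies of $v_{r-1}$ last. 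The inner sum over $m_{r-1}$ is then
\begin{equation*}
\sum_{m_{r-1}\ge0}\frac{\epsilon_{r-1}^{m_{r-1}}}{m_{r-1}!}\,({\mathpzc p}_{\overline m})_\star({\mathpzc p}'_{m_{r-1}})_\star\Ch^r_{g,n+\overline m+m_{r-1}}(\cdots\otimes v_{r-1}^{\otimes m_{r-1}}),
\end{equation*}
where ${\mathpzc p}'_{m_{r-1}}$ forgets only the last $m_{r-1}$ points. It therefore suffices to prove
\begin{equation*}
({\mathpzc p}'_{k})_\star\Ch^r_{g,N+k}(\cdots\otimes v_{r-1}^{\otimes k})=(2g-2+N)(2g-1+N)\cdots(2g-3+N+k)\,\beta,
\end{equation*}
with $N=n+\overline m$. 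Then $\sum_k \frac{\epsilon_{r-1}^k}{k!}(2g-2+N)^{\overline{k}}=(1-\epsilon_{r-1})^{-(2g-2+N)}$ by the binomial series, which gives \eqref{eqn:general_shift_closed_form}.

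The key computation is the case $k=1$, done by induction on $k$. Applying Lemma~\ref{lem:v_r_minus_1_pullback_propety} repeatedly gives
\begin{equation*}
\Ch^r_{g,N+k}(\cdots\otimes v_{r-1}^{\otimes k})=\prod_{j=1}^{k}\bigl(-(t-\psi_{N+j})\bigr)\ \text{(suitably pulled back)}\cdot{\mathpzc p}'^{*}_{k}\beta,
\end{equation*}
where at stage $j$ the class $\psi_{N+j}$ lives on $\ocM_{g,N+j}$ and is pulled back further.

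I would push forward one point at a time, forgetting the last point first. That point's factor is $-(t-\psi_{N+k})$ times a pullback of a class on $\ocM_{g,N+k-1}$. By the projection formula, its pushforward is $({\mathpzc p})_\star(-t+\psi_{N+k})\cdot(\text{class})$. Here ${\mathpzc p}_\star 1=0$ and ${\mathpzc p}_\star\psi_{N+k}=2g-2+(N+k-1)$, which gives the factor $2g-3+N+k$.

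The subtlety, and the main obstacle, is that the earlier factors $\psi_{N+j}$ with $j<k$ are pulled back with the standard correction, $\psi_{N+j}={\mathpzc p}^*\psi_{N+j}+D_{N+j,N+k}$. So the stated product form must be interpreted carefully. I would handle this by ordering the induction so that the last-inserted point is always forgotten first. Lemma~\ref{lem:v_r_minus_1_pullback_propety} is applied directly to $\Ch^r_{g,N+k}$ with the last marking, giving $-(t-\psi_{N+k})\,{\mathpzc p}^*\Ch^r_{g,N+k-1}(\cdots\otimes v_{r-1}^{\otimes(k-1)})$ exactly, with no product expansion needed. The projection formula then yields $(2g-3+N+k)\,\Ch^r_{g,N+k-1}(\cdots)$. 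Induction on $k$ gives the rising factorial. The remaining care is checking the stability condition $2g-2+N>0$ at each step, which holds since the original pair $(g,n)$ is stable.

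Finally, \eqref{eqn:general_shift_ring_final} follows by combining this with the vanishing inequality for $s=-1$. That inequality reads $(r-1)m_0+\cdots+m_{r-2}\le(r+2)(g-1)+n+|a|$, which bounds $m_0,\ldots,m_{r-2}$, so the outer sum is finite. Each $\beta$ is polynomial in $t$, and each term carries a power of $(1-\epsilon_{r-1})^{-1}$. Hence the class lies in $H^*(\ocM_{g,n})[\epsilon_0,\ldots,\epsilon_{r-2},(1-\epsilon_{r-1})^{-1},t]$.
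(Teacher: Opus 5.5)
Your proposal is correct and follows essentially the same route as the paper's proof. Both apply Lemma~\ref{lem:v_r_minus_1_pullback_propety} to the most recently inserted $v_{r-1}$-marking and push forward using ${\mathpzc p}_\star 1=0$ and ${\mathpzc p}_\star\psi=\kappa_0$. Iterating gives the Pochhammer factor $(2g-2+n+\overline m)_{m_{r-1}}$, which resums to $(1-\epsilon_{r-1})^{-(2g-2+n+\overline m)}$ by the binomial series, and the $s=-1$ vanishing inequality then makes the outer sum over $m_0,\dotsc,m_{r-2}$ finite.
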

\begin{proof}Firstly, to express the equations more compactly, we set
\begin{equation*}
\Phi\coloneqq v_{a_1}\otimes\cdots\otimes v_{a_n}.
\end{equation*}By Lemma \ref{lem:v_r_minus_1_pullback_propety}, we have
\begin{equation}\label{eqn:v_r_minus_one_pull_back_p_m}
\begin{aligned}
&{\mathpzc{p}_{m}}_\star
\Ch_{g,n+m}^{r}
(\Phi
 \otimes v_0^{\otimes m_0}\otimes\cdots \otimes v_{r-1}^{\otimes m_{r-1}})\\
&\quad=-{\mathpzc{p}_{m}}_\star\left[
(t-\psi_{n+m})\cdot
\mathpzc{p}^*\Ch_{g,n+m-1}^{r}
(\Phi
 \otimes v_0^{\otimes m_0}\otimes\cdots
 \otimes v_{r-1}^{\otimes m_{r-1}-1})
\right].
\end{aligned}
\end{equation}

Note that the projection formula yields
\begin{equation}\label{eqn:projection_formula_p_star}
\mathpzc{p}_\star\left[
(t-\psi_{n+m})\cdot \mathpzc{p}^*(\Phi)
\right]
= -\mathpzc{p}_\star(\psi_{n+m})\cdot\Phi
= -\kappa_0\cdot\Phi=-(2g-2+n+m-1)\Phi
\end{equation}
for any class $\Phi \in  H^*(\ocM_{g, n+m-1})$, since
$\mathpzc{p}_\star(1)=0$ and
$\mathpzc{p}_\star(\psi_{n+m})=\kappa_0 = 2g-2+(n+m-1)$
on $\ocM_{g,n+m-1}$.

By the identity ${\mathpzc{p}_{m}}_\star = {\mathpzc{p}_{m-1}}_\star \circ \mathpzc{p}_\star$, projection formula \eqref{eqn:projection_formula_p_star}, and pullback equation \eqref{eqn:v_r_minus_one_pull_back_p_m}, we see that
\begin{equation*}
\begin{aligned}
&{\mathpzc{p}_{m}}_\star\Ch_{g,n+m}^{r}(\Phi\otimes v_0^{\otimes m_0}\otimes\cdots \otimes v_{r-1}^{\otimes m_{r-1}})\\
&=-{\mathpzc{p}_{m}}_\star(t-\psi_{n+m-1})\mathpzc{p}^* \Ch_{g,n+m}^{r}(\Phi\otimes v_0^{\otimes m_0}\otimes\cdots \otimes v_{r-1}^{\otimes m_{r-1}-1})\\
&=\kappa_0{\mathpzc{p}_{m-1}}_\star\Ch_{g,n+m-1}^{r}(\Phi\otimes v_0^{\otimes m_0}\otimes\cdots \otimes v_{r-1}^{\otimes m_{r-1}-1})\\
&=(2g-2+n+m-1){\mathpzc{p}_{m-1}}_\star\Ch_{g,n+m-1}^{r}(\Phi\otimes v_0^{\otimes m_0}\otimes\cdots \otimes v_{r-1}^{\otimes m_{r-1}-1}).
\end{aligned}
\end{equation*}
Inductively proceeding on $m$ exactly $m_{r-1}$ times, we get
\begin{equation}\label{eqn:v_r_minus_one_pullback_pochhammer}
\begin{aligned}
 &{\mathpzc{p}_{m}}_\star\Ch_{g,n+m}^{r}(\Phi\otimes v_0^{\otimes m_0}\otimes\cdots \otimes v_{r-1}^{\otimes m_{r-1}})\\
 &=(2g-2+n+m-m_{r-1})_{m_{r-1}}{\mathpzc{p}_{m-m_{r-1}}}_\star\Ch_{g,n+m-m_{r-1}}^{r}(\Phi\otimes v_0^{\otimes m_0}\otimes\cdots \otimes v_{r-2}^{\otimes m_{r-2}})\\
 &=(2g-2+n+\overline{m})_{m_{r-1}}{\mathpzc{p}_{\overline{m}}}_\star\Ch_{g,n+\overline{m}}^{r}(\Phi\otimes v_0^{\otimes m_0}\otimes\cdots \otimes v_{r-2}^{\otimes m_{r-2}})
\end{aligned}
\end{equation}
where
\begin{equation*}
(x)_{k}\coloneqq x(x+1) \cdot \cdots \cdot (x+k-1) \quad \text{for}\ k\geq 1, \quad \text{and}\quad (x)_{0}\coloneqq 1
\end{equation*}
is the Pochhammer symbol.

Now, observe the following computation
\begin{equation*}
\begin{aligned}
&\Ch_{g,n}^{r,\Vec{\epsilon}}(\Phi)\\
&= \sum_{m_0,\ldots,m_{r-1}\geq 0 }\frac{\epsilon_0^{m_0}\cdot \cdots \cdot \epsilon_{r-1}^{m_{r-1}}}{m_0!\cdot \cdots \cdot m_{r-1}!}{\mathpzc{p}_{m}}_\star\Ch_{g,n+m}^{r}(\Phi\otimes v_0^{\otimes m_0}\otimes\cdots \otimes v_{r-1}^{\otimes m_{r-1}})\\
&=\sum_{m_0,\ldots,m_{r-1}\geq 0}\frac{\epsilon_0^{m_0}\cdot \cdots \cdot \epsilon_{r-1}^{m_{r-1}}}{m_0!\cdot \cdots \cdot m_{r-1}!}(2g-2+n+\overline{m})_{m_{r-1}}{\mathpzc{p}_{\overline{m}}}_\star\Ch_{g,n+\overline{m}}^{r}(\Phi\otimes v_0^{\otimes m_0}\otimes\cdots \otimes v_{r-2}^{\otimes m_{r-2}})\\
&=\sum_{m_0,\ldots,m_{r-2}\geq 0}\frac{\epsilon_0^{m_0}\cdot \cdots \cdot \epsilon_{r-2}^{m_{r-2}}}{m_0!\cdot \cdots \cdot m_{r-2}!}\left(\sum_{m_{r-1}\geq0}\frac{(2g-2+n+\overline{m})_{m_{r-1}}}{m_{r-1}!}\epsilon_{r-1}^{m_{r-1}}\right){\mathpzc{p}_{\overline{m}}}_\star\Ch_{g,n+\overline{m}}^{r}(\Phi\otimes v_0^{\otimes m_0}\otimes\cdots \otimes v_{r-2}^{\otimes m_{r-2}})
\end{aligned}
\end{equation*}

By generalized binomial expansion \cite[Section 4.6 (ii)]{NIST:DLMF}, we have the following:
\begin{equation*}
(1-y)^{-x}=\sum_{k\geq 0}\frac{(x)_{k}}{k!}y^k
\end{equation*}
for any $x,y \in \mathbb{C}$ with $\vert y \vert <1$. Hence, we get
\begin{equation*}
\Ch_{g,n}^{r,\Vec{\epsilon}}(\Phi)
=\sum_{m_0,\ldots,m_{r-2}\geq 0}\frac{\epsilon_0^{m_0}\cdot \cdots \cdot \epsilon_{r-2}^{m_{r-2}}}{m_0!\cdot \cdots \cdot m_{r-2}!}(1-\epsilon_{r-1})^{-(2g-2+n+\overline{m})}{\mathpzc{p}_{\overline{m}}}_\star\Ch_{g,n+\overline{m}}^{r}(\Phi\otimes v_0^{\otimes m_0}\otimes\cdots \otimes v_{r-2}^{\otimes m_{r-2}}).
\end{equation*}
As a result, we conclude the proof by equation \eqref{eqn:general_shift_minus_one_before_power_series}.
\end{proof}

\bibliographystyle{abbrv}
\bibliography{biblio}

\end{document}